\documentclass[notheoremnums]{dpreprint}

\newtheorem{theorem}{Theorem}[section]
\newtheorem*{theorem*}{Theorem}
\newtheorem*{proposition*}{Proposition}
\newtheorem{corollary}[theorem]{Corollary}
\newtheorem{proposition}[theorem]{Proposition}
\newtheorem{lemma}[theorem]{Lemma}

\theoremstyle{definition}
\newtheorem{definition}[theorem]{Definition}
\newtheorem{conjecture}[theorem]{Conjecture}

\newtheorem*{definition*}{Definition}
\newtheorem{remark}[theorem]{Remark}
\newtheorem*{remark*}{Remark}
\newtheorem{question}[theorem]{Question}
\newtheorem{maintheorem}{Theorem}

\usepackage{xcolor}
\usepackage{mathtools}

\newcommand{\dblank}{\mathrel{\vcenter{\hbox{\rule{1cm}{0.5pt}}}}}
\newcommand{\1}{\widehat{1}}
\newcommand{\ls}[2][1.5]{there exists $q_{n} \geq h_{n}$ such that $p(q_{n}) \geq #1q_{n} + f_{n} - #2$}
\newcommand{\len}{\ell en}

\begin{document}

\dtitle[Word complexity of weakly mixing rank-one subshifts]{Word Complexity of (Measure-Theoretically) Weakly Mixing Rank-One Subshifts}
\dauthor[Darren~Creutz]{Darren Creutz}{creutz@usna.edu}{US Naval Academy}{}
\datewritten{\today}

\keywords{Symbolic dynamics, word complexity, weakly mixing, rank-one transformations}
\subjclass{Primary: 37B10; Secondary 37A25}

\dabstract{%
We exhibit, for arbitrary $\epsilon > 0$, subshifts admitting weakly mixing (probability) measures with word complexity $p$ satisfying $\limsup \nicefrac{p(q)}{q} < 1.5 + \epsilon$.  For arbitrary $f(q) \to \infty$, said subshifts can be made to satisfy $p(q) < q + f(q)$ infinitely often.

We establish that every subshift associated to a rank-one transformation (on a probability space) which is not an odometer satisfies $\limsup p(q) - 1.5q = \infty$ and that this is optimal for rank-ones.
}

\makepreprint

\dsectionstar{Introduction}

Hedlund and Morse \cite{morse1938symbolic} initiated the study of word complexity of symbolic systems: given an infinite word $x \in \mathcal{A}^{\mathbb{Z}}$, on some finite set $\mathcal{A}$--the alphabet, the word complexity $p(q)$ is the number of distinct subwords of $x$ of length $q$; more generally for a closed, shift-invariant $X \subseteq \mathcal{A}^{\mathbb{Z}}$, i.e.~a subshift, the complexity $p(q)$ is the number of distinct subwords of length $q$ appearing in any of the $x \in X$.

\cite{morse1940symbolic} established the first lower bound on the word complexity in terms of the structure of the subshift: if $x$ is aperiodic then $p(q) \geq q + 1$ for all $q$. 
A natural question, considering aperiodicity to be a weak form of mixing-like behavior, is to what extent mixing-type properties impose lower bounds on complexity, especially in light of recent results (e.g.~\cite{cassaigneetal}, \cite{CK2}, \cite{CK3}, \cite{CK4}, \cite{DDMP}, \cite{DOP}, \cite{ormespavlov}, \cite{PS}) regarding subshifts with low word complexity being highly structured.

\cite{morse1940symbolic} also exhibited words with $p(q) = q + 1$, called Sturmian words, which can be encoded by irrational rotations \cite{CH73}.  As irrational rotations are totally ergodic, the natural question is whether weak mixing imposes any sort of stronger lower bound on word complexity.
Topological mixing properties were considered by Gao and Ziegler \cite{gaoziegler} (see also Gao and Hill \cite{gaohillbounded}, \cite{gaohilltopo}); here we address the measure-theoretic question.  

The lowest previously known complexity for a subshift admitting a weakly mixing (probability) measure, due to
Ferenczi \cite{ferenczichacon}, is a subshift with complexity satisfying $\limsup \nicefrac{p(q)}{q} = \nicefrac{5}{3}$ and $\liminf \nicefrac{p(q)}{q} = 1.5$.
We exhibit subshifts, admitting weakly mixing (probability) measures, with lower complexity:

\begin{maintheorem}[Theorem \ref{Areal}]\label{A}
For every $\epsilon > 0$, there exists a weakly mixing rank-one transformation (on a probability space) such that the associated subshift has complexity $\limsup \nicefrac{p(q)}{q} < 1.5 + \epsilon$.
\end{maintheorem}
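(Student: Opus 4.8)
The plan is to construct $T$ by cutting and stacking and to extract the word complexity of the associated subshift $X$ directly from the cut-and-stack parameters. Encode the construction by coding words $B_{0}=0$ and $B_{n+1}=B_{n}\,\sigma_{n,1}\,B_{n}\,\sigma_{n,2}\cdots B_{n}\,\sigma_{n,r_{n}}$, where $r_{n}\geq 2$ is the number of cuts at stage $n$, each displayed $B_{n}$ is a literal copy of the previous coding word, and each $\sigma_{n,i}$ is a (possibly empty) spacer word of $1$'s, so that $h_{n+1}:=|B_{n+1}|=r_{n}h_{n}+\sum_{i}|\sigma_{n,i}|$ and $X$ is the orbit closure of $\lim_{n}B_{n}$. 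Since the alphabet is $\{0,1\}$ and $X$ will be aperiodic, $p(q)-p(q-1)$ equals the number of right-special words of length $q-1$; so the entire problem is to control, scale by scale, how many right-special words of each length there are. The mechanism I would exploit is that a right-special word of length $q$ lives just to the left of a position where the symbol to come is ambiguous — in a rank-one subshift, a position where a copy of some $B_{m}$ ends and is followed in some occurrences by the next copy and in others by a stage-$m$ spacer — and such a word stays right-special only until $q$ exceeds the distance to the nearest feature that distinguishes its two left-contexts. Thus the design goal is a cut-and-stack in which each stage $m$ contributes exactly one such right-special word, born at length $h_{m}$ and dying just past $2h_{m}$: concretely, make the copies of $B_{m}$ occur always in blocks of two separated by spacers (e.g.\ $B_{m+1}\approx(B_{m}B_{m}\sigma_{m})^{j_{m}}B_{m}$, with $j_{m}\to\infty$), so that a word reading back from a spacer agrees with one reading back from a clean copy-boundary for length up to $2h_{m}$ but not beyond. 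Granting such a construction, $s(q):=p(q)-p(q-1)$ counts the stages $m$ with $h_{m}<q\leq 2h_{m}$ (plus a bounded correction), so if $h_{m+1}/h_{m}\to\infty$ then $s(q)$ takes only the values $0$ and $1$ and $p(q)=q+O(h_{m(q)})$ where $h_{m(q)}$ is the largest height $\leq q$; the ratio $p(q)/q$ then peaks near $q=2h_{m}$ at roughly $\tfrac{3}{2}$ and falls back toward $1$ over the long stretch $(2h_{m},h_{m+1}]$, giving $\limsup p(q)/q=\tfrac{3}{2}$ and $\liminf p(q)/q=1$.

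This idealized construction is, however, too rigid to be weakly mixing — a "blocks of exactly two, always" pattern tends to leave the heights $h_{m}$ with a fixed divisor or otherwise build an odometer factor — so the real content is to perturb it just enough to destroy all eigenvalues while spending only $\epsilon$ of the complexity budget. I would do this by allowing, at a sparse set of stages, a controlled deviation: vary $r_{m}$, and slide the spacers to adjustable positions $p_{m,1}<p_{m,2}<\cdots$ inside $B_{m}^{r_{m}}$ with consecutive gaps permitted to be as large as $(1+\delta)h_{m}$ rather than exactly $2h_{m}$. Each such deviation widens the length-window on which the stage-$m$ right-special word survives from $(h_{m},2h_{m}]$ to roughly $(h_{m},(2+\delta)h_{m}]$, nudging the relevant $\limsup$ from $\tfrac{3}{2}$ toward $2-\tfrac{1}{2+\delta}$; keeping the deviant stages sparse and $\delta$ small holds $\limsup p(q)/q<\tfrac{3}{2}+\epsilon$. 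The same deviant stages, through their free data $(r_{m}$, spacer positions$)$, are what ensure weak mixing: by the rank-one eigenvalue criterion, $e^{2\pi i t}$ is an eigenvalue of $T$ only if a series $\sum_{m}\Phi_{m}(t)$ converges, with $\Phi_{m}(t)$ measuring the dispersion modulo $1$ of the stage-$m$ cumulative spacer offsets, and one chooses the deviant $(r_{m}$, spacer positions$)$ inductively so that this series diverges for every $t\notin\mathbb{Z}$. I expect this to be the crux of the argument: the deviant stages must be frequent and spread-out enough to kill the eigenvalues of a transformation that is still under construction, yet sparse and thin enough to keep the complexity below $\tfrac{3}{2}+\epsilon$. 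The delicate point is the Diophantine one — showing that a bounded amount of well-placed spacer freedom per octave of scales suffices to exclude all $t\notin\mathbb{Z}$ — and it is here that I would invoke the complexity and spectral estimates for rank-one subshifts established earlier in the paper.

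Finally, since the spacers have bounded length and appear boundedly often per stage while $r_{m}\to\infty$, one has $\sum_{m}h_{m+1}^{-1}\sum_{i}|\sigma_{m,i}|<\infty$, so the cut-and-stack carries a finite invariant measure; after normalization this is a rank-one transformation on a probability space, uniquely ergodic with measure $\mu$, aperiodic, and weakly mixing by the construction above — so $X$ is the subshift required by Theorem~\ref{Areal}. (The same construction, with $h_{m+1}/h_{m}$ forced to grow fast relative to a prescribed $f(q)\to\infty$, yields in addition $p(h_{m})=h_{m}+O\big(\sum_{m'<m}h_{m'}\big)<h_{m}+f(h_{m})$ for all large $m$, which is the companion assertion $p(q)<q+f(q)$ infinitely often.)
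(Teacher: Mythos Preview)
Your proposal has a genuine gap in the complexity analysis of the idealized construction. Writing $B_{m+1}=(B_m^2 1)^{j_m}B_m$ (taking $|\sigma_m|=1$), the extra right-special word does not die at length $2h_m$ as you claim. The word $B_m 1 B_m^2$ is right-special: it is followed by $1$ inside $(B_m^2 1)^2\subset B_{m+1}$, and it is followed by $0$ inside $B_m^2 1 B_m\cdot B_m^2$, which appears across the seam of $B_{m+1}B_{m+1}$ (the first factor ends in $B_m^2 1 B_m$ and the second begins with $B_m^2$). Since $B_{m+1}$ ends in $1B_m$ while $B_m 1 B_m^2$ ends in $0B_m$, every suffix of $B_m 1 B_m^2$ of length at least $h_m+O(1)$ is right-special and is \emph{not} a suffix of any $B_{m'}$. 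This already forces $p(3h_m)\gtrsim 5h_m$, hence $\limsup p(q)/q\geq 5/3$ rather than $3/2$; your construction is exactly the case handled by Lemma~\ref{11} and Proposition~\ref{15} of the paper. So no perturbation of this template can get below $5/3$, let alone $3/2+\epsilon$.

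Your diagnosis of the obstruction is also inverted. The construction $(B_m^2 1)^{j_m}B_m$ has spacer values $0$ and $1$ each occurring about half the time, and the paper's Theorem~\ref{ismix} shows that any rank-one with bounded spacers and a positive proportion of both spacer values $0$ and $1$ is already weakly mixing --- no sparse perturbation or eigenvalue-exclusion argument is needed. The entire difficulty is on the complexity side, and it is resolved by a different combinatorial template. The paper takes $B_{n+1}=((B_n1)^{\gamma}B_n)^{L_n}$ with a \emph{fixed} $\gamma\geq 2$; here the doubled $B_n$ is the rare feature (one $B_n^2$ per block of $\gamma+1$ copies) rather than the dominant one. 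The right-special words are then classified completely (Proposition~\ref{words}), the complexity is computed exactly (Proposition~\ref{count}), and the two extra right-special families live on $(h_n,(2-\tfrac{1}{L_{n-1}})h_n]$ and $((\gamma+1)h_n,(2\gamma+1)h_n]$, giving peaks at $\tfrac{3}{2}+\tfrac{1}{4L_{n-1}-2}$ and $\tfrac{3}{2}+\tfrac{1}{4\gamma+2}$. Taking $\gamma$ large and $L_n\to\infty$ yields $\limsup p(q)/q<\tfrac{3}{2}+\epsilon$, while Theorem~\ref{ismix} gives weak mixing directly.
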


\begin{maintheorem}[Theorem \ref{Areal}]\label{A2}
For any $f(q) \to \infty$, the subshifts can be made to satisfy $p(q) < q + f(q)$ infinitely often.
\end{maintheorem}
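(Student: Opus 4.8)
The plan is to realise the desired subshifts as symbolic models of explicitly constructed rank-one transformations and then to verify both assertions on the construction. Recall that a rank-one transformation is built by cutting and stacking from a single column and is specified by a cut sequence $(r_n)_{n\ge 0}$ and spacer data $(s_{n,i})_{1\le i\le r_n}$; its column heights satisfy $h_{n+1}=r_nh_n+\sum_{i=1}^{r_n}s_{n,i}$, and its symbolic model over $\{0,1\}$ is the subshift whose finite subwords are exactly the factors of the words $B_0=0$, $B_{n+1}=B_n1^{s_{n,1}}B_n1^{s_{n,2}}\cdots B_n1^{s_{n,r_n}}$. Thus it suffices to choose parameters for which (i) the word complexity $p$ of this subshift satisfies $\limsup p(q)/q<3/2+\epsilon$ (Theorem~\ref{A}) with the refinement $p(q)<q+f(q)$ infinitely often (Theorem~\ref{A2}), and (ii) the transformation has no nonconstant measurable eigenfunction, i.e.\ is weakly mixing.

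On the complexity side I would first isolate a two-sided estimate for the word complexity of an arbitrary rank-one subshift in terms of its parameters: for $q$ in a block $[h_n,h_{n+1})$, $p(q)$ equals $q$ plus a quantity comparable to the number of distinct \emph{boundary windows} of length $q$ --- subwords straddling a junction between two consecutive copies of some $B_m$ with $m\le n$ --- so that $p(q+1)-p(q)$ is governed by how many new boundary windows appear at length $q$. This is the mechanism behind the companion lower bound $\limsup\,(p(q)-\tfrac32q)=\infty$ for non-odometer rank-ones. The parameters would be chosen roughly homogeneously: at each stage $r_n$ grows slowly and the spacers follow a prescribed pattern built from few distinct values, engineered so that the boundary-window contribution created at stage $n$ has size at most (and approaching) $\tfrac12 h_n$; since $p(q)\approx q+\tfrac12 h_n$ for $q$ just above $h_n$, this pins $\limsup p(q)/q$ at $3/2$, and the extra $\epsilon$ is the cost of the perturbations forced by (ii) together with the uniform-in-$q$ bookkeeping. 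For the refinement of Theorem~\ref{A2} I would, along a sparse subsequence of stages, take the cut counts $r_{n_k}$ enormous: in the long block $[h_{n_k},h_{n_k+1})$ essentially all boundary windows come from stages $<n_k$ and number only $O(h_{n_k})$, whereas $q$ runs up to roughly $r_{n_k}h_{n_k}$, so choosing $r_{n_k}$ large enough relative to $f$ forces $p(q)-q<f(q)$ throughout the upper part of each such block, hence infinitely often.

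For weak mixing I would use the standard eigenvalue criterion for rank-one transformations, of the form: $e^{2\pi i\theta}$ can be an eigenvalue of $T$ only if $\sum_n\frac1{r_n}\sum_{i=1}^{r_n}\lVert\theta\,c_{n,i}\rVert^2<\infty$, where $c_{n,i}=(i-1)h_n+\sum_{j<i}s_{n,j}$ is the starting position of the $i$-th copy of $B_n$ inside $B_{n+1}$. One then designs the spacer data so that this series diverges for every $\theta\neq0$ --- a by-now-standard type of argument for rank-one transformations in the spirit of Chacon's map: it is enough to install, at infinitely many stages, spacer configurations whose cumulative positions $c_{n,i}$ are spread out enough that the corresponding term is bounded below for the targeted residues, handling a dense family of candidate frequencies recursively and using continuity to sweep up the remaining $\theta$. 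The delicate point is that these eigenvalue-destroying stages must be assigned few enough distinct spacer values that their contribution to the complexity count stays within the $3/2+\epsilon$ budget of step (i).

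I expect the main obstacle to be precisely this interaction, together with the fact that the target is essentially extremal. The companion lower bound says that a weakly mixing --- hence non-odometer --- rank-one cannot make all its boundary-window contributions $o(h_n)$, so the construction must sit right at the $3/2$ threshold rather than comfortably below it; one must therefore show that eigenvalues can be destroyed using spacer configurations lean enough that the \emph{aggregate} boundary-window count at every scale $q$ --- not merely the per-stage count, since windows born at different stages persist and overlap as $q$ increases --- stays below $(\tfrac12+\epsilon)q$, and then convert the general complexity estimate into the uniform bound $p(q)<(3/2+\epsilon)q$. Once that is in hand, the remaining freedoms --- the sparsity of the rapid-growth stages and the precise growth rates --- are fixed last, using the slack to absorb lower-order errors and to secure Theorem~\ref{A2}.
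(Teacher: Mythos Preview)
Your proposal is a plausible research plan but is both more speculative and substantially more roundabout than what the paper actually does; the ``delicate interaction'' you flag as the main obstacle is in fact sidestepped entirely.

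The paper does not work with a generic rank-one construction and then struggle to balance complexity against eigenvalue-destruction. Instead it fixes from the outset a single explicit family,
\[
B_{n+1} = \bigl((B_n 1)^{\gamma} B_n\bigr)^{L_n},
\]
with $\gamma \ge 2$ a \emph{constant} integer and $L_n \ge 2$. For this family the paper classifies all right-special words exactly (Proposition~\ref{words}) and obtains a closed-form complexity function (Proposition~\ref{count}, Theorem~\ref{comp}): in particular $p(h_{n+1}) = (1 + 1/L_n)h_{n+1}$ and $\limsup p(q)/q = 3/2 + 1/(4\gamma+2)$. So one simply picks $\gamma$ large enough that $1/(4\gamma+2) < \epsilon$, and then, given $f$, chooses each $L_n$ large enough that $(\gamma+1)h_n + \gamma < f(h_{n+1})$, which forces $p(h_{n+1}) < h_{n+1} + f(h_{n+1})$. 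There are no estimates, no ``boundary-window'' bookkeeping across scales, and no aggregate error to control---the complexity is computed outright.

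For weak mixing the paper also avoids the eigenvalue-sum criterion you propose. It gives a direct Chacon-style argument (Theorem~\ref{ismix}): since a fixed positive fraction of the spacers $s_{n,i}$ equal $0$ and a fixed positive fraction equal $1$ (this uses only $\gamma$ bounded and $L_n \ge 2$), one shows that for any levels $I$, $J$ in $C_n$ with $J$ some $\ell$ levels below $I$ there is a time $t$ with $\mu(T^t I \cap I), \mu(T^t I \cap J) \ge \kappa^\ell \mu(I)$, and deduces $\mu(T^t A \cap A)\mu(T^t A \cap B) > 0$ for arbitrary positive-measure $A,B$. No spectral analysis is needed, and there is nothing to tune against the complexity budget: the same spacer pattern $\{0,1\}$ that keeps the right-special-word count low is already enough for weak mixing.

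In short, the key idea you are missing is that a single fixed spacer alphabet $\{0,1\}$ with a rigid periodic pattern $((B_n1)^\gamma B_n)^{L_n}$ simultaneously makes the complexity exactly computable and makes weak mixing automatic; the only free parameters are the constant $\gamma$ (governing $\epsilon$) and the growth of $L_n$ (governing the $p(q) < q + f(q)$ refinement), and these do not interact.
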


Naturally, one wonders whether these bounds are sharp.  Cassaigne \cite{Cassaigne98sequenceswith} showed that if $p(q) = q + c$ for some constant $c$ then it is the image of a Sturmian word (so cannot admit a weakly mixing measure); this implies $p(q) < q + f(q)$ infinitely often is the best possible (see Proposition \ref{io} for specifics).

The analogous question for strong mixing was first explored by Ferenczi \cite{ferenczi1996rank} who showed that the classical staircase transformation (proved mixing by Adams \cite{adams1998smorodinsky}) has quadratic complexity and conjectured that was the minimal possible.  The author, Pavlov and Rodock \cite{CPR} disproved this conjecture; recently the author \cite{Creutzmixing} showed that strong mixing manifests exactly at superlinear complexity: every strongly mixing subshift satisfies $\lim \nicefrac{p(q)}{q} = \infty$ and for any $f(q) \to \infty$ there exist strongly mixing subshifts with $\lim \nicefrac{p(q)}{(qf(q))} = 0$.

We establish that $\limsup \nicefrac{p(q)}{q} = 1.5$ is optimal for rank-one transformations:

\begin{maintheorem}[Theorem \ref{mainreal}]\label{main}
Let $T$ be a rank-one transformation (on a probability space) which is not an odometer.  Then the associated subshift has complexity satisfying $\limsup p(q) - 1.5q = \infty$ (and $\liminf p(q) - q = \infty$).
\end{maintheorem}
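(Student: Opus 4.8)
The plan is entirely combinatorial. Record the cutting‑and‑stacking data of $T$: the rank‑one words $B_n\in\{0,1\}^{*}$, the cut numbers $r_n\geq 2$, the spacer numbers $a_n^{(1)},\dots,a_n^{(r_n)}\geq 0$, and the heights $h_n=|B_n|$, with
\[ B_{n+1}\;=\;B_n\,1^{a_n^{(1)}}\,B_n\,1^{a_n^{(2)}}\cdots B_n\,1^{a_n^{(r_n)}}, \]
so that the language of the associated subshift $X$ is $\bigcup_n\{\text{factors of }B_n\}$ (and we may telescope consecutive levels freely). Since the alphabet is binary, $p(q{+}1)-p(q)$ equals the number of right‑special words of length $q$, hence $p(q)-q-1=\sum_{m=1}^{q-1}\bigl(\#\{\text{right-special words of length }m\}-1\bigr)$. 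Because $T$ is not an odometer, $X$ is infinite — a rank‑one subshift is finite only when there are no spacers from some level on, and then $T$ is itself an odometer — so $p$ is strictly increasing, every summand above is $\geq 0$, and $q\mapsto p(q)-q$ is nondecreasing. It therefore suffices to produce $q_k\to\infty$ together with $q_k'$ satisfying $\tfrac12 q_k-q_k'\to\infty$ such that $X$ has at least two right‑special words of every length in $[q_k',q_k)$: telescoping from $q_k'$ gives $p(q_k)\geq(q_k'+1)+2(q_k-q_k')=1.5\,q_k+\bigl(\tfrac12 q_k-q_k'\bigr)+1$, so $p(q_k)-1.5\,q_k\to\infty$; and since $p(q)-q$ is nondecreasing this simultaneously forces $p(q)-q\to\infty$, the parenthetical claim. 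This is where the constant $1.5$ enters: two right‑special words must persist over the upper half of a length‑scale.

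To find the two right‑special words, fix a level $n$. Every proper nonempty suffix $w$ of $B_n$ occurs at the right end of each copy of $B_n$ inside $B_{n+1}$, where it is followed by the first letter of $B_n$ (namely $0$) when the ensuing spacer block is empty and by $1$ when it is nonempty; so $w$ is right‑special once the level‑$n$ spacers contain both a zero and a nonzero value. For the second word one uses, according to the regime, either $1^q$ — right‑special precisely when $X$ contains a run of ones longer than $q$ — or a word $(\text{suffix of }B_n)\,1^{g}$, where $g$ runs over the \emph{gap values} between consecutive copies of $B_n$ in $X$; the gap set contains the $a_n^{(i)}$ together with all propagated sums $a_n^{(r_n)}+a_{n+1}^{(i)}+\cdots$, and such a word is followed by both $0$ and $1$ as soon as two distinct gap values exceed $g$. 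Equivalently, counting factors directly: at scale $q_n$ one has about $h_n$ near‑periodic factors from a long run $(B_n1^{c})^{k}$ for the dominant small gap $c$ (possibly $0$), plus roughly another $h_n$ factors straddling a rarer larger gap $g>c$, which therefore carry a run of $g$ ones absent from that periodic run; these new factors are genuinely distinct, and the total is near $2q_n$ whenever $q_n$ may be taken close to $h_n$.

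The substantive work — and the main obstacle — is to make ``$T$ is not an odometer'' deliver this non‑degeneracy uniformly. One must show that unless the spacer data is eventually as regular as it is for an odometer, there are infinitely many levels $n$ admitting a usable scale $q_n$ (equal to $h_n$, slightly above $h_n$, or a level below $n$ where a large spacer was inserted) at which two distinct gap values lie below the scale and the two right‑special words persist over a length range exceeding $\tfrac12 q_n$. This entails treating separately bounded versus unbounded spacers, few versus many distinct gap values, and spacers small, comparable to, or large relative to $h_n$ (when they are large one works instead with the near‑periodic runs $(B_n1^{g})^k$ and with factors reaching deep into the large gaps, namely $(\text{suffix of }B_n)1^{j}$ and $1^{j}(\text{prefix of }B_n)$). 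One must also rule out the only degenerate coincidences between the two chosen words, which arise when $B_n$ ends in a run of ones of length $\sum_{j<n}a_j^{(r_j)}$ — a run whose length, when large, only makes the $1^q$ family right‑special over a correspondingly long range and so strengthens the estimate.

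The tight cases, where this scheme yields only $p(q)\approx 1.5\,q$, are exactly the Ferenczi‑type weakly mixing constructions of Theorems~\ref{A} and \ref{A2}: there $q_k'$ sits just below $\tfrac12 q_k$, so $p(q_k)-1.5\,q_k\approx\tfrac12 q_k-q_k'$ still tends to infinity along the chosen scales even though $p(q)/q$ never beats $1.5$ by a definite amount. Showing that the propagated higher‑order gaps always suffice to push $q_k'$ detectably below $\tfrac12 q_k$ is the heart of the argument.
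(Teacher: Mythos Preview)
Your framework is correct: counting right-special words, using suffixes of the $B_m$ as one family, and seeking a second family over a range of lengths exceeding half the scale is exactly how the paper proceeds (see Lemma~\ref{countmethod}). The reduction of $\liminf p(q)-q=\infty$ to the $\limsup$ statement via monotonicity of $p(q)-q$ is also fine.

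The gap is that you have written a plan rather than a proof, and the plan is missing the structural reduction that makes the case analysis tractable. The paper does not attempt to handle ``bounded versus unbounded spacers, few versus many distinct gap values, spacers small, comparable to, or large relative to $h_n$'' in one sweep. Instead it first proves (Proposition~\ref{6}) that if three or more distinct spacer values persist then already $\limsup p(q)/q\geq 2$, and then uses a sequence of language-preserving recodings (Propositions~\ref{trick}--\ref{tcd}, culminating in Proposition~\ref{2.3}) to reduce to a subshift in which $s_{n,r_n}=0$ and $s_{n,i}\in\{0,d\}$ for a \emph{fixed} $d$, with both values occurring at every sufficiently large level. Only after this normalization does the paper carry out the case analysis (all of Section~\ref{S3}, Theorem~\ref{t2}), and even in that restricted setting it requires roughly a dozen lemmas tracking the precise pattern of $0$'s and $d$'s---the hardest case being exactly the one you flag, $B_{n+1}=((B_n1)^{\gamma}B_n)^{L}$, where one must pass to level $n{+}1$ or $n{+}2$ to extract the needed second family.

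Your proposed second family (words $1^q$ or $(\text{suffix of }B_n)1^g$) is too coarse for the two-value case: when $d$ is fixed and small, runs of $1$'s are bounded, so $1^q$ contributes nothing, and the ``gap values'' you invoke collapse to $\{0,d\}$, giving a second right-special word only over a window of length $\approx h_n$, not the $\approx \tfrac12 q_k$ you need when $q_k$ may be as large as $2h_{n+1}$. The paper's second family is instead built from words like $B_n^{\beta}\1 B_n^{t}$ or $(B_n\1)^{\alpha}B_n^2\1(B_n\1)^{t}B_n$, whose right-specialness depends delicately on the exponent pattern; identifying which such word works, and over what range, is the content of Lemmas~\ref{9}--\ref{4.23}. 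Your outline does not indicate how to find these words or why the range always exceeds half the scale.
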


While Sturmian words are encoded by irrational rotations (which are totally ergodic and rank-one), Rote \cite{rote} showed that the general word encoded by an irrational rotation has complexity $p(q) = 2q$ so if one treats an irrational rotation as a rank-one subshift then the complexity satisfies $p(q) \geq 2q$.

There appears to be a complexity distinction between totally ergodic and weakly mixing rank-one subshifts, namely that we can exhibit examples of totally ergodic rank-one subshifts with strictly lower complexity than any of our weakly mixing examples.  Specifically, Theorem \ref{main} is optimal:

\begin{maintheorem}[Theorem \ref{Dreal}]\label{te1.5}
For every $f(q) \to \infty$, there exists a totally ergodic rank-one transformation (on a probability space) such that the associated subshift satisfies $p(q) < 1.5q + f(q)$ for all sufficiently large $q$ and $p(q) < q + f(q)$ infinitely often.
\end{maintheorem}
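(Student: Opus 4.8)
The plan is to build the example by a cutting-and-stacking construction in the spirit of the proof of Theorem~\ref{A}, but \emph{tamed}: whereas the weakly mixing examples there require spacer patterns rich enough to force weak mixing (which also pushes $\limsup p(q)/q$ slightly above $1.5$), here I would use spacers that are as sparse as possible while still (a)~keeping the subshift nondegenerate (not an odometer) and (b)~destroying every rational eigenvalue. Concretely: fix cut parameters $r_n$ increasing to infinity, all spacer blocks equal to the empty word \emph{except} that for $n$ in a sparse infinite set $L$ one inserts a single spacer equal to $1$, positioned in the \emph{interior} of the stack (say $B_{n+1}=B_n^{\,j_n}\,1\,B_n^{\,r_n-j_n}$ for $n\in L$, and $B_{n+1}=B_n^{\,r_n}$ otherwise, with $B_0=0$), so that every $B_n$ begins and ends with $0$ and no run of two or more $1$'s is ever created. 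The heights satisfy $h_{n+1}=r_nh_n$ or $r_nh_n+1$. The growth rate of $r_n$ and the sparseness of $L$ are the free parameters, to be calibrated against the given $f$.

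Total ergodicity is the easy half. By the standard description of the spectrum of a rank-one transformation, a primitive $m$-th root of unity with $m\geq 2$ can be an eigenvalue only if, for all sufficiently large $n$, the interior spacer lengths at level $n$ are all congruent to one another modulo $m$ (namely to $-h_n$). For every $n\in L$ the level-$n$ spacer lengths include both $0$ and $1$, so this congruence fails; since $L$ is infinite it fails for infinitely many $n$, for every $m\geq 2$. Hence $T$ has no nontrivial rational eigenvalue, i.e.\ $T$ is totally ergodic; and since it has no eigenvalue at any $e^{2\pi i/m}$ ($m\geq 2$) and an infinite orbit closure, it is not an odometer. This is exactly the point where the construction must depart from a pure odometer, and a single misplaced $1$ per level of $L$ suffices to annihilate all rational spectrum at once, at negligible combinatorial cost.

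The complexity bounds are the substance, and here I would invoke the complexity estimates for rank-one subshifts developed earlier (the same machinery underlying Theorems~\ref{A} and \ref{main}). For $q$ in a window $[h_n,h_{n+1})$ one decomposes a length-$q$ word of the subshift according to how many consecutive copies of $B_n$ it overlaps, its offset within them, and which spacer blocks (from levels $\leq n$) it contains; because there is at most one spacer per level, the spacers are short, and $L$ is sparse, a length-$q$ window sees only a slowly growing number of distinct spacer configurations, and the placement rule (no long $1$-runs) keeps each junction type contributing at most $\approx q$ new words. Organizing the count this way should yield $p(q)\leq \tfrac32 q + E_n$ on $[h_n,h_{n+1})$, where $E_n$ accumulates the $O(1)$ transitional word-types from scales $\leq n$; choosing $r_n$ to grow slowly and the gaps in $L$ to be wide (relative to $f$) forces $E_n=o(f)$, giving $p(q)<\tfrac32 q + f(q)$ for all large $q$. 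For the sharper bound along a subsequence, take $q=h_n$ (or just below): a length-$q$ word must be a suffix of a $B_n$-copy followed by a bounded spacer block followed by a prefix of the next $B_n$-copy, so there are $q+O(1)$ of these plus the slowly growing contribution of the rare $1$-spacers; with $L$ sparse enough relative to $f$ this is $<h_n+f(h_n)$, and since $h_n\to\infty$ we get $p(q)<q+f(q)$ infinitely often. One must also check, against Theorem~\ref{main}, that the forced divergence $\limsup\bigl(p(q)-\tfrac32 q\bigr)=\infty$ is respected — and indeed the content of the argument is precisely that this unavoidable excess can be made to grow more slowly than any prescribed $f$.

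The main obstacle I expect is the tension between the two requirements: low complexity pushes the construction toward being "almost periodic'' (extremely long runs of identical blocks, vanishing spacer density), while total ergodicity forbids it from being "eventually periodic modulo $m$'' for every $m$. The sparse interior single-$1$-spacer device is what reconciles them, but one must then verify carefully that inserting these rare $1$'s perturbs the word counts only by the slowly growing $E_n$ above — i.e.\ that the complexity estimates of the earlier sections degrade gracefully under sparse spacers, in particular that the placement rule really does prevent any new junction type from contributing more than $\approx q$ words — and that $r_n$, the gaps in $L$, and the accumulated error can all be tuned simultaneously below the given $f$. That bookkeeping, rather than any new idea, is the heart of the proof.
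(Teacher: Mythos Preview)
Your outline has a genuine gap in the complexity half, and the construction you propose is not the one the paper uses.

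The ``machinery underlying Theorems~\ref{A} and~\ref{main}'' does not do what you need here. The work in Sections~\ref{S2}--\ref{S3} behind Theorem~\ref{main} establishes only \emph{lower} bounds on $p(q)$; nothing there yields $p(q)\le 1.5q + E_n$. The upper bounds behind Theorem~\ref{A} come from an \emph{exact} right-special-word classification (Proposition~\ref{words}) and complexity formula (Proposition~\ref{count}, Theorem~\ref{comp}) for the specific family $B_{n+1}=((B_n1)^{\gamma_n}B_n)^{L_n}$, not from any general estimate transferable to other rank-one words. Your construction $B_{n+1}=B_n^{\,j_n}1B_n^{\,r_n-j_n}$ is a different family --- after collapsing the spacerless levels it is precisely the Ferenczi-type $B_n^{p}1B_n^{q}$ treated in Proposition~\ref{18a}, whose complexity is nowhere computed exactly in the paper (and which for $p=q=2$ gives $\limsup p(q)/q=5/3$, not $3/2$). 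Whether sending $p,q\to\infty$ pushes the limsup down to $3/2$ with controllable excess is exactly the content you would have to supply; ``should yield'' and ``organizing the count this way'' are placeholders for a full right-special classification that you have not carried out. Your eigenvalue argument for total ergodicity also needs care: the criterion is a summability condition, not merely that two distinct spacer values occur; with a \emph{single} $1$ among $r_n$ zeros and $r_n\to\infty$, you must check that the single deviation is not washed out (it survives if $j_n$ stays a bounded fraction from the endpoints, but this has to be said).

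The paper's proof takes a different route entirely. It reuses the family $B_{n+1}=((B_n1)^{\gamma_n}B_n)^{L_n}$ of Definition~\ref{theTs}, for which Theorem~\ref{comp} already gives $p(h_{n+1})=(1+\tfrac{1}{L_n})h_{n+1}$ and pins down the two lengths per window where $p(q)-1.5q$ is locally maximal. The key move is to send \emph{both} parameters to infinity: $L_n\to\infty$ handles $p(h_n)<h_n+f(h_n)$, while $\gamma_n\to\infty$ (chosen so that $\tfrac12 h_n<f^*(\gamma_n)$) controls the second local maximum and gives $p(q)<1.5q+f(q)$ everywhere. Total ergodicity is proved not via eigenvalues but by a direct invariant-set argument: taking $L_n=m_n!$ with $m_n>n$ forces every fixed $t$ to divide $h_n$ eventually, and since $s_{n,i}=1$ for a $(1-\tfrac{1}{\gamma_n+1})$-fraction of $i$, the map $T^{h_n}$ nearly shifts levels down by one, which lets one propagate $(1-\epsilon)$-density of a $T^t$-invariant set across all residue classes modulo $t$.
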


It is worth remarking that $\limsup p(q)-1.5q = \infty$ distinguishing behavior in subshifts also appears in the work of Ormes and Pavlov \cite{ormespavlov} who showed that if $\limsup p(q) - 1.5q < \infty$ then the words in question are necessarily uniformly recurrent or bidirectionally eventually periodic.  For rank-one transformations, having bounded spacers implies uniform recurrence so their result and ours do not meaningfully overlap.  However, it is interesting that $\limsup p(q) - 1.5q < \infty$ is exactly the bound that rules out total ergodicity for rank-one subshifts as it is well-known that the lack of total ergodicity for rank-ones is equivalent to factoring onto a finite cyclic permutation, which is similar in spirit to their conclusion.

In connection with other properties often discussed with rank-one transformations, if we replace $p(q) < q + f(q)$ infinitely often with a slightly weaker condition then work of Ryzhikov \cite{ryzhikovmsj} gives:
\begin{maintheorem}[Theorem \ref{msjreal}]\label{msj}
For every $\epsilon > 0$, there exists a subshift with complexity satisfying
$\limsup \nicefrac{p(q)}{q} < 1.5 + \epsilon$ and $\liminf \nicefrac{p(q)}{q} < 1 + \epsilon$
such that the associated rank-one transformation is weakly mixing (on a probability space) and has minimal self-joinings (hence also has trivial centralizer and is mildly mixing).
\end{maintheorem}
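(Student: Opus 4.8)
The plan is to revisit the cutting-and-stacking construction behind Theorem~\ref{Areal} and exploit the fact that Theorem~\ref{msjreal} asks only for $\liminf \nicefrac{p(q)}{q} < 1 + \epsilon$ in place of $p(q) < q + f(q)$ infinitely often: it is the difference between controlling $p(q)-q$ and controlling $p(q)/q$ at the low-complexity scales that creates room. Recall that the proof of Theorem~\ref{A} produces, for each $\epsilon$, a family of weakly mixing rank-one transformations --- specified by a cut sequence $(r_n)$ and a spacer scheme --- whose subshifts satisfy $\limsup \nicefrac{p(q)}{q} < 1.5 + \epsilon$, and that a mild version of the argument behind Theorem~\ref{A2} makes $\nicefrac{p(q)}{q}$ close to $1$ along a sparse sequence of scales, so that $\liminf \nicefrac{p(q)}{q} < 1 + \epsilon$. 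The first step is to isolate from that proof exactly which constraints on $(r_n)$ and on the spacers are used for the $\limsup$ bound, for the low-complexity scales, and for weak mixing (killing of rational eigenvalues via the spacers), and to check that all of them remain satisfiable while $r_n \to \infty$ --- which is no longer in conflict with the complexity requirement once we only demand the weaker $\liminf$ estimate.

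Next I would invoke Ryzhikov's criterion \cite{ryzhikovmsj}: a sufficient condition, phrased in terms of the cutting and spacer parameters of a rank-one construction, under which the transformation has minimal self-joinings --- a condition met by a broad class of constructions with $r_n \to \infty$ and suitably non-degenerate spacers. The substance of the argument is to show that the spacer degrees of freedom not already consumed by the complexity ceiling and the weak-mixing requirement suffice to also satisfy Ryzhikov's hypotheses. This is a stage-by-stage choice (a diagonalization over the countably many potential obstructions) of a spacer vector that is simultaneously complexity-cheap --- few distinct values, aligned so that reading across tower boundaries manufactures few new length-$q$ words --- rich enough to annihilate the next candidate rational eigenvalue, and non-degenerate enough for Ryzhikov. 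Once minimal self-joinings is in hand, trivial centralizer, primeness and mild mixing follow from the general theory of minimal self-joinings (del Junco and Rudolph), and in particular the transformation is weakly mixing.

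For the complexity bookkeeping I would reuse the description of $p(q)$ for rank-one subshifts from the earlier part of the paper: the $\limsup \nicefrac{p(q)}{q} < 1.5 + \epsilon$ estimate is exactly the computation proving Theorem~\ref{A}, and the $\liminf$ estimate comes from inserting near-periodic stages --- stages $n$ at which almost every spacer is $0$, so that the generating word is within a bounded number of symbols of a power of $B_n$ --- and evaluating $p$ at scales $q$ somewhat larger than $h_n$ but still $\ll h_{n+1}$, where the only length-$q$ words beyond the $\approx h_n$ forced by the period-$h_n$ structure are the $O(q)$ words straddling the few non-trivial junctions at that and higher stages; with the spacers kept few-valued this is at most $(1+o(1))q$, so $\liminf \nicefrac{p(q)}{q} \le 1 < 1 + \epsilon$. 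One must confirm that inserting such near-periodic stages cofinally is consistent with $r_n \to \infty$, with the $\limsup$ ceiling holding at every scale, with weak mixing, and with Ryzhikov's criterion --- the point being that weak mixing and minimal self-joinings need non-degenerate behavior only at a cofinal set of stages, leaving infinitely many stages free to be near-periodic.

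The main obstacle I expect is precisely this simultaneous compatibility. Ryzhikov's criterion tends to want the spacers spread out or injective in a way that inflates complexity, whereas the $1.5 + \epsilon$ ceiling forces a very rigid spacer structure; threading a single scheme through $r_n \to \infty$, the complexity ceiling at every scale, eigenvalue-killing and Ryzhikov-genericity at a cofinal set of stages, and near-periodicity at infinitely many other stages, is the delicate part. The reason it should go through is that the budget $1.5 + \epsilon$ --- as opposed to the far tighter $q + f(q)$ --- is exactly wide enough to accommodate a complexity-cheap spacer scheme that still meets Ryzhikov's (flexible) hypotheses; checking that fit, stage by stage, is the real content of the proof.
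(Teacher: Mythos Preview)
Your proposal has the direction of Ryzhikov's criterion reversed, and this is fatal. Ryzhikov's theorem \cite{ryzhikovmsj} gives minimal self-joinings for rank-one transformations with \emph{bounded} cut sequence $\{r_n\}$ (plus non-rigidity, i.e.\ not being ``flat''), not for constructions with $r_n \to \infty$. You have built the entire architecture of your argument around ``threading a single scheme through $r_n \to \infty$'' and then invoking Ryzhikov, but that is precisely the regime where his criterion does not apply. Indeed, the remark following the paper's proof explicitly notes that the examples with $L_n \to \infty$ (hence $r_n \to \infty$) are most likely not mildly mixing and hence do not have minimal self-joinings.

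Once this is corrected, the elaborate diagonalization you describe --- near-periodic stages for the $\liminf$, eigenvalue-killing at other stages, stage-by-stage compatibility checks --- is unnecessary. The paper simply takes the construction of Definition~\ref{theTs} with \emph{constant} parameters $\gamma_n = \gamma$ and $L_n = \gamma+1$, where $\gamma$ is chosen so that $\frac{1}{\gamma+1} < \epsilon$. Theorem~\ref{comp} then gives $\liminf \nicefrac{p(q)}{q} = 1 + \frac{1}{\gamma+1}$ and $\limsup \nicefrac{p(q)}{q} = \frac{3}{2} + \frac{1}{4\gamma+2}$ directly; Corollary~\ref{theTswkmix} gives weak mixing; and since $r_n + 1 = (\gamma+1)^2$ is bounded and the spacers are non-constant, Ryzhikov's theorem yields minimal self-joinings. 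The $\liminf$ close to $1$ arises not from near-periodic stages with mostly-zero spacers but from the explicit formula in Theorem~\ref{comp}: it is $1 + \frac{1}{\max(L_{n-1},\gamma_n+1)}$, and with both parameters equal to $\gamma+1$ this is $1 + \frac{1}{\gamma+1}$.
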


The proof of Theorem \ref{main} is worth outlining briefly.  First we establish that for a rank-one subshift with $\limsup \nicefrac{p(q)}{q} < 2$, there is a rank-one subshift which generates the same language such that the spacer sequence eventually takes on at most two values.  Not being an odometer implies that both values must occur infinitely often and one can arrange for both to occur at every level (this arranging can lead to the cut sequence growing very rapidly).

The proof then proceeds by an analysis of all possible rank-one subshifts with exactly two spacer values.  We remark that finding our low complexity examples was a direct result of this examination, which both indicated $1.5$ ought to be the optimal bound and led to which subshifts were the correct candidates.

There remain questions regarding the precise nature of the complexity of subshifts admitting weakly mixing measures; we discuss these in Section \ref{questions}.  The main question left open is whether there exists a subshift, necessarily not rank-one, admitting a weakly mixing (probability) measure such that $\limsup \nicefrac{p(q)}{q} < 1.5$.  We tentatively conjecture that this is not the case and a bit more: for every subshift admitting a weakly mixing (probability) measure, we tentatively conjecture that $\limsup \nicefrac{p(q)}{q} > 1.5$.

Section \ref{S4} where the examples are constructed (and Section \ref{Swm} where weak mixing is proved) may be read independently; the reader primarily interested in the examples may opt to skip Sections \ref{S2} and \ref{S3} which are aimed at proving Theorem \ref{main}.

\textbf{Acknowledgments} The author would like to thank the referee for suggesting several welcome improvements to the exposition and to thank R.~Pavlov for a discussion that prompted the realization that the results hold for all non-odometer rank-ones (rather than only for totally ergodic rank-ones as stated in earlier drafts).

\section{Definitions and preliminaries}\label{S1}

\subsection{Symbolic dynamics}

\begin{definition}
A \textbf{subshift} on the finite set $\mathcal{A}$ is any subset $X \subset \mathcal{A}^{\mathbb{Z}}$ which is closed in the product topology and shift-invariant: for $x = (x_{n})_{n \in \mathbb{Z}} \in X$ and $k \in \mathbb{Z}$, the translate $(x_{n+k})_{n \in \mathbb{Z}}$ is also in $X$.
\end{definition} 

\begin{definition}
A \textbf{word} is any element of $\mathcal{A}^\ell$ for some $\ell$, the \textbf{length} of $w$, written $\len(w)$.  A word $w$ is a \textbf{subword} of a word or biinfinite sequence $x$ if there exists $k$ so that $w_{i} = x_{i+k}$ for $1 \leq i \leq \len(w)$.  A word $u$ is a \textbf{prefix} of $w$ if $u_{i} = w_{i}$ for $1 \leq i \leq \len(u)$ and a word $v$ is a \textbf{suffix} of $w$ if $v_{i} = w_{i+\len(w)-\len(v)}$ for $1 \leq i \leq \len(v)$.  A subword (or prefix or suffix) is \textbf{proper} when it is not the entire word.
\end{definition}

For words $v,w$, we denote by $vw$ their concatenation--the word obtained by following $v$ immediately by $w$. We also write such concatenations with product or exponential notation, e.g.~$\prod_i w_i$ or $0^n$.

\begin{definition}
The \textbf{language} of a subshift $X$ is $\mathcal{L}(X) = \{ w : \text{$w$ is a subword of some $x \in X$} \}$.
\end{definition}

\begin{definition}
The \textbf{word complexity function} of a subshift $X$ over $\mathcal{A}$ is the function $p_X: \mathbb{N} \rightarrow \mathbb{N}$ defined by $p_X(q) = |\mathcal{L}(X) \cap \mathcal{A}^q|$, the number of words of length $q$ in the language of $X$.
\end{definition}

When $X$ is clear from context, we suppress the subscript and just write $p(q)$.

\subsubsection{Right-special words}

All subshifts we consider are on the alphabet $\{0,1\}$ so it is natural to consider:
\begin{definition}
The set of \textbf{right-special} words is
$
\mathcal{L}^{RS}(X) = \{ w \in X : w0, w1 \in \mathcal{L}(X) \}
$.
\end{definition}

Cassaigne \cite{cassaigne} showed the well-known: 
$
p(q) = p(m) + \sum_{\ell = m}^{q-1} |\{ w \in \mathcal{L}^{RS} : \len(w) = \ell \}|
$
for $m < q$.

\subsubsection{Quasi-Sturmian words}

An infinite $x \in \mathcal{A}^{\mathbb{N}}$ is \textbf{Sturmian} when $p_x(q) = q + p_x(1)$.  Hedlund and Morse \cite{morse1940symbolic} exhibited examples of such words and showed that if $p_x(q) \leq q$ or $p_x(q+1) = p_x(q)$ for any $q$ then $x$ is periodic.

Cassaigne \cite{Cassaigne98sequenceswith} termed infinite words $x$ such that $p_x(q) = q + c$ for some constant $c$ and all sufficiently large $q$ \textbf{quasi-Sturmian} and showed such a word must be the image of a Sturmian word under a morphism $f : \mathcal{A}^{*} \to \mathcal{A}^{*}$ which is nonperiodic.

Indeed, his result quickly gives a bit more:
\begin{proposition}\label{io}
Let $X$ be an aperiodic subshift such that $p_{X}(q) \leq q + d$ for some constant $d$ and infinitely many $q$.  Then $X$ is quasi-Sturmian (in the sense that all $x \in X$ are quasi-Sturmian) hence cannot admit a weakly mixing measure.
\end{proposition}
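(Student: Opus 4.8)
The plan is to reduce the statement to Cassaigne's characterization of quasi-Sturmian words \cite{Cassaigne98sequenceswith} by first upgrading the "infinitely many $q$" hypothesis to "all sufficiently large $q$". The key observation is Cassaigne's formula $p(q+1) - p(q) = |\{w \in \mathcal{L}^{RS}(X) : \len(w) = q\}|$, which shows that $q \mapsto p(q+1) - p(q)$ is a nonincreasing function of $q$: every right-special word of length $q+1$ has a right-special word of length $q$ as a prefix, so the number of right-special words of each length cannot increase. Since $X$ is aperiodic, Hedlund--Morse gives $p(q) \geq q+1$ for all $q$, hence $p(q+1) - p(q) \geq 1$ for all $q$; so this difference sequence is a nonincreasing sequence of positive integers and therefore eventually constant, say equal to $1 + a$ for all $q \geq q_0$, with $a \geq 0$ an integer.

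First I would argue that $a = 0$. If $a \geq 1$, then for $q \geq q_0$ we have $p(q) \geq p(q_0) + (1+a)(q - q_0) \geq (1+a)q - (1+a)q_0 + p(q_0)$, which grows like $(1+a)q$ with $1 + a \geq 2$; this contradicts the hypothesis that $p(q) \leq q + d$ for infinitely many $q$, since $(1+a)q - C \leq q + d$ fails for all large $q$ once $a \geq 1$. Hence $a = 0$ and $p(q+1) - p(q) = 1$ for all $q \geq q_0$, which means $p(q) = q + c$ for some constant $c$ and all $q \geq q_0$: that is, $X$ is quasi-Sturmian in the stated sense. Then I would invoke Cassaigne's theorem: every such $x$ is the image of a Sturmian word under a nonperiodic morphism.

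For the final clause, it remains to see that a quasi-Sturmian subshift cannot admit a weakly mixing measure. A Sturmian subshift is (measurably) isomorphic to an irrational rotation, hence has nontrivial discrete spectrum; a nonperiodic morphic image is a topological factor of a finite-to-one extension built over the Sturmian system (or, more directly, the morphic image still factors onto the underlying irrational rotation), so any invariant measure on $X$ pushes forward to an invariant measure on the rotation and the rotation's eigenfunctions pull back to nonconstant eigenfunctions on $(X, \mu)$. This rules out weak mixing. The main obstacle I anticipate is the last step: making precise that a nonperiodic morphic image of a Sturmian subshift genuinely retains an irrational rotation as a measure-theoretic factor for \emph{every} invariant measure, rather than merely sharing its complexity growth — one must either cite the structure theory of quasi-Sturmian subshifts directly or carefully track how the morphism intertwines the shift actions. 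Everything else is a short monotonicity argument.
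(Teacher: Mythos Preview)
Your argument contains a genuine error at the outset: the function $q \mapsto p(q+1)-p(q)$ is \emph{not} nonincreasing in general. Your justification, that ``every right-special word of length $q+1$ has a right-special word of length $q$ as a prefix,'' is incorrect on two counts. First, the length-$q$ \emph{prefix} of a right-special word need not be right-special at all (if $w0,w1\in\mathcal{L}$, that says nothing about the two one-letter extensions of the prefix $w_1\cdots w_q$). The natural map is to take the length-$q$ \emph{suffix}, which is indeed right-special. Second, and more importantly, even the suffix map need not be injective: two distinct right-special words of length $q+1$ can share the same right-special suffix of length $q$, so the count can go up. Indeed, the rank-one subshifts constructed later in this very paper (Proposition~\ref{count}) have $p(q+1)-p(q)$ oscillating between $1$ and $2$ infinitely often.

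The paper's proof avoids monotonicity entirely. It simply telescopes: for those infinitely many $q$ with $p(q)\leq q+d$, one has
\[
q+d \geq p(q) = p(1) + \sum_{\ell=1}^{q-1}\big(p(\ell+1)-p(\ell)\big) \geq p(1) + (q-1) + \big|\{\ell<q : p(\ell+1)-p(\ell)\geq 2\}\big|,
\]
so the set $\{\ell : p(\ell+1)-p(\ell)\geq 2\}$ has fewer than $d$ elements. Past the largest such $\ell$, every increment is exactly $1$ (by Hedlund--Morse aperiodicity), and $p(q)=q+c$ for a constant $c$. This is the step you should replace your monotonicity claim with; the rest of your outline (invoking Cassaigne's structure theorem and the irrational-rotation factor to rule out weak mixing) is fine.
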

\begin{proof}
By the Hedlund-Morse theorem, we may assume $p(\ell+1) - p(\ell) \geq 1$ for all $\ell$ since otherwise the subshift is periodic.  For infinitely many $q$,
\[
q + d \geq p(q) = p(1) + \sum_{\ell=1}^{q-1} (p(\ell+1)-p(\ell)) \geq p(1) + q - 1 + |\{ \ell < q : p(\ell+1) - p(\ell) \geq 2 \}|
\]
so for infinitely many $q$ we have $|\{ \ell < q : p(\ell+1) - p(\ell) \geq 2 \}| < d$ meaning $|\{ \ell : p(\ell+1) - p(\ell) \geq 2 \}| < d$.

Set $c = p(1) - 1 + \sum_{\ell=1}^{\infty} (p(\ell+1) - p(\ell) - 1)$, which must be finite as there are only finitely many $\ell$ with $p(\ell + 1) - p(\ell) > 1$.  Then for all $q > \max\{ \ell : p(\ell+1) - p(\ell) \geq 2\}$,
\[
p(q) = p(1) + q - 1 + \sum_{\ell=1}^{q-1} (p(\ell+1) - p(\ell) - 1)
=  q + c
\]
Since Sturmian words can be encoded by irrational rotations, Sturmian (and therefore quasi-Sturmian) subshifts cannot admit weakly mixing measures.
\end{proof}

\subsection{Ergodic theory}

\begin{definition}
A \textbf{transformation} $T$ is a measurable map on a standard Borel or Lebesgue measure space $(Y,\mathcal{B},\mu)$ that is measure-preserving: $\mu(T^{-1}B) = \mu(B)$ for all $B \in \mathcal{B}$.
\end{definition}

\begin{definition}Two transformations $T$ on $(Y,\mathcal{B},\mu)$ and $T^{\prime}$ on $(Y^{\prime},\mathcal{B}^{\prime},\mu^{\prime})$ are \textbf{measure-theoretically isomorphic} if there exists 
a bijective map $\phi$ between full measure subsets $Y_0 \subset Y$ and $Y'_0 \subset Y'$ where 
$\mu(\phi^{-1} A) = \mu'(A)$ for all measurable $A \subset Y'_0$ and $(\phi \circ T)(y) = (T' \circ \phi)(y)$ for all $y \in Y_0$.
\end{definition}

\begin{definition}
A transformation $T$ is \textbf{ergodic} when $A = T^{-1} A$ implies that $\mu(A) = 0$ or $\mu(A^c) = 0$. 
\end{definition}

\begin{definition}
A transformation $T$ is \textbf{totally ergodic} when $T^{k}$ is ergodic for all $k \in \mathbb{N}$.
\end{definition}

\begin{definition}
A transformation $T$ on a probability space is \textbf{weakly mixing} when any of the following equivalent conditions hold:
\begin{itemize}
\item for all measurable sets $A,B$, there exists $\{ t_{n} \}$ such that $\mu(T^{t_{n}}A \cap B) \to \mu(A)\mu(B)$
\item there exists a density one $\{ t_{n} \}$ such that $\mu(T^{t_{n}}A \cap B) \to \mu(A)\mu(B)$ for all measurable sets $A,B$
\item $T \times T$ is ergodic
\item for all measurable $A,B$ there exists $n$ such that $\mu(T^{n}A\cap A)\mu(T^{n}A \cap B) > 0$
\end{itemize}
\end{definition}

\subsection{Rank-one transformations}

A \textbf{rank-one transformation} is a transformation $T$ constructed by ``cutting and stacking".  Here $Y$ represents a (possibly infinite) interval, $\mathcal{B}$ is the induced $\sigma$-algebra from $\mathbb{R}$, and $\mu$ is Lebesgue measure. We give a brief description, referring the reader to 
\cite{fghsw21} or  \cite{silva2008invitation} for more details and to \cite{ferenczisurv} for equivalent definitions.

The transformation is defined inductively on larger and larger portions of the space through Rohlin towers or \textbf{columns}, denoted $C_n$. Each column $C_n$ consists of \textbf{levels} $I_{n,j}$ where $0 \leq j < h_{n}$ is the height of the level within the column. All levels $I_{n,j}$ in $C_n$ are 
intervals with the same length, $\mu(I_{ n})$, and the total number of levels in a column is the \textbf{height} of the column, denoted by $h_n$. The transformation $T$ is defined on all levels $I_{n,j}$ except the top one $I_{n, h_n - 1}$ by sending each $I_{n,j}$ to $I_{n,j+1}$ using the unique order-preserving affine map.

Start with $C_1=[0,1)$ with height $h_1=1$. To obtain $C_{n+1}$ from $C_n$, we require a \textbf{cut sequence}, $\{r_n\}$ such that $r_n \geq 1$ for all $n$. Make $r_n$ vertical cuts of $C_n$ to create $r_n+1$ \textbf{subcolumns} of equal width. Denote a \textbf{sublevel} of $C_n$ by $I_{n,j}^{[i]}$ where $0 \leq a < h_{n}$ is the height of the level within that column, and 
$i$ represents the position of the subcolumn, where $i=0$ represents the leftmost subcolumn and $i=r_n$ is the rightmost subcolumn. After cutting $C_n$ into subcolumns, add extra intervals called \textbf{spacers} on top of each subcolumn to function as levels of the next column. The \textbf{spacer sequence}, $\{s_{n,i}\}$ such that $0 \leq i \leq r_{n}$ and $s_{n,i} \geq 0$, specifies how many sublevels to add above each subcolumn. Spacers are the same width as the sublevels, act as new levels in the column $C_{n+1}$, and are taken to be the leftmost intervals in $[1,\infty)$ not in $C_{n}$.
After the spacers are added, stack the subcolumns with their spacers right on top of left, i.e.~so that $I_{n,0}^{[i+1]}$ is directly above $I_{n,h_{n}-1}^{[i]}$. This gives the next column, $C_{n+1}$.

Each column $C_n$ defines $T$ on $\bigcup_{j = 0}^{h_n - 2} I_{n,j}$ and 
the partially defined map $T$ on $C_{n+1}$ agrees with that of $C_n$, extending the definition of $T$ to a portion of the top level of $C_n$ where it was previously undefined. Continuing this process gives the sequence of columns $\{C_1, \dots, C_n, C_{n+1}, \dots\}$ and $T$ is then the limit of the partially defined maps. 

Though this construction could result in $Y$ being an infinite interval with infinite Lebesgue measure, $Y$ has finite measure if and only if 
$\sum_{n} \frac{1}{r_{n}h_{n}}\sum_{i=0}^{r_{n}} s_{n,i} < \infty$, see \cite{CreutzSilva2010}.
All rank-one transformations we define satisfy this condition, and for convenience we renormalize so that $Y = [0,1)$. Every rank-one transformation is ergodic and invertible.

The reader should be aware that we are making $r_{n}$ cuts and obtaining $r_{n}+1$ subcolumns (following Ferenczi \cite{ferenczi1996rank}), while other papers (e.g.~\cite{Creutz2021}) use $r_{n}$ as the number of subcolumns.

\subsection{Odometers}

\begin{definition}
A rank-one transformation which can be constructed using a spacer sequence such that there exists $N$ so that $s_{n,i} = 0$ for all $n\geq N$ and $0\leq i < r_{n}$ is an \textbf{odometer}.
\end{definition}

Odometers have discrete spectrum and all their eigenvalues are rational in the sense that they are of the form $\exp(2\pi i q)$ for $q \in \mathbb{Q}$.

\subsection{Symbolic models of rank-one transformations}

For a rank-one transformation defined as above, we define a subshift $X(T)$ on the alphabet $\{ 0,1 \}$ which is measure-theoretically isomorphic to $T$:

\begin{definition}
The \textbf{symbolic model} $X(T)$ of, or \textbf{subshift associated} to, a rank-one transformation $T$ is given by the sequence of words: $B_1 = 0$ and 
\begin{equation*}\label{rk1word}
B_{n+1}=B_n1^{s_{n,0}}B_n1^{s_{n,1}}\cdots B_{n} 1^{s_{n,r_n}}= \prod_{i=0}^{r_n} B_n1^{s_{n,i}}
\end{equation*}
and $X(T)$ is the set of all biinfinite sequences such that every subword is a subword of some $B_n$.
\end{definition}

The words $B_n$ are a symbolic coding of the column $C_n$: $0$ represents $C_1$ and $1$ represents the spacers, and $h_{n} = \len(B_{n})$.  There is a natural measure associated to $X(T)$:

\begin{definition}
The \textbf{empirical measure} for a symbolic model $X(T)$ of a rank-one transformation $T$ is the measure $\nu$ defined by, for each word $w$,
\[
\nu([w]) = \lim_{n \rightarrow \infty} \frac{|\{1 \leq j \leq \len(B_{n}) - \len(w) \ : \ B_n[j; \len(w)] = w\}|}{\len(B_{n}) - \len(w)}
\]
where $B_{n}[j; \ell]$ denotes the subword of $B_{n}$ starting at position $j$ with length $\ell$.
\end{definition}

Danilenko \cite{danilenko16} (combined with \cite{deljunco} and \cite{kalikow}) proved that the symbolic model $X(T)$ of a rank-one subshift, equipped with its empirical measure, is measure-theoretically isomorphic to the cut-and-stack construction (see \cite{adamsferenczipeterson17}; see \cite{fghsw21} for the full generality including odometers).

Due to this isomorphism, we move back and forth between rank-one and symbolic model terminology as needed and write $\mathcal{L}(T)$ for the language of $X(T)$, or simply $\mathcal{L}$ if $X(T)$ is clear from context, and define:

\begin{definition}
A \textbf{rank-one subshift} is the symbolic model of a rank-one transformation.
\end{definition}

Likewise, when the measure is clear from text, such as the empirical measure for a rank-one subshift:
\begin{definition}
A \textbf{(measure-theoretically) weakly mixing subshift} is a subshift for which the measure is weakly mixing.
\end{definition}

\section{Properties of rank-one subshifts}\label{S2}

\begin{lemma}\label{obv}
For $n < m$, $B_{m}$ has $B_{n}$ as a prefix and $B_{n}1^{s_{n,r_{n}}}$ as a suffix.
\end{lemma}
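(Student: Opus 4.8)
The plan is a straightforward induction on $m$ with $n$ held fixed, the defining recursion $B_{k+1} = \prod_{i=0}^{r_k} B_k 1^{s_{k,i}}$ supplying everything; I would carry the prefix claim and the suffix claim through the same induction, though the two do not actually interact.

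For the base case $m = n+1$, both halves are read off directly from $B_{n+1} = B_n 1^{s_{n,0}} B_n 1^{s_{n,1}} \cdots B_n 1^{s_{n,r_n}}$: the word opens with the factor $B_n 1^{s_{n,0}}$, hence has $B_n$ as a prefix, and it closes with the factor $B_n 1^{s_{n,r_n}}$, which is exactly the asserted suffix. For the inductive step, assume the statement at level $m-1$ with $m \ge n+2$ and expand $B_m = \prod_{i=0}^{r_{m-1}} B_{m-1} 1^{s_{m-1,i}}$. The leftmost factor is $B_{m-1} 1^{s_{m-1,0}}$, so $B_m$ begins with $B_{m-1}$, which begins with $B_n$ by the inductive hypothesis, giving the prefix claim. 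The rightmost factor is $B_{m-1} 1^{s_{m-1,r_{m-1}}}$, so $B_m$ ends with $B_{m-1} 1^{s_{m-1,r_{m-1}}}$; since $B_{m-1}$ ends with $B_n 1^{s_{n,r_n}}$ by the inductive hypothesis, a copy of $B_n$ followed by $1^{s_{n,r_n}}$ sits at the end of $B_m$, which gives the suffix claim (modulo the trivial remark below).

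I do not expect any genuine obstacle — the entire content is the nesting picture in which each $B_{k+1}$ begins with $B_k$ and ends with $B_k 1^{s_{k,r_k}}$. The one bookkeeping point worth stating explicitly is that, for $m > n+1$, the terminal copy of $B_n$ in $B_m$ is in fact followed by $s_{n,r_n} + s_{n+1,r_{n+1}} + \cdots + s_{m-1,r_{m-1}}$ consecutive symbols $1$, so the suffix assertion is to be understood as: a copy of $B_n$ occurs at the very end of $B_m$, followed by at least $s_{n,r_n}$ spacer symbols — which is all that is needed in the sequel.
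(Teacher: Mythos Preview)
Your induction is exactly what the paper means by ``immediate from the construction,'' so the approach matches. The prefix half goes through cleanly.

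Your closing remark is the interesting part, and you are right to flag it: the suffix half of the lemma is not literally true for $m > n+1$ once some intermediate $s_{k,r_k}$ is positive, since then $B_m$ ends in $B_n 1^{s_{n,r_n} + s_{n+1,r_{n+1}} + \cdots + s_{m-1,r_{m-1}}}$ rather than $B_n 1^{s_{n,r_n}}$. The paper's one-line proof glosses over this. In practice it does no harm: the only places the suffix claim is invoked (e.g.\ Proposition~\ref{6} and throughout Section~\ref{S3}) are after Proposition~\ref{1} has reduced to the situation $s_{k,r_k} = 0$ for all large $k$, where the trailing spacers vanish and $B_n$ itself is a genuine suffix of $B_m$. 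So your reading --- that what is really being asserted, and really used, is that $B_m$ ends in $B_n$ followed by at least $s_{n,r_n}$ ones --- is the correct one.
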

\begin{proof}
This is immediate from the construction.
\end{proof}

\begin{lemma}\label{0}
$B_{n}$ has $0$ as a prefix for all $n$.
\end{lemma}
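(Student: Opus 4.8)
The plan is to proceed by induction on $n$, exploiting the recursive definition $B_{n+1} = \prod_{i=0}^{r_n} B_n 1^{s_{n,i}}$ of the generating words. For the base case $n = 1$ there is nothing to do: by definition $B_1 = 0$, so $0$ is (literally, all of) $B_1$ and in particular a prefix of it.

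For the inductive step, I would assume that $B_n$ has $0$ as a prefix and observe that the first letter of $B_{n+1}$ is the first letter of its leftmost factor, namely the first letter of the initial copy of $B_n$ appearing in $B_{n+1} = B_n 1^{s_{n,0}} B_n 1^{s_{n,1}} \cdots B_n 1^{s_{n,r_n}}$. Since that first letter is $0$ by the inductive hypothesis, $B_{n+1}$ has $0$ as a prefix, completing the induction. Alternatively, one can bypass the induction entirely by invoking Lemma \ref{obv}: for $n > 1$ that lemma gives $B_1$ as a prefix of $B_n$, and $B_1 = 0$, while the case $n = 1$ is again immediate from the definition.

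There is no real obstacle here — the statement is immediate from the construction — so the only thing to be careful about is simply citing the recursion (or Lemma \ref{obv}) correctly and handling the trivial base case $B_1 = 0$ explicitly.
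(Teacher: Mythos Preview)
Your proposal is correct, and your alternative argument invoking Lemma \ref{obv} is exactly the paper's proof: since $B_{1}=0$ is a prefix of every $B_{n}$ by that lemma, the claim follows immediately. The inductive argument you give first is also fine, just slightly more verbose than necessary.
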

\begin{proof}
By Lemma \ref{obv}, $B_{n}$ has $B_{1} = 0$ as a prefix.
\end{proof}

We next need a result of Danilenko:
\begin{proposition}[\cite{MR4041118} Lemma 1.10]\label{1}
Every rank-one subshift is measure-theoretically isomorphic to a rank-one subshift with $s_{n,r_{n}} = 0$ and the two subshifts generate the same language.
\end{proposition}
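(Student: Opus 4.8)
\emph{Proof proposal.} The plan is to absorb the rightmost spacer of each level and carry the accumulated amount forward into the spacers of the next level. Put $c_1 = 0$ and $c_{n+1} = c_n + s_{n,r_n}$, so that $c_n = \sum_{k=1}^{n-1} s_{k,r_k}$, and define a new rank-one construction with the same cut sequence $\hat{r}_n = r_n$ and spacer sequence
\[
\hat{s}_{n,i} = s_{n,i} + c_n \quad (0 \le i < r_n), \qquad \hat{s}_{n,r_n} = 0,
\]
with associated words $\hat{B}_n$ and transformation $\hat{T}$; by construction $\hat{s}_{n,r_n} = 0$ for every $n$. With $B_1 = \hat{B}_1 = 0$ (so $B_1 = \hat{B}_1 1^{c_1}$) and assuming $B_n = \hat{B}_n 1^{c_n}$, a short computation gives
\[
\hat{B}_{n+1} 1^{c_{n+1}}
= \Bigl( \prod_{i=0}^{r_n-1} \hat{B}_n 1^{c_n + s_{n,i}} \Bigr)\, \hat{B}_n 1^{c_n + s_{n,r_n}}
= \prod_{i=0}^{r_n} \hat{B}_n 1^{c_n} 1^{s_{n,i}}
= \prod_{i=0}^{r_n} B_n 1^{s_{n,i}}
= B_{n+1},
\]
so $B_n = \hat{B}_n 1^{c_n}$ for all $n$; that is, $\hat{B}_n$ is exactly $B_n$ with its last $c_n$ symbols deleted.

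First I would check that $\hat{T}$ is a genuine rank-one transformation on a probability space. Since $T$ and $\hat{T}$ use the same cut sequence, the levels of $\hat{C}_n$ and of $C_n$ have the same length $\lambda_n$; as $\hat{h}_n := \len(\hat{B}_n) \le \len(B_n) = h_n$, we get $\mu(\hat{C}_n) = \hat{h}_n \lambda_n \le h_n \lambda_n = \mu(C_n) \le 1$, and letting $n \to \infty$ gives $\mu(\hat{Y}) = \lim_n \mu(\hat{C}_n) < \infty$. Next, the languages coincide: as $\hat{B}_n$ is a prefix of $B_n$, every subword of some $\hat{B}_n$ is a subword of the corresponding $B_n$, so $\mathcal{L}(\hat{T}) \subseteq \mathcal{L}(T)$; conversely, since $r_n \ge 1$ we have $\hat{s}_{n,0} = s_{n,0} + c_n \ge c_n$, so the prefix $\hat{B}_n 1^{\hat{s}_{n,0}}$ of $\hat{B}_{n+1}$ begins with $\hat{B}_n 1^{c_n} = B_n$; thus $B_n$ is a subword of $\hat{B}_{n+1}$ and $\mathcal{L}(T) \subseteq \mathcal{L}(\hat{T})$. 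Hence $X(T) = X(\hat{T})$ as subshifts.

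Finally I would show the empirical measures agree, which, together with the cited isomorphism between a rank-one transformation and its symbolic model equipped with the empirical measure, promotes ``same subshift'' to ``measure-theoretically isomorphic.'' The one quantitative ingredient is $c_n = o(h_n)$: setting $\epsilon_k = s_{k,r_k}/(r_k h_k)$ we have $\sum_k \epsilon_k < \infty$ because $s_{k,r_k} \le \sum_{i=0}^{r_k} s_{k,i}$ and $\sum_k \frac{1}{r_k h_k}\sum_{i=0}^{r_k} s_{k,i} < \infty$ (finite total measure); iterating $h_{k+1} \ge (r_k+1) h_k \ge 2 h_k$ gives $r_k h_k / h_N \le h_{k+1}/h_N \le 2^{-(N-1-k)}$, whence $c_N/h_N = \sum_{k<N} \epsilon_k\, r_k h_k/h_N \le 2^{-(N-1)}\sum_{k<N} \epsilon_k 2^k \to 0$ (indeed $\sum_N c_N/h_N \le 2\sum_k \epsilon_k < \infty$). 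Since $\hat{B}_n$ is $B_n$ with its last $c_n$ symbols removed, for any fixed word $w$ the number of occurrences of $w$ in $\hat{B}_n$ differs from the number in $B_n$ by at most $c_n$; dividing by $\hat{h}_n - \len(w) = h_n - \len(w) - c_n$ and letting $n \to \infty$ yields $\hat\nu([w]) = \nu([w])$ (and in particular the defining limit exists). Therefore $\hat{T} \cong (X(T), \nu) \cong T$.

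The bookkeeping throughout is routine; the one place demanding genuine care is the estimate $c_n = o(h_n)$, which is exactly what prevents the carried-forward spacers from perturbing the empirical measure — and is where the probability-space hypothesis enters.
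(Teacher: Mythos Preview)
Your proof is correct and follows the standard ``carry the tail spacers forward'' argument: define $c_n=\sum_{k<n}s_{k,r_k}$, set $\hat{s}_{n,i}=s_{n,i}+c_n$ for $i<r_n$ and $\hat{s}_{n,r_n}=0$, verify inductively that $B_n=\hat{B}_n1^{c_n}$, and conclude that the languages and empirical measures coincide. The only delicate step is $c_n/h_n\to 0$, and your summability estimate $\sum_N c_N/h_N\le 2\sum_k\epsilon_k$ handles this cleanly.

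There is nothing to compare against in the paper itself: the proposition is stated with a citation to Danilenko's Lemma~1.10 and no proof is given here. Your argument is essentially the one that appears in that reference (and in several other places in the rank-one literature), so you have in effect reconstructed the cited proof. One small remark: you invoke the isomorphism between a rank-one transformation and its symbolic model with the empirical measure, which the paper also cites from Danilenko; this is a separate result from the one being proved, so there is no circularity.
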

%

\begin{proposition}\label{finmeasinf}
For  a rank-one subshift on a finite measure space,
$
\frac{1}{h_{n}} \inf \{ s_{n,i} : 0 \leq i < r_{n} \} \to 0
$.
\end{proposition}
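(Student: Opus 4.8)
The plan is to exploit the finite-measure criterion $\sum_n \frac{1}{r_n h_n}\sum_{i=0}^{r_n} s_{n,i} < \infty$ stated in the construction. Write $m_n = \inf\{s_{n,i} : 0 \le i < r_n\}$; note that each of the $r_n$ terms $s_{n,0},\dots,s_{n,r_n-1}$ in the inner sum is at least $m_n$, so $\sum_{i=0}^{r_n} s_{n,i} \ge r_n m_n$, and therefore $\frac{1}{r_n h_n}\sum_{i=0}^{r_n} s_{n,i} \ge \frac{m_n}{h_n}$. Since the series on the left is summable, its general term tends to $0$, and hence $\frac{m_n}{h_n} \to 0$, which is exactly the claim.

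In more detail, the key steps in order are: (1) recall the finite-measure condition from the excerpt (citing \cite{CreutzSilva2010}); (2) bound the inner sum below by $r_n m_n$ using that $m_n$ is an infimum over $0 \le i < r_n$, so it is a lower bound for each of those $r_n$ spacer values (the $i = r_n$ term is $\ge 0$ and simply discarded); (3) divide by $r_n h_n$ to get $\frac{m_n}{h_n} \le \frac{1}{r_n h_n}\sum_{i=0}^{r_n} s_{n,i}$; (4) invoke the fact that the terms of a convergent series go to zero to conclude $\frac{m_n}{h_n} \to 0$.

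I do not expect a genuine obstacle here — the statement is essentially a one-line consequence of the convergence criterion. The only thing to be careful about is the index range in the infimum versus the index range in the summation: the infimum in the statement runs over $0 \le i < r_n$ (i.e., $r_n$ values), while the defining sum in the finite-measure criterion runs over $0 \le i \le r_n$ (i.e., $r_n + 1$ values). Dropping the top spacer $s_{n,r_n}$ costs nothing since it is nonnegative, so the inequality $\sum_{i=0}^{r_n} s_{n,i} \ge \sum_{i=0}^{r_n - 1} s_{n,i} \ge r_n m_n$ is valid, and the argument goes through without modification. One could also remark that this immediately recovers the (known) fact that a finite-measure rank-one transformation which is not an odometer still cannot have the infimum of its spacer counts grow proportionally to the column height.
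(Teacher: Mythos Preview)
Your proof is correct. It differs in packaging from the paper's argument: the paper argues by contradiction, assuming $\inf\{s_{n,i} : 0 \le i < r_n\} \ge \delta h_n$ for infinitely many $n$ and observing directly that then $\mu(C_{n+1}) \ge \mu(C_n) + \inf_i s_{n,i}\,\mu(I_n) \ge (1+\delta)\mu(C_n)$ along that subsequence, forcing $\mu(C_N) \to \infty$. Your route instead invokes the finite-measure series criterion $\sum_n \frac{1}{r_n h_n}\sum_{i=0}^{r_n} s_{n,i} < \infty$ already recorded in the paper and reads off $\frac{m_n}{h_n} \to 0$ from term-wise domination. Both are short; yours is marginally slicker since it recycles a fact the paper has already stated, while the paper's version is self-contained and does not appeal to that criterion. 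Your care about the index range ($i < r_n$ in the infimum versus $i \le r_n$ in the sum) is exactly right and handled correctly.
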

\begin{proof}
Suppose $\inf \{ s_{n,i} : 0 \leq i < r_{n} \} \geq \delta h_{n}$ infinitely often for some $\delta > 0$.  Then for such $n$, we have $\mu(C_{n+1}) \geq \mu(C_{n}) + \inf \{ s_{n,i} : 0 \leq i < r_{n} \}\mu(I_{n}) \geq (1 + \delta)\mu(C_{n})$.  So for any $k$, if we choose $N$ such that at least $k$ values of $n < N$ have $\inf \{ s_{n,i} : 0 \leq i < r_{n} \} \geq \delta h_{n}$ then $\mu(C_{N}) \geq (1 + \delta)^{k} \mu(C_{0})$.  Taking $k \to \infty$ shows the measure would then be infinite.
\end{proof}

\subsection{Rank-one subshifts with at least three distinct spacer values}

\begin{proposition}\label{6}
For a rank-one subshift on a finite measure space with $s_{n,r_{n}} = 0$ for all sufficiently large $n$, if the set $\{ s_{m,i} : m \geq n, 0 \leq i < r_{m} \}$ contains at least three distinct values for infinitely many $n$ then $\limsup \frac{p(q)}{q} \geq 2$.
\end{proposition}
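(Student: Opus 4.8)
The plan is to estimate the number of right-special words in the language directly from the structure of the blocks $B_n$, using Cassaigne's formula $p(q) = p(m) + \sum_{\ell=m}^{q-1} |\{w \in \mathcal{L}^{RS} : \len(w) = \ell\}|$. The key observation is that right-special words in a rank-one subshift arise from the places where two occurrences of $B_n$ (viewed as subwords of some $B_m$) are followed by different numbers of spacer $1$'s. Concretely, if $s_{n,i}$ and $s_{n,j}$ are two distinct spacer values occurring at level $n$ with, say, $s_{n,i} < s_{n,j}$, then the word $B_n 1^{s_{n,i}}$ is right-special: it can be followed by $1$ (as seen in $B_n 1^{s_{n,j}}\cdots$) or by $0$ (as seen in $B_n 1^{s_{n,i}} B_n \cdots$, since every $B_n$ starts with $0$ by Lemma \ref{0}). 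So distinct spacer values at level $n$ generate right-special words of length roughly $h_n + s_{n,i}$, and with \emph{three} distinct values $s_{n,i} < s_{n,j} < s_{n,k}$ we get right-special words at (at least) the two lengths $h_n + s_{n,i}$ and $h_n + s_{n,j}$ — and more importantly, we can extend these right-special words on the left.

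First I would make precise the left-extension mechanism: any word $w$ that is right-special remains right-special when extended on the left to a longer word that is still in the language (this is automatic — if $w0, w1 \in \mathcal{L}$ and $uw \in \mathcal{L}$, it does not immediately follow that $uw0, uw1 \in \mathcal{L}$, so one must be careful). The correct statement is rather that for a rank-one subshift, if $B_n 1^s$ is a proper prefix of both $B_n 1^{s_{n,i}} B_n$ and $B_n 1^{s_{n,j}}$ type continuations, then there is a \emph{unique} maximal left-extension, and the right-special words of each length $\ell$ between $h_n$ and roughly $h_n + \min_i s_{n,i}$ that end at the spacer block must be counted. So the plan is: for each $n$ with three distinct spacer values in $\{s_{m,i} : m \ge n, 0 \le i < r_m\}$, identify a range of lengths $\ell$ — ideally an interval of length comparable to $h_n$ — on which $|\{w \in \mathcal{L}^{RS} : \len(w) = \ell\}| \ge 2$, or alternatively identify two "independent" families of right-special words so that summing gives an extra $\sim q$ growth beyond the baseline $q$. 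Using Proposition \ref{finmeasinf}, $\min_i s_{n,i}/h_n \to 0$, so the three spacer values are themselves bounded by $o(h_n)$ relative to... no — wait, only the \emph{infimum} goes to zero; the larger spacer values can be huge. That is actually fine: what matters is that we can find right-special words of many distinct lengths.

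Here is the cleaner approach I would pursue. Fix $n$ large with three distinct values $a < b < c$ among $\{s_{m,i} : m \ge n\}$; by passing to a larger level (and using that $B_m$ has $B_n$ as a prefix and $B_n 1^{s_{n,r_n}} = B_n$ as a suffix by Lemma \ref{obv} and Proposition \ref{1}) we may assume $a,b,c$ all occur as $s_{n,i}$ for indices $i < r_n$ at a common level $n$. The suffixes $B_n 1^a$, $B_n 1^b$, $B_n 1^c$ all occur in $B_{n+1}$, each immediately followed by $B_n$ (hence by $0$), except where a longer run of $1$'s continues. Track backwards: the word $1^{a} B_n 1^{a}$ versus $1^{b-? }\ldots$ — the point is that for each $j$ with $0 \le j \le a$, the word consisting of $B_n$ with $j$ trailing and $a - j$ "virtual" extensions... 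I will instead count as follows. Consider, for $0 \le t \le a$, the word $w_t$ = (suffix of $B_n$ of the appropriate length) followed by $1^t$; these are $\sim h_n$ distinct lengths, and on a positive fraction of them there are at least two right-special words (one coming from the $a$ vs.\ $b$ ambiguity resolved at a different backward position than the $b$ vs.\ $c$ ambiguity). Summing $|\{w \in \mathcal{L}^{RS}: \len w = \ell\}| \ge 2$ over an interval of length $\gtrsim h_n$ and $\ge 1$ elsewhere (by aperiodicity, via the Hedlund–Morse theorem and the fact that rank-one non-trivial subshifts are aperiodic), Cassaigne's formula yields $p(q) \ge q + c \cdot h_n$ for $q \approx 2 h_n$, hence $\limsup p(q)/q \ge 2$.

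The main obstacle is the bookkeeping in the previous paragraph: showing that three distinct spacer values force \emph{two} right-special words of the \emph{same} length for a range of lengths of size proportional to $h_n$ (two values only force right-special words at two separate lengths, which is not enough). The resolution should be that with three values $a<b<c$, for each backward shift $t$ in a range of size $\min(b-a, \text{something}) $ or rather of size $\gtrsim h_n$, one gets the "$a$-vs-rest" branching at length $h_n + a - t$ (no, $a$ is small)... The honest statement: one right-special family has lengths in $[h_n, h_n + a]$ roughly, a second independent family (from where $B_n 1^b B_n$ vs a longer run branches, pulled back through the copy of $B_n$) has lengths in $[h_n + a, 2h_n + a]$, and crucially these two families \emph{overlap} on an interval of length $\gtrsim h_n$ because $B_n$ itself has length $h_n$ — so on that overlap interval the count is $\ge 2$. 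Making this overlap argument rigorous — in particular verifying the two families are genuinely distinct right-special words and not the same word counted twice — is where the real work lies, and it will use that $B_n$ begins with $0$ together with a careful comparison of how far back the two branchings can be extended while staying in $\mathcal{L}$.
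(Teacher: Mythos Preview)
Your high-level strategy is right --- find, for each large $n$, an interval of lengths of size comparable to $h_n$ on which there are at least two right-special words per length, then invoke Proposition~\ref{finmeasinf} --- but the mechanism you describe for producing those two words is muddled, and in places points the wrong way.

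The clean observation you are missing is that \emph{every suffix of a right-special word is right-special}: if $w0, w1 \in \mathcal{L}$ and $v$ is a suffix of $w$, then $v0, v1$ are suffixes of words in $\mathcal{L}$, hence in $\mathcal{L}$. So once you have the two right-special words $B_n 1^{t_n}$ and $B_n 1^{u_n}$ with $t_n < u_n$ (the smallest and second-smallest spacer values; the third value $v_n > u_n$ is used only to witness that $B_n 1^{u_n}$ is itself right-special), you simply take their length-$\ell$ suffixes. The remaining point is that these suffixes are \emph{distinct}: since $s_{n,r_n}=0$ eventually, $B_n$ has a suffix of the form $01^{c-1}$ for some fixed $c$, so the length-$\ell$ suffix of $B_n 1^{t_n}$ has a $0$ at position $t_n + c$ from the right, whereas the length-$\ell$ suffix of $B_n 1^{u_n}$ has a $1$ there (as $u_n > t_n$). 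Thus for every $t_n + c \le \ell < h_n + t_n$ there are at least two right-special words of length $\ell$, giving $p(h_n) \ge 2(h_n - t_n - c)$; Proposition~\ref{finmeasinf} then yields $p(h_n)/h_n \to 2$. This is exactly the paper's argument.

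Your attempt to ``extend on the left'' goes in the wrong direction (as you yourself note, left extensions need not stay right-special), and your ``overlap of two families'' picture with ranges $[h_n, h_n+a]$ and $[h_n+a, 2h_n+a]$ has the wrong ranges and no actual overlap. You also do not need to arrange all three spacer values at a common level: it suffices that $t_n$ is realized at level $n$ as the overall infimum over $m \ge n$, while $u_n, v_n$ may occur at some $m \ge n$, since $B_n$ is a suffix of $B_m$ (from $s_{m,r_m}=0$) and hence $B_n 1^{u_n}$ is a subword of $B_m 1^{u_n}$ which sits inside $B_{m+1}$.
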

\begin{proof}
Choose $n$ such that $t_{n} = \inf \{ s_{n,i} : 0 \leq i < r_{n} \}$ has the property that $t_{n} = \inf \{ s_{m,i} : m \geq n, 0 \leq i < r_{m} \}$ (such an $n$ must exist since otherwise there is a sequence $\{ m_{t} \}$ along which $\inf \{ s_{m_{t},i} : 0 \leq i < r_{m_{t}} \}$ is strictly decreasing which would contradict that $s_{m,i} \geq 0$).

Let $u_{n}, v_{n} \in \{ s_{m,i} : m \geq n, 0 \leq i < r_{m} \}$ such that $t_{n} < u_{n} < v_{n}$.  Such must exist since otherwise $|\{ s_{m,i} : m \geq n, 0 \leq i < r_{m^{\prime}} \}| = 2$ so the same holds for all $n^{\prime} \geq n$.

The word $B_{n}1^{t_{n}}B_{n}$ is a subword of $B_{n+1}$.  As $B_{n}$ has $0$ as a prefix, $B_{n}1^{t_{n}}0 \in \mathcal{L}$.  As $u_{n} > t_{n}$ and $B_{n}1^{u_{n}}$ is a subword of $B_{m}1^{u_{n}}$ which is a subword of $B_{m + 1}$, this shows $B_{n}1^{t_{n}} \in \mathcal{L}^{RS}$.  Likewise $B_{n}1^{u_{n}} \in \mathcal{L}^{RS}$.

Let $N$ such that $s_{n,r_{n}} = 0$ for $n \geq N$.  Let $c \geq 1$ such that $B_{N}$ has $01^{c-1}$ as a suffix (such $c \leq h_{N}$ must exist as $B_{N}$ has $0$ as a prefix).  Since $s_{n,r_{n}}=0$ for $n \geq N$, the word $B_{n}$, for all $n \geq N$, has $B_{N}$ as a suffix hence has $01^{c-1}$ as a suffix.

Therefore $B_{n}1^{t_{n}}$ has $01^{c-1+t_{n}}$ as a suffix and $B_{n}1^{u_{n}}$ has $01^{c-1+u_{n}}$ as a suffix meaning that for every $t_{n} + c \leq \ell < h_{n} + t_{n}$, the suffixes of $B_{n}1^{t_{n}}$ and $B_{n}1^{u_{n}}$ of length $\ell$ are distinct (as $u_{n} > t_{n}$).

Then $p(\ell+1) - p(\ell) = |\{ w \in \mathcal{L}^{RS} : \len(w) = \ell \}| \geq 2$ for $t_{n} + c \leq \ell < h_{n} + t_{n}$ meaning that $p(h_{n}) \geq 2(h_{n} - t_{n} - c)$ so, as Proposition \ref{finmeasinf} implies $\frac{t_{n}}{h_{n}} \to 0$,
$
\frac{p(h_{n})}{h_{n}} \geq 2(1 - \frac{t_{n}+c}{h_{n}}) \to 2
$.
\end{proof}

\subsection{Rank-one subshifts with the same language}

\begin{lemma}\label{trick1a}
Let $T$ be a rank-one subshift with cut sequence $\{ r_{n} \}$ and spacer sequence $\{ s_{n,i} \}$.

Let $N \in \mathbb{N}$.
For $n < N$, set $\tilde{r}_{n} = r_{n}$ and $\tilde{s}_{n,i} = s_{n,i}$.

Set $\tilde{r}_{N} = (r_{N} + 1)(r_{N+1}+1) - 1$ and for $0 \leq a \leq r_{N+1}$, set $\tilde{s}_{N,a(r_{N}+1) + b} = s_{N,b}$ for $0 \leq b < r_{N}$ and set $\tilde{s}_{N,a(r_{N}+1) + r_{N}} = s_{N,r_{N}} + s_{N+1,a}$.

For $n > N$, set $\tilde{r}_{n} = r_{n+1}$ and $\tilde{s}_{n,i} = s_{n+1,i}$.

Then the rank-one subshift $\tilde{T}$ generates the same language as $T$.
\end{lemma}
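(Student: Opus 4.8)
The plan is to establish the purely combinatorial fact that the block sequence of $\tilde T$ is (up to one deletion) a subsequence of that of $T$: namely $\tilde B_{n} = B_{n}$ for $n \le N$ and $\tilde B_{n} = B_{n+1}$ for all $n > N$. Once this is known the conclusion is immediate. By Lemma \ref{obv} each $B_{n}$ is a prefix of $B_{n+1}$, so the finite word-sets $\mathcal{L}_{n} := \{ w : w \text{ is a subword of } B_{n} \}$ increase with $n$ and have union $\mathcal{L}(X(T))$, and similarly the sets $\{ w : w \text{ is a subword of } \tilde B_{n} \}$ increase to $\mathcal{L}(X(\tilde T))$. But by the block identity this second family is $\mathcal{L}_{1} \subseteq \cdots \subseteq \mathcal{L}_{N} \subseteq \mathcal{L}_{N+2} \subseteq \mathcal{L}_{N+3} \subseteq \cdots$, which is cofinal in the first, so the two unions coincide. (In fact this shows $X(\tilde T) = X(T)$ as subshifts, not merely that they have the same language.)

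First I would dispose of the range $n \le N$ by a trivial induction: $\tilde B_{1} = 0 = B_{1}$, and for $1 \le n < N$ the recursion $\tilde B_{n+1} = \prod_{i=0}^{\tilde r_{n}} \tilde B_{n} 1^{\tilde s_{n,i}}$ uses $\tilde r_{n} = r_{n}$, $\tilde s_{n,i} = s_{n,i}$, and $\tilde B_{n} = B_{n}$, hence equals $B_{n+1}$. The crux is the single identity $\tilde B_{N+1} = B_{N+2}$. Since $\tilde r_{N} + 1 = (r_{N}+1)(r_{N+1}+1)$, each index $0 \le i \le \tilde r_{N}$ is uniquely $i = a(r_{N}+1) + b$ with $0 \le a \le r_{N+1}$ and $0 \le b \le r_{N}$, so, using $\tilde B_{N} = B_{N}$,
\[
\tilde B_{N+1} = \prod_{i=0}^{\tilde r_{N}} B_{N} 1^{\tilde s_{N,i}} = \prod_{a=0}^{r_{N+1}} \Bigl( \prod_{b=0}^{r_{N}} B_{N} 1^{\tilde s_{N, a(r_{N}+1)+b}} \Bigr).
\]
By the definition of $\tilde s_{N,\cdot}$ the inner product equals $\bigl( \prod_{b=0}^{r_{N}-1} B_{N} 1^{s_{N,b}} \bigr) B_{N} 1^{s_{N,r_{N}} + s_{N+1,a}} = \bigl( \prod_{b=0}^{r_{N}} B_{N} 1^{s_{N,b}} \bigr) 1^{s_{N+1,a}} = B_{N+1} 1^{s_{N+1,a}}$, and therefore the outer product is $\prod_{a=0}^{r_{N+1}} B_{N+1} 1^{s_{N+1,a}} = B_{N+2}$. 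Finally, for $n > N$ one induction step completes the proof: taking $\tilde B_{N+1} = B_{N+2}$ as the base case, if $\tilde B_{n} = B_{n+1}$ then, since $\tilde r_{n} = r_{n+1}$ and $\tilde s_{n,i} = s_{n+1,i}$, we get $\tilde B_{n+1} = \prod_{i=0}^{\tilde r_{n}} \tilde B_{n} 1^{\tilde s_{n,i}} = \prod_{i=0}^{r_{n+1}} B_{n+1} 1^{s_{n+1,i}} = B_{n+2}$.

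The only step that is not completely mechanical is the bookkeeping inside the displayed equation: one must verify that, in the $a$-th of the $r_{N+1}+1$ macro-blocks, the extra spacer count $s_{N+1,a}$ lands exactly on the last copy of $B_{N}1^{s_{N,r_{N}}}$, so that the macro-block collapses to $B_{N+1}1^{s_{N+1,a}}$ and the full concatenation over $a$ reproduces $B_{N+2}$. It is also worth checking in passing that $\tilde T$ is a genuine rank-one construction on a finite measure space: every $\tilde s_{n,i} \ge 0$ (being a sum of nonnegative integers) and every $\tilde r_{n} \ge 1$, and since the stage-$N$ spacer total for $\tilde T$ is just the combined stage-$N$ and stage-$(N+1)$ spacer totals for $T$ while all other stages are unchanged or reindexed, the summability condition ensuring finite measure is inherited from $T$.
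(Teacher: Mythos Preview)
Your proof is correct and follows essentially the same approach as the paper's: both establish $\tilde{B}_{n} = B_{n}$ for $n \le N$ and $\tilde{B}_{n} = B_{n+1}$ for $n > N$ via the same double-product computation showing $\tilde{B}_{N+1} = B_{N+2}$. You are simply more explicit than the paper about the induction for $n > N$, about why coinciding block sequences yield the same language, and about the finite-measure check, all of which the paper leaves implicit.
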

\begin{proof}
Clearly $\tilde{B}_{n} = B_{n}$ for $n \leq N$.  By design,
\begin{align*}
\tilde{B}_{N+1} &= \prod_{a=0}^{r_{N+1}}\big{(}\prod_{b=0}^{r_{N}}\tilde{B}_{N}1^{\tilde{s}_{N,a(r_{N}+1)+b}}\big{)} \\
&= \prod_{a=0}^{r_{N+1}}\Big{(}\big{(}\prod_{b=0}^{r_{N}-1}B_{N}1^{s_{N,b}}\big{)}B_{N}1^{s_{N,r_{N}}+s_{N+1,a}}\Big{)} 
= \prod_{a=0}^{r_{N+1}}B_{N+1}1^{s_{N+1,a}} = B_{N+2}
\end{align*}
so $\tilde{B}_{n} = B_{n+1}$ for all $n > N$.
\end{proof}

\begin{proposition}\label{trick}
Let $T$ be a rank-one transformation such that $s_{n,r_{n}} = 0$ and $s_{n,i} = c_{n}$ for $0 \leq i < r_{n}$ for all sufficiently large $n$.  If $c_{n}$ is not eventually constant then there exists a rank-one subshift $\tilde{T}$ which generates the same language as $T$, with the property that $\tilde{s}_{n,\tilde{r}_{n}} = 0$ and $\tilde{s}_{n,i}$ is not constant over $0 \leq i < \tilde{r}_{n}$ for infinitely many $n$.
\end{proposition}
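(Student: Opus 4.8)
The plan is to apply Lemma \ref{trick1a} repeatedly, merging two consecutive levels at a time (which never changes the language) and choosing at each stage a level at which the constant $c$ changes.

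Fix $N_{0}$ beyond which $s_{n,r_{n}} = 0$ and $s_{n,i} = c_{n}$ for $0 \leq i < r_{n}$. The first step is a direct computation: if Lemma \ref{trick1a} is applied with $N$ any index $\geq N_{0}$, then the new spacer sequence at level $N$ takes the value $s_{N,b} = c_{N}$ at each position $a(r_{N}+1)+b$ with $0 \leq b < r_{N}$, the value $s_{N,r_{N}}+s_{N+1,a} = c_{N+1}$ at each position $a(r_{N}+1)+r_{N}$ with $a < r_{N+1}$, and the value $s_{N,r_{N}}+s_{N+1,r_{N+1}} = 0$ at the last position $\tilde{r}_{N} = (r_{N}+1)(r_{N+1}+1)-1$. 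Since $r_{N}, r_{N+1} \geq 1$, a position carrying value $c_{N}$ (such as $i = 0$) and a position carrying value $c_{N+1}$ (such as $i = r_{N}$) both lie in the range $0 \leq i < \tilde{r}_{N}$; hence the merged level satisfies $\tilde{s}_{N,\tilde{r}_{N}} = 0$, and $\tilde{s}_{N,i}$ is nonconstant over $0 \leq i < \tilde{r}_{N}$ exactly when $c_{N} \neq c_{N+1}$. Moreover, Lemma \ref{trick1a} leaves all levels $< N$ untouched and, at levels $> N$, replaces $(r_{n}, s_{n,\cdot})$ by $(r_{n+1}, s_{n+1,\cdot})$; so the new transformation again has the given form above level $N$, with constants $n \mapsto c_{n+1}$, which is still not eventually constant.

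Since $c_{n}$ is not eventually constant, choose $M_{1} \geq N_{0}$ with $c_{M_{1}} \neq c_{M_{1}+1}$ and apply Lemma \ref{trick1a} at $M_{1}$ to get $T^{(1)}$: it generates $\mathcal{L}(T)$, it has $\tilde{s}_{M_{1},\tilde{r}_{M_{1}}} = 0$ with nonconstant $\tilde{s}_{M_{1},\cdot}$, and it still has the given form (hence a not-eventually-constant constant sequence) above level $M_{1}$. Iterating produces $T = T^{(0)}, T^{(1)}, T^{(2)}, \dots$ where $T^{(k)}$ comes from $T^{(k-1)}$ via Lemma \ref{trick1a} at some level $M_{k} > M_{k-1}$ with the change-of-$c$ property, so that $\mathcal{L}(T^{(k)}) = \mathcal{L}(T)$ and $T^{(k)}$ has the desired property at each of the levels $M_{1}, \dots, M_{k}$. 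Since $T^{(k)}$ and $T^{(k-1)}$ agree on all levels below $M_{k}$ and $M_{k} \to \infty$, the cut- and spacer-data of $T^{(k)}$ stabilize level by level; let $\tilde{T}$ be the resulting rank-one transformation. By construction $\tilde{T}$ satisfies $\tilde{s}_{M_{k},\tilde{r}_{M_{k}}} = 0$ with $\tilde{s}_{M_{k},\cdot}$ nonconstant for every $k$, which is the conclusion once the language is checked.

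The only real obstacle is checking that this limit of infinitely many merges still generates $\mathcal{L}(T)$. This follows from the stabilization: for each $n$ the word $\tilde{B}_{n}$ equals $B_{n}^{(k)}$ for all large $k$, and tracing through the index shifts shows that $\tilde{B}_{n}$ is literally one of the original blocks $B_{m}$ (the block that survives all the merges occurring below that height), with $m = m(n) \to \infty$ as $n \to \infty$. Thus every $\tilde{B}_{n}$ is a subword of some $B_{m}$, and conversely every $B_{m}$ is a prefix of $\tilde{B}_{m} = B_{m(m)}$ since $m(m) \geq m$ (Lemma \ref{obv}); hence $\mathcal{L}(\tilde{T}) = \mathcal{L}(T)$. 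One can instead avoid the limiting argument altogether: pick the change-of-$c$ levels $N_{0} \leq M_{1} < M_{2} < \cdots$ in advance with $M_{k+1} \geq M_{k}+2$, perform all the merges simultaneously, and verify directly that the blocks $\tilde{B}_{n}$ of the resulting transformation form a cofinal subsequence of $\{B_{m}\}$.
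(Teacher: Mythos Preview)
Your proof is correct and follows essentially the same approach as the paper: both apply Lemma \ref{trick1a} at infinitely many levels where $c_{n} \neq c_{n+1}$, and your ``alternative'' at the end---selecting the $M_k$ with gaps $M_{k+1} \geq M_k + 2$ and merging simultaneously---is precisely the paper's argument (the paper takes $\mathcal{N}$ to be a set of such $n$ containing no consecutive integers and applies Lemma \ref{trick1a} at each). Your main presentation via iterated merges and a limit is more explicit about why the language is preserved, whereas the paper's terse version leans on the observation that non-consecutive merges are independent and yield blocks that are a cofinal subsequence of the $B_m$.
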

\begin{proof}
If $c_{n}$ is not eventually constant then there exist infinitely many $n < m$ such that $c_{n} \ne c_{m}$ so there exist infinitely many $n$ such that $c_{n} \ne c_{n+1}$.

If we apply Lemma \ref{trick1a} at such an $n$ then $\tilde{s}_{n,i}$ is not constant over $0 \leq i < \tilde{r}_{n}$ since $\tilde{s}_{n,r_{n}} = s_{n,r_{n}} + s_{n+1,0} = 0 + c_{n+1} \ne c_{n} = \tilde{s}_{n,0}$ and $\tilde{s}_{n,\tilde{r}_{n}} = s_{n,r_{n}} + s_{n+1,r_{n+1}} = 0$.

Let $\mathcal{N}$ be a set of $n$ such that $c_{n} \ne c_{n+1}$ such that $\mathcal{N}$ does not contain any pairs of consecutive integers.  Applying Lemma \ref{trick1a} for each $n \in \mathcal{N}$ gives the claim.
\end{proof}

In fact, one can do a similar modification across multiple stages simultaneously:

\begin{lemma}\label{trick1}
Let $T$ be a rank-one subshift with cut sequence $\{ r_{n} \}$ and spacer sequence $\{ s_{n,i} \}$ and let $\{ n_{t} \}$ be a strictly increasing sequence with $n_{1} = 1$.  For $t \geq 1$, set
\[
\tilde{r}_{t} = \Big{(}\prod_{n=n_{t}}^{n_{t+1}-1} (r_{n}+1)\Big{)} - 1
\]
and, for $0 \leq j < n_{t+1} - n_{t}$ and $0 \leq i_{j} \leq r_{n_{t}+j}$,
\begin{align*}
&\tilde{s}_{t,i_{0} + i_{1}(r_{n_{t}}+1) + i_{2}(r_{n_{t}+1}+1)(r_{n_{t}}+1) + \cdots + i_{n_{t+1}-n_{t}-1}(r_{n_{t+1}-1} + 1)\cdots (r_{n_{t}}+1)} \\
&\quad\quad\quad = s_{n_{t},i_{0}} + \sum_{j=1}^{n_{t+1}-n_{t}-1} \left\{ \begin{array}{ll} s_{n_{t}+j,i_{j}} \quad &\text{if $i_{k} = r_{n_{t}+k}$ for all $0 \leq k < j$} \\ 0 & \text{otherwise} \end{array} \right.
\end{align*}
Then $T$ and $\tilde{T}$ generate the same language: $\tilde{B}_{t} = B_{n_{t}}$ for all $t \geq 1$.
\end{lemma}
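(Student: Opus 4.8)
The plan is to establish the stated identity $\tilde{B}_t = B_{n_t}$ for all $t \geq 1$ directly by induction on $t$; this suffices, because once $\tilde{B}_t = B_{n_t}$ for every $t$, each subword of a $\tilde{B}_t$ is a subword of some $B_n$, while conversely Lemma \ref{obv} gives that $B_n$ is a prefix of $B_{n_t}$ whenever $n_t \geq n$, so $T$ and $\tilde{T}$ have the same language. The base case is immediate: $\tilde{B}_1 = 0 = B_1 = B_{n_1}$ since $n_1 = 1$.

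For the inductive step, suppose $\tilde{B}_t = B_{n_t}$. Then $\tilde{B}_{t+1} = \prod_{i=0}^{\tilde{r}_t} \tilde{B}_t 1^{\tilde{s}_{t,i}} = \prod_{i=0}^{\tilde{r}_t} B_{n_t} 1^{\tilde{s}_{t,i}}$, so it is enough to show that completely unfolding $B_{n_{t+1}}$ into copies of $B_{n_t}$ reproduces this word. I would do this by an inner induction on $m$, for $1 \leq m \leq n_{t+1} - n_t$, proving that
\[
B_{n_t + m} = \prod_{i=0}^{R_m - 1} B_{n_t} 1^{\sigma_m(i)}, \qquad R_m := \prod_{j=0}^{m-1}(r_{n_t + j} + 1),
\]
where, writing $i = \sum_{j=0}^{m-1} i_j \prod_{k=0}^{j-1}(r_{n_t+k}+1)$ in mixed-radix form with $0 \leq i_j \leq r_{n_t+j}$, the exponent $\sigma_m(i)$ is given by the same formula as $\tilde{s}_{t,\cdot}$ in the statement but with the sum over $j$ truncated at $m-1$. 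The case $m = 1$ is just the definition $B_{n_t+1} = \prod_{i_0 = 0}^{r_{n_t}} B_{n_t} 1^{s_{n_t, i_0}}$, and the case $m = n_{t+1} - n_t$ (where $R_m - 1 = \tilde{r}_t$ and $\sigma_m = \tilde{s}_{t,\cdot}$), combined with $\tilde{B}_t = B_{n_t}$, is precisely the desired identity $\tilde{B}_{t+1} = B_{n_{t+1}}$.

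For the inner step $m \to m+1$ I would write $B_{n_t+m+1} = \prod_{i_m = 0}^{r_{n_t+m}} \bigl( \prod_{i=0}^{R_m-1} B_{n_t} 1^{\sigma_m(i)} \bigr) 1^{s_{n_t+m, i_m}}$, note that reading this word left to right lists the copies of $B_{n_t}$ in the order of the combined index $i + i_m R_m$ (whose mixed-radix digits are those of $i$ followed by $i_m$), and then match exponents position by position. For a copy with $i < R_m - 1$ the trailing block $1^{s_{n_t+m, i_m}}$ is irrelevant to it and one checks $\sigma_{m+1}(i + i_m R_m) = \sigma_m(i)$; for the last copy of each block, $i = R_m - 1$ has all its digits maximal, so $\sigma_m(R_m - 1) = s_{n_t, r_{n_t}} + \sum_{j=1}^{m-1} s_{n_t+j, r_{n_t+j}}$, and appending $1^{s_{n_t+m, i_m}}$ produces exactly $\sigma_{m+1}\bigl( (R_m - 1) + i_m R_m \bigr)$, since the digits there are $(r_{n_t}, \dots, r_{n_t+m-1}, i_m)$. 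The one delicate point --- and essentially the whole content of the lemma --- is the bookkeeping with the conditions ``$i_k = r_{n_t+k}$ for all $k < j$'': given $i < R_m - 1$, take $j^{\ast}$ to be the least index with $i_{j^{\ast}} < r_{n_t+j^{\ast}}$ and check that in both $\sigma_m(i)$ and $\sigma_{m+1}(i + i_m R_m)$ precisely the terms $j = 1, \dots, j^{\ast}$ survive, every later term vanishing because digit $j^{\ast}$ fails to be maximal. That index-chasing is the main (and really the only) obstacle; once it is set up, everything else is forced. One could alternatively iterate Lemma \ref{trick1a} $n_{t+1}-n_t-1$ times at stage $n_t$, but checking that the iterate collapses to the stated mixed-radix formula is the same computation in disguise.
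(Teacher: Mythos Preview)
Your proposal is correct and follows essentially the same approach as the paper: both argue by induction on $t$ with base case $\tilde{B}_1 = B_{n_1}$, and both handle the inductive step by telescoping through the mixed-radix parametrisation of $\{0,\ldots,\tilde{r}_t\}$. The only cosmetic difference is direction: the paper starts from $\prod_a \tilde{B}_t 1^{\tilde{s}_{t,a}}$ and successively collapses the innermost index (recognising $\prod_{i_0} B_{n_t}1^{s_{n_t,i_0}} = B_{n_t+1}$, then the $i_1$-product as $B_{n_t+2}$, and so on), whereas you expand $B_{n_t+m}$ outward via your auxiliary $\sigma_m$; your explicit $j^\ast$ bookkeeping is exactly what the paper's ``$\vdots$'' is hiding.
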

\begin{proof}
We have $\tilde{B}_{1} = 0 = B_{1} = B_{n_{1}}$, so we may assume $\tilde{B}_{t} = B_{n_{t}}$ and then
\begin{align*}
\tilde{B}_{t+1} = \prod_{a=0}^{\tilde{r}_{t}} \tilde{B}_{t}1^{\tilde{s}_{t,a}} 
&= \prod_{i_{0},\ldots,i_{n_{t+1}-n_{t}-1}} B_{n_{t}}1^{s_{n_{t},i_{0}}}1^{\sum_{j=1}^{n_{t+1}-n_{t}-1} s_{n_{t}+j,i_{j}}\bbone_{i_{k}=r_{n_{t}+k}\forall k<j}} \\
&= \prod_{i_{1},\ldots,i_{n_{t+1}-n_{t}-1}} \big{(}\prod_{i_{0}=0}^{r_{n_{t}}} B_{n_{t}}1^{s_{n_{t},i_{0}}}\big{)} 1^{s_{n_{t}+1,i_{1}}} 1^{\sum_{j=2}^{n_{t+1}-n_{t}-1} s_{n_{t}+j,i_{j}}\bbone_{i_{k}=r_{n_{t}+k}\forall k< j}} \\
&= \prod_{i_{1},\ldots,i_{n_{t+1}-n_{t}-1}} B_{n_{t}+1} 1^{s_{n_{t}+1,i_{1}}} 1^{\sum_{j=2}^{n_{t+1}-n_{t}-1}s_{n_{t}+j,i_{j}}\bbone_{i_{k}=r_{n_{t}+k}\forall k< j}} \\
&= \prod_{i_{2},\ldots,i_{n_{t+1}-n_{t}-1}} \big{(}\prod_{i_{1}=0}^{r_{n_{t}+1}} B_{n_{t}+1} 1^{s_{n_{t}+1,i_{1}}}\big{)} 1^{s_{n_{t}+2,i_{2}}} 1^{\sum_{j=3}^{n_{t+1}-n_{t}-1}s_{n_{t}+j,i_{j}}\bbone_{i_{k}=r_{n_{t}+k}\forall k< j}} \\
&\quad\quad\quad\quad\quad\quad\quad\quad\quad\quad\quad\quad\quad \vdots \\
&= \prod_{i_{n_{t+1}-n_{t}-1}=0}^{r_{n_{t+1}-n_{t}-1}} B_{n_{t+1}-1}1^{s_{n_{t+1}-1,i_{n_{t+1}-n_{t}-1}}}
= B_{n_{t+1}} \qedhere
\end{align*}
\end{proof}

\begin{proposition}\label{trick2}
Let $T$ be a rank-one subshift such that $s_{n,r_{n}} = 0$ for all sufficiently large $n$ and that there exists $0 \leq i, i^{\prime} < r_{n}$ such that $s_{n,i} \ne s_{n,i^{\prime}}$ for infinitely many $n$ .  Then there exists a rank-one subshift $\tilde{T}$, which generates the same language, such that for all sufficiently large $n$, $\tilde{s}_{n,\tilde{r}_{n}} = 0$ and there exists $0 \leq i,i^{\prime} < \tilde{r}_{n}$ with $\tilde{s}_{n,i} \ne \tilde{s}_{n,i^{\prime}}$.
\end{proposition}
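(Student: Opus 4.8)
The plan is to deduce Proposition \ref{trick2} from Lemma \ref{trick1}: I would choose the grouping $\{n_{t}\}$ so that every block $[n_{t},n_{t+1})$ past the first contains a stage at which $s_{n,\cdot}$ is non-constant over the non-terminal subcolumns, and then check that the merged spacer sequence $\tilde{s}_{t,\cdot}$ produced by Lemma \ref{trick1} inherits this non-constancy over $\{0,\dots,\tilde{r}_{t}-1\}$ while keeping $\tilde{s}_{t,\tilde{r}_{t}}=0$.

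First, fix $N_{0}$ with $s_{n,r_{n}}=0$ for all $n\geq N_{0}$, and let $m_{1}<m_{2}<\cdots$ enumerate $\{m\geq N_{0}: \exists\,0\leq i,i'<r_{m}\text{ with }s_{m,i}\neq s_{m,i'}\}$, which is infinite by hypothesis. Set $n_{1}=1$ and $n_{t+1}=m_{t}+1$ for $t\geq 1$. This sequence is strictly increasing with $n_{1}=1$, so Lemma \ref{trick1} applies and produces a rank-one subshift $\tilde{T}$ generating the same language as $T$, with $\tilde{B}_{t}=B_{n_{t}}$. For every $t\geq 2$, $n_{t}=m_{t-1}+1>N_{0}$, so all stages occurring in the block $[n_{t},n_{t+1})=[n_{t},m_{t}]$ are $\geq N_{0}$; taking all coordinates $i_{k}=r_{n_{t}+k}$ in the formula of Lemma \ref{trick1} writes $\tilde{s}_{t,\tilde{r}_{t}}$ as a sum of terms of the form $s_{n,r_{n}}$ with $n\geq N_{0}$, whence $\tilde{s}_{t,\tilde{r}_{t}}=0$.

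It remains to produce, for each $t\geq 2$, two indices in $\{0,\dots,\tilde{r}_{t}-1\}$ at which $\tilde{s}_{t,\cdot}$ differs. Put $j_{0}=m_{t}-n_{t}$; since $n_{t+1}=m_{t}+1$, this is exactly the index $n_{t+1}-n_{t}-1$ of the last coordinate. I would fix $i_{k}=r_{n_{t}+k}$ for $0\leq k<j_{0}$ and let $i_{j_{0}}$ range over $0\leq i_{j_{0}}<r_{m_{t}}$. A direct check of the indexing in Lemma \ref{trick1} shows that for these choices the index stays strictly below $\tilde{r}_{t}$ (equality would force $i_{j_{0}}=r_{m_{t}}$); and, when $j_{0}\geq 1$, the base term $s_{n_{t},i_{0}}=s_{n_{t},r_{n_{t}}}$ and every sum term indexed by $j<j_{0}$ reduce to a vanishing $s_{n,r_{n}}$ with $n\geq N_{0}$, while the $j=j_{0}$ term equals $s_{m_{t},i_{j_{0}}}$ (for $j_{0}=0$ one has $\tilde{s}_{t,i_{0}}=s_{m_{t},i_{0}}$ directly). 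Either way $\tilde{s}_{t,\cdot}=s_{m_{t},i_{j_{0}}}$ along this family, and since $m_{t}$ was chosen so that $s_{m_{t},\cdot}$ is non-constant on $\{0,\dots,r_{m_{t}}-1\}$, the family realizes at least two distinct values at indices $<\tilde{r}_{t}$, as needed. (Nothing is claimed about the block $[1,n_{2})$, but the conclusion is only required for sufficiently large $n$, so controlling all $t\geq 2$ suffices.)

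The only step requiring real care --- hence the main, if modest, obstacle --- is the bookkeeping of which indicator factors in the formula of Lemma \ref{trick1} survive for the chosen coordinates; once that is settled the rest is immediate from Lemma \ref{trick1} and the choice of $\{n_{t}\}$. One could instead iterate Lemma \ref{trick1a} a single block at a time, but a single application of Lemma \ref{trick1} is cleaner.
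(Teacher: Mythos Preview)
Your argument is correct and follows the same strategy as the paper: apply Lemma \ref{trick1} with a grouping $\{n_t\}$ chosen so that each block past the first contains a stage where the spacer sequence is non-constant over the non-terminal subcolumns. The only difference is placement: the paper takes $\{n_t\}_{t\geq 2}$ to \emph{be} the sequence of non-constant stages, so the non-constant stage sits at the \emph{start} of each block, whereas you put it at the \emph{end} via $n_{t+1}=m_t+1$. The paper's choice makes the verification shorter, since for $0\leq i_0<r_{n_t}$ with all higher $i_j=0$ the indicator conditions in Lemma \ref{trick1} fail immediately (as $i_0\neq r_{n_t}$), giving $\tilde{s}_{t,i_0}=s_{n_t,i_0}$ without any bookkeeping; your placement forces you to track which indicators survive, though as you note this is routine.
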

\begin{proof}
Let $n_{1} = 1$ and $\{ n_{t} \}_{t \geq 2}$ be the sequence of $n$ for which $s_{n,i} \ne s_{n,i^{\prime}}$.  Lemma \ref{trick1} then gives the claim since $s_{n_{t},i}$ being nonconstant over $0 \leq i < r_{n_{t}}$ implies $\tilde{s}_{t,a}$ is nonconstant over $0 \leq a < r_{n_{t}}$ hence over $0 \leq a < \tilde{r}_{t}$.  Clearly $\tilde{s}_{t,\tilde{r}_{t}} = 0$ for sufficiently large $t$ as $s_{n,r_{n}} = 0$ for all sufficiently large $n$.
\end{proof}

\begin{proposition}\label{trick3}
Let $T$ be a rank-one subshift such that $s_{n,r_{n}} = 0$ for all sufficiently large $n$ and that for infinitely many $n$,  $s_{n,0} = s_{n,r_{n}-1} = 0$.  Then there exists a rank-one subshift, which generates the same language, such that $\tilde{s}_{n,r_{n}} = 0$ and $\tilde{s}_{n,0} = \tilde{s}_{n,\tilde{r}_{n}-1} = \tilde{s}_{n+1,\tilde{r}_{n+1}-1} = 0$ for all sufficiently large $n$.
\end{proposition}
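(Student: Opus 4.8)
The plan is to derive this from Lemma \ref{trick1} by choosing the grouping sequence $\{n_{t}\}$ to run exactly through the indices where the hypothesized degeneracy occurs. Fix $N_{0}$ with $s_{n,r_{n}} = 0$ for all $n \geq N_{0}$, and let $\mathcal{S} = \{ n : s_{n,0} = s_{n,r_{n}-1} = 0 \}$, which is infinite by hypothesis. I would then set $n_{1} = 1$ and take $n_{2} < n_{3} < \cdots$ to be the increasing enumeration of the elements of $\mathcal{S}$ greater than $1$, and apply Lemma \ref{trick1} to obtain a rank-one subshift $\tilde{T}$ with $\tilde{B}_{t} = B_{n_{t}}$ for all $t$; this $\tilde{T}$ generates the same language as $T$. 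The conclusion to be verified is that for all large $t$ one has $\tilde{s}_{t,\tilde{r}_{t}} = 0$, $\tilde{s}_{t,0} = 0$, and $\tilde{s}_{t,\tilde{r}_{t}-1} = 0$ (the stated fourth condition $\tilde{s}_{n+1,\tilde{r}_{n+1}-1}=0$ being the third one shifted, hence automatic for large indices).

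It then remains to read three entries of the new spacer sequence off the formula in Lemma \ref{trick1}, using the mixed-radix description of the index set: every index of the new level $t$ is written uniquely as $i_{0} + i_{1}(r_{n_{t}}+1) + \cdots$ with $0 \leq i_{j} \leq r_{n_{t}+j}$ and $m = n_{t+1}-n_{t}$ digits. The top index $\tilde{r}_{t} = \prod_{n=n_{t}}^{n_{t+1}-1}(r_{n}+1) - 1$ is the all-maximal string $i_{j} = r_{n_{t}+j}$, so for $t$ large enough that $n_{t} \geq N_{0}$ the formula gives $\tilde{s}_{t,\tilde{r}_{t}} = \sum_{j} s_{n_{t}+j, r_{n_{t}+j}} = 0$. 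The index $0$ is the string $i_{j} = 0$ for all $j$; since $r_{n_{t}} \geq 1$ the digit $i_{0} = 0 \neq r_{n_{t}}$, so the condition requiring $i_{k} = r_{n_{t}+k}$ for all $k < j$ fails for every $j \geq 1$ and $\tilde{s}_{t,0} = s_{n_{t},0} = 0$ for $t \geq 2$. Finally, since $\tilde{r}_{t} \geq 1$, the index $\tilde{r}_{t}-1$ is the string $i_{0} = r_{n_{t}} - 1$, $i_{j} = r_{n_{t}+j}$ for $j \geq 1$; again $i_{0} \neq r_{n_{t}}$ kills every $j \geq 1$ term, so $\tilde{s}_{t,\tilde{r}_{t}-1} = s_{n_{t}, r_{n_{t}}-1} = 0$ for $t \geq 2$. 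Shifting $t \mapsto t+1$ handles the last condition.

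There is no real obstacle here: Lemma \ref{trick1} does all the structural work, and the content is just selecting $\{n_{t}\}$ and evaluating three entries of $\tilde{s}$. The only point needing a moment's care is the bookkeeping in the mixed-radix subtraction, and the degenerate case $r_{n_{t}} = 1$, where the digit-$0$ index and the $(\tilde{r}_{t}-1)$-index can share (or even coincide in) their first digit; but in that case the hypothesis already forces $s_{n_{t},0} = s_{n_{t},r_{n_{t}}-1} = 0$, so the evaluation is unaffected and all three identities still hold.
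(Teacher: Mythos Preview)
Your proposal is correct and takes essentially the same approach as the paper: both set $n_{1}=1$, enumerate the infinitely many indices where $s_{n,0}=s_{n,r_{n}-1}=0$ as $\{n_{t}\}_{t\geq 2}$, apply Lemma \ref{trick1}, and read off the three relevant entries of $\tilde{s}$ from the mixed-radix formula. Your write-up is in fact more careful than the paper's, which omits the verification that $\tilde{s}_{t,\tilde{r}_{t}}=0$ and the remark about the $r_{n_{t}}=1$ edge case.
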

\begin{proof}
Let $n_{1} = 1$ and $\{ n_{t} \}_{t \geq 2}$ be the sequence of $n$ for which $s_{n,0} = s_{n,r_{n}-1} = 0$.  Lemma \ref{trick1} then gives the subshift since $\tilde{s}_{t,0} = s_{n_{t},0} = 0$ and $\tilde{s}_{t,\tilde{r}_{t}-1}$ has $i_{0} = r_{n_{t}}-1$ so $\tilde{s}_{t,\tilde{r}_{t}-1} = s_{n_{t},r_{n_{t}}-1} = 0$ and, likewise, $\tilde{s}_{t+1,\tilde{r}_{t+1}-1} = s_{n_{t+1},r_{n_{t+1}}-1} = 0$.
\end{proof}

\begin{proposition}\label{tcd}
If a rank-one subshift has the property that $s_{n,r_{n}} = 0$ for all sufficiently large $n$ and there exist constant nonnegative integers $c < d$ such that $s_{n,i} \in \{ c, d \}$ for all $0 \leq i < r_{n}$ (with both occurring) for sufficiently large $n$ then there exists a rank-one subshift which generates the same language such that $s_{n,r_{n}} = 0$ and $s_{n,i} \in \{ 0, d-c \}$ for all $0 \leq i < r_{n}$ (with both occurring) for all sufficiently large $n$.
\end{proposition}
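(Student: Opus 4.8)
The plan is to re-code the construction so that each block $B_{n}$ is replaced by $B_{n}1^{c}$: appending a fixed tail $1^{c}$ converts every internal spacer $1^{s_{n,i}}$ (with $s_{n,i}\in\{c,d\}$) into a spacer $1^{s_{n,i}-c}\in\{1^{0},1^{d-c}\}$, while the single copy of $1^{c}$ needed at the right-hand end can be injected at one stage by enlarging the final spacer there. Concretely, fix $N$ large enough that $s_{n,r_{n}}=0$ and $s_{n,i}\in\{c,d\}$ with both values occurring for all $n\geq N$, and define $\tilde{T}$ by $\tilde{r}_{n}=r_{n}$ for every $n$, with spacers $\tilde{s}_{n,i}=s_{n,i}$ for $n<N$; $\tilde{s}_{N,i}=s_{N,i}$ for $0\leq i<r_{N}$ together with $\tilde{s}_{N,r_{N}}=c$; and, for $n>N$, $\tilde{s}_{n,i}=s_{n,i}-c$ for $0\leq i<r_{n}$ together with $\tilde{s}_{n,r_{n}}=0$. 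Since $s_{n,i}\geq c$ for $i<r_{n}$ whenever $n\geq N$, all the $\tilde{s}_{n,i}$ are nonnegative; since both $c$ and $d$ occur among $\{s_{n,i}:0\leq i<r_{n}\}$ and $c<d$, both $0$ and $d-c$ occur among $\{\tilde{s}_{n,i}:0\leq i<r_{n}\}$ for each $n>N$; and $\tilde{s}_{n,\tilde{r}_{n}}=0$ for all $n>N$.

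Next I would verify by induction that $\tilde{B}_{n}=B_{n}$ for $n\leq N$ and $\tilde{B}_{n}=B_{n}1^{c}$ for $n>N$. The cases $n\leq N$ are immediate since the spacers agree. For $\tilde{B}_{N+1}$, using $s_{N,r_{N}}=0$ one computes $\tilde{B}_{N+1}=\bigl(\prod_{i=0}^{r_{N}-1}B_{N}1^{s_{N,i}}\bigr)B_{N}1^{c}=B_{N+1}1^{c}$. For $n>N$, assuming $\tilde{B}_{n}=B_{n}1^{c}$, the trailing $1^{c}$ of each copy of $\tilde{B}_{n}$ merges with the following spacer $1^{s_{n,i}-c}$ to restore $1^{s_{n,i}}$, giving $\tilde{B}_{n+1}=\bigl(\prod_{i=0}^{r_{n}-1}B_{n}1^{s_{n,i}}\bigr)B_{n}1^{c}=B_{n+1}1^{c}$, again using $s_{n,r_{n}}=0$. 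This telescoping is the only real computation; the one point to watch is the bookkeeping of the position of the single extra $1^{c}$.

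Finally I would check $\mathcal{L}(\tilde{T})=\mathcal{L}(T)$. Each $B_{n}$ is a prefix of $\tilde{B}_{n}$ (equal when $n\leq N$), and $B_{n}$ is a subword of $B_{n+1}$, so $\mathcal{L}(T)\subseteq\mathcal{L}(\tilde{T})$. Conversely, for $n\geq N$ we have $s_{n,0}\geq c$, so $B_{n}1^{c}$ is a prefix of $B_{n}1^{s_{n,0}}$ and hence a subword of $B_{n+1}$; thus every subword of $\tilde{B}_{n}=B_{n}1^{c}$ lies in $\mathcal{L}(T)$, giving $\mathcal{L}(\tilde{T})\subseteq\mathcal{L}(T)$. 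Since $\tilde{r}_{n}=r_{n}\geq 1$ and block heights only increase by the constant $c$, the finite-measure condition $\sum_{n}(r_{n}h_{n})^{-1}\sum_{i}s_{n,i}<\infty$ is inherited by $\tilde{T}$ (using also that $\sum_{n}(r_{n}h_{n})^{-1}<\infty$ since $h_{n}\geq 2^{n-1}$), so $\tilde{T}$ is a genuine rank-one subshift with $\tilde{s}_{n,\tilde{r}_{n}}=0$ and $\tilde{s}_{n,i}\in\{0,d-c\}$, both occurring, for all $n>N$. The only real obstacle is recognizing the correct re-coding; once the shift-by-$1^{c}$ is in place every step is routine.
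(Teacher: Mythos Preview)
Your proposal is correct and follows essentially the same approach as the paper: you define exactly the same $\tilde{r}_n$ and $\tilde{s}_{n,i}$ (injecting the extra $1^{c}$ at stage $N$ and subtracting $c$ from the internal spacers thereafter), and you carry out the same induction to show $\tilde{B}_n=B_n1^{c}$ for $n>N$. If anything, you are more careful than the paper, which simply asserts that $\tilde{B}_n=B_n1^{c}$ implies the languages coincide; your explicit check that $B_n1^{c}$ is a prefix of $B_{n+1}$ (via $s_{n,0}\geq c$) and the finite-measure verification are welcome additions but not strictly required here.
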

\begin{proof}
For all $n$ set $\tilde{r}_{n} = r_{n}$.
Let $N$ such that for all $n \geq N$, we have $s_{n,i} \in \{ c, d \}$ for all $0 \leq i < r_{n}$ and $s_{n,r_{n}} = 0$.  For $n < N$, set $\tilde{s}_{n,i} = s_{n,i}$.

Set $\tilde{s}_{N,i} = s_{N,i}$ for $0 \leq i < r_{N}$ and $\tilde{s}_{N,r_{N}} = c$.
For $n > N$ set $\tilde{s}_{n,i} = s_{n,i} - c$ for $0 \leq i  < r_{n}$ and $\tilde{s}_{n,r_{n}} = 0$.

Clearly $\tilde{B}_{n} = B_{n}$ for $n \leq N$.  Observe that
\[
\tilde{B}_{N+1} = \big{(}\prod_{i=0}^{r_{N}-1}B_{N}1^{s_{N,i}}\big{)}B_{N}1^{c} = B_{N+1}1^{c}
\]
If $\tilde{B}_{n} = B_{n}1^{c}$ then
\[
\tilde{B}_{n+1} = \big{(}\prod_{i=0}^{r_{n}-1}\tilde{B}_{n}1^{\tilde{s}_{n,i}}\big{)}\tilde{B}_{n}
=  \big{(}\prod_{i=0}^{r_{n}-1}B_{n}1^{c} 1^{s_{n,i} - c}\big{)}B_{n}1^{c}
= B_{n+1}1^{c}
\]
so $\tilde{B}_{n} = B_{n}1^{c}$ for all $n > N$ meaning they generate the same language.
\end{proof}

\subsection{Totally ergodic rank-one subshifts}

%
%

\begin{proposition}\label{te}
Let $T$ be a rank-one transformation such that there exists $c$ so that for all sufficiently large $n$, it holds that $s_{n,i} = c$ for all $0 \leq i < r_{n}$ and $s_{n,r_{n}} = 0$.  Then $T$ is an odometer.
\end{proposition}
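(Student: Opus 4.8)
The plan is to re-present $T$ via a cut-and-stack construction whose spacers below the top position are eventually all $0$, which is exactly the definition of an odometer. Everything rests on the identity: when $s_{n,i}=c$ for $0\le i<r_n$ and $s_{n,r_n}=0$,
\[
B_{n+1}1^{c}=\Big(\prod_{i=0}^{r_n-1}B_n1^{c}\Big)B_n1^{c}=(B_n1^{c})^{\,r_n+1}.
\]
So if one uses the word $B_n1^{c}$ in place of $B_n$, the construction asks only for equal cuts and no spacers --- it builds the $\prod_n(r_n+1)$-adic odometer.

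To make this precise I would argue as in the proof of Proposition \ref{tcd}. Fix $N$ with $s_{n,i}=c$ for $0\le i<r_n$ and $s_{n,r_n}=0$ whenever $n\ge N$. Keep $\tilde r_n=r_n$ and $\tilde s_{n,i}=s_{n,i}$ for $n<N$; set $\tilde s_{N,i}=c$ for $0\le i<r_N$ and $\tilde s_{N,r_N}=c$; and set $\tilde s_{n,i}=0$ for all $0\le i\le r_n$ when $n>N$. An induction using the displayed identity shows $\tilde B_n=B_n$ for $n\le N$ and $\tilde B_n=B_n1^{c}$ for $n>N$. Thus $\tilde T$ is a rank-one transformation with $\tilde s_{n,i}=0$ for all $0\le i<\tilde r_n$ once $n>N$, and $\sum_n\frac{1}{\tilde r_n\tilde h_n}\sum_{i=0}^{\tilde r_n}\tilde s_{n,i}$ has only finitely many nonzero terms so the space is finite; hence $\tilde T$ is an odometer. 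Since $r_n\ge1$, the word $B_n1^{c}$ is a prefix of $B_{n+1}$, so $\tilde B_n$ and $B_n$ are subwords of one another's successors and hence $X(\tilde T)=X(T)$; and since $c$ is fixed while $\len(B_n)\to\infty$, passing from $B_n$ to $B_n1^{c}$ does not alter the limiting frequency of any word, so $X(T)$ and $X(\tilde T)$ carry the same empirical measure. By the isomorphism theorem of Danilenko, Del Junco and Kalikow recalled above, $T$ and $\tilde T$ are then measure-theoretically isomorphic to this common symbolic system, so $T$ is an odometer.

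The one point that needs genuine care is the last: the bookkeeping above (as in Proposition \ref{tcd}) only shows $\tilde T$ has the same \emph{language} as $T$, whereas what is needed is that $\tilde T$ is \emph{isomorphic} to $T$; bridging this gap is precisely the observation that appending the fixed suffix $1^{c}$ changes none of the word frequencies. Everything else --- the identity $\tilde B_n=B_n1^{c}$, and finiteness of the space of $\tilde T$ --- is routine, and the case $c=0$ is immediate since then $T$ is already presented as an odometer.
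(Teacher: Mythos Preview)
Your symbolic rewriting $\tilde B_n=B_n1^c$ is correct, and you are right that $\tilde T$ is presented as an odometer and that $X(\tilde T)=X(T)$. The problem is the final step, where you invoke the Danilenko isomorphism to conclude $T\cong X(T)=X(\tilde T)\cong\tilde T$. That isomorphism fails \emph{precisely} in the situation you are in. From $\tilde B_n=B_n1^c=(B_N1^c)^{M_n}$ one sees that every $B_n$ with $n>N$ is a prefix of the periodic word $(B_N1^c)^\infty$, so the language of $X(T)$ is exactly the language of that periodic word and $X(T)$ is a finite orbit of size at most $h_N+c$. A rank-one transformation on a nonatomic Lebesgue space cannot be measure-theoretically isomorphic to a finite periodic system, so neither $T\cong X(T)$ nor $\tilde T\cong X(\tilde T)$ holds. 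Matching the empirical measures does nothing to repair this: the coding map $T\to X(T)$ collapses each level $I_{N,j}$ to a single point and is genuinely many-to-one. In short, you have correctly identified the delicate point, but the fix you propose does not close it.

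The paper's proof avoids the symbolic model entirely and works at the level of the cut-and-stack construction of $T$ itself. It observes that for $n\ge N$ the $c$ spacer levels added above any subcolumn always feed back into $I_{N,0}$, so $T^{h_N+c}(I_{N,0})=I_{N,0}$; one can then absorb those $c$ spacers into a taller column $C_N'$ of height $h_N+c$ and re-present the \emph{same map} $T$ via cutting and stacking $C_N'$ with no spacers at all. This is the geometric counterpart of your identity $B_{n+1}1^c=(B_n1^c)^{r_n+1}$, but carried out on the actual Rokhlin towers rather than on words, which is what is needed to conclude that $T$ itself (not merely something with the same language) is an odometer.
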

\begin{proof}
Let $N > 1$ such that for all $n \geq N$, $s_{n,i} = c$ for all $0 \leq i < r_{n}$ and $s_{n,r_{n}} = 0$.  Let $S_{n,j}^{[i]}$ for $1 \leq j \leq c$ be the spacer levels added above $C_{n}^{[i]}$ for $0 \leq i < r_{n}$ (we do not add spacers above $C_{n}^{[r_{n}]}$ as $s_{n,r_{n}} = 0$).
Since $T(S_{n,c}^{[i]}) = I_{n,0}^{[i+1]}$ for $0 \leq i < r_{n}$, and since $I_{n,0} \subseteq I_{N,0}^{[0]}$, we have that $T(S_{n,c}^{[i]}) \subseteq I_{N,0}^{[0]}$ for all $n \geq N$ and all $0 \leq i < r_{n}$.
Since $I_{N,h_{N}-1}^{[r_{N}]} = \bigsqcup_{n > N} \bigsqcup_{i=0}^{r_{n}-1} I_{n,h_{n}-1}^{[i]}$ this means $T^{h_{N}+c}(I_{N,0}) = I_{N,0}$.

Define $I_{N,h_{N}} = \bigsqcup_{n \geq N} \bigsqcup_{0 \leq i < r_{n}} S_{n,1}^{[i]}$.  Then $T(I_{N,h_{N}-1}) = I_{N,h_{N}}$ and $T^{c}(I_{N,h_{N}}) = I_{N,0}$.  Define the column $C_{N}^{\prime} = \bigsqcup_{j=0}^{h_{N}-1} I_{N,j} \sqcup \bigsqcup_{j=0}^{c-1} T^{j}(I_{N,h_{N}})$ and the columns $C_{N+n}^{\prime}$ via cutting and stacking starting from $C_{N}^{\prime}$ using cut sequence $r_{N+n}^{\prime} = r_{N+n}$ and spacer sequence $s_{n,i}^{\prime} = 0$.  The resulting odometer is the same map as $X$ so $X$ is an odometer.
\end{proof}

\begin{proposition}\label{2.3}
Let $T$ be a rank-one transformation on a finite measure space which is not an odometer.  If $\limsup p(q)/q < 2$ then
 there exists a rank-one subshift, which generates the same language as $T$, such that there exists a constant positive integer $d$ so that for all sufficiently large $n$ it holds that $s_{n,r_{n}} = 0$ and $s_{n,i} \in \{ 0, d \}$ for all $0 \leq i < r_{n}$ and there exists $0 \leq i, i^{\prime} < r_{n}$ so that $s_{n,i} = 0$ and $s_{n,i^{\prime}} = d$.
 \end{proposition}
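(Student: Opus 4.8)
The plan is to chain together the structural reductions already established in this section. \emph{Setup.} By Proposition \ref{1} we may replace $T$ by a rank-one subshift generating the same language with $s_{n,r_{n}} = 0$ for all $n$; this changes neither $\mathcal{L}$ (hence not $p$) nor the measure-theoretic isomorphism class, so the new transformation still satisfies $\limsup p(q)/q < 2$ and is still not an odometer. Since $s_{n,r_{n}} = 0$ for all $n$ and $\limsup p(q)/q < 2$, the contrapositive of Proposition \ref{6} gives that for all sufficiently large $n$ the set $\{ s_{m,i} : m \geq n,\ 0 \leq i < r_{m} \}$ has at most two elements. These sets decrease with $n$, so they stabilize: there are $N$ and a set $V$ with $|V| \leq 2$ such that $s_{m,i} \in V$ for all $m \geq N$ and $0 \leq i < r_{m}$.

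\emph{A degenerate branch.} Suppose first that the spacers are constant across subcolumns at all large levels, i.e.\ there is a sequence $c_{n}$ with $s_{n,i} = c_{n}$ for $0 \leq i < r_{n}$ and all large $n$. If $c_{n}$ is eventually constant then Proposition \ref{te} forces $T$ to be an odometer, contradicting the hypothesis. If $c_{n}$ is not eventually constant, then Proposition \ref{trick} produces a rank-one subshift with the same language, still with last-subcolumn spacers $0$, still with all interior spacers in $V$ (each new interior spacer is, by the construction in Lemma \ref{trick1a}, a single old value $c_{m}$ or $0 + c_{m}$), and now with the spacers non-constant across subcolumns at infinitely many levels. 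Replacing $T$ by this subshift, we have reduced to the case that there are $0 \leq i, i^{\prime} < r_{n}$ with $s_{n,i} \neq s_{n,i^{\prime}}$ for infinitely many $n$.

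\emph{Main case.} Non-constancy across subcolumns at a (large) level exhibits two distinct values in $V$, so $|V| = 2$; write $V = \{ c, d \}$ with $c < d$. Proposition \ref{trick2} now yields a rank-one subshift with the same language such that, for all sufficiently large $n$, $s_{n,r_{n}} = 0$ and $s_{n,i} \neq s_{n,i^{\prime}}$ for some $0 \leq i, i^{\prime} < r_{n}$; tracing Lemma \ref{trick1}, the telescoped interior spacers remain in $\{ c, d \}$ (the ``carry'' contributions coming from maximal subcolumn indices all vanish because $s_{n,r_{n}} = 0$), so for all large $n$ we in fact have $s_{n,i} \in \{ c, d \}$ with both values occurring. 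Finally Proposition \ref{tcd}, applied with this pair $c < d$, produces a rank-one subshift generating the same language with $s_{n,r_{n}} = 0$ and $s_{n,i} \in \{ 0, d - c \}$, both values occurring, for all large $n$; this is the desired subshift, with the statement's $d$ taken to be the positive integer $d - c$.

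Since the content resides entirely in the lemmas already proved, the work is bookkeeping: at each reduction one must check that ``the last subcolumn carries no spacer'' and ``every interior spacer lies in the designated two-element set'' are both preserved (this is the main obstacle, and the telescoping in Lemmas \ref{trick1a} and \ref{trick1} is where it has to be verified), and one must correctly isolate the degenerate branch in which the spacer is constant at each level but varies with the level — which is exactly where Proposition \ref{trick}, together with the odometer criterion of Proposition \ref{te}, is used, and hence where the hypothesis that $T$ is not an odometer enters.
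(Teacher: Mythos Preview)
Your proof is correct and follows essentially the same chain of reductions as the paper's proof: Proposition~\ref{1} to arrange $s_{n,r_n}=0$, Proposition~\ref{6} to bound the spacer values to at most two, Proposition~\ref{te} to rule out the eventually-constant case via the non-odometer hypothesis, Proposition~\ref{trick} (in the degenerate branch) and then Proposition~\ref{trick2} to get non-constant interior spacers at every large level, and finally Proposition~\ref{tcd} to shift the pair to $\{0,d-c\}$. Your explicit separation into the degenerate and main branches, and your verification that the telescoped interior spacers in Lemmas~\ref{trick1a} and~\ref{trick1} remain in the two-element set (because the ``carry'' terms vanish thanks to $s_{n,r_n}=0$), are spelled out more carefully than in the paper, but the underlying argument is the same.
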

\begin{proof}
By Proposition \ref{1}, $T$ is measure-theoretically isomorphic to a transformation $\tilde{T}$ which generates the same language and has $\tilde{s}_{n,\tilde{r}_{n}} = 0$ for all $n$.  By Proposition \ref{te}, $\tilde{T}$ has the property that for every $n$ and $0 \leq i < r_{n}$ there exists $m \geq n$ and $0 \leq i^{\prime} < r_{m}$ such that $s_{m,i} \ne s_{n,i^{\prime}}$.

By Proposition \ref{6}, if $\limsup_{N} |\{ s_{n,i} : n \geq N, 0 \leq i < r_{n} \}| \geq 3$ then $\limsup \frac{p(q)}{q} \geq 2$.  So there exists $N$ such that $|\{ s_{m,i} : m \geq N, 0 \leq i < r_{m} \}| \leq 2$.  Therefore $|\{ s_{m,i} : m \geq n, 0 \leq i < r_{m} \}| = 2$ for all sufficiently large $n$.

Proposition \ref{trick} gives a rank-one subshift generating the same language such that $s_{n,r_{n}} = 0$ for all sufficiently large $n$ and $s_{n,i} \ne s_{n,i^{\prime}}$ for infinitely many $n$.  Proposition \ref{trick2} then gives a rank-one subshift generating the same language with that property for all sufficiently large $n$.  Finally, Proposition \ref{tcd} gives a rank-one subshift, still generating the same language, such that $s_{n,i} \in \{ 0, d \}$ and $0 \leq i < r_{n}$ and $s_{n,r_{n}} = 0$ for all sufficiently large $n$.
\end{proof}

\section{Subshifts with exactly one nonzero spacer value}\label{S3}

\begin{theorem}\label{t}
Let $p$ be the complexity function for a rank-one subshift such that for all sufficiently large $n$,
the spacer sequence satisfies $s_{n,i} \in \{ 0, d \}$ for some constant positive integer $d$ and $s_{n,r_n}=0$ and that $s_{n,i}$ is not constant over $0 \leq i < r_n$.  Then $\limsup p(q) - 1.5q = \infty$.
\end{theorem}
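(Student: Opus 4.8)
The plan is to count right-special words via Cassaigne's formula. Write $r(\ell) = |\mathcal{L}^{RS} \cap \{0,1\}^{\ell}| = p(\ell+1) - p(\ell)$, so $p(q) - \tfrac{3}{2}q = (p(1) - \tfrac{3}{2}) + \sum_{\ell=1}^{q-1}(r(\ell) - \tfrac{3}{2})$. Because $B_{n}B_{n}$ and $B_{n}1^{d}B_{n}$ are subwords for every large $n$, the subshift is aperiodic, so $r(\ell) \geq 1$ for all $\ell$ by Hedlund--Morse; hence the conclusion is equivalent to the running sum of the excesses $r(\ell) - \tfrac{3}{2}$ being unbounded above. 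I would first normalize the spacer sequence using the results of Section \ref{S2} (notably Propositions \ref{trick2} and \ref{trick3}) and then split into cases according to whether two consecutive $d$-spacers occur at infinitely many levels.

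The structural input I need is a description of $\mathcal{L}^{RS}$. For $n$ large, every bi-infinite point parses uniquely into copies of $B_{n}$ separated by gaps $1^{0}$ or $1^{d}$, and the resulting word over $\{0,d\}$ is itself organized the same way: each $B_{n+1}$ contributes the block $(s_{n,0},\dots,s_{n,r_{n}-1})$, and consecutive such blocks are separated by single letters forming the analogously-defined ``gap subshift''. A word $w$ whose length lies in a window of width comparable to $h_{n}$ about $mh_{n}$ is right-special essentially when it ends at a block boundary and the length-$m$ window of gaps it pins down is right-special in the gap subshift -- up to a correction of bounded size coming from the finitely many self-overlaps of $B_{n}$ and from runs of $1$s, all runs of $1$s being bounded since $d$ is fixed and $B_{N}$ is a fixed finite word. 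So on such a window $r(\ell)$ agrees with the corresponding right-special count for the gap subshift up to $O(1)$, with $B_{n}$ (length $h_{n}$) and the bispecial word $B_{n}B_{n}$ (length $2h_{n}$) as concrete anchors.

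Summing these counts over a single generation $[h_{n},h_{n+1})$ and optimizing the cut-off within it, $p(q) - \tfrac{3}{2}q$ reaches, up to an error that is of lower order in the generation, the value
\[
\big(p(h_{n}) - \tfrac{3}{2}h_{n}\big) \; + \; h_{n}\Big(\max_{1 \leq m \leq r_{n}+1}\big(p^{(1)}(m) - \tfrac{3}{2}m\big) \; - \; \big(p^{(1)}(1) - \tfrac{3}{2}\big)\Big),
\]
where $p^{(1)}$ is the complexity function of the gap subshift. The crux is that this is unbounded. If two consecutive $d$-spacers occur at infinitely many levels, the gap subshift contains all four words of length $2$, so the parenthesized quantity is at least $p^{(1)}(2) - 3 = 1 > 0$; together with $p(h_{n}) - h_{n} \to \infty$ -- which holds because there are infinitely many lengths $\ell$ with $r(\ell) \geq 2$ (for instance the lengths of the words $B_{k}B_{k}$), the subshift not being quasi-Sturmian by Proposition \ref{io} -- this yields $\limsup(p(q) - \tfrac{3}{2}q) = \infty$. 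If consecutive $d$-spacers eventually never occur, the gap subshift avoids $dd$ but, after the normalizations of Section \ref{S2}, is again amenable to the same analysis at a deeper scale; iterating, the parenthesized quantity can equal $p^{(1)}(1) - \tfrac{3}{2} = \tfrac{1}{2}$ at every level only if the iterated gap subshifts are eventually quasi-Sturmian, which contradicts being rank-one with a single fixed nonzero spacer value.

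The main obstacle is precisely this last point: excluding the possibility that the excesses $r(\ell) - \tfrac{3}{2}$ conspire to keep $\sum_{\ell<q}(r(\ell)-\tfrac{3}{2})$ bounded above, i.e.\ controlling the iterated gap subshifts and verifying that right-special words remain abundant at some scale $m \leq r_{n}+1$ uniformly in $n$ -- this is where the ``analysis of all possible two-spacer-value subshifts'' does its real work. Preventing the bounded corrections (spurious occurrences of $B_{n}$ and runs of $1$s) from accumulating over a generation is a secondary nuisance, controlled by the boundedness of all runs of $1$s and of the self-overlaps of $B_{n}$.
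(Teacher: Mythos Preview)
Your gap-subshift heuristic is the right intuition, but the proposal has a genuine gap precisely where the real difficulty lies.

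The case you call ``consecutive $d$-spacers eventually never occur'' is, in the paper's notation, the case $a_{n,1}=a_{n,z_n}=1$ and $a_{n,j}\le 2$ for all $j$ (Proposition~\ref{17}). This is not a side case to be disposed of by iteration: it contains the paper's own low-complexity examples $B_{n+1}=((B_n1)^{\gamma}B_n)^{L_n}$ of Section~\ref{S4}, whose complexity satisfies $p(q)\le 1.5q+f(q)$ with $f(q)/q\to 0$ arbitrarily slowly. In particular, the excess $\max_m\big(p^{(1)}(m)-\tfrac32 m\big)-\big(p^{(1)}(1)-\tfrac32\big)$ can be \emph{zero} for every $m\le r_n+1$ at infinitely many levels (the gap subshift there is Sturmian-like on each finite window), so your displayed formula gives no growth from that term. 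Your proposed escape --- ``iterate to a deeper scale and derive a contradiction with being rank-one'' --- does not work as stated: passing to the gap subshift of a two-spacer-value rank-one just produces another two-spacer-value rank-one, and there is no obstruction to iterated gap subshifts being quasi-Sturmian on bounded windows at every level. The paper handles this case by a long explicit case analysis (Lemmas~\ref{4.14}--\ref{4.23}) that locates, in each subcase, a specific right-special word of length $\approx c\,h_n$ disjoint from the $B_{n+1}$-suffixes; there is no soft shortcut.

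A secondary issue: the ``$r(\ell)$ agrees with the gap-subshift right-special count up to $O(1)$'' statement is not sharp enough even where it is true. The corrections coming from self-overlaps of $B_n$ and from boundary effects are $O(1)$ \emph{per window}, and there are $\sim r_n$ windows in a generation; since the signal you are trying to extract is also of order $h_n$ per window, these corrections are the same order as the main term and cannot be dismissed as lower order. The paper avoids this by working with explicit right-special words rather than asymptotic counts, using Lemma~\ref{split} to separate them from suffixes of $B_{n+1}$ by a fixed constant $c$.
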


This is a quick consequence of:
\begin{theorem}\label{t2}
Let $p$ be the complexity function for a rank-one subshift such that for all sufficiently large $n$,
the spacer sequence satisfies $s_{n,i} \in \{ 0, d \}$ for some constant positive integer $d$ and $s_{n,r_n}=0$ and that $s_{n,i}$ is not constant over $0 \leq i < r_n$.

Then there exists a constant $C$ such that for all sufficiently large $n$, there exists $q_{n} \geq h_{n}$ such that $p(q_{n}) \geq 1.5q_{n} + (p(h_{n}) - h_{n}) - C$.
\end{theorem}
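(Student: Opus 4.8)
The plan is to convert the statement into a lower bound on the number of right-special words and then, for each large $n$, to exhibit enough of them at lengths between $h_n$ and roughly $2h_n$.

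\textbf{Reduction via Cassaigne's formula.} Write $r(\ell)$ for the number of right-special words of length $\ell$. Cassaigne's formula gives $p(q) - p(h_n) = \sum_{\ell = h_n}^{q-1} r(\ell)$, so the target inequality $p(q_n) \geq 1.5 q_n + (p(h_n) - h_n) - C$ is equivalent to $\sum_{\ell = h_n}^{q_n - 1} r(\ell) \geq 1.5(q_n - h_n) + \tfrac12 h_n - C$. The natural attempt is $q_n$ near $2h_n$: then the right-hand side is $\approx 2h_n - C$ over a window of $\approx h_n$ lengths, and since $r(\ell) \geq 1$ for every $\ell$ (Morse--Hedlund, the subshift being aperiodic), it suffices to show that $r(\ell) \geq 2$ for all but $O(1)$ of the lengths $\ell \in [h_n, 2h_n)$, with the $O(1)$ uniform in $n$ (it may depend on $d$ and on the stage past which the hypotheses hold).

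\textbf{Exhibiting two right-special words of each length.} Using $B_{n+1} = \prod_{i=0}^{r_n} B_n 1^{s_{n,i}}$ with $s_{n,i} \in \{0,d\}$, $s_{n,r_n} = 0$, and both $0$ and $d$ occurring among $s_{n,0},\dots,s_{n,r_n-1}$: since some interior spacer is $0$ and some is $d$, the block $B_n$ occurs in $\mathcal{L}$ both immediately followed by $0$ (the first letter of the next copy of $B_n$) and immediately followed by $1$ (the first letter of a $1^d$ spacer). More generally, any bi-infinite point of $X(T)$ parses uniquely (for $n$ past the recognizability stage) as an alternation of copies of $B_n$ and spacer blocks $1^0$ or $1^d$; a word of length $\geq h_n$ ending strictly inside a copy of $B_n$ has its one-letter right extension forced, so every right-special word of length in $[h_n, 2h_n)$, apart from boundedly many, has the form $u B_n$ (or $u 1^j$) for a suitable left block $u$, and is right-special exactly when $u$ is the length-$k$ left context of one copy of $B_n$ followed by $0$ and of one followed by $1$. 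The two candidate families of length $h_n + k$ are $\mathrm{suffix}_k(B_n)\,B_n$ and $\mathrm{suffix}_{k-d}(B_n)\,1^d\,B_n$ (for $k > d$); whether each is actually right-special is decided by the local arrangement of $0$'s and $d$'s in $s_{n,\cdot}$ — one wants a $0$ (resp.\ a $d$) immediately followed by a $0$ and one immediately followed by a $d$, and in patterns lacking such a witness at level $n$ (for instance a strictly alternating $d\,0\,d\,0\cdots 0$, or $0\,d\,d\cdots d\,0$) one recovers the needed right-specialness either from the level-$(n+1)$ spacers or from the internal level-$(n-1)$ structure of $B_n$ (whose suffix is a fixed $01^{c-1}$ past the threshold). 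Carrying this through shows that for all but $O(1)$ of $k \in [0,h_n)$ there are at least two distinct right-special words of length $h_n + k$.

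\textbf{Conclusion and the main obstacle.} Taking $q_n = 2h_n$ (perturbed by at most a bounded amount to avoid the exceptional lengths) and summing, $p(q_n) = p(h_n) + \sum_{\ell = h_n}^{q_n - 1} r(\ell) \geq p(h_n) + 2h_n - C = 1.5 q_n + (p(h_n) - h_n) - C$, with $C$ depending only on $d$ and the threshold stage. The step I expect to dominate the work is the second paragraph: establishing, uniformly in $n$, that two right-special words persist across essentially the whole window $[h_n, 2h_n)$. This is genuinely a case analysis on the two-valued spacer pattern — the two families above can individually fail to be right-special for ``lopsided'' or ``alternating'' arrangements, and one must then substitute other witnesses coming from the neighbouring levels — and it is exactly where the full strength of the hypotheses ($s_{n,i} \in \{0,d\}$, both values occurring among $i < r_n$, $s_{n,r_n} = 0$, and $s_{n,i}$ nonconstant) is used. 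The error constant $C$, and the fact that $q_n$ is only asserted to exist, are artifacts of this analysis.
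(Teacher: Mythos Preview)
Your reduction via Cassaigne's formula is exactly the paper's starting point (its Lemma~\ref{countmethod}): one gets $q_n - h_n$ right-special suffixes of the $B_{n+m}$ for free, and the task is to produce roughly $\tfrac12 q_n$ additional right-special words in the same range of lengths. The gap is in your choice of window.

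Your claim that $r(\ell) \geq 2$ for all but $O(1)$ of the lengths $\ell \in [h_n, 2h_n)$ is false, and not for a minor reason: the paper's own near-extremal examples $B_{n+1} = ((B_n 1)^{\gamma}B_n)^{L}$ (Definition~\ref{theTs}, with $d = 1$ and fixed $L,\gamma \geq 2$) satisfy all the hypotheses of Theorem~\ref{t2}, yet by Proposition~\ref{count} one has $r(\ell) = 1$ for every $\ell$ in the interval $\big((2 - \tfrac{1}{L})h_n,\, 2h_n\big]$, whose length $h_n/L$ tends to infinity. Thus $p(2h_n) - p(h_n) \approx (2 - \tfrac{1}{L})h_n$, short of the required $2h_n - C$ by $h_n/L$, and no uniform $C$ absorbs this. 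Your two proposed witness families $\mathrm{suffix}_k(B_n)B_n$ and $\mathrm{suffix}_{k-d}(B_n)1^d B_n$ genuinely coalesce (or fail to be right-special) on that entire subinterval, and the ``substitute witnesses from neighbouring levels'' you allude to do not exist at those lengths.

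What the paper does instead is a five-way case split on the pattern of the $a_{n,j}$ (where $B_{n+1} = \prod_j B_n^{a_{n,j}}1^d$), and in each case exhibits a single long right-special word $D_n$, not of the form you propose, whose suffixes past a bounded point disagree with all suffixes of $B_{n+m}$. The value of $q_n$ varies wildly across cases: it is $3h_n$ in the easy cases (Proposition~\ref{15}), but in the hardest case---precisely the examples above---$q_n$ is taken on the order of $2h_{n+1}$ (Lemma~\ref{4.23}), using that $B_{n+1}B_{n+1}1^d \in \mathcal{L}$ forces $((B_n1)^{\gamma}B_n)^{2L-1}$ to be right-special. So the assertion ``there exists $q_n \geq h_n$'' in the theorem is doing real work: $q_n$ cannot be pinned near $2h_n$ uniformly.
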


\begin{proof}[Proof of Theorem \ref{t} from Theorem \ref{t2}]
Let $N$ such that for all $n \geq N$, there exists $q_{n} \geq h_{n}$ such that $p(q_{n}) \geq 1.5q_{n} + (p(h_{n})-h_{n}) - C$.  Let $m > n$ such that $h_{m} \geq q_{n}$.  As $s_{n,i} > 0$ for $i < r_{n}$ implies aperiodicity,
$p(\ell+1) - p(\ell) \geq 1$ for all $\ell$ so $p(h_{n}) \geq h_{n}$ and $p(h_{m}) - p(q_{n}) \geq h_{m} - q_{n}$.  Then
\begin{align*}
p(h_{m}) - h_{m} &= (p(h_{m}) - p(q_{n})) + p(q_{n}) - h_{m}
\geq (h_{m} - q_{n}) + 1.5q_{n} - C - h_{m}
= 0.5q_{n} - C \to \infty
\end{align*}
and therefore
$
p(q_{m}) - 1.5q_{m} \geq p(h_{m}) - h_{m} -  C \to \infty$.
\end{proof}

Before proving Theorem \ref{t2}, we show how Theorem \ref{t} implies:
\begin{theorem}\label{mainreal}
Let $T$ be a rank-one transformation (on a probability space) which is not an odometer.  Then the associated subshift has complexity satisfying $\limsup p(q) - 1.5q = \infty$ (and $\liminf p(q) - q = \infty$).
\end{theorem}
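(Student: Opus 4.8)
The plan is to reduce Theorem \ref{mainreal} to Theorem \ref{t} by passing, in several normalization steps, to a language-equivalent rank-one subshift whose spacer sequence has exactly one nonzero value. Since the complexity function depends only on the language of the subshift, it suffices to prove $\limsup p(q) - 1.5q = \infty$ for any rank-one subshift generating the same language as the original $T$. I would first dispose of the case $\limsup p(q)/q \geq 2$: if that holds, then certainly $\limsup p(q) - 1.5q = \infty$, so there is nothing to prove. Hence I may assume $\limsup p(q)/q < 2$, which is exactly the hypothesis that makes Proposition \ref{2.3} applicable.

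Next I would invoke Proposition \ref{2.3}: since $T$ is a rank-one transformation on a finite measure space (a probability space) which is not an odometer, and $\limsup p(q)/q < 2$, there is a language-equivalent rank-one subshift $\tilde{T}$ and a constant positive integer $d$ such that for all sufficiently large $n$ one has $\tilde{s}_{n,\tilde{r}_n} = 0$, $\tilde{s}_{n,i} \in \{0,d\}$ for all $0 \leq i < \tilde{r}_n$, and both values $0$ and $d$ actually occur among the $\tilde{s}_{n,i}$ with $0 \leq i < \tilde{r}_n$. In particular $\tilde{s}_{n,i}$ is not constant over $0 \leq i < \tilde{r}_n$. This is precisely the hypothesis of Theorem \ref{t}, applied to $\tilde{T}$. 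So Theorem \ref{t} yields $\limsup p_{\tilde T}(q) - 1.5q = \infty$; since $\tilde T$ and $T$ have the same language, $p_{\tilde T} = p_T = p$, and we conclude $\limsup p(q) - 1.5q = \infty$.

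For the parenthetical claim $\liminf p(q) - q = \infty$: since $T$ is not an odometer, after the normalization above the spacer sequence has $s_{n,i} = d > 0$ for at least one $i < r_n$ for all large $n$, which forces the subshift to be aperiodic, hence $p(\ell+1) - p(\ell) \geq 1$ for all $\ell$ by the Hedlund–Morse theorem. If $p(q) - q$ were bounded along a subsequence, then by Proposition \ref{io} the subshift would be quasi-Sturmian, contradicting $\limsup p(q) - 1.5q = \infty$ (a quasi-Sturmian subshift has $p(q) = q + c$ eventually, so $p(q) - 1.5q \to -\infty$). Hence $p(q) - q \to \infty$, i.e., $\liminf p(q) - q = \infty$.

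The only genuine work here is bookkeeping — confirming that the hypotheses of Proposition \ref{2.3} and Theorem \ref{t} are met and that language-equivalence preserves $p$ — so I do not expect a real obstacle in this particular deduction; all the substance is deferred to the (still unproven) Theorem \ref{t2}, whose proof is the heart of the matter. If there is any subtlety, it is in the $\liminf$ statement: one must make sure the normalized subshift really is aperiodic (it is, since a positive spacer value occurring cofinally prevents periodicity) so that Proposition \ref{io} applies in contrapositive form.
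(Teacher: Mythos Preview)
Your proposal is correct and follows essentially the same route as the paper's own proof: dispose of the case $\limsup p(q)/q \geq 2$, apply Proposition~\ref{2.3} to normalize to a language-equivalent subshift with spacer values in $\{0,d\}$, invoke Theorem~\ref{t}, and then use Proposition~\ref{io} in contrapositive (together with the just-established $\limsup p(q)-1.5q=\infty$) for the $\liminf$ claim. Your version is slightly more explicit about verifying aperiodicity before invoking Proposition~\ref{io}, which the paper leaves implicit.
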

\begin{proof}
By Proposition \ref{2.3}, either $\limsup p(q)/q \geq 2$ or there exists a rank-one subshift which generates the same language with the property that there exists a constant nonnegative integer $d$ such that for all sufficiently large $n$, $s_{n,i} \in \{ 0, d \}$ for all $0 \leq i < r_{n}$ and $s_{n,r_{n}} = 0$ and such that there exists $0 \leq i,i^{\prime} < r_{n}$ with $s_{n,i} = 0$ and $s_{n,i^{\prime}} = d$.  Theorem \ref{t} applied to that subshift then gives that $\limsup p(q) - 1.5q = \infty$.  Proposition \ref{io} ensures $\liminf p(q) - q = \infty$ as otherwise $p(q) = q + c$ for a constant $c$ for all sufficiently large $q$.
\end{proof}

The remainder of this section is the proof of Theorem \ref{t2}.

\subsection{Some notation and basic facts}

Write $\1$ to represent $1^{d}$.

We use repeatedly the facts that $0$ is a prefix of every $B_{n}$ (Lemma \ref{0}) and that $B_{n}$ is a suffix of $B_{m}$ for $m \geq n$ for sufficiently large $n$ (due to $s_{n,r_{n}} = 0$).

We also use repeatedly the fact that $B_{n}B_{n}$ and $B_{n}\1 B_{n}$ are subwords of $B_{n+1}$ due to $s_{n,i}$ not being constant over $0 \leq i < r_{n}$.

\begin{lemma}\label{split}
There exists a constant $c \geq 1$ such that for all $n \geq N$, the words $B_{n}^{2}$ and $B_{n}\1 B_{n}$ differ on suffixes of length at least $h_{n} + c$.
\end{lemma}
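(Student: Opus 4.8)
The plan is to pin down the suffix where $B_n^2$ and $B_n\1 B_n$ first disagree when read from the right. Recall $B_n$ has $0$ as a prefix (Lemma \ref{0}) and, for $n\geq N$, has $B_N$ as a suffix hence has $01^{c-1}$ as a suffix for a fixed $c\geq 1$ chosen so that $B_N$ ends in $01^{c-1}$ (as in the proof of Proposition \ref{6}; such $c\leq h_N$ exists since $B_N$ begins with $0$). So $B_n^2$ has $01^{c-1}B_n$ as a suffix while $B_n\1 B_n$ has $01^{c-1}\1 B_n = 01^{c-1+d}B_n$ as a suffix. Both words have the common suffix $B_n$ of length $h_n$; immediately to the left of that copy of $B_n$, the first word has a $1$ coming from the tail $1^{c-1}$ of the left $B_n$ block (when $c\geq 2$) or a $0$ (the last symbol of $B_n$ before its final $01^{c-1}$ tail — here one must be slightly careful, but in all cases the symbol $d$ positions to the left of the right-hand $B_n$ differs: the second word has a $1$ there from the block $\1$, and choosing the comparison length appropriately, the first word does not).

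The clean way to carry this out is: both $B_n^2$ and $B_n\1B_n$ end with $B_n$, and just before it $B_n^2$ has the suffix of $B_n$ ending in $01^{c-1}$ adjacent, whereas $B_n\1B_n$ has $1^{d}$ adjacent. Thus reading right-to-left past the common $B_n$: after $h_n + (c-1)$ symbols both still agree (both show $1^{c-1}B_n$ or, if $c-1 < d$, the strings $1^{c-1}B_n$ vs $1^{c-1}B_n$ agree up to there since the second string's tail is $1^{c-1}\cdots$ too); at position $h_n + c$ from the right, $B_n^2$ shows the final $0$ of the penultimate $B_n$'s tail $01^{c-1}$, i.e. a $0$, while $B_n\1B_n$ shows (still inside $1^d$, provided $d \geq c$) a $1$, or (if $d < c$) shows the last symbol of the left $B_n$, which is again a $1$ since $B_n$ ends in $01^{c-1}$ with $c-1 \geq d \geq 1$. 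Either way the symbols differ at position $h_n + c$, so the suffixes of length $h_n + c$ are distinct; taking the constant $c$ of the lemma to be this same $c$ (or $c+1$ to be safe, absorbing the edge cases) completes the argument. Note $c$ does not depend on $n$, which is the whole point.

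The main obstacle is the bookkeeping in the edge cases $c \leq d$ versus $c > d$ and the possibility $c = 1$ (where $B_n$ ends simply in $0$): one has to make sure that in every case there is a single position, at bounded distance from the right end, where the two words genuinely disagree, and that the disagreement is a $0$ against a $1$ rather than an accidental coincidence. I would handle this by observing that the symbol at distance $h_n + k$ from the right in $B_n\1B_n$ is $1$ for all $1 \leq k \leq d$ (it lies in the inserted block $\1 = 1^d$), whereas the symbol at distance $h_n + k$ from the right in $B_n^2$ is the symbol at distance $k$ from the right in $B_n$, i.e. it is $0$ when $k = c$ and $1$ when $1 \leq k < c$. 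If $c \leq d$ we get a mismatch at $k = c$ (a $0$ versus a $1$); if $c > d$ we instead compare at a position beyond the block $\1$ and use that $B_n\1B_n$ there shows the last symbol of $B_n$, namely $1$ (since $B_n$ ends in $01^{c-1}$ and $c - 1 \geq 1$), while $B_n^2$ shows the $0$ at distance $c$ — but now at distance $h_n + d + 1$, noting $d + 1 \leq c$, the second word shows the $0$ ending the left $B_n$ only if $d+1 = c$; in general one simply takes the lemma's constant to be $c + d$, beyond which the left-hand copies of $B_n$ in the two words are aligned and their final tails $01^{c-1}$ force a $0/1$ discrepancy at distance $h_n + c$ inside them. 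This is routine once set up; the substantive content is merely that the discrepancy distance is bounded independently of $n$.
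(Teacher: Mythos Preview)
Your approach is the same as the paper's, but you have made it much harder on yourself than necessary. The case split on $c \leq d$ versus $c > d$ is entirely avoidable, and the final hedge of replacing $c$ by $c+d$ is unneeded. The clean observation you are circling is simply this: since $B_{n}$ ends in $01^{c-1}$ and $d \geq 1$, the word $B_{n}\1 B_{n}$ has $01^{c-1}1^{d}B_{n}$ as a suffix, hence has $1^{c}B_{n}$ as a suffix (because $c-1+d \geq c$). Meanwhile $B_{n}^{2}$ has $01^{c-1}B_{n}$ as a suffix. These two length-$(h_{n}+c)$ suffixes visibly differ in their leftmost symbol, and that is the entire proof---no subcases, no edge cases for $c=1$, no concern about whether $d$ is large or small. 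Your middle paragraph actually arrives at this (``at position $h_{n}+c$ from the right, $B_{n}^{2}$ shows $\ldots$ a $0$, while $B_{n}\1 B_{n}$ shows $\ldots$ a $1$''), and your analysis of the $c>d$ case is in fact correct; the subsequent backtracking and the proposed constant $c+d$ are symptoms of not trusting that the single inequality $c-1+d \geq c$ kills all cases at once.
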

\begin{proof}
Choose $c$ such that $B_{N}$ has $01^{c-1}$ as a suffix (possible as $B_{N}$ has $0$ as a prefix).  Since $B_{n}B_{n}$ has $B_{N}B_{n}$ as a suffix, $B_{n}B_{n}$ has $01^{c-1}B_{n}$ as a suffix.  As $B_{n}\1 B_{n}$ has $1^{c-1}1^{d}B_{n}$ as a suffix, this shows the words differ on the suffixes $01^{c-1}B_{n}$ and $1^{c}B_{n}$.
\end{proof}

\subsection{Counting via right-special words}

\begin{lemma}\label{8}
$B_{n} \in \mathcal{L}^{RS}$ for all $n$.
\end{lemma}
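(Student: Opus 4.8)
The plan is to produce both $B_n 0$ and $B_n 1$ as subwords of some $B_{m+1}$ with $m \geq n$, which places them in $\mathcal{L}$. Before beginning I would invoke Proposition~\ref{1} to assume $s_{n,r_n} = 0$ at \emph{every} stage $n$, not merely at large $n$; this changes neither the language nor the hypotheses ($s_{n,i} \in \{0,d\}$ and nonconstant for large $n$). With this normalization, Lemma~\ref{obv} makes $B_n$ both a prefix and a suffix of $B_m$ for all $m \geq n$.

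For $n$ large enough that the hypotheses apply one takes $m = n$. Nonconstancy of $s_{n,i}$ over $0 \leq i < r_n$, with values in $\{0,d\}$, supplies an index $i$ with $s_{n,i} = 0$ and an index $i'$ with $s_{n,i'} = d$, so $B_n B_n$ and $B_n \1 B_n$ are subwords of $B_{n+1}$ and hence lie in $\mathcal{L}$. Since $0$ is a prefix of $B_n$, the prefix of length $h_n + 1$ of $B_n B_n$ is $B_n 0$, whence $B_n 0 \in \mathcal{L}$; since $\1 = 1^d$ with $d \geq 1$, the word $B_n \1 B_n$ contains $B_n 1$ (its first $B_n$ followed by the initial symbol of $\1$), whence $B_n 1 \in \mathcal{L}$. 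Thus $B_n \in \mathcal{L}^{RS}$.

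For the finitely many remaining small $n$, fix any large $m \geq n$; by the previous paragraph $B_m B_m$ and $B_m \1 B_m$ lie in $\mathcal{L}$. Inside $B_m B_m$, the suffix $B_n$ of the first copy of $B_m$ is immediately followed by the prefix $B_n$ of the second copy, so $B_n B_n$ is a subword and $B_n 0 \in \mathcal{L}$; inside $B_m \1 B_m$, that same suffix $B_n$ is immediately followed by $\1 = 1^d$, so $B_n 1 \in \mathcal{L}$. Hence $B_n \in \mathcal{L}^{RS}$ for all $n$. The only delicate point — and what I expect to be the main obstacle — is precisely this small-$n$ step: if some early stage had $s_{n,r_n} \neq 0$, then $B_n$ would not be a suffix of later blocks and $B_n 0$ could actually be absent from $\mathcal{L}$, so the normalization $s_{n,r_n} = 0$ at every stage (via Proposition~\ref{1}) is doing real work; past that, the whole argument is routine prefix/suffix bookkeeping.
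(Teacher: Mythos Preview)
Your core argument is exactly the paper's one-line proof: for $n$ where the standing hypotheses apply, $B_{n+1}$ contains both $B_n B_n$ and $B_n\1$ as subwords, whence $B_n 0, B_n 1 \in \mathcal{L}$. The paper does not separately treat small $n$ at all (the lemma is only ever invoked, via Lemma~\ref{countmethod}, at stages $n \geq N$), so your normalization-and-suffix patch for early stages---while correct, and your observation that without it an early $B_n$ could genuinely fail to be right-special is on point---is extra care the paper simply omits.
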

\begin{proof}
$B_{n+1}$ contains $B_{n}B_{n}$ and $B_{n}\1 $ as subwords.  $B_{n}$ has $0$ as a prefix so $B_{n} \in \mathcal{L}^{RS}$.
\end{proof}

\begin{lemma}\label{countmethod}
Write $f_{n} = p(h_{n}) - h_{n}$

If there are $t_{n}$ distinct right-special words, all of length at least $h_{n}$ and less than $q_{n}$, which are not suffixes of $B_{n+m}$ for any $m \geq 1$ then
\[
p(q_{n}) \geq q_{n} + f_{n} + t_{n}
\]
\end{lemma}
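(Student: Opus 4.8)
The plan is to exploit Cassaigne's formula $p(q) = p(m) + \sum_{\ell=m}^{q-1}|\{w \in \mathcal{L}^{RS} : \len(w) = \ell\}|$ with $m = h_n$, together with the Hedlund--Morse lower bound $p(\ell+1) - p(\ell) \geq 1$ (valid since aperiodicity holds). First I would write
\[
p(q_n) = p(h_n) + \sum_{\ell = h_n}^{q_n - 1} |\{ w \in \mathcal{L}^{RS} : \len(w) = \ell \}|.
\]
The baseline contribution of the sum is at least $q_n - h_n$, since each $\ell$ contributes at least one right-special word (this is exactly $p(\ell+1)-p(\ell) \geq 1$). Combined with $p(h_n) = h_n + f_n$, this already gives $p(q_n) \geq q_n + f_n$; the task is to find $t_n$ extra right-special words, one beyond the baseline at $t_n$ distinct lengths $\ell$ in the range $h_n \leq \ell < q_n$.

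The natural source of a "guaranteed" right-special word at each relevant length is a suffix of $B_{n+m}$ for large $m$: by Lemma \ref{obv}, $B_{n+m}$ has $B_n 1^{s_{n,r_n}} = B_n$ as a suffix (for $n \geq N$, since $s_{n,r_n}=0$), and more to the point $B_{n+m}$ is itself a prefix of every later $B_{n+m'}$, so every proper suffix of every $B_{n+m}$ with $m$ large is followed sometimes by $0$ (from the next repetition of $B_n$, which starts with $0$) and sometimes by $1$ (since $s_{n,i}$ is nonconstant, giving both $B_nB_n$ and $B_n\1 B_n$ as subwords of $B_{n+1}$), making these suffixes right-special. The point of the hypothesis is that the $t_n$ given right-special words are \emph{not} among these "suffix-type" right-special words at their respective lengths. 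So at each length $\ell$ that occurs among the lengths of the $t_n$ words, there are at least two right-special words: the guaranteed suffix-type one and the given one; and at lengths $h_n \leq \ell < q_n$ not occurring among the $t_n$ words, there is still at least one. Summing, $\sum_{\ell=h_n}^{q_n-1}|\{w \in \mathcal{L}^{RS}: \len(w)=\ell\}| \geq (q_n - h_n) + t_n$, which gives $p(q_n) \geq h_n + f_n + (q_n - h_n) + t_n = q_n + f_n + t_n$.

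The step requiring the most care is verifying that at each length $\ell \in [h_n, q_n)$ there genuinely is a suffix-type right-special word of that length, so that the $t_n$ given words are actually "extra" and not double-counted against the baseline. For this I would fix, for each such $\ell$, an $m$ large enough that $\len(B_{n+m}) > \ell$, take the suffix of $B_{n+m}$ of length $\ell$, and check it is right-special: since $B_{n+m}$ is a suffix of $B_{n+m+1}$ and also $B_{n+m}B_{n+m}$ occurs (as $s_{n+m,i}$ is nonconstant, or simply because $B_{n+m+1}$ begins with $B_{n+m}B_{n+m}$ when $r_{n+m}\geq 1$ and $s_{n+m,0}=0$, and $B_{n+m+1}$ contains $B_{n+m}\1 B_{n+m}$), this length-$\ell$ suffix of $B_{n+m}$ is followed by both $0$ and $1$ somewhere in the language. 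One must also confirm the suffix-type word at length $\ell$ is distinct from the given word of length $\ell$: that is precisely the hypothesis that the $t_n$ given words are not suffixes of any $B_{n+m}$. (A minor point: one should note the suffix-type word at a given length is well-defined as a word — two different large $m$ give the same length-$\ell$ suffix, since $B_{n+m}$ is a suffix of $B_{n+m'}$ — but for the counting argument we only need existence of at least one such word per length, so this is not essential.) Once these bookkeeping points are in place the inequality falls out immediately from the displayed sum.
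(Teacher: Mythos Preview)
Your proposal is correct and follows essentially the same approach as the paper: apply Cassaigne's formula with base $h_n$, produce at each length $\ell \in [h_n,q_n)$ a right-special word that is a suffix of some $B_{n+m}$, and observe these are disjoint from the $t_n$ hypothesized words, giving at least $(q_n-h_n)+t_n$ right-special words in that length range. The paper compresses the ``suffix-type right-special word'' step by invoking Lemma~\ref{8} ($B_n \in \mathcal{L}^{RS}$, hence so is every suffix) rather than arguing directly from $B_{n+m}B_{n+m}$ and $B_{n+m}\1 B_{n+m}$ as you do, but the content is the same.
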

\begin{proof}
Since $p(q_{n}) - p(h_{n}) = |\{ w \in \mathcal{L}^{RS} : h_{n} \leq \len(w) < q_{n} \}|$, and since by Lemma \ref{8} we have at least $q_{n}-h_{n}$ suffixes of some $B_{n+m}$ of length at least $h_{n}$ and less than $q_{n}$ which are right-special and distinct from the $t_{n}$ hypothesized,
$
p(q_{n}) \geq p(h_{n}) + q_{n} - h_{n} + t_{n}
$.
\end{proof}

The proof of Theorem \ref{t2} will proceed by establishing the existence of right-special words which are not suffixes of any $B_{n+m}$.  To this end,
rewrite the defining words as
\[
B_{n+1} = (\prod_{j=1}^{z_{n}-1}B_{n}^{a_{n,j}}\1)B_{n}^{a_{n,z_{n}}}
\]
where $a_{n,j} \geq 1$ and $z_{n} \geq 2$ and $a_{n,j} \geq 2$ for at least one $j$ as $0$ and $d$ both occur in $\{ s_{n,i} : 0 \leq i < r_{n} \}$.

\subsection{The (straightforwardly) 5/3 cases}

Throughout this section, let $N$ such that $s_{n,i} \in \{ 0, d \}$ and $s_{n,r_{n}} = 0$ and $s_{n,i}$ is not constant over $0 \leq i < r_{n}$ for all $n \geq N$.

\begin{proposition}\label{15}
If for $n \geq N$ one of the following holds
\begin{itemize}
\item $a_{n,z_{n}} \geq 2$ and $a_{n,1} = 1$, i.e.~$B_{n}\1\dblank B_{n}^{2}$
\item $a_{n,z_{n}} = 1$ and $a_{n,1} \geq 2$, i.e.~$B_{n}^{2}\dblank\1 B_{n}$
\item $a_{n,z_{n}} = 1$ and $a_{n,1} = 1$ and $a_{n,j} \geq 3$ for some $j$, i.e.~$B_{n}\1\dblank B_{n}^{3} \dblank\1 B_{n}$
\end{itemize}
then \ls[\frac{5}{3}]{c}.
\end{proposition}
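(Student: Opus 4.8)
The plan is to exhibit, in each of the three cases, enough right-special words of length between $h_n$ and some $q_n$ that are \emph{not} suffixes of any later $B_{n+m}$, and then invoke Lemma \ref{countmethod}. Recall that $B_{n+m}$ has $B_n$ as a suffix (eventually) and $B_n$ has $0$ as a prefix, so a suffix of $B_{n+m}$ of length just above $h_n$ looks like (tail of $B_n$)$\cdot 0 \cdot$(prefix of $B_n$) — in particular, reading right-to-left from position $h_n$, we see $0$ immediately. By contrast, the word $1^c B_n$ (or more generally $\1 B_n$, using the $01^{c-1}$ suffix of $B_n$ from Lemma \ref{split}) is right-special by the usual argument ($B_{n+1}$ contains both $B_n\1 B_n$ and $B_n\1\1$, or a variant), has length $> h_n$, and cannot be a suffix of any $B_{n+m}$ because its symbol at depth $h_n+c$ or so is a $1$ where $B_{n+m}$ forces a $0$. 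The goal is to produce roughly $\tfrac{1}{6}q_n$ such extra right-special words, which combined with the $q_n - h_n$ "free" suffix right-specials of Lemma \ref{countmethod} and the $f_n = p(h_n) - h_n$ offset yields $p(q_n) \geq \tfrac{5}{3}q_n + f_n - c$.

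First I would handle the case $a_{n,z_n}\ge 2$, $a_{n,1}=1$, i.e. $B_{n+1}$ both begins $B_n\1\cdots$ and contains $\cdots B_n^2\cdots$ (indeed ends in $B_n^{a_{n,z_n}}$ with $a_{n,z_n}\ge 2$, so contains $B_n B_n$ as a suffix-adjacent block). Here the key observation is that since $B_n\1$ occurs (as a prefix) and $B_n B_n$ occurs, for each $0\le k \le h_n$ the word consisting of the length-$(h_n+k)$ suffix of $B_n B_n$ — that is, $B_n$ preceded by its own length-$k$ suffix — is right-special: it can be followed by the $(k{+}1)$-st symbol of $B_n$ (continuing inside $B_nB_n$) and, because $B_n\1$ occurs and $\1 = 1^d$ while $B_n$'s $(k{+}1)$-st symbol eventually differs (or using that $B_n$ has $0$ as prefix), also by a conflicting symbol. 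More carefully, I want the right-special words of the form "(suffix of $B_n$) $B_n$" — there are about $h_n$ of these of lengths in $[h_n, 2h_n)$ — half of which I claim fail to be suffixes of later $B_{n+m}$'s: a suffix of $B_{n+m}$ of length $h_n + k$ is (length-$k$ suffix of $B_n$)$\cdot B_n$ only if the length-$k$ suffix of $B_n$ equals the length-$k$ block of $B_{n+m}$ sitting just before its terminal $B_n$, which because $s_{n,r_n}=0$ is itself a suffix of $B_n$ — so actually these ARE suffixes. The fix is to instead use words like $\1 \cdot(\text{suffix of }B_n)\cdot B_n$ or $1^j B_n B_n$-type words arising from the $\1$ in $B_n\1 B_n^{a_{n,z_n}}$: the block $\1 B_n^2$ occurs (since $a_{n,1}=1$ at the start and $a_{n,z_n}\ge 2$ at the end, after concatenation $B_n\1 B_n\cdots$; more robustly $B_n\1 B_n B_n$ occurs inside $B_{n+2}$ because $B_{n+1}$ starts $B_n\1$ and ends $B_n B_n$). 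Taking suffixes of $\1 B_n B_n$ of length in $[h_n+1, 2h_n]$ that \emph{include at least one symbol of the }$\1$ gives $\sim h_n$ right-special words (right-special because $\1$ can be followed by $B_n$ or by more $1$'s, via $B_n \1\1$ occurring), none of which is a suffix of $B_{n+m}$ for large $m$ since $B_{n+m}$ ends in $B_n$ which has $0$ as a prefix so cannot exhibit the long run of $1$'s at the corresponding depth. This gives $t_n \ge h_n - c$ with $q_n = 2h_n$, hence $p(q_n)\ge q_n + f_n + h_n - c = \tfrac{3}{2}q_n + f_n - c \ge \tfrac{5}{3}q_n + f_n - c$? — I'd need to push $q_n$ down or find more words; the correct bookkeeping is to take $q_n$ slightly above $h_n$, say $q_n = \tfrac{3}{2}h_n$-ish, balancing $q_n - h_n$ against $t_n$, so that $q_n + t_n = \tfrac{5}{3}q_n$ forces $t_n = \tfrac{2}{3}q_n$, i.e. I need roughly $2h_n$ bad right-special words among lengths up to $\tfrac{5}{2}h_n$ — which means iterating the above into $B_n^3$ or $\1 B_n^2\1$ blocks. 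The second and third bullets are genuinely symmetric/analogous: for $a_{n,z_n}=1, a_{n,1}\ge 2$ one uses the mirror blocks $B_n^2 \1$ and reads prefixes rather than suffixes (using that $B_n$ has $0$ as a \emph{prefix}, so $B_n B_n \1$ vs $B_n\1$ distinguishes on prefixes), and for $a_{n,j}\ge 3$ somewhere with both ends $=1$ one uses the block $B_n^3$ sitting between two $\1$'s to get both a long-$1$-run obstruction on the left and one on the right, doubling the count.

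The main obstacle I anticipate is precisely the suffix-of-$B_{n+m}$ exclusion: because $s_{n,r_n}=0$ makes $B_n$ a suffix of every later $B_{n+m}$, many "obvious" right-special words obtained as overlaps within $B_n^2$ are in fact suffixes of $B_{n+m}$ and so do not count in Lemma \ref{countmethod}. Circumventing this forces one to work with words that contain part of a $\1$ block (a run of $d$ ones that, crucially, is never immediately followed by $0^{?}$ in the way a genuine $B_n$-boundary inside $B_{n+m}$ would be), and the book-keeping of exactly how many such words of each length exist — and choosing $q_n$ to make the $\tfrac{q_n - h_n}{q_n}$ versus $\tfrac{t_n}{q_n}$ trade-off land at $\tfrac{5}{3}$ rather than merely $\tfrac{3}{2}$ — is where the three cases genuinely differ and where the constant $c$ (from Lemma \ref{split}) gets absorbed. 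I would organize the proof as: (i) a uniform lemma that "$1^j B_n B_n$ and its suffixes are right-special and not suffixes of $B_{n+m}$" (and its prefix-mirror), (ii) a count of how many such words each of the three structural hypotheses guarantees, (iii) optimize $q_n$, (iv) apply Lemma \ref{countmethod}.
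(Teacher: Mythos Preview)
Your proposal has a genuine gap: you never identify a word that is actually right-special, and several of your candidate words are not. For instance, you claim suffixes of $\1 B_n B_n$ are right-special ``because $\1$ can be followed by $B_n$ or by more $1$'s, via $B_n\1\1$ occurring'' --- but $B_n\1\1 = B_n 1^{2d}$ never occurs since every $s_{n,i} \in \{0,d\}$, so after $\1$ the next symbol is always $0$. Likewise, $B_n B_n$ (and hence $1^j B_n B_n$) need not be right-special in Case~1: $B_nB_n0$ would force $B_n^3 \in \mathcal{L}$, which is not assumed there. You also acknowledge the arithmetic does not close (``I'd need to push $q_n$ down or find more words''); in fact your best honest count gives only $\tfrac{3}{2}q_n$, not $\tfrac{5}{3}q_n$.

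The paper's argument avoids all of this by finding, via a short subcase analysis on $a_{n,2}$ (resp.\ on $a_{n,j}$), a \emph{single} right-special word $D_n$ of length $\geq 3h_n$ --- specifically $D_n \in \{\,B_n\1 B_n\1 B_n,\; B_n^2\1 B_n,\; B_n\1 B_n^2,\; B_n^3\,\}$. Right-specialness of $D_n$ is checked directly in each subcase (Lemmas \ref{9}--\ref{14}). Then \emph{every} suffix of $D_n$ is automatically right-special, and by Lemma \ref{split} the suffixes of length $\geq h_n + c$ differ from the corresponding suffixes of $B_{n+1}$ (one compares the symbol at depth $h_n + c$: $D_n$ is chosen so that its suffix of that length ends $\ldots \1 B_n$ exactly when $B_{n+1}$ ends $\ldots B_n B_n$, and vice versa). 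This yields $t_n \geq 2h_n - c$ extra right-special words of length $< 3h_n$, and with $q_n = 3h_n$ Lemma \ref{countmethod} gives $p(3h_n) \geq 3h_n + f_n + 2h_n - c = \tfrac{5}{3}(3h_n) + f_n - c$ on the nose. The point you are missing is that one long right-special word of length $3h_n$ hands you $2h_n$ right-special suffixes for free; you do not need to build them one at a time.
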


\begin{lemma}\label{9} Words of the form $B_{n}\1 B_{n}\1\dblank$

For $n \geq N$, if $a_{n,1} = a_{n,2} = 1$ then $B_{n}\1 B_{n}\1 B_{n} \in \mathcal{L}^{RS}$.
\end{lemma}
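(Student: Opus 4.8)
The goal is to show $B_{n}\1 B_{n}\1 B_{n} \in \mathcal{L}^{RS}$ when $a_{n,1} = a_{n,2} = 1$, i.e.\ when $B_{n+1}$ begins $B_{n}\1 B_{n}\1 B_{n}^{a_{n,3}}\1\cdots$. The strategy is the standard one used throughout this section: exhibit the word as a subword of some $B_{m}$ and then produce two distinct one-letter right extensions. Membership in the language is immediate: $B_{n}\1 B_{n}\1 B_{n}$ is a prefix of $B_{n+1}$ (since $a_{n,1}=a_{n,2}=1$ and $a_{n,3}\ge 1$), so it lies in $\mathcal{L}$.

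\textbf{Producing the right-special extensions.} For the extension by $0$: the word $B_{n}\1 B_{n}\1 B_{n}$ is a prefix of $B_{n+1}$, and after the third copy of $B_{n}$ the next symbol in $B_{n+1}$ is either the start of another $B_{n}$ (if $a_{n,3}\ge 2$) or a $1$ from $\1$ (if $a_{n,3}=1$). In the former case the next symbol is $0$ (as $0$ is a prefix of $B_{n}$ by Lemma \ref{0}), giving $B_{n}\1 B_{n}\1 B_{n}0 \in \mathcal{L}$; in the latter case I instead observe that $B_{n}\1 B_{n}\1 B_{n}\1 B_{n}$ also occurs — this requires knowing that $B_{n}\1 B_{n}\1 B_{n}\1 B_{n}$ or at least $B_{n}\1 B_{n}\1 B_{n}0$ appears somewhere. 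The cleaner route is to use that $B_{n+1}B_{n+1}$ and $B_{n+1}\1 B_{n+1}$ are subwords of $B_{n+2}$: since $B_{n+1}$ ends with $B_{n}$ (as $s_{n,r_{n}}=0$, for $n$ large) and begins with $B_{n}\1 B_{n}\1 B_{n}$, the concatenation $B_{n+1}B_{n+1}$ contains $\cdots B_{n}\cdot B_{n}\1 B_{n}\1 B_{n}\cdots$, which contains $B_{n}\1 B_{n}\1 B_{n}$ followed by $\1$ (the $\1$ after the first occurrence within the second copy of $B_{n+1}$), hence $B_{n}\1 B_{n}\1 B_{n}1 \in \mathcal{L}$. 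For the extension by $0$, the first occurrence as a prefix of $B_{n+1}$ is followed by $0$ when $a_{n,3}\ge 2$; when $a_{n,3}=1$, I look inside $B_{n+1}$ for any later place where $B_{n}^{2}$ occurs (one exists since some $a_{n,j}\ge 2$), and note that $B_{n}\1 B_{n}\1 B_{n}$ read just before such a spot is followed by $0$. Carefully checking this last sub-case is the main obstacle.

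\textbf{Handling the $a_{n,3}=1$ sub-case.} When $a_{n,1}=a_{n,2}=a_{n,3}=1$ the prefix of $B_{n+1}$ reads $B_{n}\1 B_{n}\1 B_{n}\1 \cdots$, so $B_{n}\1 B_{n}\1 B_{n}1 \in \mathcal{L}$ directly. For the $0$-extension in this case, since some $a_{n,j}\ge 2$, there is a position in $B_{n+1}$ of the form $\cdots \1 B_{n}B_{n}\cdots$; but to get the full word $B_{n}\1 B_{n}\1 B_{n}$ immediately preceding a $0$, I use instead the structure of $B_{n+2}$, where $B_{n+1}$ appears followed by $B_{n+1}$ (from $B_{n+1}B_{n+1} \subseteq B_{n+2}$): the suffix of the first $B_{n+1}$ is $\cdots B_{n}$ and the prefix of the second is $B_{n}\1\cdots$, so near the junction one reads $B_{n}\1 B_{n}\1 B_{n}$ (lying inside the first $B_{n+1}$, ending at its final $B_{n}$) and then the first symbol of the next $B_{n+1}$, which is $0$. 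In fact it is simpler to note that the \emph{first} three blocks $B_{n}\1 B_{n}\1 B_{n}$ of $B_{n+1}$, when $B_{n+1}$ is followed by anything starting with $0$ (e.g.\ another $B_{n+1}$, or $B_{n}$), are not what we extend — rather we use that $B_{n}\1 B_{n}\1 B_{n}$ occurs as a prefix of the \emph{second} $B_{n+1}$ inside $B_{n+1}B_{n+1}$, preceded by the final $B_{n}$ of the first $B_{n+1}$, and separately occurs followed by $\1$; combining gives both extensions. I expect the referee-facing write-up to simply pick, in each of the cases $a_{n,3}\ge 2$ and $a_{n,3}=1$, the two explicit occurrences, so the proof is short; the only real care needed is the bookkeeping of which occurrence of the block within $B_{n+1}$ or $B_{n+1}B_{n+1}$ furnishes the $0$ versus the $1$ on the right.
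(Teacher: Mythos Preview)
Your overall strategy---find two explicit occurrences of $B_{n}\1 B_{n}\1 B_{n}$, one followed by $0$ and one by $1$---is fine, but the execution has genuine gaps in both sub-cases.

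\textbf{The $1$-extension when $a_{n,3}\ge 2$.} Your junction argument in $B_{n+1}B_{n+1}$ does not work. At the junction you see $B_{n}^{a_{n,z_n}}\cdot B_{n}\1 B_{n}\1 B_{n}^{a_{n,3}}\cdots$, so the copy of $B_{n}\1 B_{n}\1 B_{n}$ sitting as the prefix of the second $B_{n+1}$ is followed by the first letter of $B_{n}^{a_{n,3}-1}$, i.e.\ by $0$, not by $1$. You have re-found the $0$-extension, not produced a $1$-extension.

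\textbf{The $0$-extension when $a_{n,3}=1$.} Your claim that $B_{n}\1 B_{n}\1 B_{n}$ occurs ``inside the first $B_{n+1}$, ending at its final $B_{n}$'' in $B_{n+1}B_{n+1}$ assumes $B_{n+1}$ ends with $\1 B_{n}\1 B_{n}\1 B_{n}$, i.e.\ that $a_{n,z_n}=a_{n,z_n-1}=1$. Nothing in the hypotheses gives this; only $a_{n,1}=a_{n,2}=1$ is assumed. Your fallback sentence about the prefix of the second $B_{n+1}$ in $B_{n+1}B_{n+1}$ again yields only the $1$-extension (since $a_{n,3}=1$ there), so you still have no $0$-extension.

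\textbf{What actually works.} The idea you mention in passing---``look inside $B_{n+1}$ for a later place where $B_{n}^{2}$ occurs''---is exactly right, but you must use the \emph{first} such place and exploit minimality. Let $j$ be minimal with $a_{n,j}\ge 2$; then $j\ge 3$ and $a_{n,j-1}=a_{n,j-2}=1$. If $j\ge 4$, blocks $j-3,j-2,j-1,j$ inside $B_{n+1}$ give the subword $B_{n}\1 B_{n}\1 B_{n}\1 B_{n}^{a_{n,j}}$, which has $B_{n}\1 B_{n}\1 B_{n}\1 B_{n}^{2}$ as a prefix. If $j=3$, the same word appears in $B_{n+1}\1 B_{n+1}$ as $B_{n}^{a_{n,z_n}}\1 B_{n}\1 B_{n}\1 B_{n}^{a_{n,3}}$. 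Either way $B_{n}\1 B_{n}\1 B_{n}\1 B_{n}^{2}\in\mathcal{L}$, and this single word contains both $B_{n}\1 B_{n}\1 B_{n}\1$ (its prefix) and $B_{n}\1 B_{n}\1 B_{n}B_{n}$ (its suffix after dropping the leading $B_{n}\1$), giving both extensions at once. This is the paper's argument; your junction-based bookkeeping cannot succeed without control over $a_{n,z_n}$, which you do not have.
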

\begin{proof}
Let $j$ minimal such that $a_{n,j} \geq 2$.

If $j > 3$ then $B_{n+1}$ has the subword $B_{n}^{a_{n,j-3}}\1 B_{n}^{a_{n,j-2}}\1 B_{n}^{a_{n,j-1}} \1 B_{n}^{a_{n,j}} = B_{n}\1 B_{n}\1 B_{n}\1 B_{n}^{a_{n,j}}$ which has $B_{n}\1 B_{n}\1 B_{n}\1 B_{n}^{2}$ as a prefix.

If $j = 3$ then the word $B_{n+1}\1 B_{n+1}$ has the subword $B_{n}^{a_{n,z_{n}}}\1 B_{n}^{a_{n,1}} \1 B_{n}^{a_{n,2}}\1 B_{n}^{a_{n,3}}$ as a subword which has $B_{n}\1 B_{n}\1 B_{n}\1 B_{n}^{2}$ as a subword.

Then $B_{n}\1 B_{n}\1 B_{n} \in \mathcal{L}^{RS}$ as $B_{n}\1 B_{n}\1 B_{n}\1$ and $B_{n}\1 B_{n}\1 B_{n}B_{n}$ are both subwords of $B_{n}\1 B_{n}\1 B_{n}\1 B_{n}^{2}$.
\end{proof}

\begin{lemma}\label{10} Words of the form $B_{n}\1 B_{n}^{2}\dblank B_{n}^{2}$

For $n \geq N$, if $a_{n,z_{n}} \geq 2$ and $a_{n,1} = 1$ and $a_{n,2} \geq 2$ then $B_{n}^{2}\1 B_{n} \in \mathcal{L}^{RS}$.
\end{lemma}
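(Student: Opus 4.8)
The plan is to certify $B_{n}^{2}\1 B_{n} \in \mathcal{L}^{RS}$ by locating one occurrence of $B_{n}^{2}\1 B_{n}$ that is followed by $0$ and another that is followed by $1$. The two standing facts I would invoke are that $B_{n+1}B_{n+1}$ and $B_{n+1}\1 B_{n+1}$ are both in $\mathcal{L}$ — indeed subwords of $B_{n+2}$, since for $n+1 \geq N$ the spacer values $s_{n+1,i}$ with $0 \leq i < r_{n+1}$ include both $0$ and $d$ — together with the fact that $B_{n}$ has $0$ as a prefix (Lemma \ref{0}). The hypotheses translate directly into the shape of $B_{n+1}$: since $a_{n,1}=1$ and $a_{n,2}\geq 2$, the word $B_{n+1}$ has $B_{n}\1 B_{n}^{2}$ as a prefix, and since $a_{n,z_{n}}\geq 2$ it has $B_{n}^{2}$ as a suffix.

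For the $0$-extension I would concatenate two copies of $B_{n+1}$: the suffix $B_{n}^{2}$ of the first copy abuts the prefix $B_{n}\1 B_{n}^{2}$ of the second, so $B_{n+1}B_{n+1}$ contains $B_{n}^{2}\,B_{n}\1 B_{n}^{2} = B_{n}^{3}\1 B_{n}^{2}$, and in particular the subword $B_{n}^{2}\1 B_{n}^{2}$. Writing $B_{n}^{2}\1 B_{n}^{2} = B_{n}^{2}\1 B_{n}\cdot B_{n}$ and using that $B_{n}$ starts with $0$ shows $B_{n}^{2}\1 B_{n} 0 \in \mathcal{L}$.

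For the $1$-extension I would instead use $B_{n+1}\1 B_{n+1}$: the suffix $B_{n}^{2}$ of the first copy, then $\1$, then the prefix $B_{n}\1$ of the second copy, exhibits $B_{n}^{2}\1 B_{n}\1$ as a subword; since $\1 = 1^{d}$ begins with $1$, this gives $B_{n}^{2}\1 B_{n} 1 \in \mathcal{L}$. Combining this with the previous paragraph yields $B_{n}^{2}\1 B_{n} \in \mathcal{L}^{RS}$.

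I do not anticipate a genuine obstacle: the argument is bookkeeping with the level-$n$ block decomposition of $B_{n+1}$ and of $B_{n+1}B_{n+1}$, $B_{n+1}\1 B_{n+1}$. The only points requiring attention are checking that $B_{n+1}B_{n+1}$ and $B_{n+1}\1 B_{n+1}$ genuinely occur (which is precisely the non-constant-spacer fact applied one level up, at level $n+1$) and keeping straight which hypothesis is used where — namely that $a_{n,z_{n}}\geq 2$ and $a_{n,1}=1$ feed both extensions, while $a_{n,2}\geq 2$ is what supplies the trailing $0$ in the $0$-extension.
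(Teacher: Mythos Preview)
Your proof is correct and essentially identical to the paper's: both use $B_{n+1}B_{n+1}$ to exhibit $B_{n}^{2}\1 B_{n}^{2}$ (hence $B_{n}^{2}\1 B_{n}0$) via the suffix $B_{n}^{2}$ and prefix $B_{n}\1 B_{n}^{2}$ of $B_{n+1}$, and $B_{n+1}\1 B_{n+1}$ to exhibit $B_{n}^{2}\1 B_{n}\1$ via the suffix $B_{n}^{2}$ and prefix $B_{n}\1$. Your identification of which hypothesis feeds which extension matches the paper exactly.
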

\begin{proof}
$B_{n+1}\1 B_{n+1} \in \mathcal{L}$ implies $B_{n}^{a_{n,z_{n}}}\1 B_{n}^{a_{n,1}}\1 B_{n}^{a_{n,2}} \in \mathcal{L}$ so $B_{n}^{2}\1 B_{n}\1  \in \mathcal{L}$ as $a_{n,z_{n}} \geq 2$ and $a_{n,1} = 1$.

$B_{n+1}B_{n+1} \in \mathcal{L}$ implies $B_{n}^{a_{n,z_{n}}}B_{n}^{a_{n,1}}\1 B_{n}^{a_{n,2}} \in \mathcal{L}$.  Since $a_{n,2} \geq 2$ this gives $B_{n}^{2}\1 B_{n}^{2}$ so $B_{n}^{2}\1 B_{n}0 \in \mathcal{L}$.
\end{proof}

\begin{lemma}\label{11} Words of the form $B_{n}^{2}\1\dblank$

For $n \geq N$, if $a_{n,1} = 2$ then $B_{n}\1 B_{n}^{2} \in \mathcal{L}^{RS}$.
\end{lemma}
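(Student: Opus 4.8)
The plan is to verify directly that $B_n\1 B_n^2$ is right-special, i.e.\ that both $B_n\1 B_n^2 0$ and $B_n\1 B_n^2 1$ lie in $\mathcal{L}$. The inputs are the three standing facts recorded above ($0$ is a prefix of every $B_n$; $B_n$ is a suffix of $B_m$ for $m\ge n$; and $B_{n+1}B_{n+1}$ and $B_{n+1}\1 B_{n+1}$ are subwords of $B_{n+2}$, hence in $\mathcal{L}$), together with two consequences of the hypotheses at stage $n$: since $a_{n,1}=2$, the word $B_{n+1}$ has $B_n^2\1$ as a prefix; and since $z_n\ge 2$ with every $a_{n,j}\ge 1$, the word $B_{n+1}$ has $B_n^{a_{n,z_n-1}}\1\,B_n^{a_{n,z_n}}$ as a tail, that is, a $\1$ preceded by a full copy of $B_n$.

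For the extension by $1$, I would look at the subword of $B_{n+1}\1 B_{n+1}$ straddling the inserted $\1$: it contains $B_n^{a_{n,z_n}}\1\,B_n^{a_{n,1}}\1 = B_n^{a_{n,z_n}}\1\,B_n^2\1$, hence $B_n\1 B_n^2\1$ (using $a_{n,z_n}\ge 1$), and as $\1=1^d$ begins with $1$ this yields $B_n\1 B_n^2 1\in\mathcal{L}$. For the extension by $0$, I would look at the subword of $B_{n+1}B_{n+1}$ straddling the junction: it contains $B_n^{a_{n,z_n-1}}\1\,B_n^{a_{n,z_n}}B_n^{a_{n,1}} = B_n^{a_{n,z_n-1}}\1\,B_n^{a_{n,z_n}+2}$, hence $B_n\1 B_n^3$ (using $a_{n,z_n-1}\ge 1$ and $a_{n,z_n}+2\ge 3$), and since $0$ is a prefix of $B_n$ this yields $B_n\1 B_n^2 0\in\mathcal{L}$. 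Together these give $B_n\1 B_n^2\in\mathcal{L}^{RS}$.

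I do not expect a genuine obstacle: this is concatenation bookkeeping in the same spirit as Lemmas \ref{9} and \ref{10}. The only step needing care is tracking which block boundaries carry a $\1$ — in particular invoking $z_n\ge 2$ so that the tail $\ldots\1\,B_n^{a_{n,z_n}}$ of $B_{n+1}$ really does contain a $\1$ with a full $B_n$ before it, which is precisely what makes the required occurrences of $B_n\1 B_n^2$ appear inside $B_{n+1}B_{n+1}$ and $B_{n+1}\1 B_{n+1}$.
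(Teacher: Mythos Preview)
Your proof is correct and is essentially identical to the paper's: both extract $B_n^{a_{n,z_n}}\1 B_n^{a_{n,1}}\1$ from $B_{n+1}\1 B_{n+1}$ to get the extension by $1$, and $B_n^{a_{n,z_n-1}}\1 B_n^{a_{n,z_n}}B_n^{a_{n,1}}$ from $B_{n+1}B_{n+1}$ (using $z_n\ge 2$) to get the extension by $0$.
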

\begin{proof}
$B_{n}^{a_{n,z_{n}-1}}\1 B_{n}^{a_{n,z_{n}}}B_{n}^{a_{n,1}} \in \mathcal{L}$ as it is a subword of $B_{n+1}B_{n+1}$ so, as $a_{n,z_{n}-1} \geq 1$ and $a_{n,z_{n}} + a_{n,1} \geq 3$, also $B_{n}\1 B_{n}^{3} \in \mathcal{L}$.
$B_{n}^{a_{n,z_{n}}}\1 B_{n}^{a_{n,1}}\1$ is a subword of $B_{n+1}\1 B_{n+1}$ so, as $a_{n,z_{n}}\geq 1$, also $B_{n}\1 B_{n}^{2}\1 \in \mathcal{L}$.
\end{proof}

\begin{lemma}\label{12} Words of the form $B_{n}^{3}\dblank$ or $\dblank B_{n}^{4} \dblank$

For $n \geq N$, if $a_{n,1} > 2$ or $a_{n,j} > 3$ for some $j$ then $B_{n}^{3} \in \mathcal{L}^{RS}$.
\end{lemma}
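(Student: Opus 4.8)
The plan is to produce, from the hypothesis that some block count in $B_{n+1}$ is large, a witness showing $B_n^3$ is right-special, i.e.\ that both $B_n^3 0$ and $B_n^3 \1$ lie in $\mathcal{L}$ (recall $B_n$ begins with $0$, so $B_n^3 B_n$ gives $B_n^3 0$ and any occurrence of $B_n^3 \1$ gives the other extension). First I would handle the case $a_{n,j} > 3$ for some $j$: then $B_{n+1}$ itself contains $B_n^{a_{n,j}}\1$ as a subword, hence contains $B_n^4 \1$, which simultaneously exhibits $B_n^3(B_n)$ with $B_n$ starting in $0$ and $B_n^3 \1$. So $B_n^3 \in \mathcal{L}^{RS}$ in this case.

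Next I would handle the case $a_{n,1} > 2$, i.e.\ $a_{n,1}\ge 3$. Here $B_{n+1}$ starts with $B_n^{a_{n,1}}\1 = B_n^3 B_n^{a_{n,1}-3}\1$, which already contains $B_n^3 \1$ if $a_{n,1}=3$, giving the $\1$-extension, and contains $B_n^3 0$ regardless since $B_n$ starts with $0$; if $a_{n,1}\ge 4$ this is subsumed by the previous paragraph. The only subtlety is when $a_{n,1}=3$ exactly and no $a_{n,j}\ge 4$: then $B_n^3\1$ is visible as a prefix of $B_{n+1}$ (since after the first run of three $B_n$'s comes a $\1$), and $B_n^3 0$ is visible as a prefix of $B_n^3 B_n$, which is a prefix of $B_{n+1}$ as $a_{n,1}=3\ge 3$ means $B_{n+1}$ starts $B_n B_n B_n \1\cdots$, wait — actually $a_{n,1}=3$ means $B_{n+1}$ starts $B_n B_n B_n \1$, so the prefix of length $3h_n+1$ is $B_n^3 1$; to get $B_n^3 0$ I instead use that $B_{n+1}$ contains $B_n^{a_{n,j}}$ for the relevant indices and, more simply, that $B_n^3 0$ is a prefix of the length-$(3h_n+1)$ string only if a fourth $B_n$ follows — so I should instead note $B_n^3\1 B_n$ occurs (from $B_n^3\1 B_n^{a_{n,2}}$ with $a_{n,2}\ge 1$), and separately that $B_{n+1}B_{n+1}$ or the structure at the seam yields $B_n^3 B_n$; concretely, if $a_{n,1}=3$ then inside $B_{n+1}$ the string $B_n^3$ is followed by $\1$, but $B_{n+1}B_{n+1}$ contains $B_n^{a_{n,z_n}}B_n^{a_{n,1}} = B_n^{a_{n,z_n}+3}$ which (as $a_{n,z_n}\ge 1$) contains $B_n^4$, hence $B_n^3 0$. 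So in every sub-case both extensions appear.

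The main obstacle — really the only place care is needed — is the boundary sub-case $a_{n,1}=3$ with all other $a_{n,j}\le 3$, where one must look across the seam $B_{n+1}B_{n+1}$ (legitimate since $B_{n+1}B_{n+1}$ is a subword of $B_{n+2}$, using that $s_{n+1,i}$ is nonconstant over $0\le i<r_{n+1}$) to manufacture a run of at least four copies of $B_n$ and thereby the extension $B_n^3 0$; the $\1$-extension is immediate from the prefix of $B_{n+1}$. Once both extensions are in hand, $B_n^3\in\mathcal{L}^{RS}$ by definition, completing the proof.
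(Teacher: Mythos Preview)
Your argument is correct and follows the same idea as the paper: exhibit $B_n^4$ (with a $\1$ available to the right) so that both $B_n^3 0$ and $B_n^3 \1$ lie in $\mathcal{L}$. The paper does this a bit more uniformly --- for $a_{n,1}\ge 3$ it directly observes that $B_n^{a_{n,z_n}}B_n^{a_{n,1}}\1$ is a subword of $B_{n+1}B_{n+1}$ (with $a_{n,z_n}+a_{n,1}\ge 4$), so $B_n^4\1\in\mathcal{L}$, which yields both extensions at once without splitting off the sub-case $a_{n,1}=3$ or treating the two extensions separately.
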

\begin{proof}
If $a_{n,j} \geq 4$, since $B_{n}^{a_{n,j}}\1$ is a subword of $B_{n+1}$ so is $B_{n}^{4}\1$.  If $a_{n,1} \geq 3$ then since $B_{n}^{a_{n,z_{n}}}B_{n}^{a_{n,1}}\1$ is a subword of $B_{n+1}B_{n+1}$ and $a_{n,z_{n}}+a_{n,1} \geq 4$, also $B_{n}^{4}\1 \in \mathcal{L}$.
\end{proof}

\begin{lemma}\label{14} Words of the form $B_{n}\1 \dblank \1 B_{n}^{3}\1 \dblank \1 B_{n}$

For $n \geq N$, if $a_{n,1} = a_{n,z_{n}} = 1$ and $a_{n,j} = 3$ for some $j > 1$ then $B_{n}\1 B_{n}^{2} \in \mathcal{L}^{RS}$.
\end{lemma}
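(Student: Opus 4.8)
Write $w := B_n\1 B_n^{2}$. The goal is to exhibit both extensions $w0 \in \mathcal{L}$ and $w1 \in \mathcal{L}$, and I would obtain each from a single explicit occurrence inside one of the defining words.

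For the $0$-extension I would use the hypothesis $a_{n,j} = 3$. Since $j > 1$ and $a_{n,z_n} = 1 \neq 3$, we have $2 \le j \le z_n - 1$, so $B_{n+1}$ contains the block $B_n^{a_{n,j-1}}\1 B_n^{a_{n,j}} = B_n^{a_{n,j-1}}\1 B_n^{3}$, and as $a_{n,j-1}\ge 1$ this contains $B_n\1 B_n^{3}$. Because $B_n$ has $0$ as a prefix (Lemma \ref{0}), the word $B_n\1 B_n^{3} = B_n\1 B_n^{2}(B_n)$ has $w0$ as a subword, so $w0\in\mathcal{L}$.

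For the $1$-extension I would instead exploit $a_{n,1} = a_{n,z_n} = 1$: this makes $B_{n+1}$ begin with $B_n\1$ and end with $B_n^{a_{n,z_n-1}}\1 B_n$. Hence in the concatenation $B_{n+1}B_{n+1}$ — which lies in $\mathcal{L}$ since $B_m B_m$ always occurs in $B_{m+1}$ — the seam produces $B_n^{a_{n,z_n-1}}\1 B_n B_n\1$, and as $a_{n,z_n-1}\ge 1$ this contains $B_n\1 B_n^{2}\1$, so $w1\in\mathcal{L}$. (Equivalently one can read the same pattern $B_n^{a_{n,z_n-1}}\1 B_n^{\,a_{n,z_n}+a_{n,1}}\1 B_n^{a_{n,2}}$ off $B_{n+1}^{2}$.) Combining the two extensions gives $w\in\mathcal{L}^{RS}$.

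I expect the only delicate point to be the $1$-extension: the natural witness $B_n\1 B_n^{2}\1$ need not appear inside a single $B_{n+1}$ (there may be no index $k$ with $a_{n,k}=2$), so one is forced to pass to a junction of two consecutive copies of $B_{n+1}$ and use precisely that both $a_{n,1}$ and $a_{n,z_n}$ equal $1$ to guarantee the spacer–$B_n$–$B_n$–spacer pattern straddling the seam. The $0$-extension and the reduction to right-specialness are routine given Lemma \ref{0}.
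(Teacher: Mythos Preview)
Your proposal is correct and follows essentially the same approach as the paper: both obtain the $0$-extension from the block $B_n^{a_{n,j-1}}\1 B_n^{a_{n,j}}$ inside $B_{n+1}$ (using $a_{n,j}=3$ and Lemma~\ref{0}), and both obtain the $1$-extension from the seam of $B_{n+1}B_{n+1}$, reading off $B_n^{a_{n,z_n-1}}\1 B_n^{a_{n,z_n}+a_{n,1}}\1 = B_n^{a_{n,z_n-1}}\1 B_n^{2}\1$ from $a_{n,1}=a_{n,z_n}=1$. Your observation that $j\le z_n-1$ is correct but not actually needed, since $B_n^{a_{n,j-1}}\1 B_n^{a_{n,j}}$ is a subword of $B_{n+1}$ for every $2\le j\le z_n$.
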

\begin{proof}
The word $B_{n+1}B_{n+1} \in \mathcal{L}$ so $B_{n}^{a_{n,z_{n}-1}}\1 B_{n}^{a_{n,z_{n}}}B_{n}^{a_{n,1}}\1 \in \mathcal{L}$ so $B_{n}\1 B_{n}^{2}\1 \in \mathcal{L}$.  As $B_{n}^{a_{n,j-1}}\1 B_{n}^{a_{n,j}} \in \mathcal{L}$, also $B_{n}\1 B_{n}^{3} \in \mathcal{L}$.
\end{proof}

\begin{proof}[Proof of Proposition \ref{15}]
First consider when $a_{n,z_{n}} \geq 2$ and $a_{n,1} = 1$.  If $a_{n,2} = 1$ then Lemma \ref{9} gives $D_{n} = B_{n}\1 B_{n}\1 B_{n} \in \mathcal{L}^{RS}$.  Since $B_{n+1}$ has $B_{n}^{2}$ as a suffix, every suffix of $D_{n}$ of length at least $h_{n}+ c$ is not a suffix of $B_{n+1}$ (Lemma \ref{split}) and is right-special.
If $a_{n,2} \geq 2$ then Lemma \ref{10} gives $D_{n} = B_{n}^{2}\1 B_{n} \in \mathcal{L}^{RS}$ which likewise has the property that every suffix of $D_{n}$ of length at least $h_{n} + c$ is right-special and not a suffix of $B_{n+1}$.

Now consider when $a_{n,z_{n}} = 1$ and $a_{n,1} \geq 2$.  If $a_{n,1} = 2$ then Lemma \ref{11} gives $D_{n} = B_{n}\1 B_{n}^{2} \in \mathcal{L}^{RS}$.  As $\1 B_{n}$ is a suffix of $B_{n+1}$ in this case, again every suffix of $D_{n}$ of length at least $h_{n}+c$ is not a suffix of $B_{n+1}$ and is right-special.  If $a_{n,1} > 2$ then Lemma \ref{12} gives $D_{n} = B_{n}^{3}$ which has the same property.

Last consider the case when $a_{n,z_{n}} = 1$ and $a_{n,1} = 1$ and $a_{n,j} \geq 3$ for some $j$.  If $a_{n,j} = 3$ then Lemma \ref{14} gives $D_{n} = B_{n}\1 B_{n}^{2} \in \mathcal{L}^{RS}$ and as $\1 B_{n}$ is a suffix of $B_{n+1}$ in this case, $D_{n}$ has the same property as above.  If $a_{n,j} > 3$ then Lemma \ref{12} gives $D_{n} = B_{n}^{3}$ with the same property.

In all cases, we have a word $D_{n}$ of length at least $3h_{n}$ with every suffix of length at least $h_{n}+c$ being right-special and not a suffix of $B_{n+1}$, so $2h_{n} - c$ right-special words which are not suffixes of $B_{n+1}$ all of length less than $3h_{n}$.  By Lemma \ref{countmethod}, then
$p(3h_{n}) \geq 3h_{n} + f_{n} + 2h_{n} - c = \frac{5}{3}(3h_{n}) + f_{n} - c$.
\end{proof}

\subsection{Words of the form \texorpdfstring{$B_{n}^{2} \dblank B_{n}^{2}$}{BB---BB}}

\begin{proposition}\label{18a}
If $a_{n,1} \geq 2$ and $a_{n,z_{n}} \geq 2$ and $a_{n+1,z_{n+1}} \geq 2$ then \ls{3c}.
\end{proposition}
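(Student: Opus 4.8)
The plan is to produce a long right-special word $D_n$ — morally a word that looks like $B_n^2 \,\widehat{1}\, B_n^2$ — all of whose sufficiently long suffixes are right-special and fail to be suffixes of any $B_{n+m}$, and then apply Lemma \ref{countmethod}. The hypotheses $a_{n,1}\geq 2$ (so $B_{n+1}$ begins $B_n^2 \widehat1 \cdots$), $a_{n,z_n}\geq 2$ (so $B_{n+1}$ ends $\cdots \widehat1 B_n^2$), and $a_{n+1,z_{n+1}}\geq 2$ (so $B_{n+2}$ ends $\cdots\widehat1 B_{n+1}^2$) are exactly what is needed to manufacture this word and to control which suffixes of $B_{n+m}$ can look like it.

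First I would show $B_n^2\,\widehat1\, B_n^2$ (or at least $B_n^2\,\widehat1\,B_n$ followed by a $0$, i.e. $B_n^2\widehat1 B_n 0 \in \mathcal L$, together with $B_n^2\widehat1 B_n\widehat1\in\mathcal L$ for the right-special property) lies in $\mathcal L$: since $a_{n,z_n}\ge 2$, the word $B_n^2$ appears at the end of $B_{n+1}$, and since $a_{n,1}\ge 2$ it appears at the start; so $B_{n+1}B_{n+1}\in\mathcal L$ contains $B_n^2 \cdot B_n^2$, while $B_{n+1}\widehat1 B_{n+1}\in\mathcal L$ (which is a subword of $B_{n+2}$ because $s_{n+1,i}$ is nonconstant) contains $B_n^2\,\widehat1\,B_n^2$ — wait, more carefully, it contains $B_n^2\,\widehat1\,B_n^{a_{n,1}}$ which already gives $B_n^2\widehat1 B_n^2$. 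Hence the prefix $D_n := B_n^2\,\widehat1\,B_n$ of length $3h_n$ is right-special: $D_n\widehat1\in\mathcal L$ from the $B_{n+1}\widehat1 B_{n+1}$ occurrence and $D_n 0\in\mathcal L$ from the $B_{n+1}B_{n+1}$ occurrence. Extend to $D_n := B_n^2\,\widehat1\,B_n^2$ of length $4h_n$; every proper prefix of length $\ge h_n$ is right-special by the same splitting argument (using that $B_n^2$ and $B_n\widehat1 B_n$ differ on long suffixes, Lemma \ref{split}), so in fact every suffix of $D_n$ of length $\ell$ with $h_n + c \le \ell \le 4h_n - c$ is right-special.

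Next I would argue these suffixes are \emph{not} suffixes of $B_{n+m}$ for any $m\ge 1$. A suffix of $B_{n+m}$ of length between roughly $h_n$ and $4h_n$ is (for $m$ large, using $s_{n+k,r_{n+k}}=0$) a suffix of $B_{n+2}$, which ends in $\widehat1 B_{n+1}^2 = \widehat1 (B_n^{a_{n,1}}\widehat1\cdots)^{?}$ — precisely, since $a_{n+1,z_{n+1}}\ge 2$, $B_{n+2}$ ends in $B_{n+1}^2$, and $B_{n+1}$ ends in $\widehat1 B_n^{a_{n,z_n}}$ with $a_{n,z_n}\ge 2$; so near its end $B_{n+2}$ reads $\cdots\widehat1 B_n^{a_{n,z_n}}\widehat1 B_n^{a_{n,z_n}}$, i.e. a block of $\ge 2$ copies of $B_n$, then a $\widehat1$, then $\ge 2$ copies of $B_n$ ending the word. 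Comparing a length-$\ell$ suffix of this with the corresponding suffix of $D_n = B_n^2\widehat1 B_n^2$: the suffix $0 1^{c-1} B_n^2$-type tail forces agreement past the last $\widehat1$, and then the next block to the left must itself end in $B_n$ preceded by a $0$, i.e. the spacer gap $h_n$ positions before the final $\widehat1$ differs — in $D_n$ there is a $\widehat1$ there whereas a suffix of $B_{n+2}$ long enough (length $\ge 3h_n + d + $ const) would require a $\widehat1$ block whose position relative to the end is pinned by $a_{n,z_n}$, and a short count of where the $\widehat1$'s sit forces a mismatch. This block-position bookkeeping is the main obstacle: one must be careful that the suffixes of $B_{n+m}$ one is excluding really cannot be made to coincide with suffixes of $D_n$, handling the ambiguity in how $B_n$'s and $\widehat1$'s interleave near the end of $B_{n+2}$; the hypothesis $a_{n+1,z_{n+1}}\ge 2$ is what rules out the dangerous case where $B_{n+2}$ ends in a single $B_{n+1}$ whose own tail could masquerade as $B_n^2\widehat1 B_n^2$.

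Granting that, $D_n$ has length $\ge 4h_n$ with at least $3h_n - 2c$ distinct suffixes of length in $[h_n, 4h_n)$ that are right-special and not suffixes of any $B_{n+m}$; set $q_n = 4h_n \ge h_n$. Lemma \ref{countmethod} yields
\[
p(4h_n) \;\ge\; 4h_n + f_n + 3h_n - 2c \;=\; \tfrac{7}{4}(4h_n) + f_n - 2c \;>\; 1.5\,(4h_n) + f_n - 2c,
\]
so \ls{3c} holds with $C = 3c$ (in fact with room to spare; one could even state the $\tfrac74$ bound, but $1.5$ with the stated constant suffices). If it turns out the clean word $B_n^2\widehat1 B_n^2$ is unavailable in some sub-case (for instance if $z_n = 2$ so $B_{n+1} = B_n^{a_{n,1}}\widehat1 B_n^{a_{n,z_n}}$ with no interior structure), the same count goes through using the occurrence of $B_n^2\widehat1 B_n^2$ inside $B_{n+1}\widehat1 B_{n+1} \subseteq B_{n+2}$, which only uses $a_{n,1}\ge 2$ and $a_{n,z_n}\ge 2$; the third hypothesis is then spent entirely on the ``not a suffix'' step.
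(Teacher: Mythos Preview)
Your approach has a genuine gap: the candidate word $D_n = B_n^2\1 B_n^2$ (and the shorter $B_n^2\1 B_n$) need not be right-special, and even when it is, it can be a suffix of $B_{n+1}$.

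For right-specialness, take $B_{n+1} = B_n^3\1 B_n^3$, so $a_{n,1} = a_{n,z_n} = 3 \geq 2$. You need $B_n^2\1 B_n^2 1 \in \mathcal{L}$, hence $\1 B_n^2\1 \in \mathcal{L}$. But every maximal block $\1 B_n^t\1$ in the language has $t$ a multiple of $3$ (concatenations of $B_{n+1}$'s and $\1$'s only produce $t \in \{3,6,\ldots\}$), so $\1 B_n^2\1$ never occurs and $D_n$ is not right-special. Your assertion that ``$D_n\1 \in \mathcal{L}$ from the $B_{n+1}\1 B_{n+1}$ occurrence'' fails here: that word contains $B_n^{a_{n,z_n}}\1 B_n^{a_{n,1}}\1$, which for $a_{n,1} \geq 3$ yields $B_n^2\1 B_n^2 0$, not $B_n^2\1 B_n^2 1$. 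For the suffix condition, take $B_{n+1} = (B_n^2\1)^L B_n^2$ with $L \geq 2$; then $D_n = B_n^2\1 B_n^2$ is literally a suffix of $B_{n+1}$, so every suffix of $D_n$ is already a suffix of $B_{n+1}$ and Lemma \ref{countmethod} gives $t_n = 0$.

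This is exactly why the paper's proof is a multi-case analysis keyed to the block structure of $B_{n+1}$. Writing $B_{n+1} = B_n^\alpha\1 u B_n^\beta$, the paper first disposes of $\alpha \neq \beta$ (Lemma \ref{18a1}), then of any $\1 B_n^t\1$ with $t \notin \{\beta,2\beta\}$ (Lemma \ref{18a2}), then of the appearance of $\1 B_n^{2\beta}\1$ (Lemma \ref{18a3}). The residual case $B_{n+1} = (B_n^\beta\1)^L B_n^\beta$ genuinely cannot be handled at level $n$: here the paper climbs to level $n+1$ and spends the hypothesis $a_{n+1,z_{n+1}} \geq 2$ to manufacture three separate families of right-special words whose combined count reaches the $\tfrac{3}{2}$ threshold. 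In each sub-case a \emph{different} right-special word is exhibited ($B_n^{\alpha+\beta-1}$, $B_n^{\alpha'}\1 B_n^{\alpha'}$, $(B_n^\beta\1)^{x+y}B_n^\beta$, etc.), tailored so that it is both right-special and not a suffix of $B_{n+m}$. There is no single universal $D_n$ that works across all configurations satisfying $a_{n,1},a_{n,z_n} \geq 2$; the interior of $B_{n+1}$ retains too much freedom.
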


These subshifts include the examples studied in \cite{ferenczichacon} defined by $B_{n+1} = B_{n}^{p}1B_{n}^{q}$ for $p,q > 1$.

The proof of Proposition \ref{18a} is a series of lemmas.
Write
\[
B_{n+1} = B_{n}^{\alpha}\1 uB_{n}^{\beta}
\]
for some word $u$ which is either empty or ends in $\1$.  Then $\alpha,\beta \geq 2$.

\begin{lemma}\label{18a1}
If $\alpha \ne \beta$ then \ls{c}.
\end{lemma}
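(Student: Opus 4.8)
Here is how I would attack Lemma~\ref{18a1}.

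The plan is to produce, for the given $n$, some $q_{n}\geq h_{n}$ together with about $h_{n}$ distinct right-special words of length below $q_{n}$ which are not suffixes of any $B_{n+m}$, and then invoke Lemma~\ref{countmethod}. The first move is to reduce to $\alpha>\beta$: the reversed rank-one subshift (whose defining words are the reverses of the $B_{m}$) is again a rank-one subshift, because $s_{m,r_{m}}=0$ makes $\widetilde{B_{m+1}}=\prod_{i}\widetilde{B_{m}}1^{\tilde s_{m,i}}$ for the reversed spacer sequence $\tilde s_{m,i}=s_{m,r_{m}-1-i}$ ($i<r_{m}$), $\tilde s_{m,r_{m}}=0$; it has the same complexity function, since word-reversal is a length-preserving bijection of the language, and it interchanges $\alpha$ with $\beta$. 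So we may assume $\alpha>\beta$, hence $\alpha\geq\beta+1\geq3$. (The argument genuinely uses $\alpha>\beta$, not just $\alpha\neq\beta$.)

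Two preliminary facts. First, $B_{n}^{\alpha+\beta}\1\in\mathcal{L}$: at a spacer-free junction of $B_{n+1}B_{n+1}$ (a subword of $B_{n+2}$) the two copies meet as $\cdots\1 B_{n}^{\beta}B_{n}^{\alpha}\1\cdots=\cdots\1 B_{n}^{\alpha+\beta}\1\cdots$, since $B_{n+1}$ starts with $B_{n}^{\alpha}\1$ and ends with $\1 B_{n}^{\beta}$. Hence $B_{n}^{j}\1\in\mathcal{L}$ for all $j\leq\alpha+\beta$ and $B_{n}^{j+1}\in\mathcal{L}$ for all $j\leq\alpha+\beta-1$; as $B_{n}$ begins with $0$ this gives $B_{n}^{j}\in\mathcal{L}^{RS}$ for every $2\leq j\leq\alpha+\beta-1$, and therefore every suffix of every such $B_{n}^{j}$ is right-special. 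Second, for $m\geq1$ the word $B_{n+m}$ ends with $B_{n+1}$, which ends with $\1 B_{n}^{\beta}$, so for $\ell<h_{n+1}$ the length-$\ell$ suffix of every $B_{n+m}$ is the single word $v_{\ell}:=$ the length-$\ell$ suffix of $B_{n+1}$; thus a right-special word of length $\ell<h_{n+1}$ fails to be a suffix of any $B_{n+m}$ exactly when it differs from $v_{\ell}$.

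The crux is: for $j>\beta$ and $\ell\in((j-1)h_{n},jh_{n}]$ with $\ell\geq\beta h_{n}+c$, the length-$\ell$ suffix of $B_{n}^{j}$ is not $v_{\ell}$. Such a suffix of $B_{n}^{j}$ continues periodically as full copies of $B_{n}$ to the left of its terminal $B_{n}^{\beta}$, so at distance $\beta h_{n}+c$ from its end it carries the symbol $B_{n}[h_{n}-c+1]=0$ (the $0$ of the suffix $01^{c-1}$ of $B_{n}$); whereas $v_{\ell}$ ends in $\1 B_{n}^{\beta}$, preceded by a full $B_{n}$, so at that same position it lies inside the run of $1$'s of length $d+c-1$ produced by the spacer and carries a $1$. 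This is the computation behind Lemma~\ref{split} (and must be checked in both regimes $c\le d$ and $c>d$). Running $j=\beta+1$ (for which $\ell<\beta h_{n}+c$ is excluded, since there the suffix of $B_{n}^{\beta+1}$ equals $v_{\ell}$) and then $j=\beta+2,\dots,\alpha+\beta-1$ (for which all $\ell\in((j-1)h_{n},jh_{n}]$ work — the leading block $1^{r}$, $r<c$, causes no coincidence), the allowed ranges of $\ell$ knit together into $[\beta h_{n}+c,\ (\alpha+\beta-1)h_{n}]$ with no interior gap, giving $(\alpha-1)h_{n}-c+1$ distinct right-special words, none a suffix of any $B_{n+m}$, all of length below $q_{n}:=(\alpha+\beta-1)h_{n}+1<h_{n+1}$. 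Lemma~\ref{countmethod} then yields
\[
p(q_{n})\ \geq\ q_{n}+f_{n}+\bigl((\alpha-1)h_{n}-c+1\bigr)\ =\ (2\alpha+\beta-2)h_{n}+f_{n}-c+2,
\]
and since $\alpha\geq\beta+1$ gives $(2\alpha+\beta-2)-\tfrac32(\alpha+\beta-1)=\tfrac12(\alpha-\beta-1)\geq0$, the right-hand side is at least $\tfrac32 q_{n}+f_{n}-c$, which is the claim.

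The main obstacle is precisely the suffix comparison in the previous paragraph: making the position-counting uniform in $j$ and handling $c\leq d$ versus $c>d$, and verifying that for $j\geq\beta+2$ even the short suffixes (length just above $(j-1)h_{n}$) are not suffixes of any $B_{n+m}$, so that the length ranges have no gaps. Everything else is bookkeeping with the standing facts that $B_{n}$ is a prefix of, and eventually a suffix of, the later $B_{m}$.
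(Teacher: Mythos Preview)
Your $\alpha>\beta$ case is correct and is exactly the paper's argument: $B_{n+1}B_{n+1}$ contains $\1 B_{n}^{\alpha+\beta}\1$, so $B_{n}^{\alpha+\beta-1}\in\mathcal{L}^{RS}$, and since $B_{n+1}$ ends in $\1 B_{n}^{\beta}$ this word differs from suffixes of $B_{n+1}$ once the length exceeds $\beta h_{n}+c$, giving $(\alpha-1)h_{n}-c$ extra right-special words and the desired bound via Lemma~\ref{countmethod}. (Your indexing over $j=\beta+1,\dots,\alpha+\beta-1$ is redundant: a length-$\ell$ suffix of $B_{n}^{j}$ is the same word as the length-$\ell$ suffix of $B_{n}^{\alpha+\beta-1}$, so you are really just taking suffixes of the single right-special word $B_{n}^{\alpha+\beta-1}$.)

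The reduction to $\alpha>\beta$ by reversal, however, has a genuine gap. Running the $\alpha>\beta$ argument in the reversed subshift requires $\widetilde{B_{n}}^{\alpha+\beta-1}$ to be right-special in $\widetilde{\mathcal{L}}$, equivalently $B_{n}^{\alpha+\beta-1}$ to be \emph{left}-special in $\mathcal{L}$. The right-specialness proof used that $B_{n}$ begins with $0$; the left-special analogue would need $B_{n}$ to end with $0$. But the standing hypothesis only gives $s_{m,r_{m}}=0$ for $m\geq N$, so $B_{n}$ ends with $B_{N}$, which ends with $01^{c-1}$; when $c>1$ the last symbol is $1$. In that case every block-aligned occurrence of $B_{n}^{j}$ is preceded by a $1$ (either the last symbol of a preceding $B_{n}$ or a symbol of $\1$), so you have not shown $0B_{n}^{j}\in\mathcal{L}$, and the reversal does not deliver the needed right-special words. (Relatedly, your formula $\widetilde{B_{m+1}}=\prod_{i}\widetilde{B_{m}}1^{\tilde s_{m,i}}$ only matches the actual reversed words for $m\geq N$, so the ``reversed rank-one subshift'' you build from $\widetilde{B_{1}}=0$ need not have the reversed language at all.)

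The paper does not use symmetry for $\alpha<\beta$. Instead it takes $\alpha'$ minimal with $\1 B_{n}^{\alpha'}\1$ a subword of $\1 B_{n+1}$ (so $\alpha'\leq\alpha<\beta$) and shows $B_{n}^{\alpha'}\1 B_{n}^{\alpha'}\in\mathcal{L}^{RS}$: the extension by $\1$ comes from either $B_{n+1}$ (if $\alpha'<\alpha$) or $B_{n+1}\1 B_{n+1}$ (if $\alpha'=\alpha$), and the extension by $0$ comes from $B_{n}^{\alpha'}\1 B_{n}^{\beta}$ being a suffix of $B_{n+1}$ together with $\alpha'<\beta$. Since $B_{n+1}$ ends in $B_{n}^{\alpha'+1}$, this yields $\alpha' h_{n}+d-c$ non-suffix right-special words of length below $2\alpha' h_{n}+d$, and Lemma~\ref{countmethod} gives $p(2\alpha' h_{n}+d)\geq\tfrac{3}{2}(2\alpha' h_{n}+d)+f_{n}-c$. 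That direct construction is what you are missing.
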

\begin{proof}
The word $B_{n+1}B_{n+1}$ has $\1 B_{n}^{\alpha+\beta}\1$ as a subword so $B_{n}^{\alpha+\beta-1} \in \mathcal{L}^{RS}$.  The word $B_{n+1}$ has $\1 B_{n}^{\beta}$ as a suffix so our word differs from $B_{n+1}$ on suffixes of length at least $\beta h_{n} + c$ (Lemma \ref{split}, which we henceforth use implicitly) and so gives at least $(\alpha-1)h_{n} - c$ right-special words which are not suffixes of $B_{n+1}$ with length less than $(\alpha+\beta-1)h_{n}$.
Then by Lemma \ref{countmethod},
\[
p((\alpha+\beta-1)h_{n}) \geq (\alpha+\beta-1)h_{n} + f_{n} + (\alpha-1)h_{n} - c
= \frac{3}{2}(\alpha+\beta-1)h_{n} + \frac{1}{2}(\alpha-1-\beta)h_{n} + f_{n} - c
\]

If $\alpha \geq \beta + 1$ then $\frac{3}{2}(\alpha+\beta-1)h_{n} + \frac{1}{2}(\alpha-1+\beta)h_{n} + f_{n} - c \geq \frac{3}{2}(\alpha+\beta-1)h_{n} + f_{n} - c$.

Now consider when $\alpha < \beta$.  Let $\alpha^{\prime}$ minimal such that $\1 B_{n}^{\alpha^{\prime}} \1$ is a subword of $\1 B_{n+1}$.  Then $\alpha^{\prime} \leq \alpha < \beta$.  If $\alpha^{\prime} < \alpha$ then $B_{n}^{\alpha^{\prime}}\1 B_{n}^{\alpha^{\prime}}\1$ is a subword of $B_{n+1}$ as $\alpha^{\prime}$ is minimal so $B_{n}^{\alpha^{\prime}}$ must precede $\1 B_{n}^{\alpha^{\prime}} \1$ in $B_{n+1}$.  If $\alpha^{\prime} = \alpha$ then, as $\alpha < \beta$, the word $B_{n}^{\alpha^{\prime}}\1 B_{n}^{\alpha^{\prime}}\1$ is a subword of $B_{n+1}\1 B_{n+1}$ (with the first $\1$ in our word being the middle $\1$ in $B_{n+1}\1 B_{n+1}$).
Since $\alpha^{\prime}$ is minimal, $B_{n+1}$ has $B_{n}^{\alpha^{\prime}}\1 B_{n}^{\beta}$ as a suffix and, as $\alpha^{\prime} < \beta$, that word has $B_{n}^{\alpha^{\prime}}\1 B_{n}^{\alpha^{\prime}}B_{n}$ as a subword.
Then $B_{n}^{\alpha^{\prime}}\1 B_{n}^{\alpha^{\prime}} \in \mathcal{L}^{RS}$.

Since $B_{n+1}$ has $B_{n}^{\alpha^{\prime}+1}$ as a suffix, our word gives at least $\alpha^{\prime} h_{n} + d - c$ right-special words which are not suffixes of $B_{n+1}$ with length less than $2\alpha^{\prime} h_{n}+d $.  Then by Lemma \ref{countmethod} (which we will henceforth use implicitly),
\[
p(2\alpha^{\prime} h_{n}+d) \geq 2\alpha^{\prime} h_{n} + d + f_{n} + \alpha^{\prime} h_{n} + d - c = \frac{3}{2}(2\alpha^{\prime} h_{n} + d) + \frac{1}{2}d - c + f_{n}\qedhere
\]
\end{proof}

From here on, assume $\alpha = \beta$.

\begin{lemma}\label{18a2}
If $\1 B_{n}^{t} \1$ is a subword of $B_{n+1}$ for some $t \ne \beta$ and $t \ne 2\beta$ then \ls{c}.
\end{lemma}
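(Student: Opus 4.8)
The plan is to follow the template of Lemma~\ref{18a1}: for each sufficiently large $n$ I exhibit a right-special word $D_{n}$ of length $q_{n}$ together with a threshold $\ell_{0}\in[h_{n},q_{n})$ such that the length-$\ell_{0}$ suffix of $D_{n}$ is not a suffix of any $B_{n+m}$ ($m\ge 1$).  Since $D_{n}0,D_{n}1\in\mathcal{L}$ and $\mathcal{L}$ is closed under subwords, every suffix of $D_{n}$ is right-special; and if a suffix of $D_{n}$ of some length $\ell\ge\ell_{0}$ coincided with the length-$\ell$ suffix of $B_{n+m}$, their length-$\ell_{0}$ suffixes would coincide too, contradicting the choice of $\ell_{0}$.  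Hence the suffixes of $D_{n}$ of lengths $\ell_{0},\ell_{0}+1,\dots,q_{n}-1$ are $q_{n}-\ell_{0}$ distinct right-special words, none a suffix of any $B_{n+m}$, so Lemma~\ref{countmethod} gives $p(q_{n})\ge q_{n}+f_{n}+(q_{n}-\ell_{0})=2q_{n}-\ell_{0}+f_{n}$, which is $\ge\tfrac{3}{2}q_{n}+f_{n}-c$ whenever $q_{n}\ge 2\ell_{0}-2c$.  The discrepancy making the length-$\ell_{0}$ suffix of $D_{n}$ distinct from that of $B_{n+m}$ always has the same origin: every $B_{n+m}$ ($m\ge1$) ends in $\1 B_{n}^{\beta}$ while every $B_{n}$ ends in $01^{c-1}$, so $c$ places before the end of a full copy of $B_{n}$ one reads a $0$, whereas $c$ places before the end of the trailing $\1 B_{n}^{\beta}$ of $B_{n+m}$ (counting past at most $\beta-1$ full copies) one reads a $1$.

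The occurrence of $\1 B_{n}^{t}\1$ in $B_{n+1}$ with $t\neq2\beta$ forces a maximal run of exactly $t$ copies of $B_{n}$ flanked by level-$n$ spacers, so $t=a_{n,j}$ for some $2\le j\le z_{n}-1$ (hence $z_{n}\ge3$), and the maximal-run lengths occurring in $\mathcal{L}$ are exactly $\{a_{n,j}:2\le j\le z_{n}-1\}\cup\{\beta,2\beta\}$.  Set $m_{n}=\min_{j}a_{n,j}\le a_{n,1}=\beta$.  \emph{If $m_{n}<\beta$}, the minimum is attained at a middle index, so $\1 B_{n}^{m_{n}}\1\subseteq B_{n+1}$; since every block has length $\ge m_{n}$ and $\beta\ge m_{n}+1$ one checks $B_{n}^{m_{n}}\1 B_{n}^{m_{n}}\1\in\mathcal{L}$ (right-extension by $1$) and $B_{n}^{m_{n}}\1 B_{n}^{m_{n}+1}\in\mathcal{L}$ (right-extension by $0$), so $D_{n}=B_{n}^{m_{n}}\1 B_{n}^{m_{n}}\in\mathcal{L}^{RS}$; take $q_{n}=2m_{n}h_{n}+d$, $\ell_{0}=m_{n}h_{n}+c$ (the distinguishing symbol lies in the leading $\1$ or the $1^{c-1}$-tail of the leading $B_{n}^{m_{n}}$, so is a $1$ in $D_{n}$ and, since $m_{n}<\beta$, a $0$ in $B_{n+m}$), noting $q_{n}=2m_{n}h_{n}+d\ge 2\ell_{0}-2c$.  \emph{If $m_{n}=\beta$}, every block has length $\ge\beta$; by hypothesis some maximal run has length $\notin\{\beta,2\beta\}$, hence $>\beta$, and we put $t'=\min(2\beta,t)\in(\beta,2\beta]$.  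Using that the run preceding the spacer before the run of length $t$ (respectively, before the $B_{n}^{2\beta}$ that arises at a $B_{n+1}B_{n+1}$ junction) has length $\ge\beta$, and that $B_{n}^{\beta}\1 B_{n}^{t'+1}$ is a prefix of $B_{n}^{\beta}\1 B_{n}^{2\beta}$ (when $t'<2\beta$, as $t'+1\le2\beta$) or of $B_{n}^{\beta}\1 B_{n}^{t}$ (when $t'=2\beta$, as $t\ge2\beta+1$), one gets $D_{n}=B_{n}^{\beta}\1 B_{n}^{t'}\in\mathcal{L}^{RS}$; take $q_{n}=(\beta+t')h_{n}+d$, $\ell_{0}=\beta h_{n}+c$ (now $D_{n}$ ends in $B_{n}^{t'}$ with at least $\beta+1$ full copies, so the distinguishing symbol is a $0$ in $D_{n}$ and a $1$ in $B_{n+m}$), noting $q_{n}=(\beta+t')h_{n}+d\ge2\beta h_{n}=2\ell_{0}-2c$ since $t'>\beta$.

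The only delicate case is $m_{n}=\beta$.  The naive right-special word $\1 B_{n}^{t}$ (or $\1 B_{n}^{2\beta}$) — extendable by $1$ via $\1 B_{n}^{t}\1$ and by $0$ via $\1 B_{n}^{t+1}$, a prefix of $\1 B_{n}^{2\beta}$ or $\1 B_{n}^{t}$ — has length only $\max(t,2\beta)h_{n}+d$, and all of its suffixes of length $<\beta h_{n}$ are suffixes of the trailing $B_{n}^{\beta}$ of the $B_{n+m}$'s, so do not count; the $\approx(\max(t,2\beta)-\beta)h_{n}$ remaining ones are fewer than half its length when $\beta<t<2\beta$, and this word gives only $\limsup p(q)/q\ge 2-\beta/\max(t,2\beta)<1.5$.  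The fix is to lengthen the word by prepending a full $B_{n}^{\beta}$ — available exactly because every block now has length $\ge\beta$ — reaching a word of length at least $2\beta h_{n}$; checking that $B_{n}^{\beta}\1 B_{n}^{t'}$ really is right-special, and in particular that its $0$-extension $B_{n}^{\beta}\1 B_{n}^{t'+1}$ lies in $\mathcal{L}$, is precisely where the $2\beta$-run at a $B_{n+1}B_{n+1}$ junction and the inequality $t'\le 2\beta$ are used.
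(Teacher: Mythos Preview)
Your argument is correct and follows essentially the same route as the paper: split on whether some middle block has length $<\beta$ (your $m_{n}<\beta$) or all blocks have length $\ge\beta$ (your $m_{n}=\beta$), and in each case exhibit a right-special word of the shape $B_{n}^{a}\1 B_{n}^{b}$ whose long suffixes are not suffixes of $B_{n+m}$. The only cosmetic differences are that in the small-block case the paper uses $B_{n}^{\beta}\1 B_{n}^{t'}$ (with $t'<\beta$ the first anomalous block) rather than your symmetric $B_{n}^{m_{n}}\1 B_{n}^{m_{n}}$, and in the $t>2\beta$ subcase the paper uses the shorter $B_{n}^{2\beta}$ rather than your $B_{n}^{\beta}\1 B_{n}^{2\beta}$; both choices give the same bound.
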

\begin{proof}
As $B_{n+1}$ has $B_{n}^{\beta}\1$ as a prefix, there is some $t^{\prime} \ne \beta, 2\beta$ such that $B_{n}^{\beta}\1 B_{n}^{t^{\prime}}\1$ is a subword of $B_{n+1}$.

Suppose first that there is such a $t^{\prime} < \beta$.  As $B_{n}^{\beta}\1 B_{n}^{\beta}$ is a subword of $B_{n+1}\1 B_{n+1}$, then $B_{n}^{\beta}\1 B_{n}^{t^{\prime}} \in \mathcal{L}^{RS}$.  Since $B_{n}^{t^{\prime}+1}$ is a suffix of $B_{n+1}$ (as $t^{\prime} < \beta$), this gives at least $\beta h_{n} + d - c$ right-special suffixes that are not suffixes of $B_{n+1}$, all of length less than $(\beta+t^{\prime})h_{n} + d$.  Then as $t^{\prime} < \beta$,
\begin{align*}
p((\beta+t^{\prime})h_{n} + d) &\geq (\beta + t^{\prime})h_{n} + d + f_{n} + \beta h_{n} + d - c \\
&= \frac{3}{2}((\beta+t^{\prime})h_{n} + d) + \frac{1}{2}(\beta - t^{\prime})h_{n} + \frac{1}{2}d + f_{n} - c 
> \frac{3}{2}((\beta+t^{\prime})h_{n} + d) + f_{n} - c
\end{align*}

So we may assume that for all $t$ such that $\1 B_{n}^{t} \1$ is a subword of $B_{n+1}$, we have $t \geq \beta$.

Suppose now that $\beta < t^{\prime} < 2\beta$.  As $B_{n+1}$ has $B_{n}^{\beta}\1 B_{n}^{\beta}$ as a suffix (since we have ruled out $t < \beta$), the word $B_{n+1}B_{n+1}$ has $B_{n}^{\beta}\1 B_{n}^{2\beta}$ as a subword.  Then $B_{n}^{\beta}\1 B_{n}^{t^{\prime}} \in \mathcal{L}^{RS}$ as $t^{\prime} < 2\beta$.  This gives at least $(\beta + t^{\prime} - \beta)h_{n} + d - c$ right-special suffixes which are not suffixes of $B_{n+1}$ (which ends in $\1 B_{n}^{\beta}$) all of length less than $(\beta + t^{\prime})h_{n} + d$.  Then, as $t^{\prime} > \beta$,
\begin{align*}
p((\beta + t^{\prime})h_{n} + d) &\geq (\beta + t^{\prime})h_{n} + d + f_{n} + t^{\prime}h_{n} + d - c \\
&= \frac{3}{2}((\beta+t^{\prime})h_{n} + d) + \frac{1}{2}(t^{\prime}-\beta)h_{n} + \frac{1}{2}d - c + f_{n} 
> \frac{3}{2}((\beta+t^{\prime})h_{n} + d) + \frac{1}{2}d - c + f_{n}
\end{align*}

If $t^{\prime} > 2\beta$ then $B_{n}^{2\beta + 1}\1 \in \mathcal{L}$ so $B_{n}^{2\beta} \in \mathcal{L}^{RS}$ which gives at least $\beta h_{n} - c$ right-special suffixes which are not suffixes of $B_{n+1}$, all of length less than $2\beta h_{n}$.  Then 
\[
p(2\beta h_{n}) \geq 2\beta h_{n} + f_{n} + \beta h_{n} - c = \frac{3}{2}(2\beta h_{n}) + f_{n} - c \qedhere
\]
\end{proof}

We are left with the case when every $\1 B_{n}^{t} \1$ in $B_{n+1}$ has $t = \beta$ or $t = 2\beta$.

\begin{lemma}\label{18a3}
If $\1 B_{n}^{2\beta} \1$ is a subword of $B_{n+1}$ then \ls{2c}.
\end{lemma}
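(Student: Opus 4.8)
The target estimate is exactly what Lemma~\ref{countmethod} produces from a supply of right-special words avoiding the suffixes of the $B_{n+m}$'s, so the plan is: for a suitable $q_{n}\approx 3\beta h_{n}$ exhibit roughly $q_{n}/2$ right-special words of length in $[h_{n},q_{n})$ that are not suffixes of any $B_{n+m}$, and feed them to Lemma~\ref{countmethod}. Recall the standing reductions: $\alpha=\beta\geq 2$ (Lemma~\ref{18a1}), every maximal run of $B_{n}$'s occurring in $\mathcal{L}$ has length exactly $\beta$ or $2\beta$ (Lemma~\ref{18a2}), some run has length $2\beta$ (the hypothesis here), and by the hypothesis $a_{n+1,z_{n+1}}\geq 2$ of Proposition~\ref{18a} the block $B_{n+1}B_{n+1}$ is a subword of $\mathcal{L}$ while each $B_{n+m}$ $(m\geq 1)$ ends in $B_{n}^{r}\1 B_{n}^{\beta}$ for the fixed value $r=a_{n,z_{n}-1}\in\{\beta,2\beta\}$; since $h_{n+1}>3\beta h_{n}+d$ it suffices, for a word of length $<3\beta h_{n}+d$, to not be a suffix of $B_{n+1}$.

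The first ingredient is the word $B_{n}^{2\beta-1}$. As $\1 B_{n}^{2\beta}\1$ is a subword of $B_{n+1}$ we get $B_{n}^{2\beta-1}\1\in\mathcal{L}$, and $B_{n}^{2\beta-1}0\in\mathcal{L}$ because $B_{n}^{2\beta}\in\mathcal{L}$ and $B_{n}$ begins with $0$ (Lemma~\ref{0}); hence $B_{n}^{2\beta-1}\in\mathcal{L}^{RS}$ and so are all of its suffixes. Since $B_{n+m}$ ends in $\1 B_{n}^{\beta}$, Lemma~\ref{split} (used here, as elsewhere in this section, implicitly) shows that the suffixes of $B_{n}^{2\beta-1}$ of length at least $\beta h_{n}+c$ are suffixes of no $B_{n+m}$ — that is $(\beta-1)h_{n}-c$ right-special words of length less than $(2\beta-1)h_{n}$. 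For $\beta$ large this already outpaces $q_{n}/2$, but for small $\beta$ (the real case) it leaves a deficit that must be covered by a genuinely longer right-special word.

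The second ingredient is such a word $D_{n}$, ending in $\1 B_{n}^{\beta}$, of length close to $3\beta h_{n}$. Here the subshift's rigidity is the governing feature: the only right-special words are the $B_{n}^{\ell}$ with $\ell<2\beta$ and the words ending in $\1 B_{n}^{\beta}$ (no word ending in $\1 B_{n}^{\ell}$ with $\ell\notin\{\beta,2\beta\}$ can be right-special, since a maximal run of length $\ell$ between two spacers is forbidden, and $\ell=2\beta$ cannot be extended by $0$). So $D_{n}$ must be pinned down explicitly: when $r=\beta$ I would take $D_{n}=B_{n}^{2\beta}\1 B_{n}^{\beta}$, checking $D_{n}\in\mathcal{L}^{RS}$ by locating $B_{n}^{2\beta}\1 B_{n}^{\beta}\1\in\mathcal{L}$ (a $2\beta$-run followed, past a spacer, by a $\beta$-run and then a spacer — available from the last $2\beta$-run of $B_{n+1}$, which is followed by $\beta$-runs, since $r=\beta$ forces it not to be $a_{n,z_{n}-1}$) and $B_{n}^{2\beta}\1 B_{n}^{2\beta}\in\mathcal{L}$ (two consecutive $2\beta$-runs, obtained at the $B_{n+1}B_{n+1}$-junction, where the terminal $B_{n}^{\beta}$ of the first copy merges with the initial $B_{n}^{\beta}$ of the second); then the suffixes of $D_{n}$ of length exceeding $2\beta h_{n}+d$ end in $B_{n}^{k}\1 B_{n}^{\beta}$ with $k>\beta$, whereas the corresponding suffix of $B_{n+1}$ carries an internal spacer in that range (since $B_{n+1}$ ends in $B_{n}^{\beta}\1 B_{n}^{\beta}$ when $r=\beta$), so they differ — giving about $\beta h_{n}$ more right-special words of length less than $3\beta h_{n}+d$. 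When $r=2\beta$ one instead uses a word ending in $\1 B_{n}^{\beta}\1 B_{n}^{\beta}$ (legitimate since $B_{n}^{2\beta}\1 B_{n}^{2\beta}\1\in\mathcal{L}$ in this subcase, again from the $B_{n+1}B_{n+1}$-junction), whose long suffixes carry an internal spacer where the corresponding suffix of $B_{n+1}$, lying inside a $2\beta$-run, does not; the bookkeeping is the same. Taking $q_{n}$ to be the length of $D_{n}$, the two families live in disjoint length-ranges and none is a suffix of a $B_{n+m}$, so Lemma~\ref{countmethod} (together with the $q_{n}-h_{n}$ genuine suffixes of the $B_{n+m}$'s supplied by Lemma~\ref{8}) yields
\[
p(q_{n})\ \geq\ q_{n}+f_{n}+(\beta-1)h_{n}-c+\beta h_{n}-O(1)\ =\ \tfrac32 q_{n}+\tfrac{\beta-2}{2}h_{n}+f_{n}-O(c),
\]
and $\beta\geq 2$ makes the middle term nonnegative, giving $p(q_{n})\geq 1.5q_{n}+f_{n}-2c$.

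The hard part is exactly this last ingredient: because right-special words are so scarce, $D_{n}$ cannot be produced by a soft argument — one must trace the relevant patterns ($B_{n}^{2\beta}\1 B_{n}^{\beta}$, $B_{n}^{2\beta}\1 B_{n}^{2\beta}$) through $B_{n+1}B_{n+1}$, and the value of $a_{n,z_{n}-1}$ (i.e.\ whether $B_{n+m}$ ends in $B_{n}^{\beta}\1 B_{n}^{\beta}$ or $B_{n}^{2\beta}\1 B_{n}^{\beta}$), and to a lesser extent the relation between $c$ and $d$, dictate a small case split. The cut value $\beta=2$ is the tight case: there the estimate above has no slack beyond the stated constant, so it is there that the sharpest admissible choice of $D_{n}$ is required.
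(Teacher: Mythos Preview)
Your overall strategy---pair the easy family of right-special suffixes of $B_{n}^{2\beta-1}$ with a second, longer right-special word $D_{n}$ ending in $\1 B_{n}^{\beta}$---matches the paper's.  The first ingredient is exactly what the paper uses.  The problem is your choice of $D_{n}$.

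In the subcase $r=a_{n,z_{n}-1}=\beta$ you take $D_{n}=B_{n}^{2\beta}\1 B_{n}^{\beta}$ and claim $D_{n}0\in\mathcal{L}$ because $B_{n}^{2\beta}\1 B_{n}^{2\beta}\in\mathcal{L}$, arguing this comes from ``two consecutive $2\beta$-runs at the $B_{n+1}B_{n+1}$-junction''.  But when $r=\beta$, the junction of $B_{n+1}B_{n+1}$ has block pattern $\cdots\1 B_{n}^{\beta}\1 B_{n}^{2\beta}\1 B_{n}^{a_{n,2}}\1\cdots$: the merged $2\beta$-block sits between $a_{n,z_{n}-1}=\beta$ and $a_{n,2}$, and nothing forces $a_{n,2}=2\beta$.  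Concretely, if $B_{n+1}=(B_{n}^{\beta}\1)^{2}B_{n}^{2\beta}(\1 B_{n}^{\beta})^{2}$ then every $2\beta$-run in any word of $\mathcal{L}$ is flanked by $\beta$-runs, so $B_{n}^{2\beta}\1 B_{n}^{2\beta}\notin\mathcal{L}$, hence $D_{n}0\notin\mathcal{L}$ and your $D_{n}$ is not right-special.  The $r=2\beta$ subcase does produce $B_{n}^{2\beta}\1 B_{n}^{2\beta}\1$ at the junction as you say, but the split on $r$ alone does not suffice.

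The paper avoids this by looking at the $\1$-junction of $B_{n+1}\1 B_{n+1}$ rather than the $0$-junction of $B_{n+1}B_{n+1}$.  Writing $B_{n+1}$ with prefix $(B_{n}^{\beta}\1)^{x}B_{n}^{2\beta}$ and suffix $B_{n}^{2\beta}(\1 B_{n}^{\beta})^{y}$, the $\1$-junction creates a run of $x+y+1$ consecutive $\beta$-blocks followed by a $2\beta$-block, so $(B_{n}^{\beta}\1)^{x+y}B_{n}^{\beta}\in\mathcal{L}^{RS}$.  Its suffixes differing from those of $B_{n+1}$ supply about $x(\beta h_{n}+d)$ extra right-special words, which together with the $B_{n}^{2\beta-1}$ family suffices when $x\geq y$.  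When $x<y$ the paper decomposes $B_{n+1}$ as a product of blocks $(B_{n}^{\beta}\1)^{x_{i}}B_{n}^{2\beta}\1$ and does a further case split on where the minimal $x_{i}$ occurs.  In the counterexample above this is the case $x=y=2$, handled by the first branch.  The point is that the genuinely long right-special word comes from long runs of $\beta$-blocks (created by inserting $\1$), not from adjacent $2\beta$-blocks (which may not exist).
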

\begin{proof}
First observe that $B_{n}^{2\beta -1} \in \mathcal{L}^{RS}$ which gives at least $(\beta-1)h_{n} - c$ right-special suffixes of length less than $(2\beta - 1)h_{n}$ which are not suffixes of $B_{n+1}$.

Choose $x, y \geq 1$ so that $B_{n+1}$ has $(B_{n}^{\beta}\1)^{x}B_{n}^{2\beta}$ as a prefix and $B_{n}^{2\beta}(\1 B_{n}^{\beta})^{y}$ as a suffix.

Then $B_{n+1}\1 B_{n+1}$ has the subword $(B_{n}^{\beta}\1)^{x+y+1}B_{n}^{2\beta}$ which means that $(B_{n}^{\beta}\1)^{x+y}B_{n}^{\beta} \in \mathcal{L}^{RS}$.  This gives at least $x(\beta h_{n} + d) - c$ right-special suffixes which are not suffixes of $B_{n+1}$ of length less than $(x+y+1)\beta h_{n} + (x+y)d$.  As there is no overlap between these and the suffixes of $B_{n}^{2\beta-1}$, this gives a total of at least $((x+1)\beta - 1)h_{n} + xd - 2c$ right-special suffixes of length less than $(x+y+1)\beta h_{n} + (x+y)d$ which are not suffixes of $B_{n+1}$.  Then
\begin{align*}
p((x+y+1)\beta &h_{n} + (x+y)d) \geq (x+y+1)\beta h_{n}+(x+y)d + f_{n} + ((x+1)\beta-1)h_{n} + xd - 2c \\
&= \frac{3}{2}((x+y+1)\beta h_{n}+ (x+y)d) + \frac{1}{2}(x+1-y)\beta h_{n} - h_{n} + \frac{1}{2}(x-y)d - 2c + f_{n}
\end{align*}
and, as $\beta \geq 2$, this means that if $x \geq y$ then
\begin{align*}
p((x+y+1)\beta h_{n} + (x+y)d) &\geq \frac{3}{2}((x+y+1)\beta h_{n}+(x+y)d) + \frac{1}{2}\beta h_{n} - h_{n} - 2c + f_{n} \\
&\geq \frac{3}{2}((x+y+1)\beta h_{n}+(x+y)d) - 2c + f_{n}
\end{align*}

So we may assume from here on that $x < y$.

Write $B_{n+1} = \big{(}\prod_{i=1}^{s} (B_{n}^{\beta}\1 )^{x_{i}} B_{n}^{2\beta}\1 \big{)}(B_{n}^{\beta}\1 )^{y-1}B_{n}^{\beta}$ for some $s \geq 1$ and $x_{i} \geq 1$ with $x_{1} = x$.  Choose $i^{\prime}$ such that $x_{i^{\prime}}$ is minimal and $i^{\prime}$ is the minimal such $i$.

First we consider the case when $i^{\prime} > 1$.  Then $x_{i^{\prime}} < x$ since otherwise we would have chosen $i^{\prime} = 1$.  Since $B_{n+1}\1$ has $B_{n}^{2\beta}\1 (B_{n}^{\beta}\1 )^{x_{s}} B_{n}^{2\beta}\1 (B_{n}^{\beta}\1 )^{y-1}B_{n}^{\beta}\1$ as a suffix, it also has $(B_{n}^{\beta}\1 )^{x_{i^{\prime}}+1} B_{n}^{2\beta} (\1 B_{n}^{\beta})^{y}\1$ as a suffix since $x_{s} \geq x_{i^{\prime}}$.  As $y > x_{i^{\prime}}$, then $(B_{n}^{\beta}\1 )^{x_{i^{\prime}}+1} B_{n}^{2\beta} (\1 B_{n}^{\beta})^{x_{i^{\prime}}+1} \1 \in \mathcal{L}$.

Since $i^{\prime} > 1$, $B_{n+1}$ has $(B_{n}^{\beta}\1 )^{x_{i^{\prime}-1}} B_{n}^{2\beta} (\1 B_{n}^{\beta})^{x_{i^{\prime}}} \1 B_{n}^{2\beta}$ as a subword.  Then $(B_{n}^{\beta}\1 )^{x_{i^{\prime}}+1} B_{n}^{2\beta} (\1 B_{n}^{\beta})^{x_{i^{\prime}}}\1 B_{n}^{2\beta} \in \mathcal{L}$ as $x_{i^{\prime}-1} \geq x_{i^{\prime}}+1$ by the choice of $i^{\prime}$, so
$(B_{n}^{\beta}\1 )^{x_{i^{\prime}}+1} B_{n}^{2\beta} (\1 B_{n}^{\beta})^{x_{i^{\prime}}}\1 B_{n}^{\beta}$ is right-special.

Since $x_{i^{\prime}} < x < y$ implies $x_{i^{\prime}} < y - 1$ and $B_{n+1}$ has $\1 (B_{n}^{\beta}\1)^{y-1}B_{n}^{\beta}$ as a suffix, this gives at least $(x_{i^{\prime}}+2)\beta h_{n} + (x_{i^{\prime}}+1) d - c$ right-special words which are not suffixes of $B_{n+1}$, all of length less than $(2x_{i^{\prime}}+4)\beta h_{n}+ (2x_{i^{\prime}}+2)d$.  Therefore 
\begin{align*}
p((2x_{i^{\prime}}+4)\beta h_{n} + (2x_{i^{\prime}}+2)d) &\geq (2x_{i^{\prime}}+4)\beta h_{n} + (2x_{i^{\prime}}+2)d + f_{n} + (x_{i^{\prime}}+2)\beta h_{n} + (x_{i^{\prime}}+1) d - c \\
&= \frac{3}{2}((2x_{i^{\prime}}+4)\beta h_{n} + (2x_{i^{\prime}}+2)d) - c + f_{n}
\end{align*}

Now consider when $i=1$, i.e.~$x_{i} \geq x$ for all $i$.  Here $B_{n+1}B_{n+1}$ has $B_{n}^{2\beta}\1 (B_{n}^{\beta}\1 )^{y-1} B_{n}^{2\beta}\1 (B_{n}^{\beta}\1 )^{x-1} B_{n}^{2\beta}$ as a subword and $B_{n+1}\1$ has $(B_{n}^{\beta}\1 )^{x_{s}} B_{n}^{2\beta}\1 (B_{n}^{\beta}\1 )^{y-1}B_{n}^{\beta}\1$ as a subword.  As $x < y$ and $x \leq x_{s}$, this means $(B_{n}^{\beta}\1 )^{x}B_{n}^{2\beta}\1 (B_{n}^{\beta}\1 )^{x-1}B_{n}^{\beta} \in \mathcal{L}^{RS}$.

This gives at least $(x+1)\beta h_{n} + x d - c$ right-special words which are not suffixes of $B_{n+1}$, all of length less than $(2x+2)\beta h_{n}+2xd$.  Therefore 
\[
p((2x+2)\beta h_{n}+2xd) \geq (2x+2)\beta h_{n}+ 2xd + f_{n} + (x+1)\beta h_{n} + x d - c
= \frac{3}{2} ((2x+2)\beta h_{n} + 2xd) + f_{n} - c \qedhere
\]
\end{proof}

\subsubsection{Proof of Proposition \ref{18a}}

\begin{proof}[Proof of Proposition \ref{18a}]
By Lemmas \ref{18a1}, \ref{18a2} and \ref{18a3}, we are left with the situation when $B_{n+1} = (B_{n}^{\beta}\1)^{L}B_{n}^{\beta}$ for some $L \geq 1$.

Since $B_{n+2}$ has $B_{n+1}B_{n+1}$ as a suffix, and since Proposition \ref{15} then covers the case when $B_{n+2}$ has $B_{n+1}\1$ as a prefix, we may assume that $B_{n+2} = B_{n+1}^{\alpha_{n+1}}\1uB_{n+1}^{\beta_{n+1}}$ for some $\alpha_{n+1},\beta_{n+1} \geq 2$ where $u$ is either empty or ends with $\1$.  Lemmas \ref{18a1} and \ref{18a2} applied to $n+1$ mean we may assume $\alpha_{n+1} = \beta_{n+1}$.

As $B_{n+2}B_{n+2} \in \mathcal{L}$, the word $B_{n+1}^{2\beta_{n+1}}\1 \in \mathcal{L}$.  Then $B_{n+1}^{2\beta_{n+1}-1} \in \mathcal{L}^{RS}$.  As $B_{n+2}$ has $\1 B_{n}^{\beta_{n+1}}$ as a suffix, this gives at least $(\beta_{n+1}-1)h_{n+1} - c$ right-special words of length less than $(2\beta_{n+1}-1)h_{n+1}$ which are not suffixes of $B_{n+2}$.

As $B_{n+2}\1 B_{n+2} \in \mathcal{L}$ and $B_{n+2}$ has $B_{n+1}B_{n+1}$ as a prefix, $B_{n+2}\1 B_{n+2}$ has $B_{n+1}\1 B_{n+1} B_{n}^{\beta}$ as a subword.  Then $B_{n+1}\1 B_{n+1}B_{n}^{\beta} = (B_{n}^{\beta}\1)^{2L+1}B_{n}^{2\beta} \in \mathcal{L}$.  Therefore $(B_{n}^{\beta}\1)^{2L}B_{n}^{\beta} \in \mathcal{L}^{RS}$.  As $B_{n+2}$ has $B_{n+1}B_{n+1}$ as a suffix and that word has $B_{n}^{2\beta}(\1 B_{n}^{\beta})^{L}$ as a suffix, this gives at least $(L\beta -1) h_{n} + Ld - c$ right-special words of length less than $(2L+1)(\beta h_{n} + d)$ which are not suffixes of $B_{n+2}$.

As $B_{n}^{2\beta}\1$ is a subword of $B_{n+1}B_{n+1}$, this means $B_{n}^{2\beta-1} \in \mathcal{L}^{RS}$ which gives at least $(\beta - 1)h_{n} - c$ right-special words of length less than $(2\beta - 1)h_{n}$ which are not suffixes of $B_{n+1}$ hence not of $B_{n+2}$ as $B_{n+1}$ has $\1 B_{n}^{\beta}$ as a suffix.

As none of these right-special words overlap with one another, the three cases above provide at least $(\beta_{n+1}-1)h_{n+1} + (L\beta -1) h_{n} + (\beta - 1)h_{n} + Ld - 3c$ right-special words which are not suffixes of $B_{n+2}$ all of length less than $(2\beta_{n+1}-1)h_{n+1}$.

Since $h_{n+1} = (L+1)\beta h_{n} + Ld$, we then have $(\beta_{n+1}-1)h_{n+1} + (L\beta -1 + \beta - 1)h_{n} + Ld -3c = \beta_{n+1} h_{n+1} - 2h_{n} - 3c  = \beta_{n+1}h_{n+1} - \frac{2}{(L+1)\beta}(h_{n+1}-Ld) - 3c$ extra right-special words of length at most $(2\beta_{n+1}-1)h_{n+1}$.  Therefore, since $L \geq 1$ and $\beta \geq 2$ so $\frac{2}{(L+1)\beta} \leq \frac{2}{4}$, we have
\begin{align*}
p((2\beta_{n+1}-1)h_{n+1}) &\geq (2\beta_{n+1}-1)h_{n+1} + f_{n} + \beta_{n+1}h_{n+1} - \frac{2}{(L+1)\beta}h_{n+1} - 3c \\
&= \frac{3}{2}(2\beta_{n+1}-1)h_{n+1} + f_{n} + \Big{(}\frac{1}{2} - \frac{2}{(L+1)\beta}\Big{)}h_{n+1} - 3c \\
&\geq \frac{3}{2}(2\beta_{n+1}-1)h_{n+1} + f_{n} - 3c \qedhere
\end{align*}
\end{proof}

\subsection{Words of the form \texorpdfstring{$B_{n}\1  \dblank \1 B_{n}^{2}\1 \dblank \1 B_{n}$}{B1---1BB1---1B} with \texorpdfstring{$B_{n}^{3}$}{BBB} never appearing}

This section handles the most difficult case when $a_{n,1} = a_{n,z_{n}} = 1$ and $a_{n,j} \leq 2$ for all $j$.  This difficulty is likely unavoidable as this case contains the examples we exhibit which are near $1.5q$ in complexity.

\begin{proposition}\label{17}
If for infinitely many $n \geq N$, it holds that $a_{n,1} = a_{n,z_{n}} = a_{n+1,z_{n+1}} = 1$ and $a_{n,j} \leq 2$ for all $j$ and $a_{n,j} = 2$ for at least one $j$ then for all sufficiently large $n$ \ls{2c}.
\end{proposition}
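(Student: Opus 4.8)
The plan is to apply Lemma~\ref{countmethod}: for each sufficiently large $n$ satisfying the hypothesis I will exhibit a length $q_{n}\ge h_{n}$ and at least $\tfrac12 q_{n}-O(1)$ right-special words of length in $[h_{n},q_{n})$ that are not suffixes of any $B_{n+m}$, $m\ge1$; Lemma~\ref{countmethod} then gives $p(q_{n})\ge q_{n}+f_{n}+\tfrac12 q_{n}-O(1)=\tfrac32 q_{n}+f_{n}-O(1)$, and tracking constants yields the stated $-2c$.

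First I would record the structural consequences of $a_{n,j}\le 2$. A short induction on the level shows that for $n$ large every occurrence of $B_{n}$ in $\mathcal{L}$ is aligned with the block decomposition of the $B_{m}$'s; combined with $a_{n,j}\le 2$ and with $a_{n,1}=a_{n,z_{n}}=1$ (so that the junction of two consecutive $B_{n+1}$'s merges exactly two single $B_{n}$-blocks into one $B_{n}^{2}$), this shows every occurrence of $B_{n}^{2}$ is immediately preceded and followed by $\1$. In particular $B_{n}^{3}\notin\mathcal{L}$, and since $d$ is the only nonzero spacer value $1^{2d}\notin\mathcal{L}$. Hence, above scale $h_{n}$, the subshift is governed by the ``block sequence'' over $\{1,2\}$ recording the sizes of the $B_{n}$-blocks, and a word $B_{n}^{b_{1}}\1 B_{n}^{b_{2}}\1\cdots\1 B_{n}^{b_{k}}$ with $b_{i}\in\{1,2\}$ is right-special exactly when $b_{1}\cdots b_{k}$ is right-special in the block sequence (with $b_{k}$ playing a special role, since $B_{n}^{2}$ is forced to continue with $\1$).

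The heart of the matter is to produce, inside the block sequence, right-special words $W\in\{1,2\}^{*}$ of all lengths in a long range — of length $\asymp M_{n}$ with $M_{n}\to\infty$, since a $2$ always occurs and, for $n$ large, the block sequence of $B_{n+k}$ over $B_{n}$-blocks is a long $\{1,2\}$-word — whose realizations $B_{n}^{W}$ are not suffixes of any $B_{n+m}$. Being such a suffix means, at the block level, being a suffix of (a concatenation of copies of) the block pattern $\pi_{n}=(a_{n,1},\dots,a_{n,z_{n}})$ of $B_{n+1}$, whose tail is $\dots 2\,1^{\,z_{n}-j_{1}}$ with $j_{1}$ the last position of a $2$; so it suffices to build $W$ ending in a pattern that disagrees with this tail ``out of phase'' — for instance $B_{n}^{2}\1 B_{n}^{2}\1 B_{n}$ against a tail $\dots B_{n}\1 B_{n}^{2}\1 B_{n}$, which disagrees in the third-from-last $B_{n}$-block and hence, by Lemma~\ref{split}, on all suffixes of length $\ge 4h_{n}+c$. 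I would then split into cases according to the arrangement of the $2$'s — whether two $2$'s ever occur adjacently, the length of the longest run of $1$'s in the block sequence, and the positions $j_{0},j_{1}$ of the first and last $2$ — and in each case write down an explicit $D_{n}$ of length $q_{n}\asymp 2M_{n}h_{n}$, obtained by concatenating such an ``out of phase'' realized $\{1,2\}$-word with a $B_{n}^{2}$-type prefix forcing the disagreement, check via the block-level description that every suffix of $D_{n}$ of length between $h_{n}+c$ and $q_{n}$ is right-special and, by the disagreement, not a suffix of any $B_{n+m}$, and count the resulting $\asymp M_{n}h_{n}\approx\tfrac12 q_{n}$ words. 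Finally, to pass from ``infinitely many $n$'' to ``all sufficiently large $n$'': $f_{n}=p(h_{n})-h_{n}$ is nondecreasing, so given an arbitrary large $n$ one picks a larger index $n'$ satisfying the hypothesis and takes $q_{n}:=q_{n'}\ge h_{n'}\ge h_{n}$, for which $p(q_{n'})\ge\tfrac32 q_{n'}+f_{n'}-2c\ge\tfrac32 q_{n'}+f_{n}-2c$.

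The main obstacle is precisely this exhaustive case analysis, together with the bookkeeping that the $D_{n}$ are never suffixes of any $B_{n+m}$ and that the right-special words extracted across the cases are distinct and of the claimed lengths. Because $1.5$ is sharp for this family (these subshifts contain the near-$1.5q$ examples constructed later in the paper), the count has essentially no slack, so each case must be handled on the nose rather than with a crude bound.
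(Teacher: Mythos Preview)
Your outline tracks the paper's approach closely: reduce to the block alphabet $\{1,2\}$ recording powers of $B_{n}$, find an explicit right-special word $D_{n}$ whose suffixes of the right lengths are not suffixes of $B_{n+1}$ (or $B_{n+2}$), count via Lemma~\ref{countmethod}, and pass from infinitely many $n$ to all large $n$ using monotonicity of $f_{n}$. That last step is exactly what the paper does in one line, and your justification of it is correct.

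The gap is that the case analysis \emph{is} the proof, and it is more delicate than your sketch suggests. The paper writes $B_{n+1}=(B_{n}\1)^{\alpha}(B_{n}^{2}\1)^{\beta}u(B_{n}^{2}\1)^{\kappa}(B_{n}\1)^{\gamma-1}B_{n}$ and devotes eight lemmas (Lemmas~\ref{4.14}--\ref{4.23} plus the auxiliary Lemma~\ref{alphaextra}) to cases indexed by the relative sizes of $\alpha,\beta,\gamma,\kappa$ and by whether $B_{n}^{2}\1 B_{n}^{2}$ appears. Two features of this analysis are absent from your sketch and would bite if you tried to fill it in. First, in several cases (notably $\alpha>1,\gamma>1,\beta>1$; the $\alpha=\gamma$ endgame; and the residual case $B_{n+1}=((B_{n}\1)^{\gamma}B_{n})^{L+1}$) a single right-special word $D_{n}$ is \emph{not enough}: the paper must combine two disjoint families, one from Lemma~\ref{alphaextra} (suffixes of $(B_{n}\1)^{\alpha+\gamma}B_{n}$, which contain no $B_{n}^{2}$) with a second family that does contain $B_{n}^{2}$, and the disjointness is what makes the count reach $\tfrac12 q_{n}$. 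Your ``write down one $D_{n}$ and take all its suffixes'' plan cannot close these cases. Second, the residual case $B_{n+1}=((B_{n}\1)^{\gamma}B_{n})^{L+1}$ (Lemma~\ref{4.23}) genuinely needs the hypothesis $a_{n+1,z_{n+1}}=1$: the right-special word produced has length between $h_{n+1}$ and $2h_{n+1}$, so one must compare against the suffix of $B_{n+2}$, and it is precisely $a_{n+1,z_{n+1}}=1$ that forces $B_{n+2}$ to end in $B_{n+1}\1 B_{n+1}$. Your block-level description in terms of $\pi_{n}$ alone does not see this.

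Two smaller points. Your claimed $M_{n}\to\infty$ is misleading: in most of the paper's cases $q_{n}$ is a bounded multiple of $h_{n}$ (e.g.\ $4h_{n}+2d$ in Lemma~\ref{4.19}), and what matters is that the disagreement with the suffix of $B_{n+1}$ occurs by length $\lesssim\tfrac12 q_{n}$, not that $q_{n}/h_{n}$ is large. And your block-level right-special criterion needs care: a realized word ending in $B_{n}^{2}$ is never right-special in $\mathcal{L}$ (it must be followed by $\1$), so the right-special words one actually uses always end in $\1 B_{n}$, and the dichotomy is whether that trailing $B_{n}$ extends to $B_{n}^{2}$ or to $B_{n}\1$.
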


Let $n \geq N$ such that $a_{n,1} = a_{n,z_{n}} = a_{n,z_{n+1}} = 1$ and $a_{n,j} \leq 2$ for all $j$ and $a_{n,j} = 2$ for at least one $j$.  Then we may write
\[
B_{n+1} = (B_{n}\1 )^{\alpha}(B_{n}^{2}\1)^{\beta}u(B_{n}^{2}\1)^{\kappa}(B_{n}\1 )^{\gamma-1}B_{n}
\]
for some word $u$, which has prefix $B_{n}\1$ and suffix $\1 B_{n} \1$,
and where $\alpha,\beta,\gamma,\kappa \geq 1$, or else $u$ is empty and $\kappa = 0$ and $\alpha, \beta, \gamma \geq 1$.  Then
\begin{align*}
B_{n+1}B_{n+1} &=~\dblank B_{n}^{2}\1 (B_{n}\1 )^{\gamma-1}B_{n}^{2}\1(B_{n}\1 )^{\alpha-1}(B_{n}^{2}\1)^{\beta}B_{n}\1 \dblank \\
B_{n+1}\1 B_{n+1} &=~\dblank B_{n}^{2}\1 (B_{n}\1 )^{\gamma+\alpha}(B_{n}^{2}\1)^{\beta}B_{n}\1 \dblank
\end{align*}

The proof of Proposition \ref{17} is a series of lemmas.

\begin{lemma}\label{alphaextra}
There are at least $\alpha (h_{n}+d)-c$ right-special words which are not suffixes of $B_{n+1}$ and with length less than $(\alpha + \gamma + 1)(h_{n}+d)$, all of which do not contain $B_{n}^{2}$ as a subword.
\end{lemma}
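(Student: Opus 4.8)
The plan is to produce all $\alpha(h_{n}+d)-c$ of the required right-special words as suffixes of a single word $D_{n}:=(B_{n}\1)^{\gamma+\alpha}B_{n}=B_{n}(\1 B_{n})^{\gamma+\alpha}$. The key preliminary is an auxiliary ``context'' word: the displayed decomposition of $B_{n+1}\1 B_{n+1}$ exhibits $B_{n}^{2}\1(B_{n}\1)^{\gamma+\alpha}(B_{n}^{2}\1)^{\beta}B_{n}\1$ as a factor, so $W_{n}:=B_{n}^{2}\,\1\,(B_{n}\1)^{\gamma+\alpha}\,B_{n}^{2}$ is a subword of $B_{n+1}\1 B_{n+1}$, hence $W_{n}\in\mathcal{L}$; note $D_{n}$ occurs inside $W_{n}$ with a copy of $B_{n}$ immediately to its right. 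Since $\len(D_{n})=(\gamma+\alpha)(h_{n}+d)+h_{n}<(\gamma+\alpha+1)(h_{n}+d)$, every suffix of $D_{n}$ automatically has length less than $(\alpha+\gamma+1)(h_{n}+d)$, and suffixes of distinct lengths are distinct, so those two requirements cost nothing.

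I would first show that \emph{every} suffix $S$ of $D_{n}$ is right-special. For $S0\in\mathcal{L}$: the copy of $D_{n}$ inside $W_{n}$ is followed by $B_{n}$, hence by $0$, so the occurrence of $S$ at the end of that copy of $D_{n}$ is followed by $0$. For $S1\in\mathcal{L}$ it suffices to get $S\1\in\mathcal{L}$. If $\len(S)\leq\len(D_{n})-(h_{n}+d)$, then $S$ is a suffix of $(B_{n}\1)^{\gamma+\alpha-1}B_{n}$, itself a suffix of $D_{n}$; since $(B_{n}\1)^{\gamma+\alpha-1}B_{n}\1=(B_{n}\1)^{\gamma+\alpha}$ is a subword of $W_{n}$, we get $S\1\in\mathcal{L}$. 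Otherwise $S=v\,(B_{n}\1)^{\gamma+\alpha-1}B_{n}$ for a nonempty suffix $v$ of $B_{n}\1$ with $\len(v)\leq h_{n}+d$, so $S\1=v\,(B_{n}\1)^{\gamma+\alpha}$; as $v$ is then a suffix of $B_{n}^{2}\1$ and $(B_{n}\1)^{\gamma+\alpha}$ is immediately preceded by $B_{n}^{2}\1$ inside $W_{n}$, again $S\1\in\mathcal{L}$. Thus all suffixes of $D_{n}$ are right-special.

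Next I would show that the suffixes of $D_{n}$ of length $\ell$ with $\gamma(h_{n}+d)+h_{n}+c\leq\ell\leq\len(D_{n})$ — of which there are $\alpha(h_{n}+d)-c+1\geq\alpha(h_{n}+d)-c$ — are not suffixes of $B_{n+1}$. From the decomposition of $B_{n+1}$ (using $\beta\geq 1$, and $\kappa\geq 1$ or $\kappa=0$), $B_{n+1}$ ends in $B_{n}^{2}\1(B_{n}\1)^{\gamma-1}B_{n}=B_{n}^{2}(\1 B_{n})^{\gamma}$, whereas $D_{n}$ ends in $B_{n}\1 B_{n}(\1 B_{n})^{\gamma}$. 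Both end in the common block $(\1 B_{n})^{\gamma}$ of length $\gamma(h_{n}+d)$; writing $\ell=\gamma(h_{n}+d)+m$, for $m\leq 2h_{n}$ the length-$\ell$ suffix of $B_{n+1}$ is the length-$m$ suffix of $B_{n}^{2}$ followed by $(\1 B_{n})^{\gamma}$, while the length-$\ell$ suffix of $D_{n}$ is the length-$m$ suffix of $B_{n}\1 B_{n}$ followed by $(\1 B_{n})^{\gamma}$. By Lemma \ref{split}, $B_{n}^{2}$ and $B_{n}\1 B_{n}$ disagree on suffixes of every length $\geq h_{n}+c$, so the two length-$\ell$ suffixes disagree for $\gamma(h_{n}+d)+h_{n}+c\leq\ell\leq\gamma(h_{n}+d)+2h_{n}$; since disagreement at one length forces it at all larger lengths, it holds for every $\ell$ up to $\len(D_{n})$ as well. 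As a word of length $\ell$ can only equal the length-$\ell$ suffix of $B_{n+1}$, none of these suffixes of $D_{n}$ is a suffix of $B_{n+1}$. (The inequalities used require $h_{n}$ large; for the finitely many smaller $n$ the asserted bound is vacuous.)

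Lastly, these suffixes contain no occurrence of $B_{n}^{2}$: they are suffixes of $D_{n}=B_{n}(\1 B_{n})^{\gamma+\alpha}$, a concatenation of copies of $B_{n}$ separated by the nonempty blocks $\1$, and $B_{n}^{2}=B_{n}B_{n}$ is not a factor of such a word — an occurrence of $B_{n}$ starting at a position other than one of the evident block starts would force $B_{n}$ to have a period strictly between $1$ and $h_{n}$, which does not occur for $n$ large (recognizability of the rank-one word $B_{n}$ within $D_{n}$, provable directly from the structure of $B_{n}$). Assembling the four points gives at least $\alpha(h_{n}+d)-c$ right-special words, all of length less than $(\alpha+\gamma+1)(h_{n}+d)$, none a suffix of $B_{n+1}$, and none containing $B_{n}^{2}$. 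I expect the crux to be the $S\1\in\mathcal{L}$ step: $D_{n}$ itself need not be right-special, since nothing controls whether the run $(B_{n}\1)^{\gamma+\alpha}$ ever extends, so each relevant suffix must be realized inside the controlled context $W_{n}$, where $(B_{n}\1)^{\gamma+\alpha}$ is flanked by $B_{n}^{2}$ on both sides; the $B_{n}^{2}$-free claim is the secondary point that must be argued rather than taken for granted.
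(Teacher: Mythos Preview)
Your proof is correct and follows the same approach as the paper: exhibit $D_n = (B_n\1)^{\gamma+\alpha}B_n$ inside $B_{n+1}\1 B_{n+1}$ and count its suffixes that are right-special and not suffixes of $B_{n+1}$. However, you overcomplicate the right-special step. The paper simply notes that $B_n\1(B_n\1)^{\gamma+\alpha}B_n^{2} = (B_n\1)^{\gamma+\alpha+1}B_n^{2}$ is a subword of $B_{n+1}\1 B_{n+1}$; this single word simultaneously contains $D_n\1$ (as a prefix) and $D_nB_n$ (after shifting past the initial $B_n\1$), so $D_n\in\mathcal{L}^{RS}$, and then every suffix of $D_n$ is trivially right-special. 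Your closing remark that ``$D_n$ itself need not be right-special'' is in fact wrong: your own second case with $v=B_n\1$ already shows $D_n\1 = (B_n\1)^{\gamma+\alpha+1}$ lies in $W_n$, hence $D_n\in\mathcal{L}^{RS}$. The $B_n^{2}$-free claim is treated as evident in the paper as well; your recognizability sketch is adequate but no more detailed than what the paper assumes.
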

\begin{proof}
$B_{n}\1(B_{n}\1 )^{\gamma+\alpha}B_{n}^{2}$ is a subword of $B_{n+1}\1 B_{n+1}$ so $(B_{n}\1)^{\gamma+\alpha}B_{n} \in \mathcal{L}^{RS}$.  Since $B_{n+1}$ has suffix $B_{n}^{2}\1(B_{n}\1 )^{\gamma-1}B_{n}$, every suffix of $(B_{n}\1 )^{\gamma+\alpha}B_{n}$ at least $c$ longer than $(B_{n}\1 )^{\gamma}B_{n}$ is not a suffix of $B_{n+1}$.
\end{proof}

\begin{lemma}\label{4.14}
If $\alpha = 1$ and $\gamma = 1$ then \ls{2c}.
\end{lemma}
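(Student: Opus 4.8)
The plan is to exploit the structure of $B_{n+1}$ when $\alpha = \gamma = 1$, so that
\[
B_{n+1} = B_{n}\1 (B_{n}^{2}\1)^{\beta}u(B_{n}^{2}\1)^{\kappa}B_{n},
\]
and to produce a long right-special word that is not a suffix of $B_{n+1}$. The key observation is that with $\alpha = \gamma = 1$, the word $B_{n+1}\1 B_{n+1}$ contains $B_{n}^{2}\1 (B_{n}\1)^{2}(B_{n}^{2}\1)^{\beta}B_{n}$ (reading across the junction), while $B_{n+1}B_{n+1}$ contains $B_{n}^{2}\1 B_{n}B_{n}^{2}\1(B_{n}^{2}\1)^{\beta-1}$ near the junction; the prefix structure $B_{n}\1 B_{n}^{2}$ of $B_{n+1}$ (from $\alpha=1$, $\beta\geq 1$) and the suffix structure $B_{n}^{2}\1 B_{n}$ of $B_{n+1}$ (from $\kappa\geq 1$ or $\beta\geq1$ and $\gamma=1$) are what let us close off a right-special word. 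Concretely, I would show that $B_{n}\1 B_{n}^{2}\1 \in \mathcal{L}$ (appears inside $B_{n+1}$ since $\alpha=1,\beta\geq1$) and that $B_{n}\1 B_{n}^{2}0 \in \mathcal{L}$, the latter coming from $B_{n+1}\1 B_{n+1}$ where $(B_{n}\1)^{2}B_{n}^{2}$ appears, so that $B_{n}\1 B_{n}^{2}$ — and more generally a suitable extension — is right-special.

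The main work is choosing the right-special word long enough. I would look for the longest prefix of $B_{n+1}$ of the form $B_{n}\1 (B_{n}^{2}\1)^{j}$ that is followed by something other than $B_{n}^{2}$ (either $B_{n}\1$ or the word ends), and dually use the suffix $(B_{n}^{2}\1)^{\kappa}B_{n}$ or $(B_{n}^{2}\1)^{\beta}$-type tail; combining a forward extension from the prefix of $B_{n+1}$ with the fact that $B_{n+1}\1 B_{n+1}$ merges two copies, one gets a right-special word $D_{n}$ of length roughly $(\beta+\kappa+2)(h_n+d)$ or so whose long suffixes (those exceeding the matching suffix of $B_{n+1}$ by at least $c$) are all right-special and not suffixes of $B_{n+1}$. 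Since $B_{n+1}$ ends in $B_{n}^{2}\1 B_{n}$ (or $(B_{n}\1)^{\gamma-1}B_n=B_n$ here) but $D_n$ will end in a block containing $(B_n\1)^2$ or a mismatched $B_n^2$ vs $B_n\1$ pattern, Lemma \ref{split}-type reasoning (the single-symbol discrepancy $0$ vs $1$ at a controlled position) separates them. Then Lemma \ref{countmethod} with $q_n = \len(D_n)$ yields $p(q_n) \geq q_n + f_n + t_n$ where $t_n$ is roughly half of $q_n$ minus $2c$, giving $p(q_n) \geq 1.5 q_n + f_n - 2c$.

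The step I expect to be the main obstacle is pinning down the precise length of the right-special word and verifying it really is not a suffix of $B_{n+1}$ in every sub-case (empty $u$ versus nonempty $u$, and small values of $\beta,\kappa$); one has to be careful that the forward extension past the prefix $B_n\1(B_n^2\1)^\beta$ of $B_{n+1}$ and the backward extension agree on a long enough overlap to be a single word in $\mathcal{L}$, which typically requires passing to $B_{n+1}\1 B_{n+1}$ or $B_{n+1}B_{n+1}$ and reading a window straddling the junction. I would handle the possibly-empty $u$, $\kappa = 0$ case separately at the start, since there $B_{n+1} = B_n\1(B_n^2\1)^\beta B_n$ and the analysis is cleanest: $(B_n\1)^2 (B_n^2\1)^\beta B_n$ appears in $B_{n+1}\1 B_{n+1}$, giving $(B_n\1)(B_n^2\1)^{\beta}B_n$-type right-special words with a full count of $(\beta+1)(h_n+d)-c$ extra right-special words of length below $(2\beta+3)(h_n+d)$ or so, comfortably past the $1.5$ threshold once one checks the constant.
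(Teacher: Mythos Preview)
Your outline has the right instinct (produce a long right-special word not a suffix of $B_{n+1}$, then invoke Lemma \ref{countmethod}), but the specific candidate you propose in the empty-$u$ case does not work, and you do not anticipate the main difficulty in the nonempty-$u$ case.

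\textbf{The empty-$u$ case.} When $u$ is empty and $\kappa = 0$, you suggest using the word $(B_{n}\1)^{2}(B_{n}^{2}\1)^{\beta}B_{n}$, which appears in $B_{n+1}\1 B_{n+1}$. But observe that this word is exactly $B_{n}\1 B_{n+1}$. Since the ambient hypothesis of Proposition \ref{17} includes $a_{n+1,z_{n+1}} = 1$, the word $B_{n+2}$ has $B_{n+1}\1 B_{n+1}$ as a suffix, hence $B_{n}\1 B_{n+1}$ is a suffix of $B_{n+2}$. Every suffix of your candidate is therefore already a suffix of some $B_{n+m}$, so Lemma \ref{countmethod} yields zero extra right-special words from it. Your count of $(\beta+1)(h_{n}+d)-c$ extras is also numerically short of the $1.5$ threshold even if the word had worked: with $q_{n}\approx (2\beta+3)(h_{n}+d)$ you would need roughly $(\beta+1.5)(h_{n}+d)$ extras. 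The paper's argument here is genuinely different: it uses $a_{n+1,z_{n+1}}=1$ (and the existence of some $a_{n+1,j}\geq 2$) to get $B_{n+1}B_{n+1}\1 \in \mathcal{L}$, yielding the much longer right-special word $B_{n}\1(B_{n}^{2}\1)^{2\beta}B_{n}$, and then counts against suffixes of $B_{n+2}$ rather than $B_{n+1}$, combining with Lemma \ref{alphaextra}.

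\textbf{The nonempty-$u$ case.} You gesture at a right-special word of length roughly $(\beta+\kappa+2)(h_{n}+d)$. The paper indeed uses $B_{n}\1(B_{n}^{2}\1)^{\beta+\kappa}B_{n}\in \mathcal{L}^{RS}$ from $B_{n+1}B_{n+1}$, but the resulting count (together with Lemma \ref{alphaextra}) only clears $1.5$ when $\beta\geq\kappa$. When $\beta<\kappa$ the suffix of $B_{n+1}$ matches the candidate word on a segment of length $(2\kappa+2)h_{n}+(\kappa+1)d$, which is too long, and one must do substantially more work: rule out $(B_{n}\1)^{4}B_{n}\in\mathcal{L}$ and $B_{n}^{2}\1 B_{n}\1 B_{n}^{2}\subset B_{n+1}$ separately, then rewrite $B_{n+1}$ in terms of blocks $(B_{n}^{2}\1)^{\beta_{i}}(B_{n}\1)^{2}$ and split further on whether $\beta_{y}<\beta$ or $\beta_{y}\geq\beta$. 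None of this case structure is visible in your plan, and it is where the actual content of the lemma lies.
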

\begin{proof}
First consider the case when $\kappa = 0$ and $u$ is empty.  Here $B_{n+1} = (B_{n}\1)^{\alpha}(B_{n}^{2}\1)^{\beta}(B_{n}\1)^{\gamma-1}B_{n} = B_{n}\1 (B_{n}^{2}\1)^{\beta} B_{n}$.  Since $a_{n+1,z_{n+1}} = 1$ and $a_{n+1,j} \geq 2$ for some $j$, we have $B_{n+1}B_{n+1}\1 \in \mathcal{L}$.
Since $B_{n+1}B_{n+1}\1 = B_{n}\1 (B_{n}^{2}\1 )^{2\beta+1}B_{n}\1$, we then have $B_{n}\1 (B_{n}^{2}\1 )^{2\beta}B_{n} \in \mathcal{L}^{RS}$.

Since $B_{n+2}$ has $B_{n+1}\1 B_{n+1}$ as a suffix, it has $\1 B_{n} \1 B_{n} \1 (B_{n}^{2}\1 )^{\beta} B_{n}$ as a suffix.  This means our right-special word gives at least $(4\beta+2)h_{n} + (2\beta+1)d - (2\beta+2)h_{n} - (\beta+1)d - c = 2\beta h_{n} + \beta d - c$ right-special words which are not suffixes of $B_{n+2}$, all of length less than $(2\beta+1)(2h_{n}+d)$.  As Lemma \ref{alphaextra} gives at least $h_{n}+d-c$ additional right-special words which are not suffixes of $B_{n+2}$ and do not contains $B_{n}^{2}$, we conclude that 
\begin{align*}
p((2\beta+1)(2h_{n}+d)) &\geq (2\beta+1)(2h_{n}+d) + f_{n} + (2\beta + 1)h_{n}+(\beta+1)d - 2c \\
&= \frac{3}{2}(2\beta+1)(2h_{n}+d) + \frac{1}{2}d + f_{n} - 2c
\end{align*}

We now consider when $\kappa \geq 1$ and $u$ is nonempty.

Here $B_{n+1}B_{n+1} =~\dblank B_{n}\1 (B_{n}^{2}\1)^{\kappa+1+\beta}B_{n}\1 \dblank$ meaning that $B_{n}\1 (B_{n}^{2}\1)^{\beta+\kappa}B_{n} \in \mathcal{L}^{RS}$.  As $B_{n+1}$ has suffix $\1 B_{n}\1 (B_{n}^{2}\1)^{\kappa}B_{n}$, every suffix of our word of length at least $(2\kappa+2)h_{n}+(\kappa+1)d + c$ is not a suffix of $B_{n+1}$.  So there are at least $2\beta h_{n} + \beta d - c$ right-special words of length less than $2(\kappa + \beta + 1)h_{n}+(\kappa+\beta+1)d$ which are not suffixes of $B_{n+1}$.

 Lemma \ref{alphaextra} in this case also gives $h_{n}+d-c$ right-special words of length less than $3(h_{n}+d)$ which are not suffixes of $B_{n+1}$ and do not contain $B_{n}^{2}$.  So,
 \begin{align*}
 p(2(\kappa+\beta+1)h_{n}&+(\beta+\kappa+1)d) \geq 2(\kappa+\beta+1)h_{n}+(\beta+\kappa+1)d + f_{n} + (2\beta+1)h_{n} + (\beta + 1)d - 2c \\
& = \frac{3}{2}(2(\kappa+\beta+1)h_{n}+(\beta+\kappa+1)d) + f_{n} + (\beta - \kappa)h_{n} + \frac{1}{2}(\beta - \kappa + 1)d  - 2c
 \end{align*}
 so if $\beta \geq \kappa$ then
 \[
 p(2(\kappa+\beta+1)h_{n}+(\beta+\kappa+1)d) \geq \frac{3}{2}(2(\kappa+\beta+1)h_{n}+(\beta+\kappa+1)d) + f_{n} - 2c
 \]
 So from here on, assume $\beta < \kappa$.
 
Observe that if $(B_{n}\1 )^{4}B_{n} \in \mathcal{L}$ then necessarily $(B_{n}\1 )^{4}B_{n}^{2} \in \mathcal{L}$ as $\gamma = 1$, so $(B_{n}\1 )^{3}B_{n} \in \mathcal{L}^{RS}$.  As $B_{n+1}$ has $B^{2}\1 B_{n}$ as a suffix, every suffix of our word of length at least $2h_{n}+d+c$ is not a suffix of $B_{n+1}$.  This gives at least $2h_{n}+2d-c$ right-special words of length less than $4h_{n}+3d$ which are not suffixes of $B_{n+1}$.  Then 
\[
p(4h_{n}+3d) \geq 4h_{n}+3d + f_{n} + 2h_{n} + 2d - c = \frac{3}{2}(4h_{n}+3d) + f_{n} + \frac{1}{2}d - c
\]
So, from here on we assume that $\1 B_{n}\1 B_{n}\1 B_{n}\1  \notin \mathcal{L}$. 

Suppose that $B_{n}^{2}\1 B_{n}\1 B_{n}^{2}$ is a subword of $B_{n+1}$.  Then $B_{n}\1 B_{n}^{2} \1 B_{n} \1 B_{n} 0 \in \mathcal{L}$ as the initial $B_{n}^{2} \1$ is preceded by $B_{n} \1$.  Also, $B_{n+1}\1 B_{n+1}$ has the subword $B_{n}\1 B_{n}^{2}\1 B_{n} \1 B_{n} \1$ where the next-to-last $\1$ is the $\1$ appearing between the $B_{n+1}$ in $B_{n+1}\1 B_{n+1}$.  Then $B_{n}\1 B_{n}^{2}\1 B_{n}\1 B_{n} \in \mathcal{L}^{RS}$.

As $B_{n+1}$ has $B_{n}^{2}\1 B_{n}$ as a suffix, our word gives at least $3h_{n}+2d - c$ right-special words which are not suffixes of $B_{n+1}$, all of length less than $5h_{n}+3d$.  Therefore 
\[
p(5h_{n}+3d) \geq 5h_{n}+3d + f_{n} + 3h_{n}+2d - c = \frac{8}{5}(5h_{n}+3d) + \frac{1}{5}d + f_{n} - c
\]

So, from here on assume also that $B_{n}^{2} \1 B_{n} \1 B_{n}^{2}$ is not a subword of $B_{n+1}$.  Therefore we can write
\[
B_{n+1} = B_{n}\1 (\prod_{i=1}^{y} (B_{n}^{2}\1 )^{\beta_{i}} (B_{n}\1 )^{2}) (B_{n}^{2}\1 )^{\kappa} B_{n}
\]
for some $y \geq 1$ (since $u$ is nonempty) and $\beta_{i} \geq 1$ and $\beta_{1} = \beta < \kappa$.

Suppose first that $\beta_{y} < \beta$.   Set $m =  \min \{ \beta_{i} \}$.  Take $i$ such that $\beta_{i} = m$ then $\beta_{i} \leq \beta_{y} < \beta$ so $i > 1$.  Then $B_{n}\1(B_{n}^{2}\1)^{\beta_{i-1}}(B_{n}\1)^{2}(B_{n}^{2}\1)^{\beta_{i}}B_{n}\1 \in \mathcal{L}$.  This means $B_{n}\1 (B_{n}^{2}\1)^{m} (B_{n}\1)^{2} (B_{n}^{2}\1)^{m}B_{n}\1 \in \mathcal{L}$ as $m \leq \beta_{i-1}$.

$B_{n}\1 (B_{n}^{2}\1 )^{\beta_{y}} (B_{n}\1 )^{2} (B_{n}^{2}\1 )^{\kappa} B_{n}$ is a suffix of $B_{n+1}$, so $B_{n}\1 (B_{n}^{2}\1 )^{m} (B_{n}\1 )^{2} (B_{n}^{2}\1 )^{m} B_{n}^{2} \in \mathcal{L}$ since $m \leq \beta_{y}$ and $m < \beta < \kappa$.
Therefore $B_{n}\1 (B_{n}^{2}\1)^{m} (B_{n}\1)^{2} (B_{n}^{2}\1)^{m}B_{n} \in \mathcal{L}^{RS}$.

This gives at least $(2m+2)h_{n} + (m+2)d - c$ right-special words which are not suffixes of $B_{n+1}$, all of length less than $(4m+4)h_{n}+(2m+3)d$.  Therefore 
\begin{align*}
p((4m+4)h_{n}+(2m+3)d) &\geq (4m+4)h_{n}+(2m+3)d + f_{n} + (2m+2)h_{n} + (m+2)d - c \\
&= \frac{3}{2}((4m+4)h_{n}+(2m+3)d) + f_{n} + \frac{1}{2}d - c
\end{align*}

So, we may assume that $\beta_{y} \geq \beta$.

$B_{n+1}\1 B_{n+1}$ has the subword $(B_{n}^{2}\1)^{\kappa}B_{n}\1 B_{n}\1 (B_{n}^{2}\1)^{\beta} B_{n} \1$.  As $\beta < \kappa$, $B_{n}\1 (B_{n}^{2}\1 )^{\beta} B_{n}\1 B_{n}\1 (B_{n}^{2}\1 )^{\beta} B_{n}\1 \in \mathcal{L}$.

$B_{n+1}B_{n+1}$ has the subword
$B_{n}\1 (B_{n}^{2}\1 )^{\beta_{y}} (B_{n}\1 )^{2} (B_{n}^{2}\1 )^{\kappa} B_{n}B_{n}$ which has $B_{n}\1 (B_{n}^{2}\1 )^{\beta_{y}} (B_{n}\1 )^{2} (B_{n}^{2}\1 )^{\beta} B_{n}B_{n}$ as a subword.

As $\beta_{y} \geq \beta$, then $B_{n}\1 (B_{n}^{2}\1 )^{\beta} (B_{n}\1 )^{2} (B_{n}^{2}\1 )^{\beta} B_{n} \in \mathcal{L}^{RS}$.  Since $\beta < \kappa$, this gives at least $2(\beta+1)h_{n} + (\beta+2)d - c$ right-special words which are not suffixes of $B_{n+1}$, all of length less than $(4\beta+4)h_{n}+(2\beta+3)d$.  Therefore 
\begin{align*}
p((4\beta+4)h_{n}+(2\beta+3)d) &\geq (4\beta+4)h_{n}+(2\beta+3)d + f_{n} + 2(\beta+1)h_{n} + (\beta+2)d - c \\
&= \frac{3}{2}((4\beta+4)h_{n}+(2\beta+3)d) + f_{n} + \frac{1}{2}d -c \qedhere
\end{align*}
\end{proof}

\begin{lemma}\label{4.15}
If $\alpha = 1$ and $\gamma > 1$ then \ls{c}.
\end{lemma}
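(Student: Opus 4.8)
The plan is to produce a single right-special word $D_{n}$ that is long relative to the ``wasted'' initial segment forced by Lemma \ref{split}, and then apply Lemma \ref{countmethod}. Since $\alpha = 1$ and $\gamma \geq 2$, I would take
\[
D_{n} \;=\; (B_{n}\1)^{\gamma-1}(B_{n}^{2}\1)^{\beta}B_{n}.
\]
Specializing the displayed expansions to $\alpha=1$: the word $(B_{n}\1)^{\gamma-1}(B_{n}^{2}\1)^{\beta+1}$ is a subword of $B_{n+1}B_{n+1}$, and $(B_{n}\1)^{\gamma+1}(B_{n}^{2}\1)^{\beta}B_{n}\1$ is a subword of $B_{n+1}\1 B_{n+1}$. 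From the first, $D_{n}\in\mathcal{L}$ (it is a prefix) and $D_{n}B_{n}=(B_{n}\1)^{\gamma-1}(B_{n}^{2}\1)^{\beta}B_{n}^{2}\in\mathcal{L}$, so $D_{n}0\in\mathcal{L}$; from the second, $D_{n}\1=(B_{n}\1)^{\gamma-1}(B_{n}^{2}\1)^{\beta}B_{n}\1\in\mathcal{L}$ is a suffix, so $D_{n}1\in\mathcal{L}$. Hence every nonempty suffix of $D_{n}$ lies in $\mathcal{L}^{RS}$.

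Next I would show that these suffixes are not suffixes of any $B_{n+m}$, $m\geq 1$. As $s_{n,r_{n}}=0$ eventually, each $B_{n+m}$ has $B_{n+1}$ as a suffix, so—since $\len(D_{n})<h_{n+1}$ (indeed $h_{n+1}=\len(D_{n})+(h_{n}+d)+\len(u)+\kappa(2h_{n}+d)$)—it suffices to rule out suffixes of $B_{n+1}$. Now $B_{n+1}$ ends in $(B_{n}\1)^{\gamma-1}B_{n}$, which, because $\gamma-1\geq 1$, ends in $B_{n}\1 B_{n}$ with that copy of $B_{n}\1 B_{n}$ preceded in $B_{n+1}$ by at least a full spacer block $\1$; while $D_{n}$ ends in $B_{n}^{2}\1 B_{n}$, i.e.\ in $B_{n}\1 B_{n}$ preceded by a genuine copy of $B_{n}$. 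By Lemma \ref{split} (stripping the common trailing $\1 B_{n}$ and comparing $B_{n}^{2}$ against $B_{n}\1 B_{n}$), $D_{n}$ and $B_{n+1}$ disagree on every suffix of length at least $2h_{n}+d+c$. Thus every suffix of $D_{n}$ of length $\ell$ with $2h_{n}+d+c\leq \ell\leq \len(D_{n})$ is a right-special word of length $\geq h_{n}$, shorter than $q_{n}:=\len(D_{n})+1$, and not a suffix of any $B_{n+m}$, giving $t_{n}=\len(D_{n})-(2h_{n}+d+c)+1$ such words.

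Lemma \ref{countmethod} then yields $p(q_{n})\geq q_{n}+f_{n}+t_{n}$, hence $p(q_{n})-1.5q_{n}-f_{n}\geq t_{n}-\tfrac{1}{2}q_{n}$, and a direct computation gives
\[
t_{n}-\tfrac{1}{2}q_{n}=\tfrac{1}{2}(\gamma-1)(h_{n}+d)+\tfrac{1}{2}\beta(2h_{n}+d)-\tfrac{3}{2}h_{n}-d-c+\tfrac{1}{2},
\]
whose coefficients of $h_{n}$ and of $d$ are nonnegative for $\gamma\geq 2$ and $\beta\geq 1$, so $t_{n}-\tfrac{1}{2}q_{n}\geq \tfrac{1}{2}-c>-c$, which is the claim. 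The step I expect to be the main obstacle is the threshold $2h_{n}+d+c$ in the second paragraph: one must check, across the sub-cases $\gamma-1=1$ versus $\gamma-1\geq 2$ and $\kappa=0$ versus $\kappa\geq 1$ (and without any assumed relation between $c$ and $d$), that the symbols of $B_{n+1}$ just to the left of its trailing $\1 B_{n}$ really are forced to be $1$ where $D_{n}$ exhibits a copy of $B_{n}$, so that Lemma \ref{split} applies uniformly; the rest is bookkeeping. It is worth noting that, unlike the $\gamma=1$ cases, this argument needs neither the auxiliary words of Lemma \ref{alphaextra} nor any passage to $B_{n+2}$—the hypothesis $\gamma>1$ is precisely what makes the prefix $(B_{n}\1)^{\gamma-1}$ nonempty, hence $D_{n}$ long enough for the ratio to reach $\tfrac{3}{2}$.
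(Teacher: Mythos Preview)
Your argument is correct and follows essentially the same route as the paper. The paper uses the right-special word $(B_{n}\1)^{\gamma}(B_{n}^{2}\1)^{\beta}B_{n}$, one $B_{n}\1$ longer than your $D_{n}=(B_{n}\1)^{\gamma-1}(B_{n}^{2}\1)^{\beta}B_{n}$; both are obtained from the same two subwords of $B_{n+1}B_{n+1}$ and $B_{n+1}\1 B_{n+1}$, both are compared to the suffix $\1 B_{n}\1 B_{n}$ of $B_{n+1}$ via Lemma~\ref{split} at the same threshold $2h_{n}+d+c$, and both yield the bound with constant $c$ after the same bookkeeping. Your stated ``main obstacle'' is not one: whether $\gamma-1=1$ or $\gamma-1\geq 2$, and whether $\kappa=0$ or $\kappa\geq 1$, the word $B_{n+1}$ always ends in $B_{n}^{2}\1(B_{n}\1)^{\gamma-1}B_{n}$ (the $B_{n}^{2}\1$ coming from the $\beta$- or $\kappa$-block), so the symbol immediately left of the trailing $B_{n}\1 B_{n}$ is always the terminal $1$ of a spacer block $\1$, and Lemma~\ref{split} applies uniformly.
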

\begin{proof}
In this case, $B_{n+1}B_{n+1}$ contains the subword $B_{n}^{2}\1(B_{n}\1 )^{\gamma-1}(B_{n}^{2}\1)^{\beta+1}B_{n}$ and $B_{n+1}\1 B_{n+1}$ contains $B_{n}^{2}\1(B_{n}\1 )^{\gamma}(B_{n}^{2}\1)^{\beta}B_{n}\1 $.  Therefore $(B_{n}\1 )^{\gamma}(B_{n}^{2}\1)^{\beta}B_{n}^{2}$ appears in $B_{n+1}B_{n+1}$ and $(B_{n}\1 )^{\gamma}(B_{n}^{2}\1)^{\beta}B_{n}\1 $ in $B_{n+1}\1B_{n+1}$.  As $B_{n+1}$ has $\1 B_{n}\1 B_{n}$ as a suffix (as $\gamma > 1$), every suffix of $(B_{n}\1 )^{\gamma}(B_{n}^{2}\1)^{\beta}B_{n}$ longer than $01^{c-1}B_{n}\1 B_{n}$ is not a suffix of $B_{n+1}$ and is right-special.
This gives at least $(\gamma + 2\beta - 1)h_{n} + (\gamma + \beta - 1)d - c$ right-special words which are not suffixes of $B_{n+1}$ of length less than $(\gamma + 2\beta+1)h_{n} + (\gamma+\beta)d$.
Therefore, as $\gamma + 2\beta - 3 \geq 2 + 2 - 3 $,
\begin{align*}
p((\gamma+&2\beta+1)h_{n}+(\gamma + \beta)d) \\
&\geq (\gamma+2\beta+1)h_{n}+(\gamma + \beta)d + f_{n} + (\gamma + 2\beta - 1)h_{n} + (\gamma + \beta - 1)d - c \\
&= \frac{3}{2}((\gamma + 2\beta + 1)h_{n} + (\gamma + \beta)d) + \frac{1}{2}(\gamma + 2\beta - 3)h_{n} + \frac{1}{2}(\gamma + \beta - 2)d + f_{n} - c \\
&> \frac{3}{2}((\gamma + 2\beta + 1)h_{n} + (\gamma + \beta)d) + f_{n} - c \qedhere
\end{align*}
\end{proof}

\begin{lemma}\label{4.18}
If $\alpha > \gamma \geq 1$ then \ls{c}.
\end{lemma}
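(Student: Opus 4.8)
The plan is to exhibit a long right-special word whose long suffixes are not suffixes of any $B_{n+m}$ and then apply Lemma~\ref{countmethod}. Write, as above, $B_{n+1}=(B_{n}\1)^{\alpha}(B_{n}^{2}\1)^{\beta}u(B_{n}^{2}\1)^{\kappa}(B_{n}\1)^{\gamma-1}B_{n}$ with $\alpha>\gamma\geq 1$, so $\alpha\geq 2$ and the initial run of $B_{n}\1$'s in $B_{n+1}$ strictly exceeds the final run. The asymmetry $\alpha>\gamma$ is what we exploit.

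First I would show $(B_{n}\1)^{\alpha}B_{n}\in\mathcal{L}^{RS}$: since $(B_{n}\1)^{\alpha}B_{n}^{2}$ is a prefix of $B_{n+1}$ and $B_{n}$ begins with $0$ (Lemma~\ref{0}), we get $(B_{n}\1)^{\alpha}B_{n}0\in\mathcal{L}$; and since $(B_{n}\1)^{\gamma+\alpha}(B_{n}^{2}\1)^{\beta}$ occurs in $B_{n+1}\1 B_{n+1}$ while $\gamma\geq 1$ forces $\gamma+\alpha\geq\alpha+1$, also $(B_{n}\1)^{\alpha+1}\in\mathcal{L}$, hence $(B_{n}\1)^{\alpha}B_{n}1\in\mathcal{L}$. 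Therefore every suffix of $(B_{n}\1)^{\alpha}B_{n}$ is right-special. Next I would determine which of these suffixes are suffixes of some $B_{n+m}$: because $s_{n,r_{n}}=0$, every $B_{n+m}$ with $m\geq 1$ ends in $B_{n+1}$, which ends in $B_{n}^{2}\1(B_{n}\1)^{\gamma-1}B_{n}$, so its suffix of length $(\gamma+1)h_{n}+\gamma d$ equals $(B_{n}\1)^{\gamma}B_{n}$ — which is also the suffix of $(B_{n}\1)^{\alpha}B_{n}$ of that length — but one $B_{n}$-block further back $B_{n+m}$ sits inside the first $B_{n}$ of the terminal $B_{n}^{2}$, whereas $(B_{n}\1)^{\alpha}B_{n}$ sits inside a block $\1$; since $B_{n}$ ends in $01^{c-1}$ (Lemma~\ref{split}) and begins with $0$, the two suffixes disagree within $c$ positions of $(\gamma+1)h_{n}+\gamma d$. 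Hence the suffixes of $(B_{n}\1)^{\alpha}B_{n}$ of length in the interval $\big((\gamma+1)h_{n}+\gamma d+c,\ (\alpha+1)h_{n}+\alpha d\big]$, of which there are $(\alpha-\gamma)(h_{n}+d)-c$, are right-special and are not suffixes of any $B_{n+m}$. Taking $q_{n}=(\alpha+1)h_{n}+\alpha d\geq h_{n}$, Lemma~\ref{countmethod} yields $p(q_{n})\geq q_{n}+f_{n}+(\alpha-\gamma)(h_{n}+d)-c$, and when $\alpha\geq 2\gamma+1$ one checks $(\alpha-\gamma)(h_{n}+d)\geq\tfrac12 q_{n}$, giving $p(q_{n})\geq\tfrac32 q_{n}+f_{n}-c$.

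The remaining range $\gamma+1\leq\alpha\leq 2\gamma$ is the main obstacle: there both runs are long, and the single word $(B_{n}\1)^{\alpha}B_{n}$, of length $\approx(\alpha+1)h_{n}$, contributes only $\approx(\alpha-\gamma)h_{n}<\tfrac12 q_{n}$ new suffixes. To push the count past $\tfrac12 q_{n}$ I would harvest additional right-special words exploiting the long final run and the $(B_{n}^{2}\1)$-blocks, after splitting into sub-cases according to whether $\kappa=0$, whether $u$ is empty, and the relative sizes of $\beta$ and $\kappa$ — in the same spirit as the proof of Lemma~\ref{4.14}. Concretely one seeks a right-special word that, near its right end, either carries a run of $B_{n}\1$'s longer than $\gamma-1$ or places a $(B_{n}^{2}\1)$-pattern where $B_{n+m}$ does not, so that its common suffix with every $B_{n+m}$ is short; the new suffixes from such a word, added to the $(\alpha-\gamma)(h_{n}+d)$ already produced, put the total over $\tfrac12 q_{n}$. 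This bookkeeping — locating the right word in each sub-case and verifying the arithmetic — is where essentially all the work lies, and (as the section header signals) is unavoidably delicate since this is the regime containing the near-$1.5q$ examples.
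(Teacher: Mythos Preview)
Your argument for the range $\alpha\geq 2\gamma+1$ is fine, but the proposal leaves a genuine gap in the range $\gamma+1\leq\alpha\leq 2\gamma$: you only sketch that ``additional right-special words'' should be harvested via a sub-case analysis \`a la Lemma~\ref{4.14}, and call this ``unavoidably delicate''. It is in fact entirely avoidable, and the paper's proof of Lemma~\ref{4.18} is two lines.

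The issue is that you truncated your right-special word too early. You correctly observe that $(B_{n}\1)^{\gamma+\alpha}$ occurs in $B_{n+1}\1 B_{n+1}$, but then you only use this to conclude $(B_{n}\1)^{\alpha}B_{n}\in\mathcal{L}^{RS}$. If instead you keep the full word $(B_{n}\1)^{\gamma+\alpha}B_{n}$ --- which is exactly what Lemma~\ref{alphaextra} does --- then the ``not a suffix of $B_{n+1}$'' threshold is still $\approx(\gamma+1)h_{n}+\gamma d$ (the same computation you did), but the word is now of length $\approx(\alpha+\gamma+1)(h_{n}+d)$. This yields $\alpha(h_{n}+d)-c$ new right-special suffixes rather than $(\alpha-\gamma)(h_{n}+d)-c$. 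Taking $q_{n}=(\alpha+\gamma+1)(h_{n}+d)$, the condition $\alpha(h_{n}+d)\geq\tfrac12 q_{n}$ becomes $\alpha\geq\tfrac12(\alpha+\gamma+1)$, i.e.\ $\alpha\geq\gamma+1$, which is precisely the hypothesis. No sub-case analysis is needed.
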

\begin{proof}
Lemma \ref{alphaextra} states there are at least $\alpha (h_{n}+d)-c$ right-special words which are not suffixes of $B_{n+1}$ all of length less than $(\alpha + \gamma + 1)(h_{n}+d)$.  Since $\alpha \geq \gamma + 1$,
\begin{align*}
p((\alpha+\gamma+1)(h_{n}+d)) &\geq (\alpha+\gamma+1)(h_{n}+d) + f_{n} + \alpha(h_{n}+d) - c \\
&= \frac{3}{2}(\alpha + \gamma + 1)(h_{n}+d) + \frac{1}{2}(\alpha - \gamma - 1)(h_{n}+d) + f_{n} - c \\
&\geq \frac{3}{2}(\alpha + \gamma + 1)(h_{n}+d) + f_{n} - c \qedhere
\end{align*}
\end{proof}

\begin{lemma}\label{4.17}
If $\alpha > 1$ and $\gamma > 1$ and $\beta > 1$ then \ls[2]{2c}.
\end{lemma}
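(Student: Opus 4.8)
The plan is to apply Lemma \ref{countmethod}: for a suitable $q_{n}\geq h_{n}$ of order $(\alpha+\gamma)(h_{n}+d)$ it suffices to exhibit at least $q_{n}-2c$ right-special words of length in $[h_{n},q_{n})$, none a suffix of any $B_{n+m}$, since then $p(q_{n})\geq q_{n}+f_{n}+t_{n}\geq 2q_{n}+f_{n}-2c$. These I would draw from two pools that are disjoint for the trivial reason that membership is decided by whether the word contains a copy of $B_{n}^{2}$: a ``$B_{n}^{2}$-free'' pool and a ``single $B_{n}^{2}$'' pool.

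The $B_{n}^{2}$-free pool is the set of suffixes of $(B_{n}\1)^{\gamma+\alpha}B_{n}$, which is right-special by Lemma \ref{alphaextra}; all its suffixes are then right-special, and those of length exceeding $(\gamma-1)(h_{n}+d)+h_{n}+c$ are not suffixes of any $B_{n+m}$ because $B_{n+m}$ ends in $B_{n}^{2}\1(B_{n}\1)^{\gamma-1}B_{n}$ and Lemma \ref{split} ($01^{c-1}0$ versus $01^{c-1+d}$) separates the two --- about $\alpha(h_{n}+d)$ words. The single $B_{n}^{2}$ pool is the set of suffixes of $(B_{n}\1)^{\gamma-1}B_{n}^{2}\1(B_{n}\1)^{j}B_{n}$ for $0\leq j\leq\min(\alpha,\gamma)-2$: each such word is right-special, since $(B_{n}\1)^{\gamma-1}B_{n}^{2}\1(B_{n}\1)^{\alpha-1}(B_{n}^{2}\1)^{\beta}$ occurs in $B_{n+1}B_{n+1}$ (which gives the extension by $B_{n}$, and, as $j+1\leq\alpha-1$, also the extension by $\1$), hence so are its suffixes, and a suffix containing a full copy of $B_{n}^{2}$ carries that copy at distance $(j+1)(h_{n}+d)$ from the right end, strictly less than the distance $\gamma(h_{n}+d)$ at which the nearest $B_{n}^{2}$ sits in the tail of $B_{n+m}$, so it cannot be a $B_{n+m}$-suffix; distinct $j$ plainly give distinct words. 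When $\min(\alpha,\gamma)\geq 3$ the single $B_{n}^{2}$ pool alone provides $\min(\alpha,\gamma)-1\geq 2$ right-special non-suffix words at each relevant length, and taking $q_{n}$ just past the longest word appearing in the two pools, a count gives $t_{n}\geq q_{n}-2c$ and the lemma.

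What is left is the corner $\min(\alpha,\gamma)=2$ (with $\beta\geq 2$ throughout), where the single $B_{n}^{2}$ pool drops to one word per length and the two pools together fall short by roughly $h_{n}$ --- the coverage near $\ell=h_{n}$ being thin and, when $\alpha=\gamma=2$, the pools not reaching far enough for that to be harmless. I would close the gap with further right-special words built on the isolated $B_{n}^{2}$ created at a $B_{n+1}B_{n+1}$-junction followed by a run of $B_{n}^{2}\1$'s, namely $(B_{n}\1)^{i}B_{n}^{2}\1(B_{n}\1)(B_{n}^{2}\1)^{k}B_{n}$ with $0\leq i\leq\gamma-1$ and $0\leq k\leq\beta-1$, right-special by the same kind of reasoning and again carrying an occurrence of $B_{n}^{2}$ too near the right end to match a $B_{n+m}$-suffix; because $\beta\geq 2$ these extend both the length range covered and, through the two free parameters, the per-length multiplicity. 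Should a gap still persist, one can run the whole argument one level up, using that $B_{n+1}B_{n+1}$ and $B_{n+1}\1 B_{n+1}$ both lie in $\mathcal{L}$ so that $B_{n+2}$ again has two spacer values at level $n+1$. I expect the main obstacle to be exactly this bookkeeping for $\min(\alpha,\gamma)=2$: keeping the several families pairwise disjoint, checking that two of them genuinely contribute at every length in $[h_{n},q_{n})$, and --- the delicate part --- locating precisely the rightmost $B_{n}^{2}$ in the tail $(B_{n}^{2}\1)^{\kappa}(B_{n}\1)^{\gamma-1}B_{n}$ of $B_{n+m}$ (and whether $\kappa$, or $u$, interferes). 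This is the same position-tracking that made Lemma \ref{4.14} long, and is why this configuration is flagged as the hard case of the section.
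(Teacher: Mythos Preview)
Your second pool has a real gap. You claim $(B_{n}\1)^{\gamma-1}B_{n}^{2}\1(B_{n}\1)^{j}B_{n}$ is right-special for each $0\leq j\leq\min(\alpha,\gamma)-2$, citing the single word $(B_{n}\1)^{\gamma-1}B_{n}^{2}\1(B_{n}\1)^{\alpha-1}(B_{n}^{2}\1)^{\beta}\in\mathcal{L}$ as witness for both extensions. That word does give the extension by $\1$ (since $j+1\leq\alpha-1$), but the extension by $0$ requires $(B_{n}\1)^{\gamma-1}B_{n}^{2}\1(B_{n}\1)^{j}B_{n}^{2}\in\mathcal{L}$, and the cited word only supplies this for $j=\alpha-1$, which lies \emph{outside} your range $j\leq\alpha-2$. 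For intermediate $j$ there is no reason the pattern $B_{n}^{2}\1(B_{n}\1)^{j}B_{n}^{2}$ should occur at all. So for $\min(\alpha,\gamma)\geq 3$ your argument does not go through as written, and your fallback sketch for $\min(\alpha,\gamma)=2$ is not a completed proof either.

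The paper avoids all of this by using a single word containing $B_{n}^{2}$, namely $(B_{n}\1)^{\gamma}B_{n}^{2}\1 B_{n}$. Its extension by $\1$ is seen in $B_{n+1}B_{n+1}$ (this is where $\alpha>1$ is used) and its extension by $0$ in $B_{n+1}\1 B_{n+1}$ (this is where $\beta>1$ is used). The point you missed is that the $B_{n}^{2}$ in this word sits only $2h_{n}+d$ from the right end, while $B_{n+1}$ ends in $\1 B_{n}\1 B_{n}$ since $\gamma>1$; so every suffix of length at least $2h_{n}+d+c$ already fails to be a suffix of $B_{n+1}$, yielding $(\gamma+1)h_{n}+\gamma d-c$ non-suffix right-special words from this single word. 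Together with the $\alpha(h_{n}+d)-c$ words from Lemma~\ref{alphaextra}, one takes $q_{n}=(\gamma+\alpha+1)h_{n}+(\gamma+\alpha)d$ and gets $t_{n}=q_{n}-2c$ directly, hence $p(q_{n})\geq 2q_{n}+f_{n}-2c$, with no case split on $\min(\alpha,\gamma)$ and no need for a family indexed by $j$.
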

\begin{proof}
  The word $(B_{n}\1 )^{\gamma}B_{n}^{2}\1 B_{n}\1 $ is a subword of $B_{n+1}B_{n+1}$ since $\alpha > 1$.  The word $(B_{n}\1 )^{\alpha+\gamma}B_{n}^{2}\1 B_{n}^{2}$ is a subword of $B_{n+1}\1 B_{n+1}$ since $\beta > 1$.  Therefore $(B_{n}\1 )^{\gamma}B_{n}^{2}\1 B_{n} \in \mathcal{L}^{RS}$.

Since $\gamma > 1$, $B_{n+1}$ has suffix $\1 B_{n}\1 B_{n}$ so there are at least $(\gamma + 1)h_{n} + \gamma d -c$ right-special words which are not suffixes of $B_{n+1}$ with length less than $(\gamma + 3)h_{n} + (\gamma+1)d$.  By Lemma \ref{alphaextra}, there are at least $\alpha (h_{n}+d)-c$ right-special words which are not suffixes of $B_{n+1}$ and with length less than $(\alpha + \gamma + 1)(h_{n}+d)$ and all with suffix $\1 B_{n}\1 B_{n}$ and which do not contain $B_{n}^{2}$ so there is no overlap with the right-special words already identified.

Therefore there are at least $(\gamma + 1 + \alpha)h_{n}+(\gamma+\alpha)d-2c$ right-special words which are not suffixes of $B_{n+1}$ all of length less than $(\gamma + 1 + \alpha)h_{n}+(\gamma+\alpha)d$ (as $\alpha \geq 2$ implies $\gamma+1+\alpha \geq \gamma + 3$).  Then 
\begin{align*}
p((\gamma+\alpha+1)h_{n}+(\gamma+\alpha)d)
&\geq (\gamma + \alpha + 1)h_{n} + (\gamma+\alpha)d + f_{n} + (\gamma + 1 + \alpha)h_{n} + (\gamma+\alpha)d - 2c \\
&= 2((\gamma + \alpha + 1)h_{n} + (\gamma+\alpha)d) + f_{n}- 2c \qedhere
\end{align*}
\end{proof}

\begin{lemma}\label{4.19}
If $\alpha > 1$ and $\gamma > 1$ and $B_{n}^{2}\1 B_{n}^{2}\1 \in \mathcal{L}$ then \ls{c}.
\end{lemma}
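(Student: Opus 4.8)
The plan is to follow the pattern of the other sub-cases of Proposition \ref{17}: exhibit $t_{n}$ right-special words that are suffixes of no $B_{n+m}$, all of length less than some $q_{n}$, with $t_{n} \ge \frac{1}{2}q_{n} - O(1)$, and apply Lemma \ref{countmethod}. These words will be produced as the union of two families that are automatically disjoint because the first contains no occurrence of $B_{n}^{2}$ while every word of the second does. The first family is the one furnished by Lemma \ref{alphaextra}: $\alpha(h_{n}+d) - c$ right-special words, none containing $B_{n}^{2}$, all of length less than $(\alpha+\gamma+1)(h_{n}+d)$. Since Lemma \ref{4.18} already handles $\alpha > \gamma$ (and Lemma \ref{4.17} handles $\beta > 1$), we may assume $\alpha \le \gamma$ and $\beta = 1$: this is exactly the regime in which the first family alone is too small, and in it the repeated block $B_{n}^{2}\1 B_{n}^{2}$ is forced to lie in the interior portion $u(B_{n}^{2}\1)^{\kappa}$ of $B_{n+1}$.

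For the second family the starting observation is that $B_{n}^{2}\1 B_{n} \in \mathcal{L}^{RS}$: the hypothesis gives $B_{n}^{2}\1 B_{n}^{2} \in \mathcal{L}$, hence $B_{n}^{2}\1 B_{n}0 \in \mathcal{L}$ since $B_{n}$ begins with $0$, while the suffix $B_{n}^{2}\1(B_{n}\1)^{\gamma-1}B_{n}$ of $B_{n+1}$ (using $\gamma \ge 2$) gives $B_{n}^{2}\1 B_{n}1 \in \mathcal{L}$. A single copy of $B_{n}^{2}$ is not enough (it yields only one new $B_{n}^{2}$-containing word), so I would locate a maximal run $(B_{n}^{2}\1)^{M}$ inside $B_{n+1}$ (so $M \ge 2$) and, using the expansions of $B_{n+1}B_{n+1}$ and $B_{n+1}\1 B_{n+1}$ recorded above, extend it to the left and right into a right-special word $W$ of the form $(B_{n}\1)^{a}(B_{n}^{2}\1)^{M}(B_{n}\1)^{b}B_{n}$; the leading $(B_{n}\1)^{a}$ together with the repeated block $(B_{n}^{2}\1)^{M}$ forces every sufficiently long suffix of $W$ to disagree with every suffix of $B_{n+1}$ (via Lemma \ref{split}, using that the tail of $B_{n+1}$ is merely $B_{n}^{2}\1(B_{n}\1)^{\gamma-1}B_{n}$ preceded by a single $B_{n}^{2}\1$). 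The suffixes of $W$ that contain a full $B_{n}^{2}$ then form the second family, of size roughly $(2(M-1)+a)h_{n}$, disjoint from the first.

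Running Lemma \ref{countmethod} at $q_{n}$ equal to the length of $W$ (or to $(\alpha+\gamma+1)(h_{n}+d)$, whichever is larger) gives $p(q_{n}) \ge q_{n} + f_{n} + \alpha(h_{n}+d) + s - 2c$, where $s$ is the size of the second family, and the conclusion $p(q_{n}) \ge \frac{3}{2}q_{n} + f_{n} - c$ follows once $s \ge \frac{1}{2}(\gamma + 1 - \alpha)(h_{n}+d) + O(1)$. The main obstacle — and the real content of the lemma — is that the hypothesis only guarantees $M \ge 2$, which produces too few $B_{n}^{2}$-containing right-special words when $\gamma$ is much larger than $\alpha$ (and $a,b$ are constrained by the surrounding block structure). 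Overcoming this requires a case analysis on how the $(B_{n}^{2}\1)$- and $(B_{n}\1)$-blocks are interleaved inside $B_{n+1}$, in the spirit of Lemma \ref{18a3}: when no single block of either kind is long, one chains several consecutive blocks into a long right-special word of the form $(B_{n}\1)^{\ast}(B_{n}^{2}\1)^{\ast}(B_{n}\1)^{\ast}\cdots B_{n}$ whose long suffixes are too long to lie in the first family and are forced, by Lemma \ref{split} applied to the unique short tail $B_{n}^{2}\1(B_{n}\1)^{\gamma-1}B_{n}$ of $B_{n+1}$, not to occur as suffixes. Balancing the chain length against $\gamma$ and checking disjointness from the Lemma \ref{alphaextra} family is where the genuine work lies; the right-specialness and non-suffix verifications are routine applications of Lemma \ref{split}.
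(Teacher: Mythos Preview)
Your plan is vastly more complicated than necessary, and the ``main obstacle'' you identify is not real. The paper's proof is a single paragraph using one right-special word and does not invoke Lemma~\ref{alphaextra} at all.

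The key observation you miss is this: from the hypothesis $(B_{n}^{2}\1)^{2}\in\mathcal{L}$ together with $\alpha>1$ and $\gamma>1$, one gets $B_{n}\1(B_{n}^{2}\1)^{2}B_{n}\1\in\mathcal{L}$ (every $B_{n}^{2}\1$ block inside $B_{n+1}$ is preceded by $B_{n}\1$, and somewhere to its right a $B_{n}\1$ must occur since $\gamma>1$). Inside this single word the pattern $B_{n}\1 B_{n}^{2}\1 B_{n}$ occurs twice, once followed by $0$ and once followed by $1$, so $B_{n}\1 B_{n}^{2}\1 B_{n}\in\mathcal{L}^{RS}$. This word has length $4h_{n}+2d$; since $B_{n+1}$ ends in $\1 B_{n}\1 B_{n}$, Lemma~\ref{split} gives $2h_{n}+d-c$ right-special suffixes that are not suffixes of $B_{n+1}$, and Lemma~\ref{countmethod} yields
\[
p(4h_{n}+2d)\ \ge\ (4h_{n}+2d)+f_{n}+(2h_{n}+d-c)\ =\ \tfrac{3}{2}(4h_{n}+2d)+f_{n}-c.
\]
Done.

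Your error is in the sentence ``A single copy of $B_{n}^{2}$ is not enough.'' You looked at $B_{n}^{2}\1 B_{n}$, which indeed gives only about $h_{n}$ non-suffix right-special words; but prepending a single $B_{n}\1$ (which the hypothesis readily allows) gives $2h_{n}+d-c$, and that is \emph{exactly} half of $4h_{n}+2d$. No second family, no case analysis on block interleavings, no balancing against $\gamma$ is needed; the values of $\alpha$, $\gamma$, $\beta$, $\kappa$ play no role beyond $\alpha,\gamma>1$.
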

\begin{proof}
If the word $(B_{n}^{2}\1 )^{2} \in \mathcal{L}$ then necessarily $B_{n}\1 (B_{n}^{2}\1)^{2}B_{n}\1 \in \mathcal{L}$ since somewhere to the right of $(B_{n}^{2}\1)^{2}$ in $B_{n+1}$ must be $B_{n}\1$ as $\gamma > 1$.  Then $B_{n}\1 B_{n}^{2}\1 B_{n} \in \mathcal{L}^{RS}$ which gives at least $2h_{n}+d-c$ right-special words of length less than $4h_{n}+2d$ which are not suffixes of $B_{n+1}$.  Then 
\[
p(4h_{n}+2d) \geq 4h_{n}+2d + f_{n} + 2h_{n}+d -c = \frac{3}{2}(4h_{n}+2d) + f_{n} - c \qedhere
\]
\end{proof}

\subsubsection{The \texorpdfstring{$1 < \alpha \leq \gamma$}{1 <= alpha < gamma} and \texorpdfstring{$B_{n}^{2}\1 B_{n}^{2} \notin \mathcal{L}$}{BB1BB not appearing} case}

From here on, we assume $B_{n}^{2}\1 B_{n}^{2} \notin \mathcal{L}$.  Therefore we can write
\[
B_{n+1} = (\prod_{t=1}^{L}((B_{n}\1 )^{\alpha_{t}}B_{n}^{2}\1))(B_{n}\1 )^{\gamma-1}B_{n}
\]
for some $\alpha_{t} \geq 1$ and $L \geq 1$ where $\alpha_{1} = \alpha$ and we write $\alpha_{L+1} = \gamma - 1$.

\begin{lemma}\label{4.20}
If $1 < \alpha \leq \gamma$ and $\alpha_{t} < \gamma - 1$ for some $t \geq 2$ then \ls{c}.
\end{lemma}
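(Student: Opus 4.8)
The plan is to feed Lemma \ref{countmethod} with a large collection of right-special words: the $B_n^2$-free words furnished by Lemma \ref{alphaextra} together with a family of $B_n^2$-containing words manufactured from the short run $(B_n\1)^{\alpha_{t^*}}$, and to check that the two families are disjoint and jointly account for about $\tfrac32 q_n$ lengths below a suitably chosen $q_n$.

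First I would fix $t^{*}\in\{2,\dots,L\}$ realizing $m:=\min\{\alpha_t:2\le t\le L\}$; the hypothesis that $\alpha_t<\gamma-1$ for some $t\ge 2$ gives $m<\gamma-1$, and minimality gives $\alpha_t\ge m$ for all $t\ge 2$, in particular $\alpha_L\ge m$ and (using $\alpha_1=\alpha\ge 2$ when $t^{*}=2$) the run preceding block $t^{*}$ has length at least $m$. The basic building block is a right-special word $D_n$ of the shape $(B_n\1)^{K}B_n^{2}\1(B_n\1)^{m}B_n$: the tail $(B_n\1)^{m}B_n$ is followed by $B_n$ at the start of block $t^{*}$ (so $D_n0\in\mathcal{L}$); it is followed by $\1$ because $\gamma-1\ge m+1$, so the final run contains $(B_n\1)^{m+1}$ (so $D_n1\in\mathcal{L}$); and the leading run $(B_n\1)^{K}$ can be taken of order $\gamma$ by placing this pattern inside $B_{n+1}\1B_{n+1}$, where the merged run $(B_n\1)^{\gamma+\alpha}$ occurs immediately before a copy of $B_n^{2}\1$ (the display recorded just before Lemma \ref{alphaextra}). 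The point of $m<\gamma-1$ is that the single $B_n^{2}$ inside $D_n$ then sits at distance $(m+1)(h_n+d)$ from the right end of $D_n$, whereas every $B_n^{2}$ occurring in a suffix of $B_{n+1}$ sits at distance at least $\gamma(h_n+d)>(m+1)(h_n+d)$ from the right end; hence, by the $01^{c-1}$-versus-$1^{c}$ comparison of Lemma \ref{split}, every suffix of $D_n$ long enough to contain that $B_n^{2}$ fails to be a suffix of $B_{n+1}$, and, since $\len(D_n)<h_{n+1}$, also of $B_{n+m}$ for every $m\ge 1$. Every suffix of $D_n$ is itself right-special, so this yields of order $K(h_n+d)$ new right-special words, all of length less than $\len(D_n)$ and all containing $B_n^{2}$.

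Next I would invoke Lemma \ref{alphaextra} for a further $\alpha(h_n+d)-c$ right-special words, none containing $B_n^{2}$ and all of length less than $(\alpha+\gamma+1)(h_n+d)$; these are automatically disjoint from the $B_n^{2}$-containing suffixes of $D_n$. Taking $q_n=\len(D_n)$ (which dominates $(\alpha+\gamma+1)(h_n+d)$ once $K$ is of order $\gamma$) and summing, Lemma \ref{countmethod} gives $p(q_n)\ge q_n+f_n+(K+\alpha)(h_n+d)-O(c)$; since $q_n=(K+m+3)h_n+(K+m+1)d$ with $K$ of order $\gamma\ge m+2$ and $\alpha\ge 2$, one checks $(K+\alpha)(h_n+d)\ge\tfrac12 q_n-O(c)$, and hence $p(q_n)\ge\tfrac32 q_n+f_n-c$ for all sufficiently large $n$, absorbing the bounded loss into the constant.

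The step I expect to be the main obstacle is making the leading run in $D_n$ genuinely long: one must verify that a run $(B_n\1)^{K}$ with $K$ of order $\gamma$ really does occur in the language immediately before a copy of $B_n^{2}\1(B_n\1)^{m}B_n$, and that $D_n$ stays two-sidedly extendable, which is exactly where the available run lengths ($\alpha\le\gamma$, the $\alpha_t\ge m$ for $t\ge 2$, $\gamma-1$, and the merged $\gamma+\alpha$ from $B_{n+1}\1B_{n+1}$) have to be played off against the hypothesis $m=\alpha_{t^{*}}<\gamma-1$. If the natural occurrence of $(B_n\1)^{\gamma+\alpha}B_n^{2}\1$ is not immediately followed by a run of length exactly $m$, one adjusts the position of the $B_n^{2}$ or the right-hand exponent (any run length $\le\gamma-1$ serves for the $1$-extension, and any block of run length $\le\gamma+\alpha$ followed by $B_n^{2}$ for the $0$-extension), at the cost of further constant-level bookkeeping; and the degenerate configurations in which no long leading run is available — so that $\alpha$ and the runs flanking block $t^{*}$ are all small while $\gamma$ is large — are absorbed by pairing the $B_n^{2}$-free words of Lemma \ref{alphaextra} against the interior right-special words present at every length below $q_n$.
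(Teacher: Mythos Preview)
There is a genuine gap in the construction of $D_n$. You want $D_n=(B_n\1)^{K}B_n^{2}\1(B_n\1)^{m}B_n$ with $K$ of order $\gamma$, but the two extensions force incompatible placements. For $D_n0\in\mathcal{L}$ you need a location where a run of length $\ge K$ is followed by $B_n^{2}\1$ and then by a run of length \emph{exactly} $m$ and another $B_n^{2}$; the only place a run of length $\gamma+\alpha$ occurs is at the junction in $B_{n+1}\1 B_{n+1}$, and the run immediately after the first $B_n^{2}\1$ there is $\alpha_{2}$, not $m$ (unless $t^{*}=2$). For $D_n1\in\mathcal{L}$ at that same location you would need the following run to exceed $m$, so if you instead set the trailing exponent to $\alpha_2$ the $1$-extension fails. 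Your ``adjustment'' paragraph does not resolve this: the $0$- and $1$-extensions cannot both be read off the junction, and away from the junction the available leading run is only $\alpha_{t^{*}-1}+1$, which need not be of order $\gamma$. The fallback of combining a short $D_n$ with Lemma~\ref{alphaextra} also breaks down, since when $K$ is small $q_n=\len(D_n)$ can be much less than $(\alpha+\gamma+1)(h_n+d)$, so the $B_n^{2}$-free words are not all of length below $q_n$.

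The paper's argument avoids this entirely and does not use Lemma~\ref{alphaextra} at all. The key observation is that $(B_n\1)^{\alpha_k+1}B_n^{2}\1(B_n\1)^{\alpha_{k+1}}B_n^{2}\in\mathcal{L}$ for every $1\le k\le L$. One then picks $t$ so that $\alpha_{t+1}=\min\{\alpha_s:s\ge 2\}$ (which is $<\gamma-1$ by hypothesis) and any $k$ with $\alpha_{k+1}>\alpha_{t+1}$; such $k$ exists since $\alpha_{L+1}=\gamma-1$. Setting $m=\min(\alpha_t,\alpha_k)$, the word $(B_n\1)^{m+1}B_n^{2}\1(B_n\1)^{\alpha_{t+1}}B_n$ is right-special: the $0$-extension comes from position $t$ and the $1$-extension from position $k$. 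The crucial point is not that $K=m+1$ is large, but that $\alpha_{t+1}\le m$, so the word has length $(m+\alpha_{t+1}+4)(h_n+d)-2d$ while contributing $(m+2)(h_n+d)-d-c$ suffixes that are not suffixes of $B_{n+1}$; since $m\ge\alpha_{t+1}$, this ratio alone already gives $\tfrac32$.
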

\begin{proof}
Observe that $B_{n}\1 (B_{n}\1)^{\alpha_{k}} B_{n}^{2}\1 (B_{n}\1)^{\alpha_{k+1}}B_{n}^{2} \in \mathcal{L}$ for all $1 \leq k \leq L$ since, in the case when $k > 1$, it is a subword of $B_{n+1}$ and, in the case when $k=1$, it is a subword of $B_{n}\1 B_{n+1}$ which is a subword of $B_{n+1}\1 B_{n+1}$.

If $\alpha_{t+1} < \alpha_{k+1}$ for some $1 \leq t,k \leq L$ then the word $B_{n}\1(B_{n}\1 )^{\alpha_{k}}B_{n}^{2}\1(B_{n}\1 )^{\alpha_{k+1}}B_{n}^{2}$ has the subword $B_{n}\1(B_{n}\1 )^{\alpha_{k}}B_{n}^{2}\1(B_{n}\1 )^{\alpha_{t+1}}B_{n}\1 $.  As $B_{n}\1(B_{n}\1 )^{\alpha_{t}}B_{n}^{2}\1(B_{n}\1 )^{\alpha_{t+1}}B_{n}^{2} \in \mathcal{L}$, this implies that the word $B_{n}\1(B_{n}\1 )^{\min(\alpha_{t},\alpha_{k})}B_{n}^{2}\1(B_{n}\1 )^{\alpha_{t+1}}B_{n} \in \mathcal{L}^{RS}$.

Since $B_{n+1}$ ends in $B_{n}^{2}(\1 B_{n})^{\gamma}$, if $\alpha_{t+1} < \gamma - 1$ then suffixes of our right-special word which are longer than $01^{c-1}B_{n}\1 (B_{n}\1)^{\alpha_{t+1}}B_{n}$ are not suffixes of $B_{n+1}$.  This gives at least $(\min(\alpha_{t},\alpha_{k}) + 2)(h_{n}+d)-d-c$ right-special words which are not suffixes of $B_{n+1}$ which have length less than $(\min(\alpha_{t},\alpha_{k})+\alpha_{t+1}+4)(h_{n}+d)-2d$.

By hypothesis $\min \{ \alpha_{t} : t \geq 2 \} < \gamma - 1$.  Let $t$ such that $\alpha_{t+1} = \min \{ \alpha_{t} : t \geq 2 \}$.  Then there exists $k$ such that $\alpha_{t+1} < \alpha_{k+1}$ since the last $\alpha_{k}$ is followed by $\gamma - 1 > \alpha_{t+1}$.  Write $m = \min(a_{t},a_{k})$.  Then $a_{t+1} \leq m$ since it is chosen to be minimal.  
We then have, as $m - \alpha_{t+1} \geq 0$,
\begin{align*}
p((\alpha_{t+1}+m+4)(h_{n}+d)-2d) &\geq (\alpha_{t+1}+m+4)(h_{n}+d) -2d + f_{n} + (m+2)(h_{n}+d) - d - c \\
&= \frac{3}{2}((\alpha_{t+1}+m+4)(h_{n}+d)-2d) + \frac{1}{2}(m-\alpha_{t+1})(h_{n}+d) + f_{n} - c \\
&\geq \frac{3}{2}((\alpha_{t+1}+m+4)(h_{n}+d)-2d) + f_{n} - c \qedhere
\end{align*}
\end{proof}

\begin{lemma}\label{4.21}
If $1 < \alpha < \gamma$ and $\alpha_{t} \geq \gamma - 1$ for all $t \geq 2$ then \ls{c}.
\end{lemma}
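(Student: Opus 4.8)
The plan is to feed Lemma \ref{countmethod} with a family of right-special words, none of which is a suffix of any $B_{n+m}$, whose cardinality at some length $q_{n}$ is at least $\tfrac{1}{2}q_{n}-c$; then $p(q_{n})\geq q_{n}+f_{n}+t_{n}\geq\tfrac{3}{2}q_{n}+f_{n}-c$, which is exactly what is wanted. Throughout, recall the structure $B_{n+1}=\bigl(\prod_{t=1}^{L}(B_{n}\1)^{\alpha_{t}}B_{n}^{2}\1\bigr)(B_{n}\1)^{\gamma-1}B_{n}$ with $\alpha_{1}=\alpha$, that $\alpha_{t}\geq\gamma-1\geq\alpha$ for $2\leq t\leq L$ (write $\alpha_{L+1}=\gamma-1$), and that $B_{n}^{2}\1 B_{n}^{2}\notin\mathcal{L}$. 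Lemma \ref{alphaextra} already supplies $\alpha(h_{n}+d)-c$ such words (none containing $B_{n}^{2}$) of length less than $(\alpha+\gamma+1)(h_{n}+d)$; since $\alpha<\gamma$ this is below half the length, so a second source of right-special words is needed.

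First I would exploit the longest available run of $B_{n}\1$. Reading $B_{n+1}\1 B_{n+1}$ across its central $\1$ produces $(B_{n}\1)^{\gamma+\alpha+1}B_{n}^{2}$ as a subword (the estimate already used in Lemma \ref{alphaextra}), and each interior block $(B_{n}\1)^{\alpha_{t}}$ is immediately followed by $B_{n}^{2}$; let $M=\max\{\gamma+\alpha+1,\ \alpha_{2},\dots,\alpha_{L}\}$. Then $(B_{n}\1)^{j}B_{n}\in\mathcal{L}$ for all $j\leq M$, it is right-special for all $j\leq M-1$, and, since $B_{n+1}$ ends in $B_{n}^{2}\1(B_{n}\1)^{\gamma-1}B_{n}=B_{n}(B_{n}\1)^{\gamma}B_{n}$, a Lemma \ref{split}-type argument shows that every suffix of $(B_{n}\1)^{M}B_{n}$ of length exceeding $(\gamma+1)h_{n}+\gamma d+c$ is not a suffix of $B_{n+1}$, hence of no $B_{n+m}$. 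This batch has size at least $(M-1-\gamma)(h_{n}+d)-c$ at a length $q_{n}\approx M(h_{n}+d)$, so by Lemma \ref{countmethod} we get $p(q_{n})\geq\tfrac{3}{2}q_{n}+f_{n}-c$ as soon as $M\geq 2(\gamma+1)$. Because $\alpha<\gamma$ forces $\gamma+\alpha+1<2(\gamma+1)$, this disposes of the case in which some block $\alpha_{t}$, $t\geq 2$, is at least $2(\gamma+1)$; so from here on one may assume every $\alpha_{t}$, $t\geq 2$, lies in $[\gamma-1,\,2\gamma+1]$.

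In the remaining (bounded-block) case I would keep the $\alpha(h_{n}+d)-c$ words of Lemma \ref{alphaextra} and adjoin a second batch of right-special words that \emph{do} contain $B_{n}^{2}$, hence disjoint from the first. These should come from the subword $(B_{n}\1)^{\gamma+\alpha}B_{n}^{2}\1(B_{n}\1)^{\alpha_{2}}$ of $B_{n+1}\1 B_{n+1}$ (valid because past the junction $B_{n}^{2}\1$ is followed by the block of length $\alpha_{2}\geq\gamma-1\geq\alpha$, using $B_{n}^{2}\1 B_{n}^{2}\notin\mathcal{L}$): one extracts a right-special word of the form $(B_{n}\1)^{a}B_{n}^{2}\1(B_{n}\1)^{b}B_{n}$ whose suffixes of length above a bound of the form $O(h_{n})+c$ are right-special, and, since the blocks are now bounded by $2\gamma+1$, the length beyond which such a suffix ceases to agree with the terminal segment of $B_{n+1}$ is controlled, so the new batch has size comparable to $\gamma(h_{n}+d)$. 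Taking $q_{n}$ to be the length of the longer of the two words, the two disjoint batches together with the baseline $q_{n}-h_{n}$ suffixes of the $B_{n+m}$ give $t_{n}\geq\tfrac{1}{2}q_{n}-c$, and Lemma \ref{countmethod} finishes the proof.

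The main obstacle is precisely this bounded-block case. The terminal run of $B_{n+1}$ has length only $\gamma-1$, yet an interior block can still be as long as $2\gamma+1$, so a carelessly chosen second word shares a long common suffix with $B_{n+1}$ (or with $B_{n+2}$) and contributes nothing to $t_{n}$; locating a good second word will in all likelihood require a further split according to whether some $\alpha_{t}$ strictly exceeds $\gamma-1$, whether $\alpha=\gamma-1$ or $\alpha<\gamma-1$, and the relative sizes of consecutive blocks, and then one must arrange the two batch sizes together with $q_{n}-h_{n}$ to sum to at least $\tfrac{3}{2}q_{n}+f_{n}-c$ with additive constant exactly $c$ rather than a larger multiple. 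All the bookkeeping of the argument lives here.
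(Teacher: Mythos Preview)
Your proposal has a genuine gap: you explicitly leave the ``bounded-block'' case unresolved, and that case is not peripheral---under the hypotheses $1<\alpha<\gamma$ and $\alpha_{t}\geq\gamma-1$, it is essentially the generic situation. You correctly sense that the right-special word should have the shape $(B_{n}\1)^{a}B_{n}^{2}\1(B_{n}\1)^{b}B_{n}$, but you do not commit to specific $a,b$, you anticipate further case splits, and you plan to combine two disjoint batches (one from Lemma~\ref{alphaextra}, one containing $B_{n}^{2}$) while worrying about keeping the additive constant at $c$. None of that is necessary.

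The paper's argument is a single clean shot with $a=\alpha$ and $b=\alpha-1$, and it uses neither Lemma~\ref{alphaextra} nor any case split on the sizes of the $\alpha_{t}$. The word $W=(B_{n}\1)^{\alpha}B_{n}^{2}\1(B_{n}\1)^{\alpha-1}B_{n}$ is right-special because: (i) $B_{n+1}B_{n+1}$ contains $(B_{n}\1)^{\gamma-1}B_{n}^{2}\1(B_{n}\1)^{\alpha-1}B_{n}^{2}$ across the junction, and $\alpha\leq\gamma-1$ lets you trim to get $W0\in\mathcal{L}$; (ii) $B_{n+1}\1$ ends in $(B_{n}\1)^{\alpha_{L}}B_{n}^{2}\1(B_{n}\1)^{\gamma-1}B_{n}\1$, and the hypothesis $\alpha_{L}\geq\gamma-1\geq\alpha$ together with $\gamma-1\geq\alpha>\alpha-1$ lets you trim to get $W1\in\mathcal{L}$. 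Now $B_{n+1}$ has $(\1 B_{n})^{\gamma}$ as a suffix with $\gamma>\alpha$, so $W$ and $B_{n+1}$ diverge on suffixes of length at least $(\alpha+1)h_{n}+\alpha d+c$; since $\len(W)=(2\alpha+2)h_{n}+2\alpha d$, this already yields $(\alpha+1)h_{n}+\alpha d-c=\tfrac{1}{2}\len(W)-c$ right-special non-suffixes, and Lemma~\ref{countmethod} gives $p(q_{n})\geq\tfrac{3}{2}q_{n}+f_{n}-c$ directly. The point you were missing is that choosing $b=\alpha-1$ (strictly less than $\gamma-1$) forces the divergence from $B_{n+1}$ to begin exactly at the midpoint of $W$, so one batch already does the whole job.
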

\begin{proof}
The word $B_{n+1}B_{n+1}$ contains $B_{n}(\1 B_{n})^{\gamma-1}\1 B_{n}^{2}\1(B_{n}\1 )^{\alpha-1}B_{n}^{2}$ as a subword, so, as $\alpha < \gamma$, the word $(B_{n}\1 )^{\alpha}B_{n}^{2}\1(B_{n}\1 )^{\alpha-1}B_{n}0 \in \mathcal{L}$.

In the case $L = 1$, the word $B_{n+1}\1 = (B_{n}\1 )^{\alpha}B_{n}^{2}(\1 B_{n})^{\gamma}\1$ so $(B_{n}\1 )^{\alpha}B_{n}^{2}\1(B_{n}\1 )^{\alpha-1}B_{n}\1  \in \mathcal{L}$ since $\alpha < \gamma$.
In the case when $L > 1$, since $\alpha_{L} \geq \gamma - 1 \geq \alpha$, the word $B_{n+1}\1$ ends in $(B_{n}\1 )^{\alpha_{L}}B_{n}^{2}(\1 B_{n})^{\gamma}\1$ which has $(B_{n}\1 )^{\alpha}B_{n}^{2}\1(B_{n}\1 )^{\alpha-1}B_{n}\1 $ as a subword.

So $(B_{n}\1 )^{\alpha}B_{n}^{2}\1(B_{n}\1 )^{\alpha-1}B_{n} \in \mathcal{L}^{RS}$.  As $B_{n+1}$ has $(\1 B_{n})^{\gamma}$ as a suffix, our word gives at least $(\alpha+1)(h_{n}+d)-d-c$ right-special words which are not suffixes of $B_{n+1}$ all of length less than $(2\alpha+1)(h_{n}+d)-d$.  Then 
\begin{align*}
p((2\alpha+1)(h_{n}+d)-d) &\geq (2\alpha+1)(h_{n}+d)-d + f_{n} + (\alpha+1)(h_{n}+d) - d -c \\
&= \frac{3}{2}((2\alpha+1)(h_{n}+d)-d) + \frac{1}{2}h_{n} + f_{n} - c \qedhere
\end{align*}
\end{proof}

\begin{lemma}\label{4.22}
If $\alpha = \gamma > 1$ and $\alpha_{t} \geq \gamma - 1$ for all $t \geq 2$ and for some $t \geq 2$, $\alpha_{t} \geq \gamma$ with $\alpha_{t} \ne 2\gamma$ then \ls{2c}.
\end{lemma}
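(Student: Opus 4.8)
The plan is to keep the decomposition $B_{n+1}=\big(\prod_{t=1}^{L}(B_n\1)^{\alpha_t}B_n^2\1\big)(B_n\1)^{\gamma-1}B_n$ with $\alpha_1=\alpha=\gamma$ and $\alpha_{L+1}:=\gamma-1$ from the start of this subsection (so $B_n^2\1 B_n^2\notin\mathcal{L}$ stands), and to exploit the long run $(B_n\1)^{\alpha_{t_0}}$. First note that $2\le t_0\le L$ (since $\alpha_{L+1}=\gamma-1<\gamma$), so block $t_0$ of $B_{n+1}$ is preceded by the $B_n^2\1$ ending block $t_0-1$ and followed by $B_n^2$, whence $B_n^2\1(B_n\1)^{\alpha_{t_0}}B_n^2\in\mathcal{L}$; and the seam of $B_{n+1}\1 B_{n+1}$, where $(B_n\1)^{\gamma-1}B_n\cdot\1\cdot(B_n\1)^{\gamma}$ produces a run of length $2\gamma$, gives $B_n^2\1(B_n\1)^{2\gamma}B_n^2\in\mathcal{L}$. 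Since $\alpha_{t_0}\ne 2\gamma$ these two runs have different lengths; set $k=\min(\alpha_{t_0},2\gamma)\ge\gamma$. Because the longer of the two runs is itself preceded by a $B_n^2\1$, the word $D_n:=B_n^2\1(B_n\1)^k B_n$ is right-special: $D_n0=B_n^2\1(B_n\1)^k B_n^2\in\mathcal{L}$ (the run-$k$ occurrence) and $D_n\1=B_n^2\1(B_n\1)^{k+1}\in\mathcal{L}$ (the longer-run occurrence).

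Since $B_{n+1}$ ends in $(B_n\1)^{\alpha_L}B_n^2\1(B_n\1)^{\gamma-1}B_n=(B_n\1)^{\alpha_L}B_n(B_n\1)^{\gamma}B_n$, its suffix of length $(\gamma+1)h_n+\gamma d$ is $(B_n\1)^{\gamma}B_n$ and the copy of $B_n$ immediately to the left of that suffix abuts the next $B_n$ with no $\1$ between; in $D_n$, by contrast, each of the $k\ge\gamma$ copies of $B_n$ in the block $(B_n\1)^k B_n$ is preceded by $\1$. Hence by Lemma \ref{split} every suffix of $D_n$ longer than $01^{c-1}(B_n\1)^{\gamma}B_n$ fails to be a suffix of $B_{n+1}$, is right-special, and has length below $h_{n+1}$ (so is a suffix of no $B_{n+m}$). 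In the subcase $\alpha_{t_0}>2\gamma$, i.e.\ $k=2\gamma$, this already does it: we obtain $(\gamma+2)h_n+(\gamma+1)d-c$ such words of length at most $\len(D_n)=(2\gamma+3)h_n+(2\gamma+1)d$, and Lemma \ref{countmethod} with $q_n$ essentially $\len(D_n)$ gives $p(q_n)\ge q_n+f_n+(\gamma+2)h_n+(\gamma+1)d-c\ge\tfrac32 q_n+f_n-2c$, with room to spare.

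The delicate subcase is $\gamma\le\alpha_{t_0}<2\gamma$, i.e.\ $k=\alpha_{t_0}\le2\gamma-1$, where counting the suffixes of $D_n$ alone falls short of $\tfrac32 q_n$ by about $\tfrac12 h_n$: indeed every suffix of $D_n$ of length at most $\len\big((B_n\1)^{k+1}B_n\big)$ is a suffix of $(B_n\1)^{k+1}B_n$, hence of $(B_n\1)^{2\gamma}B_n$, hence already one of the $\gamma(h_n+d)-c$ right-special words (none containing $B_n^2$) produced by Lemma \ref{alphaextra}. The remedy is to observe that any suffix of $D_n$ reaching at least $c$ characters into the \emph{first} copy of $B_n$ of $D_n$'s $B_n^2$ contains a nonempty proper suffix of $B_n$ immediately followed by a full $B_n$, so by Lemma \ref{split} it is not a suffix of $(B_n\1)^{2\gamma}B_n$; these roughly $h_n$ words are genuinely new. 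Adjoining them to Lemma \ref{alphaextra}'s words — the two families are disjoint, one lying inside the suffixes of $(B_n\1)^{2\gamma}B_n$ and the other disjoint from them — yields about $(\gamma+1)h_n+\gamma d-2c$ right-special words, all of length at most $\len(D_n)=(k+3)h_n+(k+1)d$ and none a suffix of any $B_{n+m}$. Lemma \ref{countmethod} with $q_n$ essentially $\max\{\len(D_n),(2\gamma+1)(h_n+d)\}$ then gives $p(q_n)\ge q_n+f_n+(\gamma+1)h_n+\gamma d-2c$, and since $k\le2\gamma-1$ makes $(\gamma+1)h_n+\gamma d\ge\tfrac12\big((k+3)h_n+(k+1)d\big)$, this is $\ge\tfrac32 q_n+f_n-2c$.

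I expect this last subcase to be the crux: establishing the disjointness of the two families of right-special words cleanly, and verifying that the combined count is just large enough — it is tight exactly when $\alpha_{t_0}=2\gamma-1$, which is what forces the constant $2c$ (each family costs a $c$) rather than $c$. A routine point used throughout is that $2\le t_0\le L$ forces $L\ge2$, so $B_{n+1}$ contains at least two full blocks and $h_{n+1}$ comfortably exceeds $\len(D_n)$, reducing ``suffix of no $B_{n+m}$'' to ``not a suffix of $B_{n+1}$''.
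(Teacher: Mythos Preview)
Your argument is correct and broadly parallel to the paper's, but the two diverge in the subcase $\gamma\le\alpha_{t_0}<2\gamma$. The paper avoids the two-family combination entirely: rather than stopping at $D_n=B_n^2\1(B_n\1)^{\alpha_{t_0}}B_n$ and then adjoining the words from Lemma~\ref{alphaextra}, it extends $D_n$ to the left by $(B_n\1)^{\gamma}$, using the hypothesis $\alpha_{t_0-1}\ge\gamma-1$ (for the $0$-extension, as a subword of $B_{n+1}$) and the seam $B_{n+1}\1 B_{n+1}$ (for the $1$-extension). The single word $(B_n\1)^{\gamma}B_n^2\1(B_n\1)^{\alpha_{t_0}}B_n$ is then right-special and already yields $(\alpha_{t_0}+2)h_n+(\alpha_{t_0}+1)d-c$ suffixes not ending $B_{n+1}$, which is enough by itself since $\alpha_{t_0}\ge\gamma$. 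In the case $\alpha_{t_0}>2\gamma$ the paper also uses a slightly different word, $(B_n\1)^{2\gamma+1}B_n$, rather than your $B_n^2\1(B_n\1)^{2\gamma}B_n$; both work, yours giving one more $h_n$ worth of words.

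Your route buys nothing extra and costs the disjointness verification --- that suffixes of $D_n$ reaching into the first $B_n$ are not suffixes of $(B_n\1)^{2\gamma}B_n$ --- which, while true (it reduces to comparing a length-$j$ suffix of $B_n$ with a length-$j$ suffix of $B_n\1$ for $c\le j\le h_n$, and these differ in their last $c$ characters), is not quite an immediate consequence of Lemma~\ref{split} as you suggest. The paper's choice sidesteps this by building the extra $\gamma(h_n+d)$ length directly into the right-special word, so the $-2c$ arises there for a different reason (or is simply slack).
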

\begin{proof}
First consider the case when $\alpha_{t} > 2\gamma$.  As $B_{n}^{2}\1 (B_{n}\1)^{\alpha_{t}} B_{n}^{2}$ is a subword of $B_{n+1}$ and $\alpha_{t} \geq 2\gamma + 1$, this means $(B_{n}\1)^{2\gamma+2}B_{n}^{2} \in \mathcal{L}$.  Then $(B_{n}\1 )^{2\gamma+1}B_{n} \in \mathcal{L}^{RS}$.  Since $B_{n+1}$ has $B_{n}^{2}\1 (B_{n}\1 )^{\gamma-1}B_{n}$ as a suffix, there are at least $(\gamma+1)h_{n} + (\gamma +1)d - c$ right-special suffixes of our word all of length less than $(2\gamma+2)h_{n} + (2\gamma + 1)d$.  Then
\begin{align*}
p((2\gamma+2)h_{n} + (2\gamma + 1)d) &\geq (2\gamma+2)h_{n} + (2\gamma + 1)d + f_{n} + (\gamma + 1)h_{n} + (\gamma+1) d - c \\
&= \frac{3}{2}((2\gamma+2)h_{n} + (2\gamma + 1)d) +f_{n} + \frac{1}{2}d -c
\end{align*}

Next consider when $\alpha_{t} < 2\gamma$.  Then $B_{n}\1(B_{n}\1 )^{\gamma-1} B_{n}^{2}\1 (B_{n}\1 )^{\alpha_{t}}B_{n}^{2}$ is a subword of $B_{n+1}$ since $\alpha_{t-1} \geq \gamma - 1$.  As the word $(B_{n}\1 )^{\gamma} B_{n}^{2}\1 (B_{n}\1 )^{2\gamma}$ is a subword of $B_{n+1}\1 B_{n+1}$, this means $(B_{n}\1)^{\gamma}B_{n}^{2}\1 (B_{n}\1)^{\alpha_{t}}B_{n} \in \mathcal{L}^{RS}$.  Since $\alpha_{t} > \gamma - 1$ and $B_{n+1}$ has $B_{n}^{2}\1 (B_{n}\1)^{\gamma-1}B_{n}$ as a suffix, our word gives at least $(\gamma + 2 + \alpha_{t} + 1 - \gamma - 1)h_{n} + (\gamma + \alpha_{t} - \gamma)d - c$ right-special suffixes which are not suffixes of $B_{n+1}$, all of length less than $(\gamma + \alpha_{t} + 3)h_{n} + (\gamma+\alpha_{t}+1)d$.  Then, as $\alpha_{t} \geq \gamma$,
\begin{align*}
p((\gamma + \alpha_{t} + 3)h_{n} + &(\gamma+\alpha_{t}+1)d) \geq (\gamma + \alpha_{t} + 3)h_{n} + (\gamma+\alpha_{t}+1)d + f_{n} + (\alpha_{t}+2)h_{n} + (\alpha_{t}+1)d - 2c  \\
&= \frac{3}{2}((\gamma + \alpha_{t} + 3)h_{n} + (\gamma+\alpha_{t}+1)d) + \frac{1}{2}(\alpha_{t} - \gamma + 1)h_{n} + \frac{1}{2}(\alpha_{t} - \gamma + 1)d + f_{n} - 2c \\
&\geq \frac{3}{2}((\gamma + \alpha_{t} + 3)h_{n} + (\gamma+\alpha_{t}+1)d) + f_{n} - 2c \qedhere
\end{align*}
\end{proof}

\begin{lemma}\label{newthingsummary}
If $\alpha = \gamma > 1$ and $\alpha_{t} \in \{ \gamma-1, 2\gamma \}$ for all $t$ and $\alpha_{t} = 2\gamma$ for some $t$ then \ls{2c}.
\end{lemma}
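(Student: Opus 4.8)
The plan is to combine right-special words from two disjoint families so that, for each sufficiently large $n$, there are at least $\tfrac12 q_n-2c$ right-special words of length at least $h_n$, of length less than $q_n$, and not suffixes of any $B_{n+m}$; by Lemma \ref{countmethod} this yields $p(q_n)\ge q_n+f_n+\tfrac12 q_n-2c=1.5q_n+f_n-2c$. The first family comes from the $2\gamma$-run: by hypothesis $\alpha_{t_0}=2\gamma$ for some $t_0\ge2$, so (there being a $B_n^2\1$ immediately before it) $B_n^2\1(B_n\1)^{2\gamma}B_n^2\1$ is a subword of $B_{n+1}$; in particular $(B_n\1)^{2\gamma}B_n^2$ and $(B_n\1)^{2\gamma+1}$ lie in $\mathcal{L}$, whence $(B_n\1)^{2\gamma}B_n\in\mathcal{L}^{RS}$. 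This is precisely the $\alpha=\gamma$ case of Lemma \ref{alphaextra}, which --- using that $B_{n+1}$ ends in $B_n^2\1(B_n\1)^{\gamma-1}B_n$ and $B_n$ ends in $01^{c-1}$ --- furnishes $\gamma(h_n+d)-c$ right-special words of length less than $(2\gamma+1)(h_n+d)$, none containing $B_n^2$ and none a suffix of $B_{n+1}$ (these lengths are well below $h_{n+1}$ here, so such a word is not a suffix of any $B_{n+m}$ either).

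Taking $q_n\approx(2\gamma+1)h_n+2\gamma d$, this first family falls short of $\tfrac12 q_n-2c$ by about $\tfrac12 h_n$ --- a fixed multiple of $h_n$, not of $q_n$, independent of the unbounded parameter $\gamma$. The second family must therefore supply on the order of $\tfrac12 h_n$ further right-special words, of length less than $q_n$, that contain $B_n^2$ --- hence are automatically distinct from those in the first family --- and are not suffixes of $B_{n+1}$. I would obtain these by analysing $B_{n+1}$, and the junction words $B_{n+1}B_{n+1}$ and $B_{n+1}\1 B_{n+1}$ occurring inside $B_{n+2}$, in a neighbourhood of a $2\gamma$-run: since each neighbour $\alpha_{t_0-1},\alpha_{t_0+1}$ lies in $\{\gamma-1,2\gamma\}$ and $B_n^2\1 B_n^2\notin\mathcal{L}$, only a short list of local configurations occurs, and in each one a right-special word built around the $2\gamma$-run --- one extension supplied by $B_n^2\1(B_n\1)^{2\gamma}B_n^2$ and the other by an occurrence at a $B_{n+1}B_{n+1}$ or $B_{n+1}\1 B_{n+1}$ junction, in the style of Lemmas \ref{4.14} and \ref{18a3} --- contains a $B_n^2$ block flanked differently than in the terminal $B_n^2\1(B_n\1)^{\gamma-1}B_n$ of $B_{n+1}$, so that enough of its suffixes which are shorter than $q_n$ and long enough to display this $B_n^2$ fail to be suffixes of $B_{n+1}$. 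I expect this to be organised as a further short sequence of lemmas, splitting on whether two $2\gamma$-runs occur with few $B_n^2\1$'s between them, on whether $t_0=2$ (so the initial length-$\gamma$ run abuts the $2\gamma$-run), and on the values of $\alpha_{t_0\pm1}$, much as in Lemmas \ref{4.14}--\ref{4.22}.

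The main obstacle is that this is the extremal case: the subshifts constructed in Section \ref{S4} with $\limsup p(q)/q$ approaching $1.5$ are exactly of this form, so there is essentially no slack. One cannot simply pass to a much longer word to absorb the deficit; the additive constant must stay bounded (here $2c$, independent of $\gamma$); and one must verify in every sub-configuration that the length thresholds line up so the two families together number at least $\tfrac12 q_n-2c$ with no double counting --- all while correctly tracking how the $2\gamma$-run interacts with the tail of $B_{n+1}$ and with the $B_{n+1}\1 B_{n+1}$ junctions. That configuration-by-configuration bookkeeping is the crux.
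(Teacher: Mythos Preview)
Your proposal is a plan rather than a proof, and it misses the key structural observation that makes the paper's argument work. The paper does not fix $q_n\approx(2\gamma+1)h_n+2\gamma d$ and then hunt for a second family to close a $\tfrac12 h_n$ gap. Instead it observes that the hypothesis $\alpha_t\in\{\gamma-1,2\gamma\}$ is exactly what allows $B_{n+1}$ to be rewritten over the ``super-block'' $D_n=(B_n\1)^{\gamma}B_n$: a run $(B_n\1)^{\gamma-1}B_n^2\1$ becomes $D_n$ and a run $(B_n\1)^{2\gamma}B_n^2\1$ becomes $D_n\1 D_n$, so that
\[
B_{n+1}=D_n^{\,y_1+1}\1\Big(\prod_{i=2}^{s}D_n^{\,y_i+2}\1\Big)D_n^{\,z+2}.
\]
The right-special words are then found at the $D_n$ scale (powers $D_n^{k}$, or words $D_n^{a}\1 D_n^{b}$), and the case split is on $y_1$ versus $z$ and on $\min_i y_i$ versus $z$. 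Crucially, $q_n$ is chosen to depend on these structural parameters, e.g.\ $q_n=(y_1+z+2)k_n$ with $k_n=\len(D_n)$ when $y_1>z$; only in that sub-case does the paper also invoke Lemma~\ref{alphaextra}, and there the two families combine with no deficit.

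Your fixed choice of $q_n$ can genuinely fail. Take $\gamma=2$ with a single $\alpha_t=2\gamma$ placed so that $B_{n+1}=D_n^{2}\1 D_n^{2}$; then every right-special word of length below $(2\gamma+1)h_n+2\gamma d=5h_n+4d$ is either a suffix of $B_{n+1}$ or a suffix of $(B_n\1)^{4}B_n$ (the words containing $B_n^2$ that are right-special, such as $D_n^{3}$, have length $9h_n+6d$ and do not fit), so your ``second family'' is empty at that scale and the $\tfrac12 h_n$ deficit cannot be closed. The remedy is exactly the paper's: pass to the $D_n$ scale and let $q_n$ grow with $y_1,z$.
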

\begin{proof}
Here we can write
\[
B_{n+1} = B_{n}\1 \Big{(} \prod_{i=1}^{s} ((B_{n}\1)^{\gamma-1}B_{n}^{2}\1)^{y_{i}} (B_{n}\1)^{2\gamma}B_{n}^{2}\1 \Big{)} ((B_{n}\1)^{\gamma-1}B_{n}^{2}\1)^{z} (B_{n}\1)^{\gamma-1}B_{n}
\]
for some $s \geq 1$ and $y_{i},z \geq 0$ and $y_{1} \geq 1$ (as $\alpha = \gamma > 1$).  Rearranging the grouping and writing $D_{n} = (B_{n}\1)^{\gamma}B_{n}$,
\begin{align*}
B_{n+1} &= B_{n}\1 \Big{(}\prod_{i=1}^{s} ((B_{n}\1)^{\gamma-1}B_{n}~B_{n}\1)^{y_{i}} (B_{n}\1)^{2\gamma}B_{n}~B_{n}\1\Big{)} ((B_{n}\1)^{\gamma-1}B_{n}~B_{n}\1)^{z} (B_{n}\1)^{\gamma-1}B_{n} \\
&= \Big{(}\prod_{i=1}^{s} ((B_{n}\1)^{\gamma}B_{n})^{y_{i}} (B_{n}\1)^{2\gamma+1}B_{n}\Big{)} ((B_{n}\1)^{\gamma}B_{n})^{z+1} \\
&= \Big{(}\prod_{i=1}^{s} D_{n}^{y_{i}}D_{n}\1 D_{n}\Big{)}D_{n}^{z+1}
= D_{n}^{y_{i}+1}\1 \Big{(}\prod_{i=2}^{s} D_{n}^{y_{i}+2}\1\Big{)} D_{n}^{z+2}
\end{align*}

Write $k_{n} = \len(D_{n})$.

First consider when $y_{1} > z$.  Since $B_{n+1}B_{n+1}$ has the subword $D_{n}^{z+2}D_{n}^{y_{1}+1}\1$ and $D_{n}$ has $0$ as a prefix (as $B_{n}$ does), then $D_{n}^{y_{1}+z+2} \in \mathcal{L}^{RS}$.  Since $D_{n}$ has $B_{n}$ as a suffix, this word disagrees with $B_{n+1}$ on suffixes longer than $1^{c-1}D_{n}^{z+2}$.  We then have at least $y_{1}k_{n} - c$ right-special words of length less than $(y_{1}+z+2)k_{n}$ which are not suffixes of $B_{n+1}$.

Lemma \ref{alphaextra} states there are at least $\gamma h_{n} + \gamma d = k_{n} - h_{n}$ right-special words of length less than $(2\gamma + 1)h_{n} + 2\gamma d$ which are not suffixes of $B_{n+1}$ and which do not contain $B_{n}^{2}$ as a subword, hence do not overlap with the words above.

Then, as $y_{1} \geq z + 1$ (and $k_{n} \geq 2h_{n}$ since $\gamma > 1$),
\begin{align*}
p((y_{1}+z+2)k_{n}) &\geq (y_{1}+z+2)k_{n} + f_{n} + y_{1}k_{n} - c + k_{n} - h_{n} - c \\
&= \frac{3}{2}(y_{1}+z+2)k_{n} + \frac{1}{2}(y_{1}-z)k_{n} - h_{n} + f_{n} - 2c \\
&\geq \frac{3}{2}(y_{1}+z+2)k_{n} + \frac{1}{2}k_{n} - h_{n} + f_{n} - 2c 
\geq \frac{3}{2}(y_{1}+z+2)k_{n} + f_{n} - 2c
\end{align*}

Now consider the case when $y_{i} < z$ for some $1 \leq i \leq s$.  Set $m = \min\{y_{i} : 1 \leq i \leq s\}$ so that $m < z$ and take $i$ minimal such that $y_{i}$ is minimal.

Since $B_{n+1}$ has $D_{n}^{y_{s}+2}\1 D_{n}^{z+2}$ as a suffix, then $D_{n}^{m+2}\1 D_{n}^{z+2} \in \mathcal{L}$.  When $i > 1$, as $D_{n}^{y_{i-1}+1}\1 D_{n}^{y_{i}+2} \1$ is a subword of $B_{n+1}$, then $D_{n}^{m+2}\1 D_{n}^{m+2}\1 \in \mathcal{L}$ as $y_{i-1} \geq y_{i} + 1$ as $i$ was taken minimal.  Then $D_{n}^{m+2}\1 D_{n}^{m+2} \in \mathcal{L}^{RS}$ as $m < z$.  As this word disagrees with suffixes of $B_{n+1}$ on words longer than $1^{c-1}D_{n}^{m+2}$, this gives at least $(m+2)k_{n} + d - c$ right-special words of length less than $2(m+2)k_{n} + d$ which are not suffixes of $B_{n+1}$.  Then
\[
p(2(m+2)k_{n}+d) \geq 2(m+2)k_{n} + d + f_{n} + (m+2)k_{n} + d - c
= \frac{3}{2}(2(m+2)k_{n}+d) + \frac{1}{2}d + f_{n} - c
\]

When $i = 1$, as $B_{n+1}$ has $D_{n}^{y_{s}+2}\1 D_{n}^{z+2}$ as a suffix (or $D_{n}^{y_{1}+1}\1 D_{n}^{z+2}$ in the case $s=1$), we have $D_{n}^{m+1}\1 D_{n}^{z+2} \in \mathcal{L}$.  The word $B_{n+1}\1 B_{n+1}$ has the subword $D_{n}^{z+2}\1 D_{n}^{y_{1}+1}\1$ which has $D_{n}^{m+1}\1 D_{n}^{m+1}\1$ as a subword.  As $m < z$, this means $D_{n}^{m+1}\1 D_{n}^{m+1} \in \mathcal{L}^{RS}$.  This word disagrees with suffixes of $B_{n+1}$ on words longer than $1^{c-1}D_{n}^{m+1}$ so there at least $(m+1)k_{n} + d - c$ right-special words of length less than $2(m+1)k_{n} + d$ which are not suffixes of $B_{n+1}$.  Then
\[
p(2(m+1)k_{n}+d) \geq 2(m+1)k_{n} + d + f_{n} + (m+1)k_{n} + d - c
= \frac{3}{2}(2(m+1)k_{n}+d) + \frac{1}{2}d + f_{n} - c
\]

From here on, assume that $y_{i} \geq z$ for all $i$.  We are left with the case when $y_{1} = z$.

Since $B_{n+1}\1 B_{n+1}$ has the subword $D_{n}^{z+2}\1 D_{n}^{y_{1}+1}\1 = D_{n}^{z+2}\1 D_{n}^{z+1}\1$ (as $y_{1} = z$) and $B_{n+1}$ has suffix $D_{n}^{y_{s}+2}\1 D_{n}^{z+2}$ which has the subword $D_{n}^{z+2}\1 D_{n}^{z+1} D_{n}$ (as $y_{s} \geq z$), this gives $D_{n}^{z+2}\1 D_{n}^{z+1} \in \mathcal{L}^{RS}$.  This word disagrees with suffixes of $B_{n+1}$ on words longer than $1^{c-1}D_{n}^{z+1}$ meaning there are at least $(z+2)k_{n} + d - c$ right-special words of length less than $(2z + 3)k_{n} + d$ which are not suffixes of $B_{n+1}$.  Then
\begin{align*}
p((2z + 3)k_{n} + d) &\geq (2z + 3)k_{n} + d + f_{n} + (z+2)k_{n} + d - c \\
&= \frac{3}{2}((2z + 3)k_{n} + d) + \frac{1}{2}k_{n} + \frac{1}{2}d + f_{n} - c
\geq \frac{3}{2}((2z + 3)k_{n} + d) + f_{n} - c \qedhere
\end{align*}
\end{proof}

\begin{lemma}\label{4.23}
If $\alpha = \gamma > 1$ and $\alpha_{t} = \gamma - 1$ for all $t \geq 2$ and $B_{n+2}$ has $B_{n+1}\1B_{n+1}$ as a suffix then \ls{2c}.
\end{lemma}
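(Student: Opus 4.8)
The plan is to first recognize that under these hypotheses $B_{n+1}$ is a pure power of an enlarged block, and then run the right-special counting of the preceding lemmas with that block in the role of $B_n$. Concretely, set $D_n:=(B_n\1)^{\gamma}B_n$. One checks $D_n^2=(B_n\1)^{\gamma}B_n^2\1(B_n\1)^{\gamma-1}B_n$ and, inductively, $D_n^{L+1}=(B_n\1)^{\gamma}B_n^2\1\bigl(\prod_{t=2}^{L}(B_n\1)^{\gamma-1}B_n^2\1\bigr)(B_n\1)^{\gamma-1}B_n$, which is exactly the expression $\prod_{t=1}^{L}\bigl((B_n\1)^{\alpha_t}B_n^2\1\bigr)(B_n\1)^{\gamma-1}B_n$ for $B_{n+1}$ once $\alpha_1=\alpha=\gamma$ and $\alpha_t=\gamma-1$ for $t\ge 2$. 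Thus $B_{n+1}=D_n^{L+1}$. The block $D_n$ has $0$ as a prefix and $B_n$ (hence $01^{c-1}$) as a suffix, so Lemma \ref{split} applies verbatim with $D_n$ in place of $B_n$, and if $k_n:=\len(D_n)$ then $h_{n+1}=(L+1)k_n$.

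Next I would harvest the subwords available one level up. Since $s_{n+1,\cdot}$ is not constant, both $B_{n+1}^2=D_n^{2L+2}$ and $B_{n+1}\1B_{n+1}=D_n^{L+1}\1D_n^{L+1}$ occur in $B_{n+2}$, and because $a_{n+1,z_{n+1}}=1$ some $\1$-terminated run of $B_{n+1}$'s has length at least two, giving $D_n^{2L+2}\1\in\mathcal L$ as well. Hence $D_n^{k}\in\mathcal L^{RS}$ for every $1\le k\le 2L+1$ (extend by $0$ inside $D_n^{2L+2}$, by $1$ using $D_n^{2L+2}\1$), and since every suffix of a right-special word is right-special, all suffixes of $D_n^{2L+1}$ are right-special. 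To count, separate these from suffixes of the $B_m$: for $\ell\le h_{n+1}$ the length-$\ell$ suffix of $D_n^{2L+1}$ is literally the length-$\ell$ suffix of $B_{n+1}=D_n^{L+1}$, hence of every $B_m$ with $m>n$, so those contribute nothing; but for $h_{n+1}<\ell\le(2L+1)k_n$ the length-$\ell$ suffix of $D_n^{2L+1}$ has no spacer at depth $\approx h_{n+1}$, whereas (using the hypothesis that $B_{n+2}$ ends in $B_{n+1}\1B_{n+1}$) the length-$\ell$ suffix of every $B_m$ with $m\ge n+2$ lies inside the suffix $D_n^{L+1}\1D_n^{L+1}$ of $B_{n+2}$ and does contain one, so by the Lemma \ref{split} estimate the two differ for $\ell\ge h_{n+1}+c'$ with $c'$ a constant. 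This yields on the order of $Lk_n$ right-special words of length below $(2L+1)k_n$ that are suffixes of no $B_m$.

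The part on which I expect to spend the most effort is that this count alone only gives the coefficient $\tfrac{3L+1}{2L+1}$, which falls short of $\tfrac32$ by about $\tfrac12k_n$ right-special words; because the estimate in Theorem \ref{t2} is tight — these subshifts contain the near-$1.5q$ examples constructed in Section \ref{S4} — the pure power $D_n^{2L+1}$ is provably insufficient and every right-special word must be accounted for. To supply the remainder I would analyse the fine structure of $B_{n+2}$ over the block $B_{n+1}$: if $a_{n+1,1}\ge 2$ or $a_{n+1,j}\ge 3$ for some $j$, then $B_{n+2}$ already falls under Proposition \ref{15} applied at level $n+1$ and one obtains the stronger $\tfrac53$ bound, so one may assume $a_{n+1,1}=1$ and all $a_{n+1,j}\le 2$. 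In that rigid case $B_{n+2}$ is again a word of the type treated in this section, one level up, and one mines $B_{n+2}$ together with $B_{n+3}$ (whose suffix contains $B_{n+2}\1B_{n+2}$, and which contains $B_{n+2}^2$) for further right-special words — of the form $D_n^{a}\1D_n^{b}$ with $a,b$ multiples of $L+1$ arising from genuine $B_{n+1}$-runs — whose long suffixes again differ from all suffixes of the $B_m$ at depth $\approx h_{n+1}$, choosing them in a length range disjoint from the one already used, and finally feeds the combined family into Lemma \ref{countmethod} to get $p(q_n)\ge\tfrac32 q_n+f_n-2c$. The main obstacle throughout is the bookkeeping: guaranteeing that the extra right-special words actually exist (the constraint that every $D_n$-run has length a multiple of $L+1$ makes extendability by $1$ delicate), that they are suffixes of no $B_m$, and that the different families do not overlap in length.
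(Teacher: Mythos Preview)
Your first half is exactly the paper's argument: one rewrites $B_{n+1}=D_n^{L+1}$ with $D_n=(B_n\1)^{\gamma}B_n$, observes that $B_{n+1}B_{n+1}\1=D_n^{2L+2}\1\in\mathcal L$ (since $B_{n+1}^2$ must occur in $B_{n+2}$ somewhere other than as the suffix $B_{n+1}\1B_{n+1}$), obtains $D_n^{2L+1}\in\mathcal L^{RS}$, and counts its suffixes of length exceeding $h_{n+1}+c$ as right-special words that are not suffixes of $B_{n+2}$. You also correctly diagnose that this count alone falls short of $\tfrac32$ by roughly $\tfrac12k_n$.

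Where you go astray is in the remedy. You propose to climb to level $n+2$ or even $n+3$ and do a case analysis on the structure of $B_{n+2}$; this is unnecessary and the bookkeeping you anticipate would indeed be painful. The paper instead simply invokes Lemma~\ref{alphaextra}, which was already established at the start of this subsection: since $\alpha=\gamma$ here, that lemma supplies $\gamma(h_n+d)-c$ additional right-special words (suffixes of $(B_n\1)^{2\gamma}B_n$) that contain no $B_n^2$, hence are disjoint from the suffixes of $D_n^{2L+1}$ you already counted, and are not suffixes of $B_{n+2}$ either (they are shorter than $h_{n+1}$ and disagree with the suffix $0(B_n\1)^{\gamma}B_n$ of $B_{n+1}$). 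Since $\gamma(h_n+d)=k_n-h_n$ and $\gamma>1$, this exactly covers the $\tfrac12k_n$ deficit with $\tfrac12(\gamma-1)h_n+\tfrac12\gamma d$ to spare, and Lemma~\ref{countmethod} then gives $p(q_n)\ge\tfrac32 q_n+f_n-2c$ with $q_n=(2L+1)k_n=2h_{n+1}-k_n$. So the missing idea is not new right-special words at a higher level, but the words from Lemma~\ref{alphaextra} that you already have in hand.
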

\begin{proof}
We are left with  $B_{n+1} = (B_{n}\1 )^{\gamma}B_{n}^{2}\1((B_{n}\1 )^{\gamma-1}B_{n}^{2}\1)^{L-1}(B_{n}\1 )^{\gamma-1}B_{n} = ((B_{n}\1)^{\gamma}B_{n})^{L+1}$.

Since $B_{n+1}B_{n+1}$ must occur somewhere in $B_{n+2}$ and not as a suffix, $B_{n+1}B_{n+1}\1 \in \mathcal{L}$, and since
$
B_{n+1}B_{n+1}\1 =((B_{n}\1)^{\gamma}B_{n})^{2L+2}\1
$
we have
$
((B_{n}\1)^{\gamma}B_{n})^{2L+1} \in \mathcal{L}^{RS}
$.
Since $h_{n+1} = (L+1)((\gamma+1)h_{n}+\gamma d)$, our right-special word has length
\begin{align*}
(2L+1)((\gamma+1)h_{n} + \gamma d) = 2h_{n+1} - ((\gamma+1)h_{n} + \gamma d)
\end{align*}
Since $B_{n+2}$ has $B_{n+1}\1 B_{n+1}$ as a suffix, this word disagrees with $B_{n+2}$ on suffixes of length at least $h_{n+1}+c$.  Therefore there are at least $h_{n+1} - ((\gamma+1)h_{n} + \gamma d) - c$ right-special suffixes of our word which are not suffixes of $B_{n+2}$.  

Lemma \ref{alphaextra} states there are also at least $\gamma (h_{n}+d) - c$ right-special words which do not have $B_{n}^{2}$ as a subword, hence do not overlap with those above nor with suffixes of $B_{n+2}$, all of length at most $(2\gamma+1)(h_{n}+d)$.  Then, as $\gamma > 1$,
\begin{align*}
p(2h_{n+1} &- ((\gamma+1)h_{n} + \gamma d)) \\
&\geq 2h_{n+1} - ((\gamma+1)h_{n} + \gamma d) + f_{n} + h_{n+1} - ((\gamma+1)h_{n} + \gamma d) - c + \gamma(h_{n}+d) - c \\
&= \frac{3}{2}(2h_{n+1} - ((\gamma+1)h_{n} + \gamma d)) + \frac{1}{2}(\gamma - 1)h_{n} + \frac{1}{2}\gamma d + f_{n} - 2c \\
&> \frac{3}{2}(2h_{n+1} - ((\gamma+1)h_{n} + \gamma d)) + f_{n} - 2c \qedhere
\end{align*}
\end{proof}

\subsubsection{Proof of Proposition \ref{17}}

\begin{proof}[Proof of Proposition \ref{17}]
Lemma \ref{4.14} gives $q_{n} \geq h_{n}$ such that $p(q_{n}) \geq 1.5q_{n} + f_{n} - 2c$ when $\alpha = \gamma = 1$.  Lemma \ref{4.15} takes care of $\alpha = 1$ and $\gamma > 1$. When $\alpha > \gamma \geq 1$, Lemma \ref{4.18} gives such a $q_{n}$.

We are left with the case when $\gamma \geq \alpha > 1$.  Lemma \ref{4.17} covers $\alpha, \gamma > 1$ and $\beta > 1$ so we proceed with $\beta = 1$.  Lemma \ref{4.19} covers the situation when $B_{n}^{2}\1 B_{n}^{2}\1 \in \mathcal{L}$ so we can assume that word does not appear from here on so $B_{n+1}$ is of the form written above Lemma \ref{4.20}.  That Lemma handles when $\alpha_{t} < \gamma - 1$ so we may assume $\alpha_{t} \geq \gamma - 1$ for all $t$.

Lemma \ref{4.21} then covers the case when $\alpha < \gamma$ so we may proceed with $\alpha = \gamma$.  Then Lemma \ref{4.22} shows that if $\alpha_{t} \geq \gamma$ with $\alpha_{t}\ne 2\gamma$ for some $t$ then we have such a $q_{n}$ so we may assume $\alpha_{t} \in \{ \gamma - 1, 2\gamma \}$ for all $t$.  Lemma \ref{newthingsummary} handles the case when $\alpha_{t} = 2\gamma$ for some $t$ so we can assume $\alpha_{t} = \gamma-1$ for all $t$.

By hypothesis, $a_{n+1,z_{n+1}} = 1$ meaning that $B_{n+2}$ ends with $B_{n+1}\1B_{n+1}$.  Lemma \ref{4.23} then guarantees the existence of such a $q_{n}$.

There are then $q_{n} \geq h_{n}$ with $p(q_{n}) \geq 1.5q_{n} + f_{n} - 2c$ for infinitely many, hence all sufficiently large $n$.
\end{proof}

\subsection{Proof of Theorem \ref{t2}}

\begin{proof}[Proof of Theorem \ref{t2}]
Set $C = 3c$.
Every $n \geq N$ satisfies one of (1) $a_{n,1} = 1$ and $a_{n,z_{n}} \geq 2$; (2) $a_{n,1} \geq 2$ and $a_{n,z_{n}} = 1$; (3) $a_{n,1} = a_{n,z_{n}} = 1$ and $a_{n,j} \geq 3$ for some $j$; (4) $a_{n,1},a_{n,z_{n}} \geq 2$; or (5) $a_{n,1} = a_{n,z_{n}} = 1$, $a_{n,j} \leq 2$ and $a_{n,j} = 2$ for some $j$.  At least one of those cases happens infinitely often.  For cases (1)--(3), Proposition \ref{15} gives the result.

For case (4), by Proposition \ref{trick3}, there exists a rank-one subshift generating the same language such that $a_{n,1} \geq 2$ and $a_{n,z_{n}} \geq 2$ and $a_{n+1,z_{n+1}} \geq 2$ for infinitely many $n$.  Proposition \ref{18a} applied to that subshift gives the claim.

If cases (1)--(4) all do not happen infinitely often then for all sufficiently large $n$, we are in case (5) in which case Proposition \ref{17} gives the claim.
\end{proof}

\section{Low complexity weakly mixing rank-one subshifts}\label{S4}

Consider the following class of rank-one subshifts:

\begin{definition}\label{theTs}
Let $L_{n} > 1$ and $\gamma_{n} > 1$ for all $n$.  Define the rank-one subshift with $B_{1} = 0$ and
\[
B_{n+1} = ((B_{n}1)^{\gamma_{n}}B_{n})^{L_{n}}
\]
\end{definition}

Observe that $h_{n+1} = L_{n}((\gamma_{n}+1)h_{n} + \gamma_{n})$ and
$
B_{n+1}B_{n+1} = ((B_{n}1)^{\gamma_{n}}B_{n})^{2L_{n}}
$ and
\[
B_{n+1}1B_{n+1} = ((B_{n}1)^{\gamma_{n}}B_{n})^{L_{n}-1}(B_{n}1)^{2\gamma_{n}+1}B_{n}((B_{n}1)^{\gamma_{n}}B_{n})^{L_{n}-1}
\]

\subsection{Right-special words}

\begin{lemma}\label{1Bn}
Let $w \in \mathcal{L}^{RS}$ with $1B_{n}$ as a suffix.  Then $w$ is a suffix of $(B_{n}1)^{2\gamma_{n}}B_{n}$ or $w$ is a suffix of $B_{n+1}$ or $w$ has $B_{n+1}$ as a proper suffix.
\end{lemma}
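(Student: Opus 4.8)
The plan is to determine where the terminal copy of $B_n$ inside $w$ can sit relative to the hierarchical structure of $X(T)$, using $B_{n+1}=((B_n1)^{\gamma_n}B_n)^{L_n}$ together with the explicit forms of $B_{n+1}B_{n+1}$ and $B_{n+1}1B_{n+1}$ recorded before the statement. The key rigidity fact I would establish first is that the longest word of the form $(B_n1)^kB_n$ lying in $\mathcal{L}$ is $(B_n1)^{2\gamma_n+1}B_n$; it appears only inside $B_{n+1}1B_{n+1}$, straddling the inserted $1$, and more generally every occurrence of $(B_n1)^kB_n$ with $k>\gamma_n$ is forced to straddle a single $1$-spacer separating two copies of $B_{n+1}$, in exactly the way dictated by $B_{n+1}1B_{n+1}=((B_n1)^{\gamma_n}B_n)^{L_n-1}(B_n1)^{2\gamma_n+1}B_n((B_n1)^{\gamma_n}B_n)^{L_n-1}$. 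This follows by inspecting a single copy of $B_{n+1}$ (inside which the longest run of $(B_n1)$ is $(B_n1)^{\gamma_n}$, since consecutive $(B_n1)^{\gamma_n}B_n$ blocks abut as $B_nB_n$ and $\gamma_n,L_n>1$) together with $B_{n+1}1B_{n+1}$ and $B_{n+2}$; along the way one also checks that $B_nB_nB_n\notin\mathcal L$.

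Now let $j\ge 0$ be maximal with $(B_n1)^jB_n$ a suffix of $w$; the hypothesis $1B_n$ a suffix forces the terminal $B_n$ to be preceded by $1$, and the rigidity fact gives $j\le 2\gamma_n+1$. Since $B_n$ begins with $0$, the symbol $0$ can follow the terminal $B_n$ only by starting a new copy of $B_n$; I would then argue, using the rigidity description, that the only copies of $B_n$ which can be followed in different occurrences both by $0$ and by $1$ are those playing the role of the final $B_n$ of some occurrence of $B_{n+1}$ — unless $w$ is too short for its left context to pin that role down. The heart of the proof is quantifying this: if $\len(w)\le(2\gamma_n+1)h_n+2\gamma_n$, the terminal segment of $w$ is genuinely ambiguous and any such $w$ that survives as a right-special word is a suffix of the maximal ambiguous block $(B_n1)^{2\gamma_n}B_n$ (the first alternative); if $\len(w)>(2\gamma_n+1)h_n+2\gamma_n$, the left context is long enough to force the terminal $B_n$ to be the final $B_n$ of an occurrence of $B_{n+1}$ inside a word of $\mathcal L$ containing $w$. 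In that case $w$ ends exactly where that copy of $B_{n+1}$ ends, so the last $\min(\len(w),h_{n+1})$ letters of $w$ coincide with $B_{n+1}$; equivalently $w$ is a suffix of $B_{n+1}$ when $\len(w)\le h_{n+1}$ and has $B_{n+1}$ as a proper suffix when $\len(w)>h_{n+1}$.

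The main obstacle is this middle step, i.e.\ pinning the threshold exactly: one must verify both that below $(2\gamma_n+1)h_n+2\gamma_n$ there really are two legitimate placements of the terminal $B_n$ (one continuing with $0$, one with $1$) so that right-specialness is possible, and that above that length the placement becomes unique and must be $B_{n+1}$-final; the constant $2\gamma_n$ is precisely the length at which this ambiguity disappears, which is why it enters the statement. A preliminary technical nuisance, which I would handle first, is that copies of $B_n$ are not literally recognizable inside $B_m$ — a shifted copy can occur inside a run $((B_n1)^{\gamma_n}B_n)^{k}$ produced at consecutive block boundaries — so the notion of "the role of the terminal $B_n$" must be formulated in terms of the positions of the $1$-spacers and the lengths of maximal $(B_n1)^k$-runs rather than a unique parsing; the rigidity fact of the first paragraph is exactly what makes this workable.
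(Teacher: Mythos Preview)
Your length-based dichotomy is incorrect, and this is a genuine gap. You claim that if $\len(w)\le(2\gamma_n+1)h_n+2\gamma_n$ then $w$ must be a suffix of $(B_n1)^{2\gamma_n}B_n$. Take $w=B_n(B_n1)^{\gamma_n}B_n$, the suffix of $B_{n+1}$ of length $(\gamma_n+2)h_n+\gamma_n$. This $w$ is right-special (every suffix of $B_{n+1}$ is), has $1B_n$ as a suffix, and since $\gamma_n\ge 2$ its length lies in your ``short'' range. But the suffix of $(B_n1)^{2\gamma_n}B_n$ of that same length has the form $u1(B_n1)^{\gamma_n}B_n$ where $B_n=0u$; comparing the first $h_n$ letters, $B_n=0u$ ends in $0$ while $u1$ ends in $1$, so $w$ is \emph{not} a suffix of $(B_n1)^{2\gamma_n}B_n$. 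The three alternatives in the lemma genuinely overlap in length (see Proposition~\ref{words}, where the first and second cases share the range $((\gamma_n+1)h_n+\gamma_n,\,(2\gamma_n+1)h_n+2\gamma_n]$), so no length threshold can separate them.

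The paper's proof avoids this by branching on \emph{structure} rather than length: it grows the known suffix of $w$ one block at a time, at each step asking whether the preceding letter is $0$ or $1$. The decisive fork is at the moment $w$ is known to have $(B_n1)^{\gamma_n}B_n$ as a suffix. If the preceding letter is $1$, one argues that $1(B_n1)^{\gamma_n}B_n$ only occurs inside $B_{n+1}1B_{n+1}$, forcing $w$ to be a suffix of $(B_n1)^{2\gamma_n}B_n$. If the preceding letter is $0$, then $w$ shares a suffix with $B_n(B_n1)^{\gamma_n}B_n=((B_n1)^{\gamma_n}B_n)^2$, and one checks that $((B_n1)^{\gamma_n}B_n)^21$ only occurs as a suffix of $B_{n+1}1$, which pins $w$ as sharing a suffix with $B_{n+1}$. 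Your rigidity fact about maximal $(B_n1)^k$-runs is essentially what drives the first branch, but you are missing the second branch entirely; the ``$0$ vs.\ $1$'' case split at position $(\gamma_n+1)h_n+\gamma_n$ from the right is the idea your argument needs.
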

\begin{proof}
Observe that $1B_{n}$ is always preceded by $B_{n}$ so $w$ shares a suffix with $B_{n}1B_{n}$.

First consider when $w$ has $0B_{n}1B_{n}$ as a suffix.  As $B_{n}1$ is always preceded by $B_{n}$ or $1$, in this case $w$ shares a suffix with $B_{n}^{2}1B_{n}$.  Since $1B_{n}0$ only appears as a prefix of $1B_{n}^{2}$, having $w0 \in \mathcal{L}$ would then mean $B_{n}^{2}1B_{n}^{2} \in \mathcal{L}$ but that word is not in $\mathcal{L}$ since $\gamma_{n} > 1$.

So $w$ has $1B_{n}1B_{n}$ as a suffix (or else is a suffix of $B_{n}1B_{n}$ which is a suffix of $B_{n+1}$) and therefore shares a suffix with $B_{n}1B_{n}1B_{n}$.  Following the same logic, if $w$ shares a suffix with $0(B_{n}1)^{t}B_{n}$ then $w0$ shares a suffix with $0(B_{n}1)^{t}B_{n}0$ which can only occur as a subword of $B_{n}^{2}1(B_{n}1)^{t-1}B_{n}^{2}$, requiring that $t \geq \gamma_{n}$.

So $w$ shares a suffix with $B_{n}1(B_{n}1)^{\gamma_{n}-1}B_{n} = (B_{n}1)^{\gamma_{n}}B_{n}$.  As $B_{n}1$ is always preceded by $1$ or $B_{n}$, we have two cases to consider (if $w$ is a suffix of $(B_{n}1)^{\gamma_{n}}B_{n}$ then it is a suffix of $B_{n+1}$).

First consider when $w$ has $1(B_{n}1)^{\gamma_{n}}B_{n}$ as a suffix.  The only occurrence of that word is in $B_{n+1}1B_{n+1}$ and it is always preceded by $(B_{n}1)^{\gamma_{n}}B_{n}$ so $w$ must share a suffix with $(B_{n}1)^{2\gamma_{n}+1}B_{n}$.  Since $1(B_{n}1)^{2\gamma_{n}}B_{n}1 \notin \mathcal{L}$ as $(B_{n}1)^{2\gamma_{n}+1}$ is always preceded by $B_{n}$ (as $L_{n} > 1$) and since $w1 \in \mathcal{L}$, either $w$ is a suffix of $(B_{n}1)^{2\gamma_{n}}B_{n}$ or $w$ has $0(B_{n}1)^{2\gamma_{n}}B_{n}$ as a suffix.  Since $0(B_{n}1)^{2\gamma_{n}}B_{n}0 \notin \mathcal{L}$ because  $B_{n}(B_{n}1)^{2\gamma_{n}}B_{n}B_{n} = B_{n}^{2}\1 (B_{n}\1)^{2\gamma_{n}-1}B_{n}^{2} \notin \mathcal{L}$ as $\gamma_{n} > 1$, it must be that $w$ is a suffix of $(B_{n}1)^{2\gamma_{n}}B_{n}$.

Now consider when $w$ has $0(B_{n}1)^{\gamma_{n}}B_{n}$ as a suffix.  Then $w$ shares a suffix with $B_{n}(B_{n}1)^{\gamma_{n}}B_{n}$.  Since $(B_{n}1)^{\gamma_{n}}B_{n}$ is always preceded by $(B_{n}1)^{\gamma_{n}}B_{n}$ or $1$,
then $w$ shares a suffix with $((B_{n}1)^{\gamma}B_{n})^{2}$.  Then $w1$ shares a suffix with $((B_{n}1)^{\gamma}B_{n})^{2}1$ and since $((B_{n}1)^{\gamma}B_{n})^{2}1$ is always a suffix of $B_{n+1}1$, this shows that $w$ shares a suffix with $B_{n+1}$.
Then either $w$ is a suffix of $B_{n+1}$ or $w$ has $B_{n+1}$ as a proper suffix.
\end{proof}

\begin{lemma}\label{0BnRS}
Let $w \in \mathcal{L}^{RS}$ with $0B_{n}$ as a suffix.  Then $w$ is a suffix of $((B_{n-1}1)^{\gamma_{n-1}}B_{n-1})^{L_{n-1}-1}B_{n}$ and $n>1$.
\end{lemma}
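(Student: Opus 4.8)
The plan is to pin down, up to translation, where the word $0B_n$ sits in $\mathcal{L}$ and then exploit $w0,w1\in\mathcal{L}$. Throughout write $\pi=(B_{n-1}1)^{\gamma_{n-1}}B_{n-1}$, so that $B_n=\pi^{L_{n-1}}$ and both $B_n$ and $\pi$ begin and end with $B_{n-1}$, hence begin with $0$ and end with $0$. The case $n=1$ is disposed of first: then $0B_1=00$, so a right-special $w$ with suffix $00$ would force $000\in\mathcal{L}$; but an easy induction (each $B_m$ with $m\ge2$ begins $01$ and ends $10$, and new runs of $0$'s in $B_{m+1}$ occur only at the seams $\cdots B_m\!\parallel\!B_m\cdots$, which are flanked by $1$'s) shows $000\notin\mathcal{L}$. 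Hence $n\ge2$.

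Next I would record the structural facts, all of the ``always preceded/followed by'' type already used for Lemma \ref{1Bn}. Since the $\gamma$'s exceed $1$, the word $B_nB_n$ occurs in $\mathcal{L}$ only across a seam $\cdots B_n\!\parallel\!B_n\cdots$ of the defining words and is always immediately flanked by $1$'s; and, by rigidity of the periodic word $\pi^{\infty}$ (whose least period is $|\pi|$ because $\pi$ carries exactly one block $B_{n-1}B_{n-1}$ per period), every occurrence of $B_n$ immediately preceded by a $0$ lies inside a copy of $B_nB_n=\pi^{2L_{n-1}}$. Writing such a copy as $\pi_1\pi_2\cdots\pi_{2L_{n-1}}$ with each $\pi_i=\pi$, that occurrence is $\pi_k\pi_{k+1}\cdots\pi_{k+L_{n-1}-1}$ for some $k$ with $2\le k\le L_{n-1}+1$ (the preceding $0$ being the last letter of $\pi_{k-1}$), it is immediately followed by $0$ exactly when $k\le L_{n-1}$ and by $1$ exactly when $k=L_{n-1}+1$, and in every case $\pi_1$ is immediately preceded by $1$.

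Now take $w\in\mathcal{L}^{RS}$ with suffix $0B_n$. Because $w1\in\mathcal{L}$ and $w0\in\mathcal{L}$, the paragraph above forces $w$ to have an occurrence of the shape $k=L_{n-1}+1$ (from $w1$) and an occurrence of some shape $k\le L_{n-1}$ (from $w0$). Let $u$ be the part of $w$ lying strictly to the left of the displayed $0$, of length $m=|w|-1-L_{n-1}|\pi|$, and suppose towards a contradiction that $m\ge(k-1)|\pi|$. Reading $u$ off the shape-$k$ occurrence, $u$ extends past $\pi_1$, so the letter of $u$ at distance $(k-1)|\pi|$ from its right end is the letter just left of $\pi_1$, namely $1$; reading $u$ off the shape-$(L_{n-1}+1)$ occurrence, and using $2\le k\le L_{n-1}$ so that $(k-1)|\pi|$ is a positive multiple of $|\pi|$ no larger than $m$, that same letter is the last letter of a full copy of $\pi$, namely $0$. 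This contradiction gives $m\le(k-1)|\pi|-1\le(L_{n-1}-1)|\pi|-1$, hence $|w|\le(2L_{n-1}-1)|\pi|$; and in this range $u\,0$ is a suffix of $\pi^{k-1}$ (the displayed $0$ being the last letter of $\pi_{k-1}$), so $w=u\,0\,B_n$ is a suffix of $\pi^{k-1}\pi^{L_{n-1}}=\pi^{\,k-1+L_{n-1}}$, hence of $\pi^{2L_{n-1}-1}=((B_{n-1}1)^{\gamma_{n-1}}B_{n-1})^{L_{n-1}-1}B_n$, as required.

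The step I expect to be the crux is the rigidity input in the second paragraph --- that the only occurrences of $B_n$ preceded by $0$ are the $\pi$-aligned ones inside a copy of $B_nB_n$ (no copy of $B_n$ straddles a $1$-spacer, and inside a run of $\pi$'s a copy of $B_n$ must be $\pi$-aligned). This is the same flavour of combinatorial rigidity the paper already uses for Lemma \ref{1Bn}, and it reduces to $\pi^{\infty}$ having least period exactly $|\pi|$, which follows by induction on $n$ once one knows a copy of $B_n$ cannot overlap a spacer.
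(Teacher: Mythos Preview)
Your argument is correct and rests on the same combinatorial backbone as the paper's proof: both first exclude $n=1$ via $000\notin\mathcal{L}$, both use that every copy of $B_n$ sits inside $1B_n1$ or $1B_nB_n1$ (the rigidity you flag as the crux), and both finish by bounding how far left $w$ can extend inside $B_nB_n$. The organization of the final step differs, however. The paper uses $w1\in\mathcal{L}$ once and for all to pin $w$ down as a proper suffix of $B_nB_n$ (via the observation that $0B_n1$ occurs only as the suffix of $B_nB_n1$, since $(B_{n-1}1)^{\gamma_{n-1}+1}$ is not a subword of $B_nB_n$), and then rules out the two possible continuations $0\pi^{L_{n-1}-1}B_n$ and $1\pi^{L_{n-1}-1}B_n$ to the left by separate short contradictions. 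You instead parametrize all occurrences of $0B_n$ inside $B_nB_n$ by a shape index $k$, note that $w1$ forces $k=L_{n-1}+1$ while $w0$ forces some $k\le L_{n-1}$, and compare the two alignments of $u$ at a single well-chosen position to bound $|u|$ directly. Your route is a bit more systematic (and generalizes cleanly if one ever needed finer positional information), while the paper's case split is shorter on the page; both rely on exactly the same unproved-here alignment input, which the paper also takes for granted.
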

\begin{proof}
Since $0B_{1} = 00$ and $000 = B_{1}^{3} \notin \mathcal{L}$, we have $n > 1$.

Every occurrence of $B_{n}$ appears either as $1B_{n}1$ or $1B_{n}B_{n}1$.  The word $0B_{n}$ is not a subword of $1B_{n}1$ and occurs as a subword of $1B_{n}B_{n}1$ at $L_{n-1}+1$ distinct starting locations.

The word $0B_{n}1$ only appears as a suffix of $1B_{n}B_{n}1$ since it must appear somewhere in $1B_{n}B_{n}1$ and the only appearance of $B_{n}1$ in that word is as a suffix as 
$B_{n}1 = ((B_{n-1}1)^{\gamma_{n-1}}B_{n-1})^{L_{n-1}-1} (B_{n-1}1)^{\gamma_{n-1}+1}$ and $(B_{n-1}1)^{\gamma_{n-1}+1}$ is not a subword of $B_{n}B_{n} = ((B_{n-1}1)^{\gamma_{n-1}}B_{n-1})^{2L_{n-1}}$.

So $w1$ shares a suffix with $B_{n}B_{n}1$ so $w$ shares a suffix with $B_{n}B_{n}$.  Since $B_{n}0$ must be a prefix of $B_{n}^{2}$ and $B_{n}^{3} \notin \mathcal{L}$, then
$B_{n}B_{n}0 \notin \mathcal{L}$.   As $w0 \in \mathcal{L}$, $w$ is then a proper suffix of $B_{n}B_{n}$.

Suppose $w$ has $0((B_{n-1}1)^{\gamma_{n-1}}B_{n-1})^{L_{n-1}-1}B_{n}$ as a suffix.  As that word only appears as a 
subword of $B_{n}B_{n}$ when the leading $0$ is the tail $0$ of the first $(B_{n-1}1)^{\gamma_{n-1}}B_{n-1}$ in the first $B_{n}$ of $B_{n}^{2}$, the word $0((B_{n-1}1)^{\gamma_{n-1}}B_{n-1})^{L_{n-1}-1}B_{n}0 \notin \mathcal{L}$ as $0((B_{n-1}1)^{\gamma_{n-1}}B_{n-1})^{L_{n-1}-1}B_{n}$ must be a suffix of $B_{n}B_{n}$ hence be followed by a $1$.  But then $w0 \notin \mathcal{L}$.

Suppose that $w$ has $1((B_{n-1}1)^{\gamma_{n-1}}B_{n-1})^{L_{n-1}-1}B_{n}$ as a suffix.  As $1 B_{n-1}$ is always preceded by $B_{n-1}$, then $w$ would share a suffix with $B_{n-1}1((B_{n-1}1)^{\gamma_{n-1}}B_{n-1})^{L_{n-1}-1}B_{n}$ but that contains $(B_{n-1}1)^{\gamma_{n-1}+1}$ as a subword which is not a subword of $B_{n}B_{n} = (B_{n-1}1)^{\gamma_{n-1}}B_{n-1})^{2L_{n-1}+}$.

Therefore $w$ must be a suffix of $((B_{n-1}1)^{\gamma_{n-1}}B_{n-1})^{L_{n-1}-1}B_{n}$.
\end{proof}

\begin{proposition}\label{words}
Let $w \in \mathcal{L}^{RS}$ with $\len(w) > 1$.  Then there exists a unique $n$ such that exactly one of the following holds (and for $m \ne n$, none of them hold):
\begin{itemize}
\item $w$ is a suffix of $B_{n+1}$ and $h_{n} < \len(w) \leq h_{n+1}$
\item $w$ is a suffix of $(B_{n}1)^{2\gamma_{n}}B_{n}$ and $(\gamma_{n}+1)h_{n} + \gamma_{n} < \len(w) \leq (2\gamma_{n}+1)h_{n} + 2\gamma_{n}$
\item $w$ is a suffix of $((B_{n-1}1)^{\gamma_{n-1}}B_{n-1})^{L_{n-1}-1}B_{n}$ and $h_{n} < \len(w) \leq h_{n}(2 - \frac{1}{L_{n-1}})$ and $n > 1$
\end{itemize}
In all three cases, $h_{n} < \len(w) \leq h_{n+1}$.
\end{proposition}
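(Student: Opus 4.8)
The plan is to reduce everything to the two structural results Lemma \ref{1Bn} and Lemma \ref{0BnRS} — the first describing right-special words ending in $1B_{n}$, the second those ending in $0B_{n}$ — and then to use the lengths of the few ``container words'' those lemmas produce to sort $w$ into exactly one of the three listed possibilities. The first thing I would establish is that $w$ must end in $0$: since $B_{1}=0$ and (by Lemma \ref{0} together with $s_{n,r_{n}}=0$) every $B_{n}$ begins and ends with $0$, in $B_{n+1}=((B_{n}1)^{\gamma_{n}}B_{n})^{L_{n}}$ each inserted $1$ is flanked by $0$'s and each junction $B_{n}B_{n}$ is a run of $0$'s, so inductively $11\notin\mathcal{L}$; hence if $w$ ended in $1$ then $w1\notin\mathcal{L}$, contradicting $w\in\mathcal{L}^{RS}$.

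Next I would take the largest $k\ge 1$ with $B_{k}$ a suffix of $w$ (it exists since $w$ ends in $0=B_{1}$), noting that by maximality $B_{k+1}$ is not a suffix of $w$. If $\len(w)=h_{k}$ then $w=B_{k}$ and $k\ge 2$ (as $\len(w)>1$), so $w$ is a suffix of $B_{k}=B_{(k-1)+1}$ with $h_{k-1}<h_{k}=\len(w)\le h_{(k-1)+1}$: this is the first bullet with $n=k-1$. Otherwise $\len(w)>h_{k}$, so the letter of $w$ immediately before this final $B_{k}$ is $0$ or $1$. If it is $0$, Lemma \ref{0BnRS} (with its index equal to $k$) gives $k>1$ and that $w$ is a suffix of $((B_{k-1}1)^{\gamma_{k-1}}B_{k-1})^{L_{k-1}-1}B_{k}$, a word of length $h_{k}(2-1/L_{k-1})$; with $\len(w)>h_{k}$ this is the third bullet with $n=k$. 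If it is $1$, Lemma \ref{1Bn} (with its index equal to $k$) gives that $w$ is a suffix of $B_{k+1}$, or a suffix of $(B_{k}1)^{2\gamma_{k}}B_{k}$, or has $B_{k+1}$ as a proper suffix; the last is impossible by maximality of $k$. If $w$ is a suffix of $B_{k+1}$ then $h_{k}<\len(w)\le h_{k+1}$, so we are in the first bullet with $n=k$. If instead $w$ is a suffix of $(B_{k}1)^{2\gamma_{k}}B_{k}$ but not of $B_{k+1}$, I would use that these two words share exactly the suffix $(B_{k}1)^{\gamma_{k}}B_{k}$, of length $(\gamma_{k}+1)h_{k}+\gamma_{k}$ — both end in $(B_{k}1)^{\gamma_{k}}B_{k}$, which inside the former is preceded by $1$ and inside the latter by $B_{k}$, hence by $0$ — so $\len(w)>(\gamma_{k}+1)h_{k}+\gamma_{k}$, and since $\len(w)\le (2\gamma_{k}+1)h_{k}+2\gamma_{k}$ we are in the second bullet with $n=k$.

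For the uniqueness and ``exactly one'' claims I would first observe that each of the three conditions, for an index $m$, forces $h_{m}<\len(w)\le h_{m+1}$: trivially for the first; for the second because $(\gamma_{m}+1)h_{m}+\gamma_{m}>h_{m}$ and $(2\gamma_{m}+1)h_{m}+2\gamma_{m}<2((\gamma_{m}+1)h_{m}+\gamma_{m})\le h_{m+1}$ (using $L_{m}\ge 2$); and for the third because $h_{m}(2-1/L_{m-1})<2h_{m}\le h_{m+1}$. Since $\{h_{n}\}$ is strictly increasing, $h_{m}<\len(w)\le h_{m+1}$ pins $m$ uniquely, so no condition can hold for any $m$ other than the $n$ found above, which is also the proposition's final assertion. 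Finally, for that one $n$ the three cases are pairwise exclusive: cases two and three have disjoint length windows, as $h_{n}(2-1/L_{n-1})<2h_{n}<(\gamma_{n}+1)h_{n}+\gamma_{n}$ because $\gamma_{n}\ge 2$; a word that is simultaneously a suffix of $B_{n+1}$ and of $(B_{n}1)^{2\gamma_{n}}B_{n}$ has length at most $(\gamma_{n}+1)h_{n}+\gamma_{n}$ (their longest common suffix), below the lower end of case two's window; and a word that is simultaneously a suffix of $B_{n+1}$ (which ends in $1B_{n}$) and of $((B_{n-1}1)^{\gamma_{n-1}}B_{n-1})^{L_{n-1}-1}B_{n}$ (which ends in $B_{n-1}B_{n}$, i.e.\ $0B_{n}$) has length at most $h_{n}$, below the lower end of case three's window.

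I expect the main obstacle to be the two longest-common-suffix comparisons used in the last two paragraphs — between $(B_{k}1)^{2\gamma_{k}}B_{k}$ and $B_{k+1}$, and between $((B_{n-1}1)^{\gamma_{n-1}}B_{n-1})^{L_{n-1}-1}B_{n}$ and $B_{n+1}$ — since that is precisely where the bookkeeping could go wrong: one must track how each container word is assembled from blocks one level down and identify the single position at which the two words first disagree reading from the right. Everything else is arithmetic with the recursion $h_{n+1}=L_{n}((\gamma_{n}+1)h_{n}+\gamma_{n})$ from Definition \ref{theTs} and keeping straight which index each of Lemmas \ref{1Bn} and \ref{0BnRS} outputs.
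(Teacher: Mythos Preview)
Your proposal is correct and follows essentially the same route as the paper's proof: both argue that $11\notin\mathcal{L}$ forces $w$ to end in $0$, pick the maximal $k$ with $B_{k}$ a (proper) suffix of $w$, split on whether the preceding letter is $0$ or $1$, invoke Lemmas~\ref{1Bn} and~\ref{0BnRS}, and then separate the three cases by computing the longest common suffixes of the relevant container words. The only cosmetic difference is that you treat the boundary case $w=B_{k}$ explicitly (landing in the first bullet with $n=k-1$), whereas the paper absorbs it by taking $n$ maximal with $B_{n}$ a \emph{proper} suffix; the two common-suffix computations you flag as potential obstacles are exactly the ones the paper carries out, and your versions of them are correct.
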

\begin{proof}
As $11 \notin \mathcal{L}$, $w$ must end in $0$.  Let $n$ be the largest integer such that $w$ has $B_{n}$ as a proper suffix (such $n$ exists since $B_{1} = 0$).  Then $w$ has either $0B_{n}$ or $1B_{n}$ as a suffix.

Lemma \ref{1Bn} states that if $w$ has $1B_{n}$ as a suffix then either $w$ is a suffix of $(B_{n}1)^{2\gamma_{n}}B_{n}$ or is a suffix of $B_{n+1}$, which are the second and first cases of the proposition, respectively, or else $w$ has $B_{n+1}$ as a proper suffix which would contradict the choice of $n$.

Lemma \ref{0BnRS} states that if $w$ has $0B_{n}$ as a suffix then $n > 1$ and $w$ is a suffix of $((B_{n-1}1)^{\gamma_{n-1}}B_{n})^{L_{n-1}-1}B_{n}$.  This puts us in the third case as $(\gamma_{n-1}+1)h_{n-1}+\gamma_{n-1} = \frac{1}{L_{n-1}}h_{n}$.

Suffixes of $(B_{n}1)^{2\gamma_{n}}B_{n}$ of length less than or equal to $(\gamma_{n}+1)h_{n} + \gamma_{n}$ are suffixes of $(B_{n}1)^{\gamma_{n}}B_{n}$ which is a suffix of $B_{n+1}$ but all suffixes longer than that are not suffixes of $B_{n+1}$ as $B_{n+1}$ has $0(B_{n}1)^{\gamma_{n}}B_{n}$ as a suffix.  Suffixes of $((B_{n-1}1)^{\gamma_{n-1}}B_{n-1})^{L_{n-1}-1}B_{n}$ of length at least $h_{n}+1$ have $0B_{n}$ as a suffix so are not suffixes of $B_{n+1}$ as $B_{n+1}$ has $1B_{n}$ as a suffix.  Clearly there is no overlap between the second and third cases as the second has $1B_{n}$ as a suffix and the third has $0B_{n}$ as a suffix.  Therefore the length restrictions make the cases a partition of $\mathcal{L}^{RS}$.

Since $(2 - \frac{1}{L_{n-1}})h_{n} < (2\gamma_{n}+1)h_{n} + 2\gamma_{n} < 2((\gamma_{n}+1)h_{n} + \gamma_{n}) \leq L_{n}((\gamma_{n}+1)h_{n}+\gamma_{n}) = h_{n+1}$, 
in all three cases $h_{n} < \len(w) \leq h_{n+1}$.
\end{proof}

\subsection{The complexity function}

\begin{proposition}\label{count}
The complexity function satisfies $p(h_{2}+1) = h_{2}(1 + \frac{1}{L_{1}}) + 1$ and for $q > h_{2}$, choosing $n$ to be the unique integer such that $h_{n} < q \leq h_{n+1}$,
\[
p(q+1) - p(q) = \left\{ \begin{array}{ll} 2 \quad & \text{when } h_{n} < q \leq (2 - \frac{1}{L_{n-1}})h_{n} \\
1 & \text{when }  (2 - \frac{1}{L_{n-1}})h_{n} < q \leq (\gamma_{n}+1)h_{n} + \gamma_{n} \\
2 & \text{when }  (\gamma_{n} + 1)h_{n} + \gamma_{n} < q \leq (2\gamma_{n}+1)h_{n} + 2\gamma_{n} \\
1 & \text{when }  (2\gamma_{n}+1)h_{n} + 2\gamma_{n} < q \leq h_{n+1}
\end{array} \right.
\]
\end{proposition}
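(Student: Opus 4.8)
The plan is to read off $p(q+1)-p(q)$ from Cassaigne's identity $p(q+1)-p(q)=|\mathcal{L}^{RS}\cap\mathcal{A}^{q}|$ by pairing the \emph{upper bound} supplied by Proposition \ref{words} (for each $q>1$ the right-special words of that length lie among at most three explicit candidates) with a matching \emph{lower bound} asserting that each of those candidates really is right-special. Throughout I use only the elementary facts that every $B_{n}$ begins and ends with $0$ (immediate from $B_{1}=0$ and the recursion $B_{n+1}=((B_{n}1)^{\gamma_{n}}B_{n})^{L_{n}}$), that $11\notin\mathcal{L}$ (the $1$'s in every $B_{n+1}$ are separated by copies of $B_{n}$), and the expansions $B_{n+1}B_{n+1}=((B_{n}1)^{\gamma_{n}}B_{n})^{2L_{n}}$ and $B_{n+1}1B_{n+1}=((B_{n}1)^{\gamma_{n}}B_{n})^{L_{n}-1}(B_{n}1)^{2\gamma_{n}+1}B_{n}((B_{n}1)^{\gamma_{n}}B_{n})^{L_{n}-1}$ already displayed above.

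The heart of the argument is three short ``every suffix is right-special'' lemmas, each proved by exhibiting the master word with both $0$ and $1$ to its right. First, every suffix of $B_{n+1}$ is right-special: $B_{n+1}1$ occurs in $B_{n+2}=((B_{n+1}1)^{\gamma_{n+1}}B_{n+1})^{L_{n+1}}$, and $B_{n+1}B_{n+1}$ occurs there too (at a junction of the $L_{n+1}\ge 2$ outer copies), so $B_{n+1}$ is followed by $1$ and by $B_{n+1}$, hence by $0$. Second, every suffix of $(B_{n}1)^{2\gamma_{n}}B_{n}$ is right-special: the expansion of $B_{n+1}1B_{n+1}$ (which lies in $\mathcal{L}$, since $B_{n+1}1B_{n+1}1$ occurs in $B_{n+2}$ as $\gamma_{n+1}>1$) shows $(B_{n}1)^{2\gamma_{n}+1}B_{n}\in\mathcal{L}$ and, because $L_{n}>1$, also $(B_{n}1)^{2\gamma_{n}+1}B_{n}B_{n}\in\mathcal{L}$; taking suffixes gives both $(B_{n}1)^{2\gamma_{n}}B_{n}1$ and $(B_{n}1)^{2\gamma_{n}}B_{n}0$ in $\mathcal{L}$. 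Third, for $n>1$ the word $((B_{n-1}1)^{\gamma_{n-1}}B_{n-1})^{L_{n-1}-1}B_{n}$ equals $((B_{n-1}1)^{\gamma_{n-1}}B_{n-1})^{2L_{n-1}-1}$, which is simultaneously a proper \emph{prefix} of $B_{n}B_{n}=((B_{n-1}1)^{\gamma_{n-1}}B_{n-1})^{2L_{n-1}}$, hence followed by $B_{n-1}$ and so by $0$, and a \emph{suffix} of $B_{n}B_{n}$, hence followed by $1$ since $B_{n}B_{n}1\in\mathcal{L}$ (a junction in $B_{n+1}$); thus every suffix of it is right-special.

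For the recursion, fix $q>h_{2}$ and let $n\ge 2$ be the unique integer with $h_{n}<q\le h_{n+1}$. By Proposition \ref{words} the right-special words of length $q$ are precisely: the length-$q$ suffix of $B_{n+1}$ (always); the length-$q$ suffix of $(B_{n}1)^{2\gamma_{n}}B_{n}$, present exactly when $(\gamma_{n}+1)h_{n}+\gamma_{n}<q\le(2\gamma_{n}+1)h_{n}+2\gamma_{n}$; and the length-$q$ suffix of $((B_{n-1}1)^{\gamma_{n-1}}B_{n-1})^{L_{n-1}-1}B_{n}$, present exactly when $h_{n}<q\le(2-\tfrac{1}{L_{n-1}})h_{n}$. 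The three lemmas make each of these genuinely right-special, and Proposition \ref{words} already records that they are pairwise distinct whenever they coexist (the first two have suffix $1B_{n}$, the third has suffix $0B_{n}$; and the second, when present, has suffix $1(B_{n}1)^{\gamma_{n}}B_{n}$ whereas $B_{n+1}$ has suffix $0(B_{n}1)^{\gamma_{n}}B_{n}$). A one-line check with $\gamma_{n},L_{n-1},L_{n}>1$ shows that the four intervals in the statement partition $(h_{n},h_{n+1}]$ and that case 2 and case 3 never overlap, and that on the first and third intervals exactly two candidates are present while on the second and fourth exactly one is; this gives the claimed values of $p(q+1)-p(q)$.

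Finally, $p(h_{2}+1)=p(1)+\sum_{\ell=1}^{h_{2}}|\mathcal{L}^{RS}\cap\mathcal{A}^{\ell}|$ with $p(1)=2$; at length $1$ only $0$ is right-special (not $1$, as $11\notin\mathcal{L}$), and for $2\le\ell\le h_{2}$ the integer of Proposition \ref{words} is $n=1$, so only the first two cases occur, giving two right-special words when $2\gamma_{1}+1<\ell\le 4\gamma_{1}+1$ and one otherwise (note $4\gamma_{1}+1<h_{2}$ since $L_{1}\ge 2$); summing yields $(h_{2}-1)+2\gamma_{1}$, whence $p(h_{2}+1)=3+(h_{2}-1)+2\gamma_{1}=h_{2}+2\gamma_{1}+2=h_{2}(1+\tfrac{1}{L_{1}})+1$, using $h_{2}/L_{1}=(\gamma_{1}+1)h_{1}+\gamma_{1}=2\gamma_{1}+1$. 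I do not expect a substantive obstacle: the right-specialness lemmas are uniform and two lines each, and the only delicate part is the bookkeeping that lines up the endpoints of the four-way split with the three length ranges coming from Proposition \ref{words} and confirms that the candidate words coincide or remain distinct exactly as needed.
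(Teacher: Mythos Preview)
Your proof is correct and follows the same route as the paper: invoke Cassaigne's identity, use Proposition \ref{words} to pin down the possible right-special words of each length, and count. You are in fact more careful than the paper on one point: the paper's proof simply counts the candidate suffixes coming from Proposition \ref{words} without pausing to check that each such suffix is genuinely right-special, whereas you supply the three short verifications explicitly; this fills a small expository gap rather than introducing a new idea.
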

\begin{proof}
In Proposition \ref{words}, there is no overlap among $n$ since $h_{n} < \len(w) \leq h_{n+1}$ for all three cases.

Recall that $p(q+1) - p(q) = |\{ w \in \mathcal{L}^{RS} : \len(w) = q \}|$.

Let $q$ and $n$ such that $h_{n} < q \leq h_{n+1}$.  There is exactly one suffix of $B_{n+1}$ of length $q$.  There is a suffix of the second form in Proposition \ref{words} of length $q$ precisely when $(\gamma_{n}+1)h_{n} + \gamma_{n} < q \leq (2\gamma_{n}+1)h_{n}+2\gamma_{n}$.
There is a suffix of the third form in Proposition \ref{words} of length $q$ precisely when $h_{n} < q \leq (2 - \frac{1}{L_{n-1}})h_{n}$ and $n > 1$.

For $1 < q \leq h_{2}$, Proposition \ref{words} applies with $n=1$ and the third case is vacuous.  Then
 $p(q+1)-p(q) = 1$ for $1 < q \leq (\gamma_{1}+1)h_{1} + \gamma_{1} = 2\gamma_{1} + 1$.  For $2\gamma_{1} + 1 < q \leq (2\gamma_{1}+1)h_{1} + 2\gamma_{1} = 4\gamma_{1} + 1$, we have $p(q+1) - p(q) = 2$ and for $4\gamma_{1}+1 < q \leq h_{2}$, $p(q+1) - p(q) = 1$.  Therefore, as $p(2) = 3$ and $h_{2} = L_{1}((\gamma_{1}+1)h_{1}+\gamma_{1}) = L_{1}(2\gamma_{1}+1)$,
\begin{align*}
p(h_{2}+1)  &= p(2) + (p(2\gamma_{1}+1) - p(2)) + (p(4\gamma_{1}+1) - p(2\gamma_{1}+1) + (p(h_{2}+1) - p(4\gamma_{1}+1)) \\
&= 3 + (2\gamma_{1}-1) + 2(2\gamma_{1} ) + (h_{2} - 4\gamma_{1})
= h_{2} + 2\gamma_{1} + 2
= h_{2} + \frac{1}{L_{1}}h_{2} + 1 \qedhere
\end{align*}
\end{proof}

\begin{theorem}\label{comp}
The transformations in Definition \ref{theTs} satisfy $p(h_{n+1}) = (1 + \frac{1}{L_{n}})h_{n+1}$.

If $\frac{\gamma_{n}}{h_{n}} \to 0$ then they also satisfy
\begin{align*}
\liminf \frac{p(q)}{q} &= 1 + \liminf \frac{1}{\max(L_{n-1},\gamma_{n}+1)} \\
\limsup \frac{p(q)}{q} &= \frac{3}{2} + \limsup \frac{1}{4\min(L_{n-1},\gamma_{n}+1)-2}
\end{align*}
\end{theorem}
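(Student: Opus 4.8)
The plan is to read both assertions off Proposition \ref{count}, which pins down $p(q+1)-p(q)$ on every interval $(h_n,h_{n+1}]$, via telescoping sums.

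For $p(h_{n+1})=(1+\frac{1}{L_n})h_{n+1}$ I would induct on $n$. The base case $n=1$ is $p(h_2)=(1+\frac1{L_1})h_2$, which follows from the already-established $p(h_2+1)=h_2(1+\frac1{L_1})+1$ together with the fact that there is exactly one right-special word of length $h_2$ (Proposition \ref{words}, using $h_2=L_1(2\gamma_1+1)>4\gamma_1+1$), so $p(h_2+1)-p(h_2)=1$. For the inductive step, write $p(h_{n+1})=p(h_n)+\sum_{q=h_n}^{h_{n+1}-1}(p(q+1)-p(q))$; the term $q=h_n$ lies in the fourth regime at level $n-1$ (since $h_n=L_{n-1}((\gamma_{n-1}+1)h_{n-1}+\gamma_{n-1})>(2\gamma_{n-1}+1)h_{n-1}+2\gamma_{n-1}$) and contributes $1$, while the rest splits into the four regimes of Proposition \ref{count}, of lengths $(1-\frac1{L_{n-1}})h_n$, $(\gamma_n-1)h_n+\gamma_n+\frac{h_n}{L_{n-1}}$, $\gamma_n(h_n+1)$, $h_{n+1}-(2\gamma_n+1)h_n-2\gamma_n$ carrying increments $2,1,2,1$. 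Adding these, plugging in $h_{n+1}=L_n((\gamma_n+1)h_n+\gamma_n)$ and the inductive hypothesis $p(h_n)=h_n+\frac{h_n}{L_{n-1}}$, the $\frac{h_n}{L_{n-1}}$ contributions cancel and the sum collapses to $p(h_{n+1})=h_{n+1}+(\gamma_n+1)h_n+\gamma_n=h_{n+1}+\frac{h_{n+1}}{L_n}$.

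For the $\liminf$ and $\limsup$, the same partial sums, now stopped at an arbitrary $q\in(h_n,h_{n+1}]$, show that $p$ is affine in $q$ on each of the four regimes, so $\frac{p(q)}{q}$ has the form $a+\frac bq$ there and is monotone; tracking signs, on $(h_n,h_{n+1}]$ the function $\frac{p(q)}{q}$ increases, decreases, increases, then decreases, so its interior local maxima sit at $q_1:=(2-\frac1{L_{n-1}})h_n$ and $q_2:=(2\gamma_n+1)h_n+2\gamma_n$ and its local minima at $K_n:=(\gamma_n+1)h_n+\gamma_n$ and at $h_{n+1}$ (one checks $q_1<K_n<q_2<h_{n+1}$ and that $q_1,q_2,K_n$ are integers). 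A direct computation gives $\frac{p(q_1)}{q_1}=\frac32+\frac1{4L_{n-1}-2}$ and $\frac{p(h_{n+1})}{h_{n+1}}=1+\frac1{L_n}$ exactly, while $\frac{p(q_2)}{q_2}=\frac32+\frac1{4(\gamma_n+1)-2}+O(\gamma_n/h_n)$, $\frac{p(K_n)}{K_n}=1+\frac1{\gamma_n+1}+O(\gamma_n/h_n)$, and $\frac{p(h_n+1)}{h_n+1}=1+\frac1{L_{n-1}}+O(1/h_n)$. Since the half-open intervals $(h_n,h_{n+1}]$ partition a tail of $\mathbb{N}$, $\limsup_q\frac{p(q)}{q}$ (respectively $\liminf_q$) equals $\limsup_n$ (respectively $\liminf_n$) of the maximum (respectively minimum) of $\frac{p(q)}{q}$ over $(h_n,h_{n+1}]$, i.e. of $\max(\frac{p(q_1)}{q_1},\frac{p(q_2)}{q_2})$ (respectively $\min(\frac{p(h_n+1)}{h_n+1},\frac{p(K_n)}{K_n},\frac{p(h_{n+1})}{h_{n+1}})$). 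Using $\gamma_n/h_n\to0$ to absorb the error terms and the $1$-Lipschitz property of $\max$ and $\min$, this reduces to $\limsup_q\frac{p(q)}{q}=\frac32+\limsup_n\max(\frac1{4L_{n-1}-2},\frac1{4(\gamma_n+1)-2})$ and $\liminf_q\frac{p(q)}{q}=1+\liminf_n\min(\frac1{L_{n-1}},\frac1{\gamma_n+1},\frac1{L_n})$.

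The last step is to bring these to the stated form. Since $x\mapsto\frac1{4x-2}$ is decreasing on $[2,\infty)$ and $L_{n-1},\gamma_n+1\ge2$, the inner $\max$ equals $\frac1{4\min(L_{n-1},\gamma_n+1)-2}$, which gives the $\limsup$ formula at once. For the $\liminf$, the term $\frac1{L_n}$ is redundant: writing $b_n=\frac1{\max(L_{n-1},\gamma_n+1)}$ one has $\frac1{L_n}\ge b_{n+1}$, hence $\min(\frac1{L_{n-1}},\frac1{\gamma_n+1},\frac1{L_n})\ge\min(b_n,b_{n+1})$, while the reverse inequality $\le b_n$ is trivial; since $\liminf_n\min(b_n,b_{n+1})=\liminf_n b_n$, we get $\liminf_n\min(\frac1{L_{n-1}},\frac1{\gamma_n+1},\frac1{L_n})=\liminf_n\frac1{\max(L_{n-1},\gamma_n+1)}$, as claimed. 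I expect the main obstacle to be purely clerical bookkeeping: pinning down the exact regime boundaries so the telescoping sum collapses cleanly, checking that $q_1,q_2,K_n$ land on integers, and correctly sorting which endpoints are interval maxima versus minima of $\frac{p(q)}{q}$ (with the harmless off-by-one at the open left endpoint $h_n$, whose value $1+\frac1{L_{n-1}}$ is attained in the previous interval) — the limit manipulations at the end, including the index shift, are then routine.
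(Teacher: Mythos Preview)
Your proposal is correct and follows essentially the same route as the paper: telescope the increments of Proposition \ref{count} to get $p(h_{n+1})$, then use that $p$ is affine on each regime so $p(q)/q$ is monotone there, and read off the extremal values at the regime endpoints. The paper telescopes $p(h_{n+1}+1)-p(h_n+1)$ rather than $p(h_{n+1})-p(h_n)$, but your off-by-one bookkeeping (the $q=h_n$ term compensating for the missing $q=h_{n+1}$) handles this correctly; your explicit index-shift argument eliminating the $\tfrac{1}{L_n}$ term from the $\liminf$ is a detail the paper leaves implicit.
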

\begin{proof}
For $n \geq 2$, by Proposition \ref{count},
\begin{align*}
p(\Big{(}2 - \frac{1}{L_{n-1}}\Big{)}h_{n}+1) - p(h_{n}+1) &= 2\Big{(}1 - \frac{1}{L_{n-1}}\Big{)}h_{n} \\
p((\gamma_{n}+1)h_{n} + \gamma_{n}+1)  -p(\Big{(}2 - \frac{1}{L_{n-1}}\Big{)}h_{n}+1) &= \Big{(}\gamma_{n} - 1 + \frac{1}{L_{n-1}}\Big{)}h_{n} + \gamma_{n} \\
p((2\gamma_{n}+1)h_{n} + 2\gamma_{n} + 1) - p((\gamma_{n}+1)h_{n} + \gamma_{n} + 1) &= 2(\gamma_{n}h_{n} + \gamma_{n}) \\
p(h_{n+1}+1) - p((2\gamma_{n}+1)h_{n} + 2\gamma_{n} + 1) &= h_{n+1} - (2\gamma_{n}+1)h_{n} - 2\gamma_{n}
\end{align*}
and therefore
\begin{align*}
p(h_{n+1}+1) - p(h_{n}+1) &= \Big{(}2 - \frac{2}{L_{n-1}} + \gamma_{n} - 1 + \frac{1}{L_{n-1}} + 2\gamma_{n} - 2\gamma_{n} - 1\Big{)}h_{n} + \gamma_{n} + 2\gamma_{n} - 2\gamma_{n} + h_{n+1} \\
&= h_{n+1} + \Big{(}\gamma_{n} - \frac{1}{L_{n-1}}\Big{)}h_{n} + \gamma_{n}
= h_{n+1} + (\gamma_{n}+1)h_{n} + \gamma_{n} - h_{n} - \frac{1}{L_{n-1}}h_{n} \\
&= h_{n+1} + \frac{1}{L_{n}}h_{n+1} - h_{n} - \frac{1}{L_{n-1}}h_{n}
\end{align*}
which implies that
\begin{align*}
p(h_{n+1}+1) &= p(h_{2} + 1) + \sum_{m=2}^{n} (p(h_{m+1} + 1) - p(h_{m}+1)) \\
&= 1 + \Big{(}1 + \frac{1}{L_{1}}\Big{)}h_{2} + \sum_{m=2}^{n} \Big{(}\Big{(}1 + \frac{1}{L_{m}}\Big{)}h_{m+1} - \Big{(}1 + \frac{1}{L_{m-1}}\Big{)}h_{m}\Big{)}
= 1 + \Big{(}1 + \frac{1}{L_{n}}\Big{)}h_{n+1}
\end{align*}
Since $p(h_{n+1}+1) - p(h_{n+1}) = 1$, then $p(h_{n+1}) = (1 + \frac{1}{L_{n}})h_{n+1}$.

Combining this with our initial observations,
\begin{align*}
p(\Big{(}2-\frac{1}{L_{n-1}}\Big{)}h_{n}+1) - 1 &= \Big{(}1 + \frac{1}{L_{n-1}}\Big{)}h_{n} + 2\Big{(}1 - \frac{1}{L_{n-1}}\Big{)}h_{n} = \Big{(}3 - \frac{1}{L_{n-1}}\Big{)}h_{n} \tag{$\dagger$}\\
p((\gamma_{n}+1)h_{n} + \gamma_{n}+1) -1 &= \Big{(}3 - \frac{1}{L_{n-1}}\Big{)}h_{n} + \Big{(}\gamma_{n}-1+\frac{1}{L_{n-1}}\Big{)}h_{n} + \gamma_{n}
= (\gamma_{n}+2)h_{n} + \gamma_{n} \\
p((2\gamma_{n}+1)h_{n} + 2\gamma_{n}+1) -1 &= (\gamma_{n}+2)h_{n} + \gamma_{n} + 2\gamma_{n}h_{n} + 2\gamma_{n}
= (3\gamma_{n}+2)h_{n} + 3\gamma_{n} \tag{$\ddagger$}
\end{align*}
and so
\begin{align*}
\frac{p(h_{n})}{h_{n}} &= 1 + \frac{1}{L_{n-1}} \\
\frac{p((2-\frac{1}{L_{n-1}})h_{n}+1)-1}{(2-\frac{1}{L_{n-1}})h_{n}} &= \frac{3 - \frac{1}{L_{n-1}}}{2 - \frac{1}{L_{n-1}}}
= \frac{3}{2} + \frac{\frac{1}{2}~\frac{1}{L_{n-1}}}{2 - \frac{1}{L_{n-1}}} = \frac{3}{2} + \frac{1}{4L_{n-1}-2} \\
\frac{p((\gamma_{n}+1)h_{n}+\gamma_{n}+1)-1}{(\gamma_{n}+1)h_{n}+\gamma_{n}} &= \frac{(\gamma_{n}+2)h_{n}+\gamma_{n}}{(\gamma_{n}+1)h_{n}+\gamma_{n}} = 1 + \frac{1}{\gamma_{n}+1+\frac{\gamma_{n}}{h_{n}}} \\
\frac{p((2\gamma_{n}+1)h_{n} + 2\gamma_{n}+1)-1}{(2\gamma_{n}+1)h_{n} + 2\gamma_{n}}
&= \frac{(3\gamma_{n}+2)h_{n} + 3\gamma_{n}}{(2\gamma_{n}+1)h_{n} + 2\gamma_{n}}
= \frac{3}{2} + \frac{\frac{1}{2}}{2\gamma_{n}+1+\frac{2\gamma_{n}}{h_{n}}}
\end{align*}
Now observe that, since $1 \leq p(q+1) - p(q) \leq 2$ for all $q$, the function $p(q)$ is increasing when $p(q+1) - p(q) = 2$ and decreasing when $p(q+1) - p(q) = 1$.  Therefore the $\liminf$ and $\limsup$ are attained along sequences of the four above-mentioned values.
Provided $\frac{\gamma_{n}}{h_{n}} \to 0$, then
\[
\liminf_{q} \frac{p(q)}{q} = \liminf_{n} \min\Big{(}1+\frac{1}{L_{n-1}}, 1 + \frac{1}{\gamma_{n}+1}\Big{)}
\]
and
\[
\limsup_{q} \frac{p(q)}{q} = \limsup_{n} \max\Big{(}\frac{3}{2} + \frac{1}{4L_{n-1}-2}, \frac{3}{2} + \frac{1}{4\gamma_{n}+2}\Big{)} \qedhere
\]
\end{proof}

\subsection{Complexity nearing 1.5q}

\begin{theorem}\label{minimal}
Let $\epsilon > 0$ and $f(q) \to \infty$.  Then there exists $\gamma_{n} = \gamma > 1$ and $L_{n} \to \infty$ such that the transformation in Definition \ref{theTs} satisfies
\[
\limsup \frac{p(q)}{q} < \frac{3}{2} + \epsilon
\quad\quad\text{and}\quad\quad
p(h_{n}) < h_{n} + f(h_{n})
\]
\end{theorem}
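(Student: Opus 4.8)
The plan is to read off both conclusions from Theorem \ref{comp}; the only real work is an appropriate recursive choice of the parameters. First I would fix a single integer $\gamma > 1$ large enough that $\frac{1}{4\gamma + 2} < \epsilon$ and set $\gamma_{n} = \gamma$ for all $n$. Since $h_{n+1} = L_{n}((\gamma+1)h_{n}+\gamma) \geq (\gamma+1)h_{n} \geq 2h_{n}$, the heights grow at least geometrically, so $h_{n} \to \infty$ and hence $\gamma_{n}/h_{n} = \gamma/h_{n} \to 0$, which is exactly the hypothesis needed to invoke the $\limsup$ formula of Theorem \ref{comp}.

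Next I would choose the cut sequence $\{L_{n}\}$ recursively. The point is that $h_{n}$ is determined once $L_{1},\dots,L_{n-1}$ are fixed, and moreover $p(h_{n+1}) - h_{n+1} = \frac{h_{n+1}}{L_{n}} = (\gamma+1)h_{n} + \gamma$ (using $p(h_{n+1}) = (1 + \frac{1}{L_{n}})h_{n+1}$ from Theorem \ref{comp}) depends only on $h_{n}$, not on $L_{n}$. So, with $h_{n}$ in hand, I would use $f(q) \to \infty$ to pick a threshold $N_{n}$ with $f(q) > (\gamma+1)h_{n} + \gamma$ for all $q \geq N_{n}$, and then take $L_{n}$ to be any integer with both $L_{n} \geq n+1$ and $L_{n}\big((\gamma+1)h_{n} + \gamma\big) \geq N_{n}$. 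The first requirement forces $L_{n} \to \infty$; the second forces $h_{n+1} \geq N_{n}$, so that $p(h_{n+1}) - h_{n+1} = (\gamma+1)h_{n}+\gamma < f(h_{n+1})$.

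It then remains only to verify the two stated conclusions. For the complexity bound: because $L_{n-1} \to \infty$, for all sufficiently large $n$ we have $L_{n-1} \geq \gamma + 1$, so $\min(L_{n-1}, \gamma_{n}+1) = \gamma+1$ eventually, and Theorem \ref{comp} gives $\limsup_{q} p(q)/q = \frac{3}{2} + \frac{1}{4(\gamma+1)-2} = \frac{3}{2} + \frac{1}{4\gamma+2} < \frac{3}{2} + \epsilon$. For the second conclusion, the inequality $p(h_{n+1}) - h_{n+1} < f(h_{n+1})$ secured during the construction says exactly $p(h_{m}) < h_{m} + f(h_{m})$ for every $m \geq 2$, which in particular holds infinitely often.

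I do not expect a substantive obstacle here: everything is bookkeeping around the recursion. The one subtlety to respect is that $f$ may tend to infinity arbitrarily slowly, forcing the thresholds $N_{n}$, and hence $L_{n}$ and the heights $h_{n+1}$, to be taken very large; but this causes no difficulty, precisely because the quantity $(\gamma+1)h_{n}+\gamma$ that must be dominated by $f(h_{n+1})$ is pinned down by $h_{n}$ \emph{before} $L_{n}$ is selected, while increasing $L_{n}$ only pushes $h_{n+1}$ out into a range where $f$ already exceeds it.
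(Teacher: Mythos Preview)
Your proposal is correct and follows essentially the same route as the paper: fix $\gamma$ with $\frac{1}{4\gamma+2}<\epsilon$, then recursively choose $L_n$ large enough that $h_{n+1}$ lands in the range where $f$ exceeds $(\gamma+1)h_n+\gamma$, and read off both conclusions from Theorem~\ref{comp}. If anything you are slightly more careful than the paper, explicitly enforcing $L_n\geq n+1$ to guarantee $L_n\to\infty$ and verifying the hypothesis $\gamma_n/h_n\to 0$ needed for the $\limsup$ formula.
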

\begin{proof}
Choose $\gamma > 1$ such that $\frac{1}{4\gamma+2} < \epsilon$.

Given $h_{n}$, choose $q_{n}$ such that for all $q \geq q_{n}$, we have $f(q) > (\gamma+1)h_{n} + \gamma$.  Then choose $L_{n}$ such that $L_{n}((\gamma+1)h_{n}+\gamma) \geq q_{n}$.  Then by Theorem \ref{comp},
\[
p(h_{n+1}) = \Big{(}1 + \frac{1}{L_{n}}\Big{)}h_{n+1} = h_{n+1} + (\gamma+1)h_{n} + \gamma < h_{n+1} + f(q_{n}) \leq h_{n+1} + f(h_{n+1})
\]
Since $L_{n} \to \infty$, 
$
\limsup \frac{p(q)}{q} = \frac{3}{2} + \frac{1}{4\gamma+2} < \frac{3}{2} + \epsilon
$.
\end{proof}

\section{Weak mixing for rank-one transformations}\label{Swm}

\begin{theorem}\label{ismix}
Let $T$ be a rank-one transformation with bounded spacers (there exists $k$ such that $s_{n,i} \leq k$ for all $0\leq i < r_{n}$ and all $n$) and $\kappa > 0$ such that for all sufficiently large $n$,
\[
|\{ s_{n,i} = 0 : 0 \leq i < r_{n} \}| \geq \kappa (r_{n}+1) \quad\quad\text{and}\quad\quad |\{ s_{n,i} = 1 : 0 \leq i < r_{n} \}| \geq \kappa (r_{n}+1)
\]
Then $T$ is weakly mixing on a finite measure space.
\end{theorem}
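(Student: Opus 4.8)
The plan is to prove that $T$ has no eigenvalue other than $1$: since $T$ is ergodic and the bounded spacers make the measure space finite (one has $h_{n+1}\ge(r_n+1)h_n\ge 2h_n$, hence $h_n\ge 2^{n-1}$ and $\sum_n(r_nh_n)^{-1}\sum_i s_{n,i}<\infty$, after applying Proposition \ref{1} to arrange $s_{n,r_n}=0$ if necessary), the absence of nontrivial eigenvalues is exactly weak mixing.

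So suppose $f\in L^2$ is nonzero with $f\circ T=\lambda f$ for some $\lambda$ on the unit circle with $\lambda\ne 1$; by ergodicity $|f|$ is constant, and I normalize $|f|\equiv 1$. Let $\mathcal P_n$ be the partition of $Y$ into the levels $I_{n,0},\dots,I_{n,h_n-1}$ of $C_n$ together with $Y\setminus C_n$, and put $f_n=\mathbb E[f\mid\mathcal P_n]$; since $\mathcal P_{n+1}$ refines $\mathcal P_n$ and $\bigvee_n\sigma(\mathcal P_n)$ is the full $\sigma$-algebra, $f_n\to f$ in $L^2$. Writing $c_{n,j}$ for the constant value of $f_n$ on $I_{n,j}$, the relation $f\circ T=\lambda f$ with $T(I_{n,j})=I_{n,j+1}$ gives $c_{n,j}=\lambda^jc_{n,0}$, so that $\|f_n\|_2^2=|c_{n,0}|^2\mu(C_n)+o(1)\to 1$ and, as $\mu(C_n)\to 1$, $|c_{n,0}|\to 1$. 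Next I would relate consecutive stages: inside $C_{n+1}$ the base $I_{n+1,0}$ is the base of the leftmost subcolumn of $C_n$ and the base of the $i$-th subcolumn equals $T^{\ell_i}(I_{n+1,0})$, where $\ell_i=\sum_{j<i}(h_n+s_{n,j})$ for $0\le i\le r_n$; so $I_{n,0}=\bigsqcup_{i=0}^{r_n}T^{\ell_i}(I_{n+1,0})$, and using $f\circ T^{\ell_i}=\lambda^{\ell_i}f$ with $\mu(I_{n,0})=(r_n+1)\mu(I_{n+1,0})$ gives
\[
(r_n+1)\,c_{n,0}=\Big(\sum_{i=0}^{r_n}\lambda^{\ell_i}\Big)c_{n+1,0}.
\]
Since $|c_{n,0}|,|c_{n+1,0}|\to 1$, the averages $a_n=\frac{1}{r_n+1}\sum_{i=0}^{r_n}\lambda^{\ell_i}$ satisfy $|a_n|\to 1$, hence $\frac{1}{r_n+1}\sum_{i=0}^{r_n}|\lambda^{\ell_i}-a_n|^2=1-|a_n|^2\to 0$.

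The density hypotheses would then enter as follows. Fix $\delta>0$. By Chebyshev, at most $\delta^{-2}(1-|a_n|^2)(r_n+1)$ of the indices $i\in\{0,\dots,r_n\}$ have $|\lambda^{\ell_i}-a_n|\ge\delta$, so for all large $n$ the set of $i\in\{0,\dots,r_n-1\}$ that fail to satisfy both $|\lambda^{\ell_i}-a_n|<\delta$ and $|\lambda^{\ell_{i+1}}-a_n|<\delta$ has fewer than $\kappa(r_n+1)$ elements; it therefore cannot swallow the set $\{i<r_n:s_{n,i}=0\}$, nor the set $\{i<r_n:s_{n,i}=1\}$, each of size at least $\kappa(r_n+1)$. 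Taking $i$ with $s_{n,i}=0$ outside the bad set, $\ell_{i+1}=\ell_i+h_n$ forces $|\lambda^{h_n}-1|=|\lambda^{\ell_{i+1}}-\lambda^{\ell_i}|<2\delta$; taking $i$ with $s_{n,i}=1$ outside the bad set, $\ell_{i+1}=\ell_i+h_n+1$ forces $|\lambda^{h_n+1}-1|<2\delta$. Then $|\lambda-1|\le|\lambda^{h_n+1}-1|+|\lambda^{h_n}-1|<4\delta$ for all large $n$, and since $\delta$ was arbitrary, $\lambda=1$ — contradicting $\lambda\ne 1$. Hence $T$ has no nontrivial eigenvalue and is weakly mixing.

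The hard part is this last step: extracting, at each large stage, one cut index carrying spacer $0$ and another carrying spacer $1$ at which the relevant partial sums $\ell_i,\ell_{i+1}$ are both close to the mean $a_n$ — i.e. marrying the $L^2$-concentration of $\{\lambda^{\ell_i}\}$ around $a_n$ to the two positive-density conditions. The rest is the standard Rokhlin-tower computation relating $c_{n,0}$ to $c_{n+1,0}$, plus the routine check (via $\|f_n\|_2\to\|f\|_2$) that $|c_{n,0}|\to 1$.
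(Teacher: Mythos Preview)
Your argument is correct. The eigenfunction calculation is clean: the relations $c_{n,j}=\lambda^{j}c_{n,0}$ and $(r_{n}+1)c_{n,0}=\big(\sum_{i}\lambda^{\ell_{i}}\big)c_{n+1,0}$ are exactly right, the variance identity $\frac{1}{r_{n}+1}\sum_{i}|\lambda^{\ell_{i}}-a_{n}|^{2}=1-|a_{n}|^{2}$ together with Chebyshev does produce, for each large $n$, a good index $i$ with $s_{n,i}=0$ and another with $s_{n,i}=1$, and the triangle-inequality chain $|\lambda-1|<4\delta$ finishes it.

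This is a genuinely different route from the paper. The paper does not touch eigenfunctions at all; instead it verifies Furstenberg's criterion (for all positive-measure $A,B$ there exists $t$ with $\mu(T^{t}A\cap A)\mu(T^{t}A\cap B)>0$) by a direct geometric argument: it shows that for levels $I,J$ in $C_{n}$ with $J$ exactly $\ell$ levels below $I$, the time $t_{n,\ell}=\sum_{t=0}^{\ell-1}h_{n+t}$ satisfies $\mu(T^{t_{n,\ell}}I\cap I)\geq\kappa^{\ell}\mu(I)$ and $\mu(T^{t_{n,\ell}}I\cap J)\geq\kappa^{\ell}\mu(J)$, by tracking sublevels through $\ell$ successive cut-and-stack stages using the density of $0$-spacers for the first inequality and of $1$-spacers for the second. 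A Lebesgue-density lemma then transfers this from levels to arbitrary sets. Your spectral approach is shorter and uses the two density hypotheses at a single sharp moment (the Chebyshev step), whereas the paper's approach is more hands-on and avoids any $L^{2}$ machinery, at the cost of several preparatory lemmas about approximating sets by level-unions. Both rely on the bounded-spacer hypothesis only to secure finite measure.
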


We adapt the proof that Chacon's transformation is weakly mixing from \cite{silva2008invitation}.

\begin{lemma}[Lemma 2.7.3 \cite{silva2008invitation}]\label{273}
For any measurable set $A$ and $\epsilon > 0$, there exists $N$ such that for all $n \geq N$ there exists $Q \subseteq \{ 0, \ldots, h_{n} - 1 \}$ such that $\mu(A \symdiff \bigcup_{q \in Q} I_{n,q}) < \epsilon$.
\end{lemma}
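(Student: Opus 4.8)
The plan is to approximate $A$ by a union of column levels via the refining partitions generated by the tower structure, a Lévy (increasing martingale) convergence argument, and a final rounding step. First I would set $\mathcal{P}_n = \{ I_{n,0}, \dots, I_{n,h_n-1} \} \cup \{ Y \setminus C_n \}$ and note that these form a refining sequence of partitions: by the cutting-and-stacking construction each level $I_{n,j}$ is a finite disjoint union of levels of $C_{n+1}$, and $C_n \subseteq C_{n+1}$ gives $Y \setminus C_{n+1} \subseteq Y \setminus C_n$, so $\sigma(\mathcal{P}_1) \subseteq \sigma(\mathcal{P}_2) \subseteq \cdots$.

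The crucial step is that $\bigvee_n \sigma(\mathcal{P}_n) = \mathcal{B}$ modulo $\mu$-null sets. Since $r_n \geq 1$, we have $h_{n+1} \geq (r_n+1)h_n \geq 2 h_n$, so $h_n \to \infty$; as every level of $C_n$ has the same width and $h_n \mu(I_n) = \mu(C_n) \leq \mu(Y) < \infty$, this forces $\mu(I_n) \to 0$. Also $C_n \nearrow Y$ (after the renormalization, $Y = \bigcup_n C_n$ up to a null set), so $\mu(Y \setminus C_n) \to 0$ by continuity of measure. Because the levels of $C_n$ are intervals of width $\mu(I_n) \to 0$ and the residual set $Y \setminus C_n$ has measure tending to $0$, the atoms of $\bigvee_n \sigma(\mathcal{P}_n)$ are $\mu$-a.e.\ singletons, which gives the claim. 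By the martingale convergence theorem, $\mathbb{E}[\mathbf{1}_A \mid \sigma(\mathcal{P}_n)] \to \mathbf{1}_A$ in $L^1(\mu)$, and this conditional expectation is the constant $a_j := \mu(A \cap I_{n,j})/\mu(I_{n,j})$ on each level $I_{n,j}$.

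To conclude, given $\epsilon > 0$ I would choose $N$ so that for all $n \geq N$ one has both $\| \mathbb{E}[\mathbf{1}_A \mid \sigma(\mathcal{P}_n)] - \mathbf{1}_A \|_1 < \epsilon/2$ and $\mu(Y \setminus C_n) < \epsilon/2$, and then set $Q = \{\, j : \mu(A \cap I_{n,j}) > \tfrac12 \mu(I_{n,j}) \,\}$ and $U = \bigcup_{q \in Q} I_{n,q}$. On each level $I_{n,j}$ one computes $\mu((A \symdiff U) \cap I_{n,j}) = \min(a_j, 1-a_j)\mu(I_{n,j}) \leq 2 a_j(1-a_j)\mu(I_{n,j}) = \int_{I_{n,j}} |\mathbf{1}_A - a_j|\, d\mu$, while $\mu((A \symdiff U) \cap (Y \setminus C_n)) \leq \mu(Y \setminus C_n)$; summing over $j$ gives $\mu(A \symdiff U) \leq \| \mathbb{E}[\mathbf{1}_A \mid \sigma(\mathcal{P}_n)] - \mathbf{1}_A \|_1 + \mu(Y \setminus C_n) < \epsilon$, as desired.

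The main obstacle is the generation statement $\bigvee_n \sigma(\mathcal{P}_n) = \mathcal{B}$ (mod null); it is essentially built into the meaning of ``rank-one'', but making it fully rigorous requires both that the levels shrink in diameter and that $Y \setminus C_n$ is $\mu$-negligible. If one prefers to sidestep the diameter/density argument, an alternative is to first approximate $A$ in $L^1$ by a finite union of dyadic intervals of $[0,1)$ and then approximate each such interval by a union of column levels, using that the width-$\mu(I_n)$ intervals making up $C_n$ refine a partition of $[0,1)$ into arbitrarily fine intervals; the remaining bookkeeping is routine.
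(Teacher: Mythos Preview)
The paper does not prove this lemma at all: it simply quotes it with a citation to Silva's textbook, so there is no ``paper's own proof'' to compare against. Your argument is correct. The generation claim $\bigvee_n \sigma(\mathcal{P}_n) = \mathcal{B}$ (mod null) is justified exactly as you say---each point eventually lies in levels of width $\mu(I_n)\to 0$ and the complement $Y\setminus C_n$ is null in the limit---and the martingale convergence plus the rounding inequality $\min(a_j,1-a_j)\le 2a_j(1-a_j)=\int_{I_{n,j}}|\mathbf{1}_A-a_j|\,d\mu$ is a clean way to finish.

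For what it is worth, the proof in Silva's book is closer to the ``alternative'' you sketch at the end: approximate $A$ by a finite union of intervals (using that intervals generate the Lebesgue $\sigma$-algebra and Carath\'eodory-type approximation), then approximate each interval by a union of column levels once the level width $\mu(I_n)$ is small enough. That route avoids the martingale theorem entirely and is a bit more elementary; your martingale approach is slicker and generalizes immediately to abstract rank-one systems where the levels are not literal intervals. Either is perfectly adequate here.
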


\begin{lemma}[Lemma 3.7.3 \cite{silva2008invitation}]\label{373}
For any positive measure set $A$ and $\epsilon > 0$, there exists $N$ such that for all $n \geq N$ there exists $0 \leq a < h_{n}$ such that $\mu(A \cap I_{n,a}) > (1 - \epsilon)\mu(I_{n,a})$.
\end{lemma}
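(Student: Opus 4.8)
The plan is to derive this directly from the Lebesgue density theorem applied to $A \subseteq [0,1)$, exploiting that the levels $I_{n,a}$ are intervals whose common width $\mu(I_{n})$ shrinks to $0$.

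First I would record two preliminary facts from the construction. Since $r_{n} \geq 1$ forces $h_{n+1} \geq (r_{n}+1)h_{n} \geq 2h_{n}$, we have $h_{n} \to \infty$; as the space has finite measure and $\mu(C_{n}) = h_{n}\mu(I_{n}) \leq 1$, this gives $\mu(I_{n}) \leq h_{n}^{-1} \to 0$. Second, the columns increase, $C_{n} \subseteq C_{n+1}$, with $\bigcup_{n} C_{n} = Y$, so every point lies in $C_{n}$ for all sufficiently large $n$ and then belongs to a unique level $I_{n,a}$ of $C_{n}$.

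Now fix $\epsilon > 0$ and set $\delta = \epsilon/2$. By the Lebesgue density theorem almost every point of $A$ is a density point, so I may choose $x_{0} \in A$ that is a density point and lies in $C_{n}$ for all large $n$. Density gives $r_{0} > 0$ with $\mu\big(A \cap [x_{0}-r, x_{0}+r]\big) \geq (1-\delta)\,2r$ for all $0 < r \leq r_{0}$. Choose $N$ so that for every $n \geq N$ we have $x_{0} \in C_{n}$ and $\mu(I_{n}) \leq r_{0}$, and let $I_{n,a}$ be the level of $C_{n}$ containing $x_{0}$. The crucial geometric observation is that an interval of length $\mu(I_{n})$ containing $x_{0}$ lies inside the ball $[x_{0}-\mu(I_{n}), x_{0}+\mu(I_{n})]$, of measure $2\mu(I_{n})$; hence, writing $r = \mu(I_{n}) \leq r_{0}$,
\[
\mu(I_{n,a} \setminus A) \leq \mu\big([x_{0}-r, x_{0}+r]\setminus A\big) = 2r - \mu\big(A \cap [x_{0}-r, x_{0}+r]\big) \leq 2\delta r.
\]
Since $\mu(I_{n,a}) = \mu(I_{n}) = r$, dividing yields $\mu(A \cap I_{n,a})/\mu(I_{n,a}) \geq 1 - 2\delta = 1 - \epsilon$, which is the assertion.

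I expect the only genuine subtleties to be bookkeeping ones: verifying $\mu(I_{n}) \to 0$ and that a full-measure set of points eventually enters the tower, and handling the two-sided ball correctly so that the density bound is applied at radius $r = \mu(I_{n})$ (it is the resulting factor of $2$ that turns $\delta$ into $\epsilon = 2\delta$). There is no real obstacle beyond these. Alternatively, the same conclusion follows from the increasing martingale theorem: the level partitions refine to a generating sequence for the Borel $\sigma$-algebra, so $E[\mathbf{1}_{A}\mid \mathcal{P}_{n}] \to \mathbf{1}_{A}$ almost everywhere, and evaluating this conditional expectation at a point of $A$ gives precisely the ratio $\mu(A\cap I_{n,a})/\mu(I_{n,a}) \to 1$; I would keep the density-theorem version as the primary argument since it is entirely self-contained.
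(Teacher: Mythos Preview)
The paper does not supply its own proof of this lemma; it merely cites Lemma~3.7.3 of Silva's book. Your density-theorem argument is correct, with one cosmetic fix: as written you conclude $\mu(A\cap I_{n,a})/\mu(I_{n,a}) \geq 1-\epsilon$, whereas the statement asks for strict inequality; taking $\delta < \epsilon/2$ (or simply invoking that the density ratio tends to $1$) repairs this.

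For comparison, the argument in the cited reference proceeds instead via the approximation Lemma~\ref{273} and a pigeonhole count: if every level satisfied $\mu(A\cap I_{n,a}) \leq (1-\epsilon)\mu(I_{n,a})$, summing over the levels in an approximating union $\bigcup_{q\in Q} I_{n,q}$ close to $A$ yields a contradiction. Your route is more geometric and self-contained, exploiting that in the concrete cut-and-stack model the levels are genuine intervals in $[0,1)$ of vanishing length; the approximation-and-pigeonhole route has the advantage of working in the abstract rank-one setting where levels need not be intervals at all. Either is perfectly adequate here.
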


\begin{lemma}\label{Q}
Let $I$ a level and $A$ a measurable set such that $\mu(A \cap I) \geq \frac{3}{4}\mu(I)$.  For any $0 < \delta < 1$, there exists $N$ such that for all $n \geq N$, if $I = \bigsqcup_{q \in Q} I_{n,q}$ is the partition of $I$ into sublevels in $C_{n}$ then $|\{ q \in Q : \mu(A \cap I_{n,q}) \geq \delta \mu(I_{n,q}) \}| \geq \frac{1}{2}|Q|$.
\end{lemma}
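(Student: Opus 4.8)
The plan is to derive this from the level‑approximation of $A$ furnished by Lemma~\ref{273}, converted into a localized (sublevel‑by‑sublevel) estimate. Since $I$ is a level of some column $C_{m}$, for every $n \geq m$ it is the disjoint union of the sublevels $\{I_{n,q}\}_{q \in Q}$ of $C_{n}$ that it contains, all of common measure $\mu(I)/|Q|$. Fix $\delta$ and pick a small $\epsilon>0$ (to be specified at the end, depending only on $\delta$ and $\mu(I)$). Applying Lemma~\ref{273} to $A$ gives $N$ such that for all $n \geq N$ there is $Q' \subseteq \{0,\dots,h_{n}-1\}$ with $\mu\big(A \symdiff \bigcup_{q \in Q'} I_{n,q}\big) < \epsilon$. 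Set $S_{n} = Q' \cap Q$ and $V_{n} = \bigsqcup_{q \in S_{n}} I_{n,q} = I \cap \bigcup_{q \in Q'} I_{n,q}$. Using the identity $I \cap (A \symdiff W) = (A \cap I) \symdiff (I \cap W)$, one gets $\mu\big((A \cap I) \symdiff V_{n}\big) \leq \mu\big(A \symdiff \bigcup_{q \in Q'} I_{n,q}\big) < \epsilon$; this first step — checking that the global approximation restricts correctly to $I$ — is the only place one must be slightly careful with set bookkeeping, and is otherwise routine.

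Next I would localize the error. For $q \in Q$ put $e_{q} = \mu\big(I_{n,q} \cap ((A \cap I) \symdiff V_{n})\big)$, so that $\sum_{q \in Q} e_{q} = \mu\big((A \cap I) \symdiff V_{n}\big) < \epsilon$. If $q \in S_{n}$ then $I_{n,q} \subseteq V_{n}$, hence $I_{n,q} \setminus A \subseteq V_{n} \setminus (A \cap I) \subseteq (A \cap I) \symdiff V_{n}$, which gives $\mu(A \cap I_{n,q}) \geq \mu(I_{n,q}) - e_{q}$. Thus every $q \in S_{n}$ with $e_{q} \leq (1-\delta)\mu(I_{n,q})$ satisfies $\mu(A \cap I_{n,q}) \geq \delta\mu(I_{n,q})$. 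By Markov's inequality applied to $\sum_{q} e_{q} < \epsilon$, the number of $q \in S_{n}$ failing this is less than $\epsilon/\big((1-\delta)\mu(I_{n,q})\big) = \tfrac{\epsilon|Q|}{(1-\delta)\mu(I)}$, while $|S_{n}| = \mu(V_{n})/\mu(I_{n,q}) \geq \big(\mu(A\cap I) - \epsilon\big)/\mu(I_{n,q}) \geq \big(\tfrac34 - \tfrac{\epsilon}{\mu(I)}\big)|Q|$. Hence the number of $q \in Q$ with $\mu(A \cap I_{n,q}) \geq \delta\mu(I_{n,q})$ exceeds $\big(\tfrac34 - \tfrac{\epsilon}{\mu(I)}\big(1 + \tfrac{1}{1-\delta}\big)\big)|Q|$.

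Finally I would choose $\epsilon = \tfrac14\mu(I)\cdot\tfrac{1-\delta}{2-\delta}$, which makes the coefficient above equal to $\tfrac12$, so that for all $n \geq N$ at least $\tfrac12|Q|$ of the sublevels $I_{n,q}$ of $I$ have $\mu(A\cap I_{n,q}) \geq \delta\mu(I_{n,q})$, as required. No limiting or compactness argument is needed — the conclusion holds at each fixed $n \geq N$ — and the entire proof is an averaging estimate built on Lemma~\ref{273}; the only "obstacle", and a mild one, is the elementary verification in the first paragraph that approximation of $A$ by levels of $C_{n}$ restricts to approximation of $A \cap I$ by sublevels of $I$. (Alternatively, one could run the same localization using the Lebesgue density theorem / martingale convergence along the refining partitions $\{I_{n,q}\}_{q\in Q}$ of $I$ in place of Lemma~\ref{273}.)
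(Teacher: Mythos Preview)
Your proof is correct and follows essentially the same approach as the paper: invoke Lemma~\ref{273} to approximate by a union of sublevels, then run a Markov-type averaging estimate to show at least half the sublevels are ``good.'' The only cosmetic difference is that the paper applies Lemma~\ref{273} directly to $A_{1}=A\cap I$ (obtaining $Q'\subseteq Q$ immediately), whereas you apply it to $A$ and then intersect with $I$---your extra verification that the approximation restricts correctly is exactly what the paper avoids by this shortcut.
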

\begin{proof}
Choose $\alpha > 0$ such that $\alpha < \frac{1}{4}(1 + \frac{1}{\delta})^{-1}$ so that
$\frac{\alpha}{\delta} + \alpha + \frac{1}{4} < \frac{1}{2}$.  Let $A_{1} = A \cap I$.  By Lemma \ref{273}, there exists $N$ such that for any $n \geq N$ there is $Q^{\prime} \subseteq Q$ such that if we set $I^{\prime} = \bigsqcup_{a\in Q^{\prime}}I_{n,a}$ then $\mu(A_{1} \symdiff I^{\prime}) < \alpha \mu(I)$.  Now observe that
$
\mu(I^{\prime} \symdiff I) \leq \mu(I^{\prime} \symdiff A_{1}) + \mu(A_{1} \symdiff I)
< \alpha \mu(I) + \frac{1}{4}\mu(I)
$.

Set $Q^{\prime\prime} = \{ a \in Q^{\prime} : \mu(I_{n,a} \setminus A_{1}) < \delta \mu(I_{n,a}) \}$ and $I^{\prime\prime} = \bigsqcup_{a\in Q^{\prime\prime}} I_{n,a}$.  Since $\delta \mu(I_{n,a}) \leq \mu(I_{n,a} \setminus A_{1})$ for $a \in Q^{\prime} \setminus Q^{\prime\prime}$,
\[
\delta \mu(I^{\prime} \symdiff I^{\prime\prime}) = \delta \mu(I^{\prime} \setminus I^{\prime\prime})
= \sum_{a \in Q^{\prime} \setminus Q^{\prime\prime}} \delta\mu(I_{n,a})
\leq \sum_{a \in Q^{\prime}\setminus Q^{\prime\prime}} \mu(I_{n,a} \setminus A_{1})
\leq \mu(I^{\prime} \setminus A_{1}) < \alpha \mu(I)
\]
so
$
\mu(I^{\prime\prime} \symdiff I) \leq \mu(I^{\prime\prime} \symdiff I^{\prime}) + \mu(I^{\prime} \symdiff I)
< \frac{\alpha}{\delta} \mu(I) + (\alpha + \frac{1}{4})\mu(I) < \frac{1}{2} \mu(I)
$.
Then
$\mu(I^{\prime\prime} \cap I) \geq \frac{1}{2}\mu(I)$ which means $|Q^{\prime\prime}| \geq \frac{1}{2}|Q|$.
\end{proof}

\begin{lemma}\label{wkmix}
If $T$ is on a finite measure space and there exists $\kappa > 0$ and $\{ t_{n,\ell} \}$ such that  for any two levels $I$ and $J$ in $C_{n}$, with $J$ being $\ell$ levels below $I$, $\mu(T^{t_{n,\ell}}I \cap I) \geq \kappa^{\ell} \mu(I)$ and $\mu(T^{t_{n,\ell}}I \cap J) \geq \kappa^{\ell} \mu(J)$ then $T$ is weakly mixing.
\end{lemma}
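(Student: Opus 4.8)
The plan is to verify the fourth characterization of weak mixing from the definition: given measurable $A,B$ of positive measure, produce $t$ with $\mu(T^{t}A\cap A)>0$ and $\mu(T^{t}A\cap B)>0$. The key idea is to locate, inside one sufficiently deep column, a level $K_{A}$ in which $A$ has density close to $1$ and a level $K_{B}$ in which $B$ has density close to $1$, positioned so that $K_{B}$ lies exactly $\ell$ levels below $K_{A}$ for some \emph{bounded} $\ell$; the hypothesis applied to $I=K_{A}$, $J=K_{B}$ then furnishes one time $t=t_{n,\ell}$ that simultaneously keeps a $\kappa^{\ell}$-fraction of $K_{A}$ inside $K_{A}$ and pushes a $\kappa^{\ell}$-fraction of $K_{A}$ into $K_{B}$, and elementary density estimates finish.

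First I would choose the target levels. Fixing a small $\epsilon$, Lemma \ref{373} applied to $A$ and to $B$ gives an $N$ and levels $I_{N,a}$, $I_{N,b}$ of $C_{N}$ with $\mu(A\cap I_{N,a})>(1-\epsilon)\mu(I_{N,a})$ and $\mu(B\cap I_{N,b})>(1-\epsilon)\mu(I_{N,b})$; since $\epsilon<\tfrac14$, if $\mu(A\cap B)=0$ then necessarily $a\ne b$. (The general case reduces to $\mu(A\cap B)=0$ by applying the conclusion to the disjoint pairs $(A\setminus B,B)$ or $(A,B\setminus A)$ and, in the trivial remaining case $A=B$, to Poincar\'e recurrence for $T$.) Now pass to a much deeper column $C_{m}$. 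Each of $I_{N,a},I_{N,b}$ splits into sublevels of $C_{m}$ indexed by the copies of $C_{N}$ sitting inside $C_{m}$, embedded rigidly; within one copy the $I_{N,a}$-sublevel is exactly $a-b$ levels above the $I_{N,b}$-sublevel, while for two consecutive copies (separated by a stage-$N$ spacer block of size $g$) the $I_{N,a}$-sublevel of the upper copy is exactly $h_{N}+g+a-b\ge 1$ levels above the $I_{N,b}$-sublevel of the lower copy. So one can realize a positive offset $\ell$ bounded by roughly $2h_{N}$ (plus a stage-$N$ spacer), hence independent of $m$. The proof of Lemma \ref{Q}, run with $\epsilon$ small, shows that all but an arbitrarily small fraction of the $I_{N,a}$-sublevels of $C_{m}$ have $A$-density at least any prescribed $\delta<1$, and likewise for $B$ and the $I_{N,b}$-sublevels; so a pigeonhole over the copies of $C_{N}$ in $C_{m}$ (using the same copy when $a>b$, and a copy together with its predecessor when $a<b$) yields levels $K_{A},K_{B}$ of $C_{m}$ with $K_{B}$ exactly $\ell$ below $K_{A}$, $\mu(A\cap K_{A})\ge\delta\mu(K_{A})$, $\mu(B\cap K_{B})\ge\delta\mu(K_{B})$, and $\mu(K_{A})=\mu(K_{B})=\mu(I_{m})$.

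Finally I would combine these. As $\ell$ is a fixed positive integer, $\kappa^{\ell}$ is a fixed positive number; take $\delta>1-\tfrac14\kappa^{\ell}$ above. The hypothesis with $n=m$, $I=K_{A}$, $J=K_{B}$ gives $t=t_{m,\ell}$ with $\mu(T^{t}K_{A}\cap K_{A})\ge\kappa^{\ell}\mu(I_{m})$ and $\mu(T^{t}K_{A}\cap K_{B})\ge\kappa^{\ell}\mu(I_{m})$. Since $T$ is measure preserving, $A$ misses at most $(1-\delta)\mu(I_{m})$ of $K_{A}$, and $B$ misses at most $(1-\delta)\mu(I_{m})$ of $K_{B}$, an inclusion--exclusion estimate gives $\mu(T^{t}A\cap A)\ge\mu\big(T^{t}(A\cap K_{A})\cap(A\cap K_{A})\big)\ge(\kappa^{\ell}-2(1-\delta))\mu(I_{m})>0$ and likewise $\mu(T^{t}A\cap B)\ge\mu\big(T^{t}(A\cap K_{A})\cap(B\cap K_{B})\big)\ge(\kappa^{\ell}-2(1-\delta))\mu(I_{m})>0$, which is exactly what weak mixing requires.

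The step I expect to be the main obstacle is the bookkeeping in the second paragraph: arranging $A$ and $B$ to be simultaneously dense in sublevels of a common column that are vertically offset by a \emph{bounded} amount. Simply invoking Lemma \ref{373} in a deep column loses control of the offset, and using far-apart copies would make $\ell$ grow like $h_{m}$, driving $\kappa^{\ell}$ to $0$; fixing $C_{N}$ first and exploiting the rigidity of its copies inside $C_{m}$ is what pins $\ell$ down, at the cost of needing the ``all but a small fraction of sublevels'' strengthening of Lemma \ref{Q} and a pigeonhole that also covers the $a<b$ predecessor-copy case.
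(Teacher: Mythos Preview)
Your strategy---find $I_{N,a}$, $I_{N,b}$ dense in $A$, $B$ via Lemma~\ref{373}, descend to a deeper column, pigeonhole for dense sublevels at a fixed offset $\ell$, then apply the hypothesis and an inclusion--exclusion estimate---is exactly the paper's, and your final paragraph is essentially identical to what appears there. The divergence is entirely in how you control the offset $\ell$, and here the paper's route is considerably simpler.

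The paper simply interchanges the roles of $A$ and $B$ if necessary so that $I_1=I_{N,a}$ sits $\ell\ge 0$ levels above $J_1=I_{N,b}$, and then uses the observation you are not exploiting: this offset is \emph{exactly preserved} under refinement. Since $I_1,J_1$ are levels of the \emph{same} column $C_N$, writing $I_1=\bigsqcup_{q\in Q_1}I_{n,q}$ and $J_1=\bigsqcup_{q\in Q_2}I_{n,q}$ gives $q\in Q_1$ iff $q-\ell\in Q_2$. Now Lemma~\ref{Q} as stated (more than half of $Q_1$ is $(1-\delta)$-dense for $A$, more than half of $Q_2$ for $B$) plus a one-line pigeonhole produces $I=I_{n,q}$, $J=I_{n,q-\ell}$ with both densities, and $\ell<h_N$ was fixed from the outset. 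No predecessor-copy bookkeeping, no strengthened Lemma~\ref{Q}, and no case split on $\mu(A\cap B)$ are needed (that split is a red herring: $\ell=0$ is permitted and the argument goes through unchanged).

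Your predecessor-copy idea also has a genuine soft spot in the generality of the lemma. The gap $g$ between two consecutive copies of $C_N$ inside $C_m$ is not in general a ``stage-$N$ spacer'': it can accumulate contributions from $s_{N+1,i},s_{N+2,i},\ldots$ and need not be bounded independently of $m$. Then your $\ell$ could grow with $m$, making the choice $\delta>1-\tfrac14\kappa^{\ell}$ circular. One could salvage this by restricting to consecutive copies lying inside a single $C_{N+1}$, but at that point the swap-and-refine argument above is both shorter and cleaner.
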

\begin{proof}
Let $A$ and $B$ be any positive measure sets.  By Lemma \ref{373}, there exist levels $I_{1}$ and $J_{1}$ in some column $C_{N}$ such that $\mu(A \cap I_{1}) > \frac{3}{4}\mu(I_{1})$ and $\mu(B \cap J_{1}) > \frac{3}{4}\mu(J_{1})$.
Let $0 \leq \ell < h_{N}$ such that $I_{1}$ is $\ell$ levels above $J_{1}$ (interchanging the roles of $A$ and $B$ if necessary).

Set $\delta = \frac{\kappa^{\ell}}{3}$.  By Lemma \ref{Q}, there exists $n > N$ such that if $I_{1} = \bigcup_{q\in Q_{1}} I_{n,q}$ and $J_{1} = \bigcup_{q \in Q_{2}} I_{n,q}$ then $|\{ q \in Q_{1} : \mu(A \cap I_{n,q}) \geq (1 - \delta) \mu(I_{n,q}) \}| \geq \frac{1}{2}|Q_{1}|$ and $|\{ q \in Q_{2} : \mu(B \cap I_{n,q}) \geq (1 - \delta) \mu(I_{n,q}) \}| \geq \frac{1}{2}|Q_{2}|$.  Since $I_{1}$ is $\ell$ levels above $J_{1}$, $q \in Q_{1}$ if and only if $q - \ell \in Q_{2}$ and $|Q_{1}| = |Q_{2}|$.  Therefore
\begin{align*}
|\{ q \in Q_{1} : \mu(A \cap I_{n,q}) < (1-\delta) \mu(I_{n,q}) \text{ or } \mu(B \cap I_{n,q-\ell}) < (1-\delta) \mu(I_{n,q}) \}| < \frac{1}{2}|Q_{1}| + \frac{1}{2}|Q_{2}| = |Q_{1}|
\end{align*}
meaning there exists $q \in Q_{1}$ such that $I = I_{n,q}$ and $J=I_{n,q-\ell}$ satisfy $\mu(A \cap I) \geq (1 - \delta) \mu(I)$ and $\mu(B \cap J) \geq (1 - \delta) \mu(J)$.

By hypothesis, $\mu(T^{t_{n,\ell}}I \cap I) \geq \kappa^{\ell}\mu(I) = 3\delta\mu(I)$ and $\mu(T^{t_{n,\ell}}I \cap J) \geq \kappa^{\ell}\mu(J) = 3\delta\mu(I)$.  Set $A_{1} = A \cap I$ and $B_{1} = B \cap J$ so that $\mu(I \setminus A_{1}) < \delta \mu(I)$ and $\mu(J \setminus B_{1}) < \delta \mu(I)$.  Then
\begin{align*}
\mu(T^{t_{n,\ell}}A_{1} \cap B_{1}) &\geq \mu(T^{t_{n,\ell}}I \cap J) - \mu(I \setminus A_{1}) - \mu(J \setminus B_{1}) \geq 3\delta\mu(I) - \delta\mu(I) - \delta\mu(I) = \delta\mu(I) > 0
\end{align*}
and similarly
\begin{align*}
\mu(T^{t_{n,\ell}}A_{1} \cap A_{1}) &\geq \mu(T^{t_{n,\ell}}I \cap I) - \mu(I \setminus A_{1}) - \mu(I \setminus A_{1})
\geq 3\delta\mu(I) - \delta\mu(I) - \delta\mu(I) = \delta\mu(I) > 0
\end{align*}

Hence for all positive measure sets $A$ and $B$ there exists $t$ such that $\mu(T^{t}A \cap A) \geq \mu(T^{t}A_{1} \cap A_{1}) > 0$ and $\mu(T^{t}A \cap B) > 0$, which is equivalent to weak mixing (\cite{furstenberg}).
\end{proof}

\begin{lemma}\label{m1}
Let $\kappa > 0$ and $n \in \mathbb{N}$ and set $t_{n,\ell} = \sum_{t=0}^{\ell-1} h_{n+t}$.  Assume
\[
|\{ s_{n,i} = 0 : 0 \leq i < r_{n} \}| \geq \kappa (r_{n}+1) \quad\quad\text{and}\quad\quad |\{ s_{n,i} = 1 : 0 \leq i < r_{n} \}| \geq \kappa (r_{n}+1)
\]
Let $I$ and $J$ be levels in $C_{n}$ with $J$ being $\ell$ levels below $I$.  Then 
\[
\mu(T^{t_{n,\ell}}I \cap I) \geq \kappa^{\ell}\mu(I)
\quad\quad
\text{and}
\quad\quad
\mu(T^{t_{n,\ell}}I \cap J) \geq \kappa^{\ell}\mu(J)
\]
\end{lemma}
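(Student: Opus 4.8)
The plan is to realize $T^{t_{n,\ell}}$ explicitly as a shift of the interval $C_{n+\ell}$ and to track where the sublevels of $I$ sitting inside $C_{n+\ell}$ are sent. Work inside $C_{n+\ell}$ and index the copies of $C_{n}$ occurring in it by tuples $\vec\imath=(i_{\ell-1},\dots,i_{1},i_{0})$ with $0\le i_{t}\le r_{n+t}$: here $i_{\ell-1}$ selects a subcolumn of $C_{n+\ell}$, $i_{\ell-2}$ a subcolumn of that copy of $C_{n+\ell-1}$, and so on, with $i_{0}$ naming which of the $r_{n}+1$ copies of $C_{n}$ inside a copy of $C_{n+1}$ we mean. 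A short induction on the cutting-and-stacking shows this copy of $C_{n}$ occupies levels $o(\vec\imath),\dots,o(\vec\imath)+h_{n}-1$ of $C_{n+\ell}$, where $o(\vec\imath)=\sum_{t=0}^{\ell-1}\bigl(i_{t}h_{n+t}+\sigma_{n+t}(i_{t})\bigr)$ and $\sigma_{m}(i):=\sum_{j<i}s_{m,j}$. Consequently, writing $I=I_{n,a}$, the level $I$ is the disjoint union of the $N:=\prod_{t=0}^{\ell-1}(r_{n+t}+1)$ sublevels $I_{n+\ell,\,o(\vec\imath)+a}$, each of measure $\mu(I)/N$, and likewise $J=I_{n,a-\ell}$ splits into the sublevels $I_{n+\ell,\,o(\vec\imath)+a-\ell}$.

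Next, the key identity. Fix $\vec\imath$ with $i_{t}<r_{n+t}$ for all $0\le t<\ell$, let $\vec\jmath=(i_{\ell-1}+1,\dots,i_{0}+1)$ (again a legal index), and set $S(\vec\imath):=\sum_{t=0}^{\ell-1}s_{n+t,i_{t}}$. Since $\sigma_{n+t}(i_{t}+1)=\sigma_{n+t}(i_{t})+s_{n+t,i_{t}}$ and $t_{n,\ell}=\sum_{t=0}^{\ell-1}h_{n+t}$, a direct computation gives $o(\vec\imath)+t_{n,\ell}=o(\vec\jmath)-S(\vec\imath)$. As $T$ acts on $C_{n+\ell}$ by $I_{n+\ell,j}\mapsto I_{n+\ell,j+1}$ and the level index $o(\vec\jmath)+a-S(\vec\imath)\le o(\vec\jmath)+h_{n}-1<h_{n+\ell}$ stays inside $C_{n+\ell}$, it follows that whenever $a\ge S(\vec\imath)$,
\[
T^{t_{n,\ell}}\bigl(I_{n+\ell,\,o(\vec\imath)+a}\bigr)=I_{n+\ell,\,o(\vec\jmath)+a-S(\vec\imath)}\subseteq I_{n,\,a-S(\vec\imath)}.
\]

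Now count. For the first bound, use those $\vec\imath$ with $i_{t}<r_{n+t}$ and $s_{n+t,i_{t}}=0$ for every $t<\ell$; then $S(\vec\imath)=0$, so each sublevel $I_{n+\ell,o(\vec\imath)+a}\subseteq I$ is carried by $T^{t_{n,\ell}}$ back into $I$. By the density hypothesis (at level $n+t$, for $0\le t<\ell$) there are at least $\prod_{t=0}^{\ell-1}\kappa(r_{n+t}+1)=\kappa^{\ell}N$ such tuples; the chosen sublevels are disjoint and their images, indexed by the distinct tuples $\vec\jmath$, are disjoint subsets of $I$, so $\mu(T^{t_{n,\ell}}I\cap I)\ge\kappa^{\ell}N\cdot(\mu(I)/N)=\kappa^{\ell}\mu(I)$. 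For the second bound, repeat with $s_{n+t,i_{t}}=1$ for all $t<\ell$, so $S(\vec\imath)=\ell$; since $J$ is $\ell$ levels below $I$ we have $a\ge\ell$, hence each image lies in $I_{n,a-\ell}=J$, and the identical count gives $\mu(T^{t_{n,\ell}}I\cap J)\ge\kappa^{\ell}\mu(J)$.

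The one genuinely substantive point is the middle identity $o(\vec\imath)+t_{n,\ell}=o(\vec\jmath)-S(\vec\imath)$: one must fix the indexing of the copies of $C_{n}$ inside $C_{n+\ell}$ and the offset formula $o(\vec\imath)$ precisely enough that adding $h_{n}+\dots+h_{n+\ell-1}$ is recognised as advancing by exactly one copy at each of the levels $n,\dots,n+\ell-1$, sliding down by the total number of spacers crossed. The rest—disjointness of the chosen sublevels and of their images, and the elementary fact that distinct copies of $C_{n}$ occupy disjoint blocks of consecutive levels of $C_{n+\ell}$—is routine bookkeeping.
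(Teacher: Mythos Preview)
Your proof is correct and follows essentially the same approach as the paper's: both track which sublevels of $I$ inside $C_{n+\ell}$ are carried by $T^{t_{n,\ell}}$ back into $I$ (respectively $J$) by selecting tuples with $s_{n+t,i_t}=0$ (respectively $=1$) at every stage, then count using the density hypothesis at each level $n,\dots,n+\ell-1$. The only difference is bookkeeping: the paper iterates the containment $T^{h_m}I_{m,a}\supset\bigsqcup_{i:s_{m,i}=0}I_{m,a}^{[i+1]}$ using sublevel notation, while you compute the level index in $C_{n+\ell}$ explicitly via the offset formula $o(\vec\imath)$.
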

\begin{proof}
Write $I = I_{n,a}$ for some $0 \leq a < h_{n}$.
As
$
T^{h_{n}}I_{n,a} \supset \bigsqcup_{i < r_{n} : s_{n,i} = 0} I_{n,a}^{[i+1]} 
$,
applying this twice,
\[
T^{h_{n+1}+h_{n}}I_{n,a} \supset \bigsqcup_{i_{0} : s_{n,i_{0}}=0} T^{h_{n+1}}I_{n,a}^{[i+1]}
\supset \bigsqcup_{i_{0} : s_{n,i_{0}} = 0} \quad \bigsqcup_{i_{1} : s_{n+1,i_{1}}=0} I_{n,a}^{[i_{0}+1][i_{1}+1]}
\]
where $I_{n,a}^{[i][j]}$ has the obvious meaning: it is the $j^{th}$ sublevel of the $i^{th}$ sublevel of $I_{n,a}$ meaning $I_{n,a}^{[i][j]}$ is a level in $C_{n+2}$.  Continuing this process:
\[
T^{\sum_{t=0}^{\ell-1}h_{n+t}}I_{n,a} \supset \bigsqcup_{i_{0} : s_{n,i_{0}}=0} \quad \bigsqcup_{i_{1} : s_{n+1,i_{1}}=0} \cdots \bigsqcup_{i_{\ell-1}:s_{n+\ell-1,i_{\ell-1}}=0} I_{n,a}^{[i_{0}+1][i_{1}+1]\cdots[i_{\ell-1}+1]}
\]
Therefore
\begin{align*}
\mu(T^{\sum_{t=0}^{\ell-1}h_{n+t}}I_{n,a} \cap I_{n,a}) &\geq \sum_{i_{0}:s_{n,i_{0}}=0} \cdots \sum_{i_{\ell-1}:s_{n+\ell-1,i_{\ell-1}}=0} \mu(I_{n,a}^{[i_{0}+1]\cdots [i_{\ell-1}+1]}) 
\geq \Big{(}\prod_{t=0}^{\ell-1} \kappa(r_{n+t}+1)\Big{)} \mu(I_{n+\ell,a}) \\
&= \Big{(}\prod_{t=0}^{\ell-1} \kappa(r_{n+t}+1)\Big{)}\Big{(}\prod_{t=0}^{\ell-1} \frac{1}{r_{n+t}+1}\Big{)}\mu(I_{n,a})
= \kappa^{\ell} \mu(I_{n,a})
\end{align*}
Similarly,
$
T^{h_{n}}I_{n,a} \supset \bigsqcup_{i < r_{n} : s_{n,i} = 1} I_{n,a-1}^{[i+1]} 
$
 so
\[
T^{\sum_{t=0}^{\ell-1}h_{n+t}}I_{n,a} \supset \bigsqcup_{i_{0} : s_{n,i_{0}}=1} \quad \bigsqcup_{i_{1} : s_{n,i_{1}}=1} \cdots \bigsqcup_{i_{\ell-1}:s_{n+\ell-i,i_{\ell-1}}=1} I_{n,a-\ell}^{[i_{0}+1][i_{1}+1]\cdots[i_{\ell-1}+1]}
\]
As $J = I_{n,a-\ell}$ since $J$ is $\ell$ levels below $I$ in $C_{n}$,
\begin{align*}
\mu(T^{\sum_{t=0}^{\ell-1}h_{n+t}}I_{n,a} \cap J) &\geq \sum_{i_{0}:s_{n,i_{0}}=1} \cdots \sum_{i_{\ell-1}:s_{n+\ell-1,i_{\ell-1}}=1} \mu(I_{n,a-\ell}^{[i_{0}+1]\cdots [i_{\ell-1}+1]}) \geq \kappa^{\ell} \mu(I_{n,a-\ell}) \qedhere
\end{align*}
\end{proof}

\begin{proposition}\label{finmeas}
Let $T$ be a rank-one transformation.  If there exists a constant $k$ such that $s_{n,i} \leq k$ for all $0 \leq i \leq r_{n}$ for all sufficiently large $n$ then $T$ is on a finite measure space.
\end{proposition}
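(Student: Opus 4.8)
The plan is to reduce everything to the finite-measure criterion recorded in the preliminaries: $Y$ has finite Lebesgue measure precisely when $\sum_{n}\frac{1}{r_{n}h_{n}}\sum_{i=0}^{r_{n}}s_{n,i}<\infty$ (from \cite{CreutzSilva2010}). So the whole proof amounts to showing that the bounded-spacer hypothesis forces this series to converge, and the engine for that is the fact that the column heights grow at least geometrically.

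Concretely, I would first observe that $h_{n+1}=\sum_{i=0}^{r_{n}}(h_{n}+s_{n,i})=(r_{n}+1)h_{n}+\sum_{i=0}^{r_{n}}s_{n,i}\geq (r_{n}+1)h_{n}\geq 2h_{n}$, using only $r_{n}\geq 1$; iterating from $h_{1}=1$ gives $h_{n}\geq 2^{\,n-1}$. Next, let $N$ be such that $s_{n,i}\leq k$ for all $0\leq i\leq r_{n}$ and all $n\geq N$. Then for $n\geq N$,
\[
\frac{1}{r_{n}h_{n}}\sum_{i=0}^{r_{n}}s_{n,i}\leq \frac{k(r_{n}+1)}{r_{n}h_{n}}\leq \frac{2k}{h_{n}}\leq \frac{2k}{2^{\,n-1}},
\]
where the middle inequality uses $r_{n}+1\leq 2r_{n}$ (again from $r_{n}\geq 1$). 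Hence $\sum_{n\geq N}\frac{1}{r_{n}h_{n}}\sum_{i=0}^{r_{n}}s_{n,i}\leq\sum_{n\geq N}2k\,2^{-(n-1)}<\infty$. The finitely many terms with $n<N$ are each finite (since each $s_{n,i}$ is a finite nonnegative integer), so the full series converges and $Y$ has finite measure; after the renormalization fixed in the preliminaries we may take $Y=[0,1)$.

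There is no real obstacle here — the argument is essentially bookkeeping. The only two points that need a word of care are (i) that the hypothesis is only assumed for sufficiently large $n$, which is handled by splitting off a harmless finite head of the series, and (ii) that the inner sum $\sum_{i=0}^{r_{n}}s_{n,i}$ runs through $i=r_{n}$ as well, which is fine because the hypothesis bounds $s_{n,r_{n}}\leq k$ too and we only ever use the crude bound $\sum_{i=0}^{r_{n}}s_{n,i}\leq k(r_{n}+1)$.
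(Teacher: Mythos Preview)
Your proof is correct and essentially the same as the paper's: both use $h_{n}\geq 2^{\,n-1}$ (from $r_{n}\geq 1$) together with $\sum_{i}s_{n,i}\leq k(r_{n}+1)$ to get a geometrically summable bound. The only cosmetic difference is that the paper argues directly with measures, showing $\mu(C_{n+1})\leq (1+k/2^{\,n-1})\mu(C_{n})$ and bounding the infinite product, whereas you invoke the summability criterion quoted in the preliminaries; the underlying computation is identical.
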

\begin{proof}
Writing $S_{n}$ for the spacers added above the $n^{th}$ column $C_{n}$, we have $\mu(S_{n}) = \sum_{i=0}^{r_{n}} s_{n,i}\mu(I_{n+1}) \leq k(r_{n}+1)\mu(I_{n+1}) = k\mu(I_{n}) = \frac{k}{h_{n}}\mu(C_{n})$.  Since $h_{n} \geq \prod_{j=1}^{n-1} (r_{j} + 1) \geq 2^{n-1}$, then $\mu(C_{n+1}) \leq (1 + \frac{k}{2^{n-1}}) \mu(C_{n})$ so $\lim \mu(C_{n}) \leq \mu(C_{0})\prod_{n=0}^{\infty} (1 + \frac{k}{2^{n}}) < \infty$.
\end{proof}

\begin{proof}[Proof of Theorem \ref{ismix}]
Lemmas \ref{wkmix} and \ref{m1} and Proposition \ref{finmeas}.
\end{proof}

\subsection{Weak mixing for low complexity transformations}

\begin{corollary}\label{theTswkmix}
The subshifts in Definition \ref{theTs} are weakly mixing (on finite measure spaces) provided that $\limsup \gamma_{n} < \infty$.
\end{corollary}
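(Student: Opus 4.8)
The plan is to verify that the subshifts of Definition~\ref{theTs} satisfy the hypotheses of Theorem~\ref{ismix}, and then invoke it. The first step is to read off the cut-and-spacer data from $B_{n+1} = ((B_n1)^{\gamma_n}B_n)^{L_n}$. The block $(B_n1)^{\gamma_n}B_n$ consists of $\gamma_n+1$ copies of $B_n$ separated by $\gamma_n$ single $1$'s and ending in a copy of $B_n$; concatenating $L_n$ of these blocks gives $B_{n+1}$ as $r_n+1 = L_n(\gamma_n+1)$ copies of $B_n$, where the spacer after each copy of $B_n$ is $1$ except after the copy ending a $(B_n1)^{\gamma_n}B_n$ block, where it is $0$ (including the final copy, so $s_{n,r_n}=0$). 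Hence $r_n = L_n(\gamma_n+1)-1$, every $s_{n,i}\in\{0,1\}$ (so the spacers are bounded, with $k=1$), and among the indices $0\le i<r_n$ exactly $L_n\gamma_n$ have $s_{n,i}=1$ and exactly $L_n-1$ have $s_{n,i}=0$.

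The second step is to produce the uniform constant $\kappa$. Since $\limsup\gamma_n<\infty$, fix $M$ and $N$ with $\gamma_n\le M$ for all $n\ge N$, and recall $L_n\ge2$ and $\gamma_n\ge2$ since these are the (integer) exponents appearing in the words. For $n\ge N$ the number of $1$-spacers satisfies $L_n\gamma_n = \frac{\gamma_n}{\gamma_n+1}(r_n+1)\ge\frac{2}{3}(r_n+1)$, and the number of $0$-spacers among $0\le i<r_n$ satisfies $L_n-1\ge\frac{L_n}{2} = \frac{1}{2(\gamma_n+1)}(r_n+1)\ge\frac{1}{2(M+1)}(r_n+1)$. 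Since $\frac{1}{2(M+1)}\le\frac{1}{6}<\frac{2}{3}$, the constant $\kappa = \frac{1}{2(M+1)}$ works in both inequalities.

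With these verified, Theorem~\ref{ismix} applies (with $k=1$ and this $\kappa$) and gives that $T$ is weakly mixing on a finite measure space; by the isomorphism between the cut-and-stack construction and the symbolic model with its empirical measure, the subshift is weakly mixing as claimed. The proof is essentially just bookkeeping; the only role of the hypothesis $\limsup\gamma_n<\infty$ is to keep $r_n+1 = L_n(\gamma_n+1)$ comparable to $L_n$, so that the $L_n-1$ zero-spacers constitute a fixed positive proportion of all spacers — without a bound on $\gamma_n$ this proportion could go to zero and the weak-mixing criterion of Theorem~\ref{ismix} would not apply.
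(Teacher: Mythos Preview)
Your proof is correct and follows essentially the same approach as the paper: both read off the spacer data from $B_{n+1}=((B_n1)^{\gamma_n}B_n)^{L_n}$, compute that there are $L_n\gamma_n$ one-spacers and $L_n-1$ zero-spacers among the $r_n+1=L_n(\gamma_n+1)$ positions, use $L_n\ge 2$ and boundedness of $\gamma_n$ to get a uniform lower bound $\kappa$ on each proportion, and then invoke Theorem~\ref{ismix}. Your write-up is slightly more explicit about the choice of $\kappa$ and the role of the hypothesis, but the argument is the same.
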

\begin{proof}
Since $B_{n+1} = ((B_{n}1)^{\gamma_{n}}B_{n})^{L_{n}}$, we have $|\{ 0 \leq i < r_{n} : s_{n,i} = 0 \}| = L_{n} - 1$ and $|\{ 0 \leq i < r_{n} : s_{n,i} = 1 \}| = L_{n}\gamma_{n}$.  As $r_{n}+1 = L_{n}(\gamma_{n}+1)$, this means
\[
\frac{|\{ 0 \leq i < r_{n} : s_{n,i} = 0 \}|}{r_{n}+1} = \frac{L_{n}-1}{L_{n}(\gamma_{n}+1)} \geq \frac{1}{\gamma_{n}+1}~\frac{1}{2}
\]
Likewise $|\{ i : s_{n,i} = 1 \}|/(r_{n}+1) = \gamma_{n}/(\gamma_{n}+1)$.
As $\gamma_{n}$ is bounded, Theorem \ref{ismix} gives weak mixing.
\end{proof}

\begin{theorem}\label{Areal}
For every $\epsilon > 0$, there exists a weakly mixing rank-one transformation (on a probability space) such that the associated subshift has complexity $\limsup \nicefrac{p(q)}{q} < 1.5 + \epsilon$.

For any $f(q) \to \infty$, the subshifts can be made to satisfy $p(q) < q + f(q)$ infinitely often.
\end{theorem}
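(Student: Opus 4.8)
The plan is to assemble Theorem \ref{minimal} and Corollary \ref{theTswkmix}; the substantive work has already been carried out in Sections \ref{S4} and \ref{Swm}, so this amounts to bookkeeping. Given $\epsilon > 0$ and $f(q) \to \infty$ (for the first assertion in isolation one may simply take $f$ to be any fixed function tending to infinity), apply Theorem \ref{minimal} to obtain a constant $\gamma > 1$ and a sequence $L_n \to \infty$ such that the rank-one transformation $T$ of Definition \ref{theTs} determined by $\gamma_n = \gamma$ and these $L_n$ satisfies both $\limsup p(q)/q < \frac{3}{2} + \epsilon$ and $p(h_n) < h_n + f(h_n)$ for all sufficiently large $n$.

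First I would settle weak mixing and the measure-theoretic setup. Since $\gamma_n = \gamma$ is constant, $\limsup \gamma_n = \gamma < \infty$, so Corollary \ref{theTswkmix} applies and $T$ is weakly mixing on a finite measure space; explicitly, the spacer sequence takes only the values $0$ and $1$ and is therefore bounded, so Proposition \ref{finmeas} already yields finiteness of the measure, and one renormalizes to a probability space as in the conventions of Section \ref{S1} (this affects neither the measure-theoretic isomorphism type nor the word complexity, so the complexity computations of Section \ref{S4} still apply to the associated subshift).

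Next I would read off the complexity conclusions. The bound $\limsup p(q)/q < \frac{3}{2} + \epsilon < 1.5 + \epsilon$ is precisely the first conclusion of Theorem \ref{minimal}. For the ``infinitely often'' claim, Theorem \ref{minimal} gives $p(h_n) < h_n + f(h_n)$ for all sufficiently large $n$ (the dip being $p(h_n) - h_n = (\gamma+1)h_{n-1} + \gamma$, pushed below $f(h_n)$ by choosing $L_{n-1}$ large relative to $f$ and $h_{n-1}$); since $h_n \to \infty$, the values $q = h_n$ exhibit infinitely many $q$ with $p(q) < q + f(q)$. No genuine obstacle arises beyond checking that the construction in Theorem \ref{minimal} really falls within the scope of Corollary \ref{theTswkmix}, i.e.\ that one can keep $\gamma_n$ bounded while simultaneously forcing $\limsup p(q)/q < 1.5 + \epsilon$ (via $\frac{1}{4\gamma+2} < \epsilon$) and arranging $p(q) < q + f(q)$ infinitely often (via $L_n \to \infty$) --- which is exactly what Theorem \ref{minimal} provides.
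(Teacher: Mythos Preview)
Your proposal is correct and follows essentially the same approach as the paper: the paper's proof consists of the single line ``Corollary \ref{theTswkmix} and Theorem \ref{minimal}'', and you have simply unpacked the bookkeeping behind that citation.
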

\begin{proof}
Corollary \ref{theTswkmix} and Theorem \ref{minimal}.
\end{proof}

\begin{theorem}\label{msjreal}
For every $\epsilon > 0$, there exists a subshift with complexity satisfying
$\limsup \nicefrac{p(q)}{q} < 1.5 + \epsilon$ and $\liminf \nicefrac{p(q)}{q} < 1 + \epsilon$
such that the associated rank-one transformation is weakly mixing (on a probability space) and has minimal self-joinings (hence also has trivial centralizer and is mildly mixing).
\end{theorem}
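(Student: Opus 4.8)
The plan is to realize the required subshift inside the explicit family of Definition~\ref{theTs}, but with all parameters held constant (rather than letting $L_n \to \infty$, as in the proof of Theorem~\ref{minimal}), so that the cutting parameter stays bounded and a minimal self-joinings criterion of Ryzhikov becomes applicable. Concretely, given $\epsilon > 0$ I would fix an integer $\gamma \geq 2$ with $\frac{1}{\gamma+1} < \epsilon$ (which forces $\frac{1}{4\gamma+2} < \epsilon$ as well), and take the rank-one subshift of Definition~\ref{theTs} with $\gamma_n = \gamma$ and $L_n = \gamma+1$ for every $n$. Then $h_{n+1} = (\gamma+1)\bigl((\gamma+1)h_n + \gamma\bigr)$ grows geometrically while $\gamma_n \equiv \gamma$, so $\gamma_n/h_n \to 0$ and Theorem~\ref{comp} applies directly, yielding
\[
\limsup \frac{p(q)}{q} = \frac{3}{2} + \frac{1}{4(\gamma+1)-2} = \frac{3}{2} + \frac{1}{4\gamma+2} < \frac{3}{2} + \epsilon
\qquad\text{and}\qquad
\liminf \frac{p(q)}{q} = 1 + \frac{1}{\gamma+1} < 1 + \epsilon .
\]

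Next I would verify the dynamical properties. Weak mixing of the associated rank-one transformation (on a probability space, after the usual renormalization) follows immediately from Corollary~\ref{theTswkmix}, since $\limsup_n \gamma_n = \gamma < \infty$. For minimal self-joinings, the payoff of holding $L_n$ constant is that the cutting parameter is now $r_n = L_n(\gamma_n+1) - 1 = (\gamma+1)^2 - 1$, independent of $n$, and every spacer value lies in $\{0,1\}$; thus the construction has uniformly bounded parameters and, being weakly mixing, it satisfies the hypotheses of the theorem of Ryzhikov~\cite{ryzhikovmsj} that such a transformation has minimal self-joinings. Trivial centralizer and mild mixing are then the standard formal consequences of minimal self-joinings, so no further work is needed there.

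I do not expect a genuine technical obstacle; the only real constraint is the trade-off dictated by Ryzhikov's hypotheses. The stronger conclusion ``$p(q) < q + f(q)$ infinitely often'' of Theorem~\ref{A2} would require $L_n \to \infty$ (so that $p(h_{n+1}) = h_{n+1} + (\gamma+1)h_n + \gamma$ eventually drops below any prescribed $f(h_{n+1})$), which makes $r_n \to \infty$ and leaves the bounded-parameter regime in which the minimal self-joinings criterion operates; settling instead for $\liminf p(q)/q < 1 + \epsilon$ is exactly what keeps $\{r_n\}$ bounded. All the quantitative content is already packaged in Theorem~\ref{comp}, so the argument reduces to fixing the constants and quoting Corollary~\ref{theTswkmix} together with \cite{ryzhikovmsj}.
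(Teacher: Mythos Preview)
Your proposal is correct and follows essentially the same route as the paper: fix $\gamma_n=\gamma$ and $L_n=\gamma+1$ in Definition~\ref{theTs}, read off the complexity from Theorem~\ref{comp}, invoke Corollary~\ref{theTswkmix} for weak mixing, and apply Ryzhikov's theorem using that $r_n=(\gamma+1)^2-1$ is bounded. Your computation $\limsup p(q)/q = \tfrac{3}{2}+\tfrac{1}{4\gamma+2}$ is in fact the correct application of Theorem~\ref{comp} (the paper's displayed $\tfrac{1}{4\gamma-2}$ is a typo), and your observation that weak mixing already rules out rigidity is exactly what the paper uses to verify Ryzhikov's non-flatness hypothesis.
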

\begin{proof}
For $\epsilon > 0$, let $\gamma > 1$ such that $\frac{1}{\gamma+1} < \epsilon$.  Then the transformation in Definition \ref{theTs} with $\gamma_{n} = \gamma$ and $L_{n} = \gamma + 1$ satisfies, by Theorem \ref{comp},
\[
\liminf \frac{p(q)}{q} = 1 + \frac{1}{\gamma+1} < 1 + \epsilon \quad\quad\text{and}\quad\quad \limsup \frac{p(q)}{q} = \frac{3}{2} + \frac{1}{4\gamma-2} < \frac{3}{2} + \epsilon
\]
and Corollary \ref{theTswkmix} gives weak mixing.  As $\{ r_{n} \}$ is bounded, Ryzhikov's theorem \cite{ryzhikovmsj} gives minimal self-joinings (the transformations are non-rigid since the $s_{n,i}$ are not constant over $0 \leq i < r_{n}$ hence are not ``flat" in the sense of Theorem 2 in \cite{ryzhikovmsj}).
\end{proof}

\begin{remark}
The examples with $p(q) < q + f(q)$ such that $L_{n} \to \infty$ are most likely not mildly mixing, hence do not have minimal self-joinings.  In essence, any alternative construction of those examples (where $f(q)/q \to 0$ so $L_{n} \to \infty$) which has bounded spacers necessarily involves constructing $B_{n+1}^{\prime} = ((B_{n}1)^{\gamma}B_{n})^{\ell_{n}}$ with $\ell_{n}$ uniformly bounded followed by $B_{n+1} = (B_{n+1}^{\prime})^{L_{n}/\ell_{n}}$.  As the second step involves adding no spacers, the construction is ``flat" and therefore should admit a rigid factor.
\end{remark}

\subsection{Totally ergodic subshifts with \texorpdfstring{$\limsup \nicefrac{p(q)}{q} = 1.5$}{limsup p(q)/q = 1.5}}

\begin{theorem}\label{Dreal}
For any $f(q) \to \infty$, there exists a totally ergodic rank-one subshift (on a probability space) satisfying
$p(q) < 1.5q + f(q)$ for all sufficiently large $q$ and $p(h_{n}) < h_{n} + f(h_{n})$ for all $n \geq 2$.
\end{theorem}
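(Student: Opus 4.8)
The plan is to use the family of Definition~\ref{theTs}, this time with \emph{both} $\gamma_{n}\to\infty$ and $L_{n}\to\infty$, choosing the parameters recursively in terms of $f$. We may assume $f$ is non-decreasing and $f\geq 1$ (replace $f$ by $q\mapsto\max(1,\inf_{q'\geq q}f(q'))$, which is still non-decreasing, still tends to $\infty$, and is eventually $\leq f$). Write $P_{n}=(\gamma_{n}+1)h_{n}+\gamma_{n}=h_{n+1}/L_{n}$, so $h_{n}=L_{n-1}P_{n-1}$ and, by Theorem~\ref{comp}, $p(h_{n})-h_{n}=P_{n-1}$. From Proposition~\ref{count} together with the exact identities marked $(\dagger)$ and $(\ddagger)$ in the proof of Theorem~\ref{comp} (none of which needs $\gamma_{n}/h_{n}\to 0$) one reads off that on each block $h_{n}<q\leq h_{n+1}$ the function $p(q)-\tfrac32 q$ is piecewise linear with slopes $\pm\tfrac12$, is $\leq\tfrac12 P_{n-1}-1$ on the part $h_{n}<q\leq(\gamma_{n}+1)h_{n}+\gamma_{n}$, is $\leq\tfrac12 h_{n}-1$ on the part $(\gamma_{n}+1)h_{n}+\gamma_{n}<q\leq h_{n+1}$, and on this latter part is actually $\leq 0$ unless $q\geq 2\gamma_{n}h_{n}$. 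Hence it suffices to arrange, for every $n$: (i) $\gamma_{n}\geq\max(n,2)$ and $f(q)>\tfrac12 h_{n}$ for all $q\geq 2\gamma_{n}h_{n}$; and (ii) $L_{n}\geq\max(n,2)$ and $f(h_{n+1})>P_{n}$. Both are achievable because, when $\gamma_{n}$ (resp.\ $L_{n}$) is chosen, $h_{n}$ (resp.\ $h_{n}$ and $P_{n}$) is already fixed while $2\gamma_{n}h_{n}$ (resp.\ $h_{n+1}=L_{n}P_{n}$) can be pushed arbitrarily high, and $f\to\infty$. Since $f$ is non-decreasing, (i) handles $q$ in the upper part of each block and (ii) handles $q$ in the lower part (there $f(q)\geq f(h_{n})>P_{n-1}>\tfrac12 P_{n-1}-1$), giving $p(q)<\tfrac32 q+f(q)$ for all sufficiently large $q$; and (ii) also gives $p(h_{n})=h_{n}+P_{n-1}<h_{n}+f(h_{n})$ for all $n\geq 2$.

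It remains to check that the resulting transformation $T$ is totally ergodic; by Danilenko's isomorphism this is equivalent to total ergodicity of the associated subshift with its empirical measure. Since the eigenvalues of an ergodic system form a group and a totally ergodic system is exactly an ergodic one with no nontrivial root of unity in its point spectrum, it is enough to show that for every prime $\ell$ no $\lambda$ of order $\ell$ is an eigenvalue. I would use the standard spectral criterion for rank-one transformations: letting $H_{n,i}$ be the height in $C_{n+1}$ at which the $i^{\text{th}}$ copy of $C_{n}$ begins and $\beta_{n}(\lambda)=\tfrac1{r_{n}+1}\sum_{i=0}^{r_{n}}\lambda^{H_{n,i}}$, if $\lambda$ (with $|\lambda|=1$) is an eigenvalue then $\prod_{n}|\beta_{n}(\lambda)|>0$. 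For completeness I would include the short proof: for a unit eigenfunction $f$ with $f\circ T=\lambda f$, the conditional expectations $f_{n}$ onto the $\sigma$-algebra generated by the levels of $C_{n}$ are constant on levels, with value $\lambda^{j}a_{n}$ on the $j^{\text{th}}$ level; the refinement $f_{n}=\mathbb{E}[f_{n+1}\mid\mathcal{C}_{n}]$ forces $a_{n}=\beta_{n}(\lambda)\,a_{n+1}$; and since $f_{n}\to f$ in $L^{2}$ and $\mu(C_{n})\to 1$ one gets $|a_{n}|\to 1$, whence $\prod_{m\geq n}|\beta_{m}(\lambda)|=|a_{n}|\to 1$. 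For the specific construction, $B_{n+1}=((B_{n}1)^{\gamma_{n}}B_{n})^{L_{n}}$ places the copies of $C_{n}$ at the heights $\{tP_{n}+j(h_{n}+1):0\leq t<L_{n},\ 0\leq j\leq\gamma_{n}\}$, so the sum factors as $\beta_{n}(\lambda)=D_{L_{n}}(\lambda^{P_{n}})\,D_{\gamma_{n}+1}(\lambda^{h_{n}+1})$, where $D_{M}(z)=\tfrac1M\sum_{k=0}^{M-1}z^{k}$ satisfies $|D_{M}(z)|\leq\frac{1}{M\,|\sin(\pi a/\ell)|}\leq\frac{\ell}{2M}$ whenever $z=e^{2\pi i a/\ell}$ with $\ell\nmid a$.

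Now fix a prime $\ell$ and $\lambda=e^{2\pi i p/\ell}$ with $1\leq p\leq\ell-1$. If $\ell\nmid h_{n}+1$ for infinitely many $n$, then along those $n$ the root $\lambda^{h_{n}+1}$ has order $\ell$, so $|D_{\gamma_{n}+1}(\lambda^{h_{n}+1})|\leq\tfrac{\ell}{2(\gamma_{n}+1)}\to 0$ and hence $\prod_{n}|\beta_{n}(\lambda)|=0$. Otherwise $\ell\mid h_{n}+1$ for all large $n$; then $h_{n}\equiv-1\pmod\ell$, so $P_{n}=(\gamma_{n}+1)h_{n}+\gamma_{n}\equiv-(\gamma_{n}+1)+\gamma_{n}\equiv-1\pmod\ell$ \emph{regardless of} $\gamma_{n}$, whence $\lambda^{P_{n}}$ has order $\ell$ and $|D_{L_{n}}(\lambda^{P_{n}})|\leq\tfrac{\ell}{2L_{n}}\to 0$, again giving $\prod_{n}|\beta_{n}(\lambda)|=0$. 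In either case $\lambda$ is not an eigenvalue, so $T$ is totally ergodic. Finally, the spacers are bounded ($s_{n,i}\in\{0,1\}$), so Proposition~\ref{finmeas} puts $T$ on a finite measure space, which we renormalize to a probability space, completing the construction. I expect the only genuinely delicate step to be the total-ergodicity argument -- in particular the observation that the two Dirichlet factors $D_{L_{n}}(\lambda^{P_{n}})$ and $D_{\gamma_{n}+1}(\lambda^{h_{n}+1})$ automatically cover complementary ``bad congruence'' regimes (via the implication $h_{n}\equiv-1\Rightarrow P_{n}\equiv-1\pmod\ell$, so $\ell\nmid P_{n}$), which is precisely why the forced divergence $\gamma_{n},L_{n}\to\infty$ already precludes every rational eigenvalue no matter how the parameters are tuned to meet the complexity constraints; the complexity bookkeeping itself is routine given the formulas of Section~\ref{S4}.
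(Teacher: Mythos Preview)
Your proposal is correct. The complexity bookkeeping is essentially identical to the paper's: both choose $\gamma_n,L_n\to\infty$ recursively so that the two local maxima of $p(q)-\tfrac32 q$ on each block $(h_n,h_{n+1}]$, pinned down by $(\dagger)$ and $(\ddagger)$, are dominated by $f$ at the relevant lengths, and both read $p(h_n)-h_n=h_n/L_{n-1}$ off Theorem~\ref{comp}.

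The total-ergodicity arguments, however, are genuinely different. The paper imposes $L_n=m_n!$ with $m_n>n$, so that every fixed $t$ divides $h_n$ for large $n$; it then shows directly that any $T^t$-invariant set of positive measure is full, exploiting that $T^{h_n}$ sends $I_{n,j}$ into $I_{n,j-1}$ on a fraction $\gamma_n/(\gamma_n+1)\to 1$ of the column (because $s_{n,i}=1$ that often), so iterating and combining with $T^{sh_n}A=A$ forces $A$ to fill nearly all levels of $C_n$. Your route is via the eigenvalue criterion $\prod_n|\beta_n(\lambda)|>0$ and the factorization $\beta_n(\lambda)=D_{L_n}(\lambda^{P_n})\,D_{\gamma_n+1}(\lambda^{h_n+1})$; the key algebraic observation that $h_n\equiv-1\pmod\ell$ forces $P_n\equiv-1\pmod\ell$ is exactly what makes the two Dirichlet factors cover complementary congruence regimes, so that \emph{any} choice with $\gamma_n,L_n\to\infty$ already rules out every rational eigenvalue --- no factorial condition on $L_n$ is needed. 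Your approach is a bit more general and arguably cleaner once the spectral criterion is in hand; the paper's approach is more self-contained (no appeal to eigenfunction theory) and gives a concrete combinatorial mechanism for the ergodicity of each power $T^t$.
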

\begin{proof}
Let $f^{*}(q) = \inf \{ f(q^{\prime}) : q^{\prime} \geq q \}$.  Then $f^{*}(q)$ is nondecreasing and $f^{*}(q) \to \infty$.

Set $\gamma_{1} = L_{1} = 2$.  Given $\gamma_{n-1}$ and $L_{n-1}$ (and therefore $h_{n}$), choose $\gamma_{n}$ such that $\frac{1}{2}h_{n} < f^{*}(\gamma_{n})$.  Then choose $L_{n} = m_{n}!$ for some $m_{n} > n$ such that $(\gamma_{n}+1)h_{n} + \gamma_{n} < f^{*}(L_{n})$.

As $h_{n+1} = L_{n}((\gamma_{n}+1)h_{n} + \gamma_{n})$, we then have $\frac{1}{L_{n}}h_{n+1} < f^{*}(L_{n}) \leq f^{*}(h_{n+1})$.
Theorem \ref{comp} gives that
\begin{align*}
p(h_{n}) &= \Big{(}1 + \frac{1}{L_{n-1}}\Big{)}h_{n} < h_{n} + f^{*}(h_{n}) \leq h_{n} + f(h_{n})
\end{align*}

The count $(\dagger)$ in the proof of Theorem \ref{comp} gives that
\begin{align*}
p(\Big{(}2-&\frac{1}{L_{n-1}}\Big{)}h_{n}+1) = \Big{(}3 - \frac{1}{L_{n-1}}\Big{)}h_{n} + 1 = \frac{3}{2}\Big{(}2 - \frac{1}{L_{n-1}}\Big{)}h_{n} + \frac{1}{2} \frac{1}{L_{n-1}} h_{n} + 1 \\
&\leq \frac{3}{2}\Big{(}2 - \frac{1}{L_{n-1}}\Big{)}h_{n} + \frac{1}{2}f^{*}(h_{n}) + 1
< \frac{3}{2}(\Big{(}2 - \frac{1}{L_{n-1}}\Big{)}h_{n} + 1) + f^{*}(\Big{(}2-\frac{1}{L_{n-1}}\Big{)}h_{n}+1)
\end{align*}
and the count $(\ddagger)$ in the proof of Theorem \ref{comp} gives
\begin{align*}
p((2\gamma_{n}+1)h_{n} + 2\gamma_{n}+1) &= (3\gamma_{n}+2)h_{n} + 3\gamma_{n} + 1
= \frac{3}{2}((2\gamma_{n}+1)h_{n} + 2\gamma_{n}) + \frac{1}{2}h_{n} + 1 \\
&\leq \frac{3}{2}((2\gamma_{n}+1)h_{n} + 2\gamma_{n}) + f^{*}(\gamma_{n}) + 1 \\
&<  \frac{3}{2}((2\gamma_{n}+1)h_{n} + 2\gamma_{n} + 1) + f^{*}((2\gamma_{n}+1)h_{n} + 2\gamma_{n}+1)
\end{align*}
As $p(q) - 1.5q$ is maximized at one of these two lengths in each range $h_{n} < q \leq h_{n+1}$, for all $q > h_{2}$,
\[
p(q) < 1.5q + f^{*}(q) \leq 1.5q + f(q)
\]
It remains to show total ergodicity (as Proposition \ref{finmeas} puts it on a finite measure space).

Let $A$ be a positive measure set and $t \in \mathbb{N}$ such that $T^{t}A = A$.  For $\epsilon > 0$ and $n > t$ such that $\frac{2t}{\gamma_{n}+1} < \epsilon$, define the sets
\[
Q_{n}(\epsilon) = \{ 0 \leq j < h_{n} : \mu(I_{n,j} \cap A) > (1 - \epsilon)\mu(I_{n,j}) \}
\]
If for some fixed $\epsilon > 0$, it holds that $Q_{n,\epsilon} = \emptyset$ for infinitely many $n$ then $\mu(A) = 0$ (Lemma \ref{373}) so we can also define
$
j_{n}(\epsilon) = \min \{ j \in Q_{n}(\epsilon) \}
$ for sufficiently large $n$.

Observe that for $j \geq t$,
\[
\mu(I_{n,j-t} \cap A) = \mu(T^{-t}I_{n,j} \cap A) = \mu(I_{n,j} \cap T^{t}A) = \mu(I_{n,j} \cap A)
\]
and so if $j \in Q_{n}(\epsilon)$ with $j \geq t$ then $j-t \in Q_{n}(\epsilon)$.  Therefore $j_{n}(\epsilon) < t$.  Now observe that, for $j > 0$,
\[
\mu(T^{h_{n}}I_{n,j} \cap I_{n,j-1}) \geq \sum_{i<r_{n} : s_{n,i}=1} \mu(I_{n,j-1}^{[i]}) = \frac{|\{ 0 \leq i < r_{n} : s_{n,i} = 1 \}|}{r_{n}+1}\mu(I_{n,j}) =
\Big{(}1 - \frac{1}{\gamma_{n} + 1}\Big{)}\mu(I_{n,j})
\]
since $s_{n,i} =1$ for $L_{n}\gamma_{n}$ values of $i$ and $r_{n}+1 = L_{n}(\gamma_{n}+1)$.
Then for $1 \leq s < t$ and $j \geq s$,
\begin{align*}
\mu(T^{sh_{n}}I_{n,j} \symdiff I_{n,j-s}) &\leq \sum_{u=0}^{s-1} \mu(T^{(s-u)h_{n}}I_{n,j-u} \symdiff T^{(s-u-1)h_{n}}I_{n,j-u-1}) \\
&= \sum_{u=0}^{s-1} \mu(T^{h_{n}}I_{n,j-u} \symdiff I_{n,j-u-1})
< \frac{2s}{\gamma_{n}+1} \mu(I_{n,j})
\end{align*}
and therefore
\[
\mu(T^{sh_{n}}I_{n,j} \cap I_{n,j-s}) \geq \Big{(}1 - \frac{2s}{\gamma_{n}+1}\Big{)}\mu(I_{n,j}) \geq \Big{(}1 - \frac{2t}{\gamma_{n}+1}\Big{)}\mu(I_{n,j}) > (1-\epsilon)\mu(I_{n,j})
\]

Since $L_{n-1} = m_{n-1}!$ and $L_{n-1}$ divides $h_{n} = L_{n-1}((\gamma_{n-1}+1)h_{n-1} + \gamma_{n-1})$ and $m_{n} > n > t$, we have that $t$ divides $h_{n}$ so $T^{sh_{n}}A = A$.  Then, for $1 \leq s < t$ and $0 \leq j < h_{n} - s$ with $j \in Q_{\epsilon}(n)$,
\begin{align*}
\mu(I_{n,j+s} &\cap A) = \mu(T^{sh_{n}}(I_{n,j+s} \cap A)) = \mu(T^{sh_{n}}I_{n,j+s} \cap A) \\
&\geq \mu(T^{sh_{n}}I_{n,j+s} \cap I_{n,j} \cap A)
\geq \mu(T^{sh_{n}}I_{n,j+s} \cap I_{n,j}) - \mu(I_{n,j} \setminus A) 
> (1 - \epsilon)\mu(I_{n,j}) - \epsilon \mu(I_{n,j})
\end{align*}
meaning that if $j \in Q_{n}(\epsilon)$ with $j < h_{n} - s$ then $j+s \in Q_{n}(2\epsilon)$.

Since $j \in Q_{n}(\epsilon)$ implies $j \in Q_{n}(2\epsilon)$, this means that $j_{n}(\epsilon) + kt + s \in Q_{n}(2\epsilon)$ for all $k \geq 0$ and $0 \leq s < t$ such that $j_{n}(\epsilon) + kt + s < h_{n}$.  So $Q_{n}(2\epsilon)$ contains all $j_{n}(\epsilon) \leq j < h_{n}$.
Then $|Q_{n}(2\epsilon)| \geq h_{n} - t$ so
\begin{align*}
\mu(A) &\geq \sum_{j \in Q_{n}(2\epsilon)} \mu(A \cap I_{n,j}) > |Q_{n}(2\epsilon)|(1 - 2\epsilon)\mu(I_{n,j})
\geq \Big{(}1 - \frac{t}{h_{n}}\Big{)}(1 - 2\epsilon)\mu(C_{n}) \to 1 - 2\epsilon
\end{align*}
As $\epsilon > 0$ was arbitrary, we conclude that $\mu(A) = 1$.
\end{proof}

\begin{remark}
Our proof of weak mixing does not apply when $\gamma_{n}$ is unbounded and we strongly suspect our transformations with $\gamma_{n} \to \infty$ are not weakly mixing.
\end{remark}

\section{Attaining specific complexities}\label{questions}

We conclude with a brief discussion of the main open question:

\begin{question}
For what pairs of values $1 \leq \alpha \leq \beta < 2$ does there exists a weakly mixing (rank-one or not) subshift with $\liminf \frac{p(q)}{q} = \alpha$ and $\limsup \frac{p(q)}{q} = \beta$?
\end{question}

Obviously the most interesting question is whether there exists a weakly mixing subshift, necessarily not rank-one, with $\beta < 1.5$.  We tentatively conjecture that our examples are the best possible:

\begin{conjecture}
Every subshift admitting a weakly mixing (probability) measure has complexity such that $\limsup \nicefrac{p(q)}{q} > 1.5$.
\end{conjecture}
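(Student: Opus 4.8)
The conjecture is open; here is the line of attack I would pursue, which splits naturally into a rank-one part and a general part.

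For rank-one subshifts, Theorem~\ref{main} already gives $\limsup p(q)/q \ge 3/2$, so what remains is to upgrade this to a strict inequality. I would sharpen the case analysis behind Theorem~\ref{t2}. By Proposition~\ref{2.3}, a rank-one subshift carrying a weakly mixing measure (necessarily not an odometer) is language-equivalent to one whose spacer sequence eventually takes exactly the two values $0$ and $d$, both occurring at every level. Re-reading the proof of Theorem~\ref{t2}, cases (1)--(3) in fact yield $\limsup p(q)/q \ge 5/3$, so the delicate cases are (4) and (5), studied in Propositions~\ref{18a} and~\ref{17}. In both of those, a refinement of the right-special word count should show that $\limsup p(q)/q = 3/2$ can hold only for \emph{asymptotically flat} constructions --- the family of Definition~\ref{theTs} with $\gamma_n \to \infty$ in case (5), and the Ferenczi-type family $B_{n+1} = B_n^{p_n}\1 B_n^{q_n}$ with $p_n, q_n \to \infty$ in case (4) (already the bound in Proposition~\ref{18a} gives $\limsup p(q)/q \ge 3/2 + c$ for an explicit $c > 0$ unless the relevant parameters grow). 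The remaining, and technically central, step is then the clean statement: \emph{a rank-one transformation built by asymptotically flat stacking, for instance $B_{n+1} = ((B_n\1)^{\gamma_n}B_n)^{L_n}$ with $\gamma_n \to \infty$, is not weakly mixing.} I would prove this by running the criterion of Section~\ref{Swm} in reverse: when $\gamma_n$ is large a proportion $\gamma_n/(\gamma_n+1) \to 1$ of the copies of $B_n$ inside $B_{n+1}$ are followed by $\1$ and then by a further copy of $B_n$, so $T^{h_n+d}$ agrees, off a set of measure $O(1/\gamma_n)$, with the map advancing the $C_n$-copy index inside $C_{n+1}$ by one; this should be parlayed into a non-trivial odometer --- hence discrete-spectrum --- factor, contradicting weak mixing, and would promote the heuristic in the Remark following Theorem~\ref{msjreal} to a theorem.

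The general, non--rank-one case is where I expect the real difficulty, and I do not anticipate a self-contained argument. The plan is to reduce to the rank-one case via a structure theorem, the desired form being: \emph{a subshift with $\limsup p(q)/q \le 3/2$ is, as a topological system, rank one or an odometer.} Granting this, the rank-one part above together with the discrete spectrum of odometers rules out any weakly mixing measure. The available low-complexity structure theory --- finite topological rank in the spirit of Ferenczi, the $S$-adic normal forms of Donoso--Durand--Maass--Petite \cite{DDMP}, and the Ormes--Pavlov dichotomy \cite{ormespavlov}, which handles the slightly stronger hypothesis $\limsup p(q) - 1.5q < \infty$ --- points toward such a statement but does not isolate topological rank one from higher finite ranks at the threshold $3/2$, nor does it control the behaviour of all invariant measures simultaneously; bridging that gap is the principal obstacle. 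As an intermediate target I would first attempt the conjecture for minimal subshifts (on the rank-one side, the case of bounded spacers), where the $S$-adic machinery and the spectral theory are both in their best shape.
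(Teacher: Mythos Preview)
The paper does not prove this statement: it is explicitly posed as a \emph{tentative} open conjecture in Section~\ref{questions}, with no proof or proof sketch offered. There is therefore no ``paper's own proof'' to compare your proposal against. You correctly recognise this at the outset.

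Your outline is a reasonable research program and is well aligned with the hints the paper does give. In particular, your identification of the ``asymptotically flat'' constructions (Definition~\ref{theTs} with $\gamma_n \to \infty$) as the boundary case for rank-ones, and the claim that proving these fail to be weakly mixing is the central technical step, matches the author's own remarks after Theorems~\ref{msjreal} and~\ref{Dreal}: the author writes that the $\gamma_n \to \infty$ examples are ``most likely not mildly mixing'' and ``strongly suspect[s]'' they are not weakly mixing, but stops short of a proof. Your proposed mechanism --- that $T^{h_n+d}$ is close to a rotation on a large tower, yielding a discrete-spectrum factor --- is the natural heuristic, though turning it into a rigorous argument is nontrivial since closeness of $T^{h_n+d}$ to an odometer-like map on most of the space does not by itself produce a genuine factor map.

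On the general (non--rank-one) side, you are candid that the needed structure theorem --- that $\limsup p(q)/q \le 3/2$ forces topological rank one --- is not in the literature and is the principal obstacle. This is accurate; the paper makes no claim in this direction either.
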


Heinis \cite{heinis} showed that $\beta \geq 3 - \frac{2}{\alpha}$ for every subshift with $\limsup \frac{p(q)}{q} < 2$.
Our work shows that $\beta \geq 1.5$ is necessary for total ergodicity in the rank-one setting.

The values $\alpha = 1$ and $\beta = \frac{1}{4\gamma+2}$ for $\gamma \in \mathbb{N}$, $\gamma \geq 2$, are attained by our examples as they have complexity satisfying $\liminf \frac{p(q)}{q} = 1$ provided $L_{n} \to \infty$, and $\limsup \frac{p(q)}{q} = 1.5 + \frac{1}{4\gamma+2}$.

Ferenczi \cite{ferenczichacon} showed that the weakly mixing rank-one subshift given by $B_{n+1} = B_{n}^{2}1B_{n}^{2}$ has $\alpha=1.5$ and $\beta=\nicefrac{5}{3}$ (this is the example that was the previously known lowest complexity).

Our examples can be adapted to attain more pairs: for all $2 \leq m < M$, by setting $\gamma = M-1$ and $L = m$, Theorem \ref{comp} gives  a weakly mixing subshift such that
\[
\liminf \frac{p(q)}{q} = 1 + \frac{1}{M} \quad\quad\text{and}\quad\quad \limsup \frac{p(q)}{q} = \frac{3}{2} + \frac{1}{4m-2}
\]

Since $M \geq 3$ and $m \geq 2$, all of these examples satisfy $\alpha \leq \nicefrac{4}{3}$ and $\beta \leq \nicefrac{5}{3}$.

\dbibliography{ComplexityWeakMixing}


\newcommand{\etalchar}[1]{$^{#1}$}
\providecommand{\bysame}{\leavevmode\hbox to3em{\hrulefill}\thinspace}
\providecommand{\MR}{\relax\ifhmode\unskip\space\fi MR }
\providecommand{\MRhref}[2]{%
  \href{http://www.ams.org/mathscinet-getitem?mr=#1}{#2}
}
\providecommand{\href}[2]{#2}
\begin{thebibliography}{DDMP16}

\bibitem[Ada98]{adams1998smorodinsky}
Terrence~M. Adams, \emph{Smorodinsky's conjecture on rank-one mixing}, Proc.
  Amer. Math. Soc. \textbf{126} (1998), no.~3, 739--744. \MR{1443143}

\bibitem[AFP17]{adamsferenczipeterson17}
Terrence Adams, S\'{e}bastien Ferenczi, and Karl Petersen, \emph{Constructive
  symbolic presentations of rank one measure-preserving systems}, Colloq. Math.
  \textbf{150} (2017), no.~2, 243--255. \MR{3719459}

\bibitem[Cas97]{cassaigne}
Julien Cassaigne, \emph{Complexit\'{e} et facteurs sp\'{e}ciaux}, Journ\'{e}es
  Montoises (Mons, 1994), no.~1, vol.~4, Bull. Belg. Math. Soc. Simon Stevin,
  1997, pp.~67--88. \MR{1440670}

\bibitem[Cas98]{Cassaigne98sequenceswith}
Julien Cassaigne, \emph{Sequences with grouped factors}, Developments in
  Language Theory III, Publications of Aristotle University of Thessaloniki,
  1998, pp.~211--222.

\bibitem[CFPZ19]{cassaigneetal}
Julien Cassaigne, Anna~E. Frid, Svetlana Puzynina, and Luca~Q. Zamboni, \emph{A
  characterization of words of linear complexity}, Proc. Amer. Math. Soc.
  \textbf{147} (2019), no.~7, 3103--3115. \MR{3973910}

\bibitem[CH73]{CH73}
Ethan~M. Coven and G.~A. Hedlund, \emph{Sequences with minimal block growth},
  Math. Systems Theory \textbf{7} (1973), 138--153. \MR{322838}

\bibitem[CK19]{CK2}
Van Cyr and Bryna Kra, \emph{Counting generic measures for a subshift of linear
  growth}, J. Eur. Math. Soc. (JEMS) \textbf{21} (2019), no.~2, 355--380.
  \MR{3896204}

\bibitem[CK20a]{CK3}
\bysame, \emph{The automorphism group of a shift of slow growth is amenable},
  Ergodic Theory Dynam. Systems \textbf{40} (2020), no.~7, 1788--1804.
  \MR{4108905}

\bibitem[CK20b]{CK4}
\bysame, \emph{Realizing ergodic properties in zero entropy subshifts}, Israel
  J. Math. \textbf{240} (2020), no.~1, 119--148. \MR{4193129}

\bibitem[CPR22]{CPR}
Darren Creutz, Ronnie Pavlov, and Shaun Rodock, \emph{Measure-theoretically
  mixing subshifts with low complexity}, Ergodic Theory and Dynamical Systems
  (to appear), 2022.

\bibitem[Cre21]{Creutz2021}
Darren Creutz, \emph{Mixing on stochastic staircase tansformations}, Studia
  Math. \textbf{257} (2021), no.~2, 121--153. \MR{4194950}

\bibitem[Cre22]{Creutzmixing}
\bysame, \emph{Measure-theoretically mixing subshifts of minimal word
  complexity}, Preprint, 2022.

\bibitem[CS10]{CreutzSilva2010}
Darren Creutz and Cesar~E. Silva, \emph{Mixing on rank-one transformations},
  Studia Math. \textbf{199} (2010), no.~1, 43--72. \MR{2652597}

\bibitem[Dan16]{danilenko16}
Alexandre~I. Danilenko, \emph{Actions of finite rank: weak rational ergodicity
  and partial rigidity}, Ergodic Theory Dynam. Systems \textbf{36} (2016),
  no.~7, 2138--2171. \MR{3568975}

\bibitem[Dan19]{MR4041118}
\bysame, \emph{Rank-one actions, their {$(C,F)$}-models and constructions with
  bounded parameters}, J. Anal. Math. \textbf{139} (2019), no.~2, 697--749.
  \MR{4041118}

\bibitem[DDMP16]{DDMP}
Sebasti\'{a}n Donoso, Fabien Durand, Alejandro Maass, and Samuel Petite,
  \emph{On automorphism groups of low complexity subshifts}, Ergodic Theory
  Dynam. Systems \textbf{36} (2016), no.~1, 64--95. \MR{3436754}

\bibitem[dJ77]{deljunco}
Andr\'{e}s del Junco, \emph{A transformation with simple spectrum which is not
  rank one}, Canadian J. Math. \textbf{29} (1977), no.~3, 655--663. \MR{466489}

\bibitem[DOP21]{DOP}
Andrew Dykstra, Nicholas Ormes, and Ronnie Pavlov, \emph{Subsystems of
  transitive subshifts with linear complexity}, Ergodic Theory and Dynamical
  Systems (2021), 1–27.

\bibitem[Fer95]{ferenczichacon}
S\'ebastien Ferenczi, \emph{Les transformations de {Chacon} combinatoire,
  structure g\'eom\'etrique, lien avec les syst\`emes de complexit\'e $2n+1$},
  Bulletin de la Soci\'et\'e Math\'ematique de France \textbf{123} (1995),
  no.~2, 271--292 (fr). \MR{96m:28018}

\bibitem[Fer96]{ferenczi1996rank}
S\'{e}bastien Ferenczi, \emph{Rank and symbolic complexity}, Ergodic Theory
  Dynam. Systems \textbf{16} (1996), no.~4, 663--682. \MR{1406427}

\bibitem[Fer97]{ferenczisurv}
\bysame, \emph{Systems of finite rank}, Colloq. Math. \textbf{73} (1997),
  no.~1, 35--65. \MR{1436950}

\bibitem[FGH{\etalchar{+}}21]{fghsw21}
M.~Foreman, S.~Gao, A.~Hill, C.E. Silva, and B.~Weiss, \emph{Rank one
  transformations, odometers and finite factors}, Preprint, 2021.

\bibitem[Fur81]{furstenberg}
H.~Furstenberg, \emph{Recurrence in ergodic theory and combinatorial number
  theory}, Princeton University Press, Princeton, N.J., 1981, M. B. Porter
  Lectures. \MR{603625}

\bibitem[GH16a]{gaohillbounded}
Su~Gao and Aaron Hill, \emph{Bounded rank-1 transformations}, J. Anal. Math.
  \textbf{129} (2016), 341--365. \MR{3540602}

\bibitem[GH16b]{gaohilltopo}
\bysame, \emph{Topological isomorphism for rank-1 systems}, J. Anal. Math.
  \textbf{128} (2016), 1--49. \MR{3481169}

\bibitem[GZ19]{gaoziegler}
Su~Gao and Caleb Ziegler, \emph{Topological mixing properties of rank-one
  subshifts}, Trans. London Math. Soc. \textbf{6} (2019), no.~1, 1--21.
  \MR{3917745}

\bibitem[Hei02]{heinis}
Alex Heinis, \emph{The p(n)/n-function for bi-infinite words}, Theor. Comput.
  Sci. \textbf{273} (2002), no.~1–2, 35–46.

\bibitem[Kal84]{kalikow}
Steven~Arthur Kalikow, \emph{Twofold mixing implies threefold mixing for rank
  one transformations}, Ergodic Theory Dynam. Systems \textbf{4} (1984), no.~2,
  237--259. \MR{766104}

\bibitem[MH38]{morse1938symbolic}
Marston Morse and Gustav~A. Hedlund, \emph{Symbolic {D}ynamics}, Amer. J. Math.
  \textbf{60} (1938), no.~4, 815--866. \MR{1507944}

\bibitem[MH40]{morse1940symbolic}
\bysame, \emph{Symbolic dynamics {II}. {S}turmian trajectories}, Amer. J. Math.
  \textbf{62} (1940), 1--42. \MR{745}

\bibitem[OP19]{ormespavlov}
Nic Ormes and Ronnie Pavlov, \emph{On the complexity function for sequences
  which are not uniformly recurrent}, Dynamical systems and random processes,
  Contemp. Math., vol. 736, Amer. Math. Soc., [Providence], RI, (2019),
  pp.~125--137. \MR{4011909}

\bibitem[PS22]{PS}
Ronnie Pavlov and Scott Schmeiding, \emph{Local finiteness and automorphism
  groups of low complexity subshifts}, Ergodic Theory and Dynamical Systems (to
  appear), 2022.

\bibitem[Rot94]{rote}
G\"{u}nter Rote, \emph{Sequences with subword complexity {$2n$}}, J. Number
  Theory \textbf{46} (1994), no.~2, 196--213. \MR{1269252}

\bibitem[Ryz13]{ryzhikovmsj}
V.~V. Ryzhikov, \emph{Bounded ergodic constructions, disjointness, and weak
  limits of powers}, Trans. Moscow Math. Soc. (2013), 165--171. \MR{3235793}

\bibitem[Sil08]{silva2008invitation}
C.~E. Silva, \emph{Invitation to ergodic theory}, Student Mathematical Library,
  vol.~42, American Mathematical Society, Providence, RI, 2008. \MR{2371216}

\end{thebibliography}
\end{document}